\documentclass[10pt,reqno]{amsart}
\RequirePackage[OT1]{fontenc}
\RequirePackage{amsthm,amsmath}
\RequirePackage[numbers]{natbib}
\RequirePackage[colorlinks,linkcolor=blue,citecolor=blue,urlcolor=blue]{hyperref}
\usepackage{amssymb}
\usepackage{anysize}
\usepackage{hyperref}
\usepackage{extarrows}
\usepackage{multirow}
\usepackage{natbib}
\usepackage{stmaryrd}
\usepackage{color}
\usepackage{amsmath}
\usepackage{enumitem}

\allowdisplaybreaks

\numberwithin{equation}{section}

\theoremstyle{plain} 
\newtheorem{thm}{Theorem}[section]
\newtheorem{lem}[thm]{Lemma}
\newtheorem{cor}[thm]{Corollary}

\newtheorem{assumption}[thm]{Assumption}
\newtheorem{defn}[thm]{Definition}

\theoremstyle{remark}
\newtheorem{rem}[thm]{Remark}

\renewcommand{\Re}{\mathrm{Re}\,}
\renewcommand{\Im}{\mathrm{Im}\,}

\newcommand{\im}{\mathrm{Im}\,}

\newcommand{\E}{{\mathbb E }}
\newcommand{\R}{{\mathbb R }}
\newcommand{\N}{{\mathbb N}}
\newcommand{\Z}{{\mathbb Z}}
\renewcommand{\P}{{\mathbb P}}
\newcommand{\C}{{\mathbb C}}

\newcommand{\ii}{\mathrm{i}}
\newcommand{\deq}{\mathrel{\mathop:}=}
\newcommand{\e}[1]{\mathrm{e}^{#1}}
\newcommand{\ntr}{\mathrm{tr}\,}
\newcommand{\dd}{\mathrm{d}}
\newcommand{\ie}{\emph{i.e., }}
\newcommand{\eg}{\emph{e.g., }}
\newcommand{\cf}{\emph{c.f., }}
\newcommand{\PP}{\Phi}
\newcommand{\dL}{\mathrm{d}_{\mathrm{L}}}
\newcommand{\wt}{\widetilde}

\newcommand{\bs}{\boldsymbol}

\newcommand{\la}{\langle}
\newcommand{\ra}{\rangle}

\renewcommand{\mathbf}[1]{\bs{#1}}
 
\marginsize{33mm}{33mm}{33mm}{33mm}  

\begin{document}

 \begin{minipage}{0.85\textwidth}
 \vspace{2.5cm}
 \end{minipage}
\begin{center}
\large\bf
Local single ring theorem on optimal scale
\end{center}

\renewcommand{\thefootnote}{\fnsymbol{footnote}}	
\vspace{1cm}
\begin{center}
 \begin{minipage}{0.32\textwidth}
\begin{center}
Zhigang Bao\footnotemark[1]  \\
\footnotesize {HKUST}\\
{\it mazgbao@ust.hk}
\end{center}
\end{minipage}
\begin{minipage}{0.32\textwidth}
\begin{center}
L\'aszl\'o Erd{\H o}s\footnotemark[1]  \\
\footnotesize {IST Austria}\\
{\it lerdos@ist.ac.at}
\end{center}
\end{minipage}
\begin{minipage}{0.33\textwidth}
 \begin{center}
Kevin Schnelli\footnotemark[1]\\
\footnotesize 
{KTH Royal Institute of Technology}\\
{\it schnelli@kth.se}
\end{center}
\end{minipage}
\footnotetext[1]{Partially supported by ERC Advanced Grant RANMAT No.\ 338804.}

\renewcommand{\thefootnote}{\fnsymbol{footnote}}	

\end{center}

\vspace{1cm}

\begin{center}
 \begin{minipage}{0.8\textwidth} \footnotesize 
 
 Let $U$ and $V$ be two independent $N$ by $N$ random matrices that are distributed according to Haar measure on $U(N)$. Let $\Sigma$ be a non-negative deterministic $N$ by $N$ matrix. The {\it single ring theorem} \cite{GKZ11}
 asserts that the empirical eigenvalue distribution of the matrix $X\deq U\Sigma V^*$ converges weakly, in the limit of large $N$, to a deterministic measure which is supported on a single ring centered at the origin in~$\C$.  Within the bulk regime, \ie in the interior of the single ring, we establish the convergence of the empirical eigenvalue distribution on the optimal local scale of order $N^{-1/2+\varepsilon}$ and establish the optimal convergence rate. The same results hold true when~$U$ and~$V$ are Haar distributed on $O(N)$.

\end{minipage}
\end{center}

 \vspace{2mm}
 
 {\small
\footnotesize{\noindent\textit{Date}: March 1, 2019}\\
 \footnotesize{\noindent\textit{Keywords}: Non-hermitian random matrices, local eigenvalue density, single ring theorem, free convolution}
 
 \footnotesize{\noindent\textit{AMS Subject Classification (2010)}: 46L54, 60B20}
 \vspace{2mm}

 }

\thispagestyle{headings}
\section{Introduction and main result}

Consider the $N\times N$
random matrix of the form
\begin{align}
X\equiv X_N=U\Sigma V^*, \label{091101}
\end{align}
where $U\equiv U_N$ and $V\equiv V_N$ are two independent sequences of random matrices, which are both Haar distributed on either the unitary group, ${U}(N)$, of degree $N$, or on the orthogonal group, ${O}(N)$, of degree $N$. Moreover, let $\Sigma\equiv \Sigma_N$ be a sequence of $N\times N$ deterministic non-negative definite diagonal matrices. Note that in general $X$ is not hermitian and most of its eigenvalues are genuinely complex numbers. In fact, almost surely the matrix $X$ is not normal. Let $\lambda_j(X)$, 
 $j=1,2,\ldots, N$,  be the eigenvalues of $X$ and let 
 \begin{align}
 \mu_X\deq \frac{1}{N}\sum_{j=1}^N \delta_{\lambda_j(X)}
 \end{align}
 be the (normalized) empirical spectral distribution of $X$. We define $\mu_\Sigma$ analogously.

\begin{assumption}\label{ass one}
We assume that the sequence $(\Sigma_N)$ is uniformly bounded, \ie
 there exists a finite constant $S_+$ such that
\begin{align}\label{le big S_+} 
0\le \Sigma_N\le S_+\,.
\end{align}
\end{assumption}

From this assumption it follows that there is a constant $0< s_+<\infty$ such that, for all $N\in\N$,
\begin{align}\label{le s-s+}
\mathrm{supp}\,\mu_\Sigma\subset[0,s_+]\,.
\end{align}

We first consider the situation where there exists a limiting measure $\mu_\sigma$\footnote{We will often use the convention that capital letters
 indicate  random matrices and the corresponding small letters indicate their limiting objects.} of $\mu_\Sigma$, \ie
\begin{align}\label{levy distance first time}
 \mathrm{d}_{\mathrm{L}}(\mu_\Sigma,\mu_\sigma)\rightarrow 0\,,
\end{align}
as $N\rightarrow\infty$, where $\mathrm{d}_{\mathrm{L}}$ denotes the L\'evy distance. Given such a $\mu_\sigma$ on $[0,\infty)$, we define
\begin{align}\label{le radii}
r_-\deq\Big(\int_{\R^+} x^{-2} {\rm d} \mu_\sigma (x)\Big)^{-\frac12}\,,\qquad r_+\deq\Big(\int_{\R^+}  x^2 {\rm d} \mu_{\sigma}(x)\Big)^{\frac12}\,,
\end{align}
where we set $r_-=0$ in case the integral in its definition diverges. Note that if $\mu_\sigma$ is supported more than one point, we have $r_-<r_+$ as  follows from Schwarz inequality. We let
\begin{align}
\mathcal{R}_\sigma\equiv \mathcal{R}(\mu_\sigma)\deq\{w \in \mathbb{C}\,:\, r_- < |w| <  r_+\} \label{102911}
\end{align}
be the ring in $\C$ with radii $r_-$ and $r_+$. In case $r_-=0$, $\mathcal{R}_\sigma$ is the punctuated disc of radius~$r_+$.

For a probability measure $\mu$ on $\R$ we denote by $\mu^{\mathrm{sym}}$ its symmetrization, \ie $\mu^{\mathrm{sym}}(A)\deq\frac{1}{2}\big[ \mu(A) + \mu(-A)\big]$ for any Borel set $A\subset\R$. For $r\in\R^+$, set
\begin{align}
  \mu_{\sigma,r}\deq\mu_{\sigma}^{\mathrm{sym}}\boxplus  \delta_{r}^\mathrm{sym}, \label{090210}
\end{align}
where $\boxplus$ denotes the free additive convolution of probability measures on $\R$; see Subsection~\ref{FAC}.

 Given a probability measure $\mu$ on $\mathbb{R}$,  its {\it Stieltjes transform}, $m_\mu$, on the complex upper half-plane $\mathbb{C}^+\deq\{z\in \mathbb\,:\, \Im z>0\}$ is defined by 
\begin{align}\label{le stieltjes transform}
 m_\mu(z)\deq \int_{\mathbb{R}} \frac{{\rm d} \mu(x)}{x-z}\,,\qquad\qquad z\in \mathbb{C}^+\,.
 \end{align}

\begin{thm}[Single ring theorem, \cite{GKZ11}]\label{global single ring} Assume that Assumption~\ref{ass one} holds and that there is a compactly supported probability measure $\mu_\sigma$ on $[0,\infty)$, which is supported at more than one point, such that~\eqref{levy distance first time} holds. Assume in addition that there are constants $k,k_1>0$ such that
\begin{align}\label{le regularity condition}
 \im m_{\mu_\Sigma}(z)\le k_1 
\end{align}
on $\{z\in\C^+\,:\, \im z> N^{-k}\}$. Then the empirical spectral distribution $\mu_X$ converges weakly (in probability) to a deterministic probability measure $\rho_\sigma$ supported on $\overline{\mathcal{R}_\sigma}$. The limiting measure is absolutely continuous with respect to Lebesgue measure and given by
\begin{align}
\rho_\sigma(w)\,\dd^2w=\frac{1}{2\pi} \Delta_w\Big( \int_{ \R} \log|s| \mu_{\sigma,|w|}({\rm d}s)\Big)\,\dd^2w \,, \qquad\qquad  w\in \mathcal{R}_\sigma\,,\label{081401}
\end{align}
where $\Delta_w=4\partial_w\partial_{\overline{w}}$ is the Laplacian on $\C$ and $\dd^2 w\equiv\dd w\wedge\dd \overline{w}$ is Lebesgue measure on~$\C$.
\end{thm}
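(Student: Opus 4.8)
The plan is to prove the theorem by Girko's Hermitization, reconstructing $\mu_X$ from the family of logarithmic potentials
\[
L_N(w)\deq\int_\C\log|\lambda-w|\,\dd\mu_X(\lambda)=\frac{1}{N}\log\bigl|\det(X-w)\bigr|=\frac{1}{2N}\log\det\bigl((X-w)(X-w)^*\bigr)\,,\qquad w\in\C\,,
\]
via the distributional identity $\mu_X=\tfrac{1}{2\pi}\Delta_wL_N$. Let $\nu_w^N$ denote the symmetrized empirical singular-value distribution of $X-w$, i.e.\ the measure putting mass $\tfrac{1}{2N}$ at each of $\pm s_1(X-w),\dots,\pm s_N(X-w)$; then $L_N(w)=\int_\R\log|u|\,\dd\nu_w^N(u)$. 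Setting $\Phi(w)\deq\int_\R\log|u|\,\mu_{\sigma,|w|}(\dd u)$, the argument splits into (i) showing that for Lebesgue-almost every $w\in\C$ one has $\nu_w^N\to\mu_{\sigma,|w|}$ weakly in probability \emph{and} $L_N(w)\to\Phi(w)$; and (ii) upgrading this pointwise convergence of potentials to the weak convergence $\mu_X\to\tfrac{1}{2\pi}\Delta_w\Phi$, which requires uniform integrability of $L_N(\cdot)$ on compact subsets of $\C$ and control of the $\log$-singularity of $\nu_w^N$ at the origin.

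For step (i), by the invariance of Haar measure the singular values of $X-w=U\Sigma V^*-w$ equal those of $\Sigma-wW$ with $W\deq U^*V$ again Haar distributed on $U(N)$, so it suffices to analyze $g_N(z)\deq\int_\R(u-z)^{-1}\,\dd\nu_w^N(u)$ for $\Sigma-wW$. Differentiating resolvent entries along the group and integrating by parts on $U(N)$ (equivalently, via the Weingarten/Schwinger--Dyson calculus) produces, after taking expectations, a self-consistent \emph{master equation} for the limiting resolvent whose only data are $|w|$ and the Stieltjes transform of $\mu_\Sigma^{\mathrm{sym}}$; concentration of measure for Haar measure on $U(N)$ (the Gromov--Milman/logarithmic Sobolev inequality for compact classical groups) shows that $g_N(z)$ and the finitely many resolvent entries entering the equation concentrate around their means with negligible fluctuations, uniformly for $\im z$ bounded below, so the limit is deterministic. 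One then verifies that the master equation has a unique solution in the relevant class of Stieltjes transforms and, via the subordination description of the free additive convolution, that this solution is $m_{\mu_{\sigma,|w|}}$ with $\mu_{\sigma,|w|}=\mu_\sigma^{\mathrm{sym}}\boxplus\delta_{|w|}^{\mathrm{sym}}$; stability of the equation under replacing $\mu_\Sigma$ by $\mu_\sigma$, permitted by \eqref{le assumption on levy distances}, then gives $\nu_w^N\to\mu_{\sigma,|w|}$ weakly in probability.

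It then remains to identify $\Phi$. An explicit computation of $\mu_{\sigma,r}$ through the subordination functions of the free convolution --- in the spirit of the Haagerup--Larsen formula for $R$-diagonal elements, and using the definitions \eqref{le radii} of $r_\pm$ --- shows that $\Phi$ is harmonic on $\{|w|<r_-\}$ and on $\{|w|>r_+\}$ (equal there to $\int_\R\log x\,\mu_\sigma(\dd x)$ and to $\log|w|$, respectively) and is radially symmetric with strictly positive Laplacian on $\mathcal R_\sigma$. Hence $\rho_\sigma\deq\tfrac{1}{2\pi}\Delta_w\Phi$ is a probability measure supported on $\overline{\mathcal R_\sigma}$, absolutely continuous with density \eqref{081401} on $\mathcal R_\sigma$, and it is the weak limit of $\mu_X$ once $L_N(w)\to\Phi(w)$ is established for a.e.\ $w$ and can be exchanged with $\Delta_w$.

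The main obstacle, and the reason for the regularity hypothesis \eqref{le regularity condition}, is precisely the convergence $L_N(w)\to\Phi(w)$: weak convergence $\nu_w^N\to\mu_{\sigma,|w|}$ does not by itself control the unbounded linear statistic $\int_\R\log|u|\,\dd\nu_w^N$, so one must rule out that $\nu_w^N$ carries abnormally much mass at very small scales --- concretely, one needs a lower bound $s_{\min}(X-w)\ge N^{-C}$ with overwhelming probability together with a bound on the number of singular values of $X-w$ in an interval $[0,\eta]$, uniformly for $w$ in the relevant region (including $|w|$ small when $r_-=0$). The assumption that $\im m_{\mu_\Sigma}$ stays bounded on scales $\im z>N^{-k}$ prevents $\Sigma$ from accumulating too many eigenvalues in a tiny window, and this bound propagates through the master equation to an upper bound on $\im g_N(z)$ down to scales $\im z>N^{-k'}$, hence to the needed square-root-type edge behaviour of $\nu_w^N$ at $0$ and to the lower tail bound on $s_{\min}(X-w)$. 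Granting this, the remaining inputs --- Lipschitz dependence of $L_N(w)$ on $w$, yielding the uniform integrability of $L_N$ on compacts, and the almost-sure exchange of $\Delta_w$ with the limit by a dominated-convergence/Montel argument --- are routine, and the theorem follows. The orthogonal case is handled identically, with the $U(N)$ integration-by-parts and concentration inputs replaced by their $O(N)$ counterparts.
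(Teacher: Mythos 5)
This theorem is stated in the paper as a citation to Guionnet--Krishnapur--Zeitouni (and Rudelson--Vershynin, per Remark~\ref{remRV}); the paper supplies no proof of its own, so the comparison is with the proof in \cite{GKZ11}. Your reconstruction captures the overall architecture of that proof accurately: Girko Hermitization via the logarithmic potential $L_N(w)$; reduction by rotation invariance to the singular values of $\Sigma-wW$ with $W=U^*V$ Haar; a Schwinger--Dyson/master-equation argument with Gromov--Milman concentration to pin down the deterministic limit $\nu_w^N\to\mu_{\sigma,|w|}=\mu_\sigma^{\mathrm{sym}}\boxplus\delta_{|w|}^{\mathrm{sym}}$; the Haagerup--Larsen identification of the potential $\Phi$ and the radii $r_\pm$; and the key issue of upgrading weak convergence of $\nu_w^N$ to convergence of the unbounded linear statistic $\int\log|u|\,\dd\nu_w^N$. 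All of this is faithful to \cite{GKZ11}.

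There is, however, one genuine gap in the way you treat the small singular values. You assert that the regularity hypothesis~\eqref{le regularity condition} ``propagates through the master equation to an upper bound on $\Im g_N(z)$ down to scales $\Im z>N^{-k'}$, hence to the needed square-root-type edge behaviour of $\nu_w^N$ at $0$ \emph{and} to the lower tail bound on $s_{\min}(X-w)$.'' The first consequence is correct: a polynomial bound on $\Im g_N$ at scale $N^{-k'}$ controls how many singular values of $X-w$ can sit in a window $[0,\eta]$ for $\eta\gtrsim N^{-k'}$, which is what tames the upper part of the $\log$ singularity. But it does \emph{not} yield a lower bound on $s_{\min}(X-w)$. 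No density or master-equation bound at polynomial scales rules out a single singular value being super-polynomially small; anti-concentration/invertibility for $\Sigma-wW$ is a separate analytic input. In \cite{GKZ11} this is precisely the extra, hard-to-check hypothesis (their condition~(1.3)); in the statement as given here it has been removed because Rudelson and Vershynin~\cite{RV14} proved unconditionally that $\P(s_{\min}(X-w)\le t/|w|)\le (t/|w|)^cN^C$ (quoted as Theorem~\ref{lem.081003} in the paper). So your outline needs an additional, independent step supplying the smallest-singular-value bound; as written it incorrectly derives it from the regularity condition, which is exactly the logical step that required new ideas and was the contribution of~\cite{RV14}.
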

\begin{rem}In 
 Theorem~\ref{global single ring}, $U$ and $V$ may be both Haar distributed on $U(N)$ or on $O(N)$. 
 \end{rem}
 \begin{rem}\label{remRV}
 In its original form Theorem~\ref{global single ring} was proved by Guionnet, Krishnapur and Zeitouni in~\cite{GKZ11} under a further assumption on the smallest singular value of the matrix $X-z$, $z\in\C$. This hard-to-check condition was removed by Rudelson and Vershynin in~\cite{RV14} (\cf Theorem~\ref{lem.081003} below), which yields~Theorem~\ref{global single ring}.   
\end{rem}
\begin{rem}
The measure $\rho_\sigma$ was first computed in~\cite{HL00}. It has a direct interpretation
in free probability theory, in fact it is the Brown measure of the free product of a
Haar unitary and an element $\sigma$ on a noncommutative probability space; see~\cite{HL00} for more details.
\end{rem}

\subsection{Local single ring law}

To state our results, we use the following definition on high-probability estimates from~\cite{EKY}. In Appendix~\ref{appendix A} we collect some of its properties.

\begin{defn}\label{definition of stochastic domination}
Let $\mathcal{X}\equiv \mathcal{X}^{(N)}$, $\mathcal{Y}\equiv \mathcal{Y}^{(N)}$ be two sequences of
 nonnegative random variables. We say that~$\mathcal{Y}$ stochastically dominates~$\mathcal{X}$ if, for all (small) $\epsilon>0$ and (large)~$D>0$,
\begin{align}
\P\big(\mathcal{X}^{(N)}>N^{\epsilon} \mathcal{Y}^{(N)}\big)\le N^{-D},
\end{align}
for sufficiently large $N\ge N_0(\epsilon,D)$, and we write $\mathcal{X} \prec \mathcal{Y}$.
 When
$\mathcal{X}^{(N)}$ and $\mathcal{Y}^{(N)}$ depend on a parameter $v\in \mathcal{V}$ (typically an index label or a spectral parameter), then $\mathcal{X}(v) \prec \mathcal{Y} (v)$, uniformly in $v\in \mathcal{V}$, means that the threshold $N_0(\epsilon,D)$ can be chosen independently of $v$. 
\end{defn}

Motivated by~\eqref{081401} we introduce a probability measure $\rho_\Sigma $ on $\C$ by requiring
\begin{align}\label{102910}
 \dd\rho_\Sigma(w)=\frac{1}{2\pi}\Delta_w \Big( \int_\R\log|s| \dd \mu_{\Sigma,|w|}(s)\Big)\dd^2 w\,,\qquad\qquad w\in\C\,,
\end{align}
where
\begin{align}
 \mu_{\Sigma,r}\deq\mu_{\Sigma}^{\mathrm{sym}}\boxplus \delta_{r}^{\mathrm{sym}}\,,\qquad\qquad r\ge0\,,
\end{align}
and $\Delta_w$ is the Laplacian on $\C$ in the sense of distributions.
\begin{rem} The fact that formula~\eqref{102910} defines a probability measure follows from previous work on the subject which we shortly summarize here.

Consider a non-commutative $W^*$-probability space $(\mathcal{M},\tau)$, with $\tau$ a trace. Let $u$ be a Haar unitary element and let $t=t^*$ be $*$-free from $u$ and such that the distribution of $t$, \ie its spectral measure, is given by $\mu_\Sigma$. Let $\widetilde\mu_{\Sigma,w}$ be the spectral measure of $|ut-w\mathrm{id}|$, with $\mathrm{id}$ the unit in $\mathcal{M}$ and $w\in\C$. Then the Brown measure for the product $ut$ is given by the Riesz measure associated to the subharmonic function
\begin{align}
\C\ni w\mapsto\int_\R\log|s| \dd \widetilde\mu_{\Sigma,w}(s)\,,
\end{align}
\cf Section 2 of~\cite{HL00}. Haagerup and Larsen showed in Proposition 3.5 in~\cite{HL00} that $\widetilde\mu_{\Sigma,w}=\mu_{\Sigma,|w|}$. Hence $\rho_\Sigma$ in~\eqref{102910} can be characterized as the Brown measure of $ut$ which by construction is a probability measure.

\end{rem}

The main result of this paper is the following local single theorem in the bulk. Notice that~\eqref{levy distance first time} is not assumed, we only require that $\mathrm{d}_\mathrm{L}(\mu_\Sigma,\mu_\sigma)\le b$, for some small constant $b>0$, for $N$ sufficiently large.

\begin{thm} \label{main theorem} Suppose that Assumption~\ref{ass one} holds. Let $\mu_\sigma$ be a compactly supported probability measure on $[0,\infty)$ which is supported at more than one point. Fix any (small) $\tau>0$ and define
\begin{align}\label{le smaller ring}
 \mathcal{R}_\sigma^\tau\deq \{w\in\C\,:\, r_-+\tau\le |w|\le r_+-\tau\}\subset\mathcal{R}_\sigma\,,
\end{align}
where $r_\pm\equiv r_\pm(\mu_\sigma)$ are given in~\eqref{le regularity condition}. Then there exists a (small) constant $b_0>0$ and $N_0\in\N$, depending only on $\mu_\sigma$ and $S_+$, such that whenever the L\'evy distance $\mathrm{d}_{\mathrm{L}}(\mu_\Sigma,\mu_\sigma)$ satisfies
\begin{align}\label{le assumption on levy distances}
 \sup_{N\ge N_0}\mathrm{d}_{\mathrm{L}}(\mu_\Sigma,\mu_\sigma)\le b\,,
\end{align}
 for some $b\le b_0$, then the following holds. Choose any $w_0\in\mathcal{R}_\sigma^\tau$. Let $f\,:\,\C\rightarrow \R$ be a smooth function such that $\|f\|_\infty\leq C_0$ and $f(z)=0$ for all $|z|\geq C_0$, for some positive constant $C_0$. For $\alpha\in(0,1/2)$
set 
\begin{align}\label{le rescaled function}
f_{w_0}(w)\deq N^{2\alpha} f(N^\alpha(w-w_0))\,.
\end{align}
Then we have for any $\alpha\in (0,1/2)$ that the estimate
\begin{align}
\Big| \frac{1}{N}\sum_{i=1}^N f_{w_0} (\lambda_i(X))- \int_{\mathcal{R}_\sigma} f_{w_0}(w) \dd\rho_{\Sigma}( w) \Big|\prec N^{-1+2\alpha} \|\Delta f\|_{\mathrm{L}^1(\C)} \label{090602}
\end{align} 
holds uniformly in $f$ and in $w_0\in\mathcal{R}_\sigma^\tau$, for $N$ sufficiently large, depending on $\tau$, $S_+$, $\mu_\sigma$ and $C_0$.
\end{thm} 
\begin{rem}
Note that we can choose $\alpha$ in~\eqref{090602}, almost as large as $1/2$ in order to have an effective bound on the error term. Since the typical distance between the eigenvalues in the bulk of the ring $\mathcal{R}_\sigma$ is of order $N^{-1/2}$, our result is optimal, both in terms of range of the exponent $\alpha$ and the error term on the right side of~\eqref{090602}. In particular, this 
improves the recent local single ring theorem of Benaych-Georges in \cite{BG} from scale $(\log N)^{-1/4}$ to the optimal scale $N^{-1/2+\epsilon}$, for any small $\epsilon>0$.
\end{rem}

\begin{rem}
 Theorem~\ref{main theorem} holds with $U,V$ being Haar distributed on either $U(N)$ or on $O(N)$. 
\end{rem}

\begin{rem}\label{le remark stay away from edges}
 Note that $w_0$ in Theorem~\ref{main theorem} is chosen to be the (open) single ring $\mathcal{R}_\sigma$, in particular $w_0$ stays away from the boundary of $\mathcal{R}_\sigma$. In case $r_-=0$, $\mathcal{R}_\sigma$ is a punctuated disc. It has been proved in~\cite{GZ10,BG15} that there are no outliers at an order one distance from~$\mathcal{R}_\sigma$.

 Let $f\,:\,\C\rightarrow\R$ be smooth and supported on $\mathcal{R}^\tau_\sigma$, for some (small) $\tau>0$. Following the proof of Theorem~\ref{main theorem} it is straightforward to verify that~\eqref{090602} also holds with $\alpha=0$ and $f_{w_0}$ replaced with $f$, provided that the support of the function $f$ stays away from the spectral edges, \ie is contained in~$\mathcal{R}_\sigma^\tau$.

 \end{rem}

 The following corollary of Theorem~\ref{main theorem} expresses the speed of convergence in 
 the single ring theorem on the macroscopic scale. 
 
  \begin{cor}\label{le corollary xh} Under the conditions and with the notations  of Theorem~\ref{main theorem}, we have that
  \begin{align}\label{le xh}
   \bigg|\frac{1}{N}\sum_{i=1}^N f(\lambda_i(X))-  \int_{\mathcal{R}_\sigma} f(w) \dd\rho_{\sigma}(w) \bigg|\prec {\|\Delta f\|_{\mathrm{L}^1(\C)}}\Big(\frac{1}{N}+b\Big)\,,
  \end{align}
 uniformly for any function $f$ supported in $\mathcal{R}_\sigma^\tau$ with a bound $\|f\|_\infty\le C_0$, for~$N$ sufficiently large, depending on~$\tau$, $S_+$, $\mu_\sigma$ and $C_0$.
 
  \end{cor}
  \begin{rem}
 In~\eqref{le xh} the measure $\rho_\sigma$ is given by~\eqref{081401}. By Theorem~4.4 and Corollary~4.5 of~\cite{HL00}, the measure $\rho_{\sigma}$ is absolutely continuous on $\C\backslash\{0\}$ with respect to Lebesgue measure. Moreover, it satisfies $\rho_\sigma(\{0\})=\mu_\sigma(\{0\})$. (In case $\mu_\sigma(\{0\})>0$, we have $r_-=0$.) Note however that we have to exclude the point $w=0$ in our results since it is outside~$\mathcal{R}_\sigma$.
  \end{rem}

\begin{rem}\label{remove regularity}  Note that 
in Theorem~\ref{main theorem} and Corollary~\ref{le corollary xh}
we do not require any regularity assumption on the measure $\mu_{\sigma}$,  we even allow for atoms in $\mu_\sigma$.
 In particular, sending $b\rightarrow 0$, as $N\rightarrow \infty$, Corollary~\ref{le corollary xh} also implies that Assumption~\ref{ass one} and~\eqref{levy distance first time} together imply $\mathrm{d}_{\mathrm{L}}(\rho_\Sigma,\rho_\sigma)\rightarrow 0$, as $N\rightarrow\infty$, thus removing the regularity condition~\eqref{le regularity condition} in the bulk from the single ring theorem, this answers a question  in \cite[Remark 2]{GKZ11}.

\end{rem}

\subsection{Summary of previous results}

The first single ring theorem was established by Feinberg and Zee  for a class of unitary invariant ensemble in \cite{FZ97}, but without full 
rigor. The complete mathematical proof was given by Guionnet, Krishnapur and Zeitouni~\cite{GKZ11};
see also Remarks~\ref{remRV} and~\ref{remove regularity} for relaxing some conditions.  
 
 In  spirit of the Wigner ensemble for the Hermitian case, the Ginibre ensemble can also be naturally extended by 
 considering arbitrary i.i.d. entries; however,  the unitary invariance property is lost in this generalization.
 Starting from the work of Girko \cite{G84}, until the final result of Tao and Vu \cite{TV10} with the least moment assumption, 
 there have been many works devoted in proving circular law for general distribution. 
 We refer to the survey \cite{BC12} for more references in this direction.
 A prominent idea called {\it Hermitization} was introduced by Girko in \cite{G84}.
 This method translates spectral distribution problems of a non-Hermitian matrix
 to those of a Hermitian matrix (of double dimension),  whose spectral properties can be studied with more established techniques.

Similarly to Wigner's original semicircle law, the single ring theorem establishes weak convergence of the spectral distribution, \ie 
it captures the density of eigenvalues on the global scale.  Since the typical distance between nearby eigenvalues is very small, of order $N^{-1/2}$,
 it is natural to ask whether the empirical density can also  be approximated by the 
 deterministic limit density  on some local scale. Ideally, such {\it local law} should  hold on the smallest possible scale, \ie just 
 above  the scale  $N^{-1/2}$. In the Hermitian case, the local laws for Wigner and related ensembles have been extensively 
studied in the recent years, see {\it e.g.}~\cite{E15} for a survey and references therein;  the optimal local scale 
has been first achieved in \cite{ESY}.

With the aid of Girko's Hermitization, local laws for non-Hermitian matrices can be obtained via studying the local law for certain Hermitian matrices. With this strategy,
the local circular law on optimal scale  was established in  the series of works  Bourgade, Yau and Yin \cite{BYY, BYY2} and Yin \cite{Yin}.   The first local single ring theorem was obtained by Benaych-Georges in \cite{BG}, down to the scale $(\log N)^{-\frac{1}{4}}$, by proving the matrix subordination for Girko's Hermitization of $X$ in (\ref{091101}), \cf (\ref{def of Hw}). The strategy of matrix subordination
was originally introduced by Kargin in \cite{Kargin}  for proving a local law in the  additive  matrix model $A+UBU^*$, where~$A$ and~$B$ are deterministic Hermitian matrices and $U$ is a Haar unitary.
 This additive model shares certain similarities with the Hermitization of the model $X=U\Sigma V^*$, but
 the latter has a block structure and thus we call it {\it block additive model} (\cf (\ref{052902})).  Recently, in \cite{BES15b,BES15c,BES17}, we obtained the local law of the additive model $A+UBU^*$  on the optimal scale. The approach developed in
 these works opens up a path to treat  the optimal local law in the block additive model, 
  hence also sheds  light on  the optimal local single ring theorem. The key difference  is that in the block additive model
  the Haar unitary matrices provide only a randomized $U(N)\times U(N)$ symmetry instead of the full  $U(2N)$ symmetry.
  In particular, the coupling  between the blocks is deterministic, so the mixing mechanism is much weaker.
   A more detailed overview of the proof strategy and the difficulties will be given in 
Section~\ref{sec:outline}.

\subsection{Notational conventions}
We use the symbols $O(\,\cdot\,)$ and $o(\,\cdot\,)$ for the standard big-O and little-o notation. We use~$c$ and~$C$ to denote strictly positive constants that do not depend on~$N$. Their values may change from line to line.

We denote by $M_N(\C)$ the set of $N\times N$ matrices over $\C$. For $A\in M_N(\C)$, we denote by $\|A\|$ its operator norm and by $\|A\|_2$ its Hilbert-Schmidt norm. The matrix entries of $A$ are denoted by $A_{ij}$.

 Let $\mathbf{g}=(g_1,\ldots, g_N)$ be a real or complex Gaussian vector. We write $\mathbf{g}\sim \mathcal{N}_{\mathbb{R}}(0,\sigma^2I_N)$ if $g_1,\ldots, g_N$ are independent and identically distributed (i.i.d.) $N(0,\sigma^2)$ normal variables; and we write $\mathbf{g}\sim \mathcal{N}_{\mathbb{C}}(0,\sigma^2I_N)$ if $g_1,\ldots, g_N$ are i.i.d.\ $N_{\mathbb{C}}(0,\sigma^2)$ variables, where $g_i\sim N_{\mathbb{C}}(0,\sigma^2)$ means that $\Re g_i$ and $\Im g_i$ are independent $N(0,\frac{\sigma^2}{2})$ normal variables. 

We use double brackets to denote index sets, \ie for $n_1, n_2\in\R$, $\llbracket n_1,n_2\rrbracket\deq [n_1, n_2] \cap\Z$.

\medskip

{\emph{Acknowledgment:} Part of this work was accomplished when Z.-G.\ B. and K.\ S. were  working at
 IST Austria with the support of ERC Advanced Grant RANMAT No.\ 338804. Support and hospitality are gratefully acknowledged. We thank an anonymous referee for very useful comments and suggestions.

\section{Preliminaries and main technical task}

\subsection{Free additive convolution}\label{FAC}
We recall some basic notions and results for the free
additive convolution. We follow the notational conventions in our previous paper~\cite{BES15}.

Let $\mu$ be a Borel probability measure on $\R$ and recall its Stieltjes transform $m_{\mu}$ defined in~\eqref{le stieltjes transform}. Note that $m_{\mu}\,:\,\C^+\rightarrow \C^+$ is an analytic function such that
\begin{align}\label{le limit to be a probablity measure}
 \lim_{\eta\nearrow\infty} \ii \eta\, m_\mu(\ii\eta)=-1\,.
\end{align}
Conversely, if $m\,:\, \C^+\rightarrow \C^+$ is an analytic function such that $\lim_{\eta\nearrow\infty} \ii \eta\, m(\ii\eta)=-1$, then~$m$ is the Stieltjes transform of a probability measure $\mu$.

Given a Borel probability measure $\mu$ on $\R$, let $F_\mu$ be the {\it negative reciprocal Stieltjes transform} of $\mu$,
\begin{align}\label{le F definition}
 F_{\mu}(z)\deq -\frac{1}{m_{\mu}(z)}\,,\qquad \qquad z\in\C^+\,.
\end{align}
Observe that
 \begin{align}\label{le F behaviour at infinity}
\lim_{\eta\nearrow \infty}\frac{F_{\mu}(\ii\eta)}{\ii\eta}=1\,,
\end{align}
as follows from~\eqref{le limit to be a probablity measure}. Note that $F_\mu$ is analytic on~$\C^+$ with nonnegative imaginary part.

The {\it free additive convolution} is the symmetric binary operation on Borel probability measures on~$\R$ characterized by the following result.
\begin{thm}[Theorem 4.1 in~\cite{BB}, Theorem~2.1 in~\cite{CG}]\label{le prop 1}
Given two Borel probability measures, $\mu_1$ and $\mu_2$, on $\R$, there exist unique analytic functions, $\omega_1,\omega_2\,:\,\C^+\rightarrow \C^+$, such that,
 \begin{itemize}[noitemsep,topsep=0pt,partopsep=0pt,parsep=0pt]
  \item[$(i)$] for all $z\in \C^+$, $\im \omega_1(z),\,\im \omega_2(z)\ge \im z$, and
  \begin{align}\label{le limit of omega}
  \lim_{\eta\nearrow\infty}\frac{\omega_1(\ii\eta)}{\ii\eta}=\lim_{\eta\nearrow\infty}\frac{\omega_2(\ii\eta)}{\ii\eta}=1\,;
  \end{align}
  \item[$(ii)$] for all $z\in\C^+$, 
  \begin{align}\label{le definiting equations}
   F_{\mu_1}(\omega_{2}(z))=F_{\mu_2}(\omega_{1}(z))\,,\quad \omega_1(z)+\omega_2(z)-z=F_{\mu_1}(\omega_{2}(z))\,.
  \end{align}
 \end{itemize}
\end{thm}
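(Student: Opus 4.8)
The plan is to recast condition $(ii)$ as a fixed point equation and to solve it with the help of the Denjoy--Wolff theorem. Write $H_{\mu_j}\deq F_{\mu_j}-\mathrm{id}$. Since $m_{\mu_j}\,:\,\C^+\to\C^+$ we have $F_{\mu_j}\,:\,\C^+\to\C^+$, and from the Nevanlinna representation $F_\mu(w)=a_\mu+w+\int_\R\frac{1+xw}{x-w}\dd\rho_\mu(x)$, with $a_\mu\in\R$ and $\rho_\mu$ a finite nonnegative measure, one reads off $\im F_\mu(w)\ge\im w$ (strict for all $w$ unless $\mu$ is a single point mass) and $F_\mu(w)/w\to 1$ as $w\to\infty$ nontangentially; in particular $\im H_{\mu_j}\ge 0$. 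A short computation shows that a pair $(\omega_1,\omega_2)$ of analytic functions $\C^+\to\C^+$ satisfies $(ii)$ if and only if $\omega_1(z)=z+H_{\mu_1}(\omega_2(z))$ and $\omega_2(z)=z+H_{\mu_2}(\omega_1(z))$, hence if and only if, for each fixed $z\in\C^+$, the point $\omega_2(z)$ is a fixed point in $\C^+$ of
\begin{align*}
 f_z(w)\deq z+H_{\mu_2}\big(z+H_{\mu_1}(w)\big)\,,\qquad w\in\C^+\,,
\end{align*}
and $\omega_1(z)\deq z+H_{\mu_1}(\omega_2(z))$. Because $\im H_{\mu_j}\ge 0$, the map $f_z$ sends $\C^+$ into the proper sub-half-plane $\{w\,:\,\im w\ge\im z\}$, so $f_z$ is not an automorphism of $\C^+$ (it is even constant if $\mu_1$ or $\mu_2$ is a point mass).

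Next I would apply the Denjoy--Wolff theorem: an analytic self-map of $\C^+$ that is not an elliptic automorphism has a point $\tau(z)\in\overline{\C^+}\cup\{\infty\}$ to which its iterates $f_z^{\circ n}$ converge locally uniformly; if $\tau(z)\in\C^+$ then $\tau(z)$ is the unique fixed point of $f_z$ in $\C^+$ and $|f_z'(\tau(z))|<1$ (or $f_z$ is constant, when everything is trivial). The point $\tau(z)$ cannot be real, since $\im f_z(w)\ge\im z>0$ forces $\im\tau(z)\ge\im z$; ruling out $\tau(z)=\infty$ is the crux, see the next paragraph. Granting it, set $\omega_2(z)\deq\tau(z)$ and $\omega_1(z)\deq z+H_{\mu_1}(\omega_2(z))$. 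These satisfy $(ii)$; analyticity of $z\mapsto\omega_2(z)$ follows from the implicit function theorem applied to $w=f_z(w)$ (using $f_z'(\omega_2(z))\ne 1$) together with uniqueness of the fixed point. Uniqueness in the theorem is then automatic: any pair satisfying $(ii)$ has $\omega_2$ equal to a fixed point of $f_z$ in $\C^+$, hence $\omega_2=\tau(z)$, and $\omega_1$ is determined. Finally, for $(i)$, the bound $\im\omega_j(z)\ge\im z$ is immediate from $\im H_{\mu_k}\ge 0$ and the relations above, while $\omega_j(\ii\eta)/(\ii\eta)\to 1$ follows from $\omega_1-\ii\eta=H_{\mu_1}(\omega_2)$, $\omega_2-\ii\eta=H_{\mu_2}(\omega_1)$, the bound $\im\omega_j(\ii\eta)\ge\eta\to\infty$, and $H_\mu(w)/w\to 0$ as $w\to\infty$ nontangentially, once one checks (again via the Nevanlinna representation) that $\omega_j(\ii\eta)$ tends to $\infty$ nontangentially.

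The main obstacle is excluding $\tau(z)=\infty$. I would do this by a continuity argument in $z$: the set $\{z\in\C^+\,:\,f_z\text{ has a fixed point in }\C^+\}$ is nonempty (for $\im z$ large $H_{\mu_j}$ is a small perturbation and $f_z$ is a genuine contraction near $z$, which is part of Voiculescu's classical $R$-transform analysis), open (implicit function theorem, since the fixed point is attracting), and closed in $\C^+$ — the last being the delicate part. For closedness, suppose $z_n\to z_0\in\C^+$ with fixed points $\omega_2^{(n)}$ and $\omega_1^{(n)}=z_n+H_{\mu_1}(\omega_2^{(n)})$; since $\im\omega_j^{(n)}\ge\im z_n\to\im z_0>0$, the only possible failure is $\omega_2^{(n)}\to\infty$. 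But then the common value $F_{\mu_1}(\omega_2^{(n)})=F_{\mu_2}(\omega_1^{(n)})=\omega_1^{(n)}+\omega_2^{(n)}-z_n$ would have to tend to $\infty$, which is impossible if $\omega_1^{(n)}$ stays bounded (then $F_{\mu_2}(\omega_1^{(n)})$ stays bounded), and leads to a contradiction if $\omega_1^{(n)}\to\infty$ too, by comparing $F_{\mu_1}(\omega_2^{(n)})\sim\omega_2^{(n)}$ and $F_{\mu_2}(\omega_1^{(n)})\sim\omega_1^{(n)}$ with $F_{\mu_1}(\omega_2^{(n)})-\omega_2^{(n)}=H_{\mu_1}(\omega_2^{(n)})=o(\omega_2^{(n)})$. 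The subtlety is that the asymptotics $F_\mu(w)/w\to 1$ and $H_\mu(w)/w\to 0$ are a priori only nontangential, so one must first rule out that $\omega_j^{(n)}$ approaches $\infty$ tangentially; this is where the fine properties of negative reciprocal Stieltjes transforms — again the Nevanlinna representation, now combined with the uniform lower bound $\im\omega_j^{(n)}\ge\im z_n$ — are used. Once the set is shown to be all of $\C^+$ by connectedness, the construction of the previous paragraphs applies for every $z\in\C^+$.
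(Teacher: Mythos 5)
The paper does not prove Theorem~\ref{le prop 1} itself; it is quoted from Belinschi--Bercovici~\cite{BB} and Chistyakov--G\"otze~\cite{CG}, so the comparison is against those references. Your Denjoy--Wolff strategy --- writing $\omega_2(z)$ as the fixed point of $f_z(w)=z+H_{\mu_2}(z+H_{\mu_1}(w))$ and invoking the Wolff point --- is precisely the route of~\cite{CG} (and is close in spirit to~\cite{BB}). The algebraic reformulation of $(ii)$, the observation that $f_z$ is never an automorphism because it maps into $\{\im w\ge\im z\}$, the uniqueness via Denjoy--Wolff, and the derivation of $(i)$ from the construction are all correct as sketched.

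The genuine gap is exactly where you flag it: excluding the Wolff point $\tau(z)=\infty$, i.e.\ the closedness step of your connectedness argument. Case~1 there (if $\omega_1^{(n)}$ stays in a compact subset of $\C^+$, then $F_{\mu_2}(\omega_1^{(n)})$ stays bounded while $\omega_1^{(n)}+\omega_2^{(n)}-z_n\to\infty$) is fine. But Case~2, where both $\omega_j^{(n)}\to\infty$, leans on $F_{\mu_j}(\omega)\sim\omega$ and $H_{\mu_j}(\omega)=o(\omega)$, which hold \emph{only nontangentially}. The a priori control you have is $\im\omega_j^{(n)}\ge\im z_n\ge c>0$; this is a lower bound on $\im\omega_j^{(n)}$, not a bound on $|\re\omega_j^{(n)}|/\im\omega_j^{(n)}$, so $\omega_j^{(n)}$ could escape to $\infty$ tangentially (e.g.\ with $\im\omega_j^{(n)}$ bounded and $|\re\omega_j^{(n)}|\to\infty$), where $F_\mu(\omega)/\omega\to 1$ can fail for general $\mu$. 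You gesture at ``the fine properties of negative reciprocal Stieltjes transforms'' to resolve this, but that \emph{is} the nontrivial content of the cited lemmas in \cite{BB,CG}; naming the difficulty is not the same as filling it. (For the paper's own applications both $\mu_j$ are compactly supported, in which case $H_{\mu_j}$ is bounded on $\{\im w\ge c\}$, Case~2 simply cannot occur, and your argument closes easily --- but the statement as written is for arbitrary Borel probability measures, which is the case that needs the extra work.) One further minor remark: the implicit function theorem gives only \emph{local} holomorphy of $z\mapsto\omega_2(z)$; you need the global uniqueness of the fixed point to glue these local branches on all of $\C^+$, which you should say explicitly, and in the degenerate case where some $\mu_j$ is a point mass Denjoy--Wolff does not apply but, as you note, the fixed point is explicit.
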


It follows from~\eqref{le limit of omega} that the analytic function $F\,:\,\C^+\rightarrow \C^+$ defined by
\begin{align}\label{le kkv}
 F(z)\deq F_{\mu_1}(\omega_{2}(z))=F_{\mu_2}(\omega_{1}(z))\,,
\end{align}
satisfies the analogue of~\eqref{le F behaviour at infinity}. Thus $F$ is the negative reciprocal Stieltjes transform of a probability measure $\mu$, called the free additive convolution of $\mu_1$ and $\mu_2$, denoted by $\mu\equiv\mu_1\boxplus\mu_2$. The functions $\omega_1$ and $\omega_2$ are referred to as the {\it subordination functions} and $F$ is said to be subordinated to~$F_{\mu_1}$, respectively to $F_{\mu_2}$. The subordination phenomenon
was first noted by Voiculescu~\cite{Voi93} in a generic situation
and extended to full generality by Biane~\cite{Bia98}. To exclude trivial shifts of measures, we henceforth assume that both,~$\mu_1$ and~$\mu_2$, are supported at more than one point. Then the analytic functions $F$, $\omega_1$ and $\omega_2$ extend continuously to the real line; see Theorem 2.3~\cite{Bel1} or Theorem 3.3~\cite{Bel}. We use the same notation for their extensions~to~$\C^+\cup\R$.

\subsection{The limiting measure $\mu_{\sigma,r} $ }  

Recall the definitions $\mu_{\Sigma,r}\deq\mu_{\Sigma}^{\mathrm{sym}}\boxplus \delta_{r}^{\mathrm{sym}}$ and $\mu_{\sigma,r}\deq\mu_{\sigma}^{\mathrm{sym}}\boxplus  \delta_{r}^\mathrm{sym}$  from~\eqref{090210}. In this subsection, we will always assume that $\mu_\Sigma$ and $\mu_\sigma$ satisfy Assumption~\ref{ass one}. For sake of simplicity of notation, we abbreviate in this subsection 
\begin{align}\label{090215}
 \mu_1\equiv \mu_{\sigma}^{\mathrm{sym}}\,,\qquad \qquad\mu_2\equiv \delta_{r}^{\mathrm{sym}}\,.
\end{align}
The negative reciprocal Stieltjes transform of  $\mu_2=\delta_{r}^{\mathrm{sym}}$ is found to be
\begin{align}\label{le F of delta sym}
 F_{\mu_2}(z)=z-\frac{r^2}{z}\,,\qquad\qquad z\in\C^+\,.
\end{align}
Substituting~\eqref{le F of delta sym} into~\eqref{le definiting equations}, we obtain
\begin{align*}
F_{\mu_1}(\omega_2(z))=F_{\mu_2}(\omega_1(z))=F_{\mu_1}(\omega_2(z))-\omega_2(z)+z-\frac{r^2}{F_{\mu_1}(\omega_2(z))-\omega_2(z)+z}\,.
\end{align*}
Solving the above equation for $F_{\mu_1}(\omega_2(z))$ we conclude that the subordination function $\omega_2(z)$ is the unique solution to
\begin{align}
F_{\mu_1}(\omega_2(z))-\omega_2(z)=-z-\frac{r^2}{\omega_2(z)-z}\,,\qquad\qquad z\in\C^+\,, \label{090410}
\end{align}
subject to the condition $\im\omega_2(z)\ge \im z$. Comparing once more with~\eqref{le definiting equations} we immediately find that the other subordination function is given by
\begin{align}\label{le omega1 after knowing omega2}
 \omega_1(z)=-\frac{r^2}{\omega_2(z)-z}\,,\qquad \qquad z\in\C^+\,.
\end{align}

The analysis of the measure $\mu_{\sigma,r}=\mu_1\boxplus\mu_2$ thus reduces to the analysis of~\eqref{090410} for $\omega_2$. We first derive upper and lower bound on $\omega_2(z)$. For the purpose of proving Theorem~\ref{main theorem} it will suffice to consider $z\in\{ \ii\eta\,:\,\eta\ge 0\}$. Since $\mu_1$ and $\mu_2$ are symmetric, we  have $\omega_2(\ii\eta)=-\overline{\omega_2(\ii\eta)}$, \ie $\omega_2(\ii\eta)$
 and $\omega_1(\ii\eta)$ are both fully imaginary. This  simplifies our analysis; while detailed quantitative 
properties of the full measure $\mu_{\sigma,r}$ are still poorly understood, we now have a good control on it near zero, hence
on its Stieltjes transform along the imaginary axis.  The main result, formulated in Theorem~\ref{le corollary of 0 in the bulk}
below,  is that   the  subordination functions  are bounded from below and above on the imaginary axis without  any condition on $\mu_\sigma$.
This theorem is the key input that enables us to dispense with  the regularity  condition in the single ring theorem; see Remark~\ref{remove regularity}.

\begin{thm}[Bounds on subordination functions]\label{le corollary of 0 in the bulk}
We assume that the support of $\mu_\sigma$ contains more than one point, equivalently,  that $r_-<r_+$.
 Let $\mu_1=\mu_{\sigma}^{\mathrm{sym}}$ and $\mu_2=\delta_{r}^{\mathrm{sym}}$ for some $r>0$.
 Fix $\eta_{\rm M}<\infty$ and a (small) $\tau>0$. Set 
 $$
 J\deq [r_-+\tau,r_+-\tau]\,.
 $$
There exist constants $c\equiv c(\mu_1,\tau,\eta_{\rm M})>0$ and $C\equiv C(\mu_1,\tau,\eta_{\rm M})<\infty$ such that
 \begin{align}
  &\sup_{r\in J}\sup_{\eta \in [0,\eta_{\rm M}]} |\omega_1(\mathrm{i}\eta)|\leq C\,,\qquad\sup_{r\in J} \sup_{\eta \in [0,\eta_{\rm M}]} |\omega_2(\mathrm{i}\eta)|\leq C\,, \\
 &\inf_{r\in J}\inf_{\eta \in [0,\eta_{\rm M}]} \Im \omega_1(\mathrm{i}\eta)\geq c\,,\qquad \inf_{r\in J}\inf_{\eta \in [0,\eta_{\rm M}]}\Im \omega_2(\mathrm{i}\eta)\geq c\,,
 \end{align} 
 and
 \begin{align}\label{mbound}
   \inf_{r\in J}\inf_{\eta \in [0,\eta_{\rm M}]}|m_{\mu_1\boxplus\mu_2}(\ii\eta)|\ge c\,,\qquad\sup_{r\in J} \sup_{\eta \in [0,\eta_{\rm M}]} 
   |m_{\mu_1\boxplus\mu_2}(\ii\eta)|\le C\,.
 \end{align}
\end{thm}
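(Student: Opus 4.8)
The plan is to reduce the statement to a single scalar equation on the imaginary axis and then extract all the bounds from the fact that $r$ lies at distance at least $\tau$ inside the interval $(r_-,r_+)$.

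First I would use symmetry. Since $\mu_1=\mu_\sigma^{\mathrm{sym}}$ and $\mu_2=\delta_r^{\mathrm{sym}}$ are symmetric, so is $\mu_1\boxplus\mu_2$, and by uniqueness in Theorem~\ref{le prop 1} the subordination functions satisfy $\omega_j(-\bar z)=-\overline{\omega_j(z)}$; hence $\omega_1(\ii\eta)$, $\omega_2(\ii\eta)$ and $m_{\mu_1\boxplus\mu_2}(\ii\eta)$ are purely imaginary for all $\eta\ge0$ (using the continuous extensions to $\R$ of the subordination functions, which exist since $\mu_1$ and $\mu_2$ are supported at more than one point; see Subsection~\ref{FAC}). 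Likewise $F_{\mu_1}(\ii\beta)=\ii\Phi(\beta)$ with
\begin{align*}
 \Phi(\beta)\deq\Big(\beta\int_\R\frac{\dd\mu_1(x)}{x^2+\beta^2}\Big)^{-1}=\im F_{\mu_1}(\ii\beta)\,,\qquad \beta\le\Phi(\beta)\le\beta+\frac{r_+^2}{\beta}\,,
\end{align*}
the last inequality by Jensen (from $\int(x^2+\beta^2)^{-1}\dd\mu_1(x)\ge(r_+^2+\beta^2)^{-1}$) and the first since $\int\beta(x^2+\beta^2)^{-1}\dd\mu_1(x)\le\beta^{-1}$. Writing $\omega_2(\ii\eta)=\ii(t+\eta)$ with $t\deq\im\omega_2(\ii\eta)-\eta\ge0$, equations~\eqref{090410}, \eqref{le omega1 after knowing omega2} and~\eqref{le F of delta sym} become, after dividing by~$\ii$,
\begin{align}\label{eq:SRTprop}
 \omega_1(\ii\eta)=\frac{\ii r^2}{t}\,,\qquad \Phi(t+\eta)=t+\frac{r^2}{t}\,,\qquad m_{\mu_1\boxplus\mu_2}(\ii\eta)=\frac{\ii}{\Phi(t+\eta)}=\frac{\ii\,t}{t^2+r^2}\,.
\end{align}
Thus $|\omega_2(\ii\eta)|=\im\omega_2(\ii\eta)=t+\eta$, $|\omega_1(\ii\eta)|=\im\omega_1(\ii\eta)=r^2/t$, $|m_{\mu_1\boxplus\mu_2}(\ii\eta)|=t/(t^2+r^2)\le1/(2r)\le1/(2(r_-+\tau))$ (this last bound is automatic from $t^2+r^2\ge2rt$), and $\pi\mu_{\sigma,r}(0)=\im m_{\mu_1\boxplus\mu_2}(0)$ is the value of $t/(t^2+r^2)$ at $\eta=0$. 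Hence everything follows once we show that $t=t(r,\eta)$ obeys $0<c\le t\le C$ uniformly for $r\in J$, $\eta\in[0,\eta_{\mathrm M}]$, with $c,C$ depending only on $(\mu_1,\tau,\eta_{\mathrm M})$.

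That $t>0$ is immediate: $t=0$ would make $|\omega_1(\ii\eta)|$ infinite in~\eqref{eq:SRTprop}, contradicting finiteness of the boundary value of the subordination function; with $t\ge0$ this gives $t>0$. For the upper bound I would use the equivalent form of the middle equation of~\eqref{eq:SRTprop} obtained from $\Phi(\beta)^{-1}=\beta\int(x^2+\beta^2)^{-1}\dd\mu_1(x)$, namely
\begin{align}\label{eq:SRTpropB}
 \int_\R\frac{x^2-\rho(t)}{x^2+(t+\eta)^2}\,\dd\mu_1(x)=0\,,\qquad \rho(t)\deq(t+\eta)\Big(\frac{r^2}{t}-\eta\Big)\,.
\end{align}
If $t<\eta$ we are done since $t<\eta_{\mathrm M}$; if $t\ge r^2/\eta$ the integrand is manifestly positive; and otherwise $t\ge\eta$ and $0<\rho(t)\le r^2+\eta_{\mathrm M}s_+^2/t$, so Taylor-expanding $\tfrac1{x^2+u^2}=\tfrac1{u^2}-\tfrac{x^2}{u^2(x^2+u^2)}$ with $u=t+\eta\le2t$, using $|x|\le s_+$ and $r_+^2-r^2\ge\tau r_+$, the left side of~\eqref{eq:SRTpropB} is at least $t^{-2}\big(\tfrac{\tau r_+}{4}-O_{\mu_1,\tau,\eta_{\mathrm M}}(1/t)\big)$, which is positive once $t$ exceeds a threshold depending only on $(\mu_1,\tau,\eta_{\mathrm M})$; hence $t\le C$.

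The lower bound is the crux, and the place where the absence of any regularity assumption on $\mu_\sigma$ must be used: near the origin $\Phi$ can be small (if $\mu_\sigma$ has an atom at $0$ it is even bounded near $0$), so one cannot compare the two sides of~\eqref{eq:SRTprop} simply by their values at $\beta=0$. What saves the argument is that $r\ge r_-+\tau$ is \emph{strictly} larger than $r_-$. Let $h(\beta)\deq\int(x^2+\beta^2)^{-1}\dd\mu_1(x)$; it is decreasing with $\lim_{\beta\downarrow0}h(\beta)=\int_\R x^{-2}\dd\mu_1(x)=r_-^{-2}$ (read as $+\infty$ if $r_-=0$), strictly larger than $(r_-+\tau)^{-2}$, so there is $u_0=u_0(\mu_1,\tau)>0$ with $h(u_0)\ge(r_-+\tau)^{-2}\ge r^{-2}$. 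Then for any $t$ with $t+\eta\le u_0$,
\begin{align*}
 \Phi(t+\eta)=\frac1{(t+\eta)h(t+\eta)}\le\frac1{t\,h(u_0)}\le\frac{r^2}{t}<t+\frac{r^2}{t}\,,
\end{align*}
so such a $t$ does not solve~\eqref{eq:SRTprop}; hence $t(r,\eta)>u_0-\eta$, in particular $t(r,\eta)>u_0/2$ whenever $\eta\le u_0/2$. For $\eta\in[u_0/2,\eta_{\mathrm M}]$ I would instead combine $t\le C$ with the Jensen bound $\Phi(t+\eta)\le(t+\eta)+r_+^2/(t+\eta)\le C+\eta_{\mathrm M}+2r_+^2/u_0=:M$ to get, from~\eqref{eq:SRTprop}, $r^2/t\le M$, hence $t\ge(r_-+\tau)^2/M$. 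Taking $c\deq\min\{u_0/2,(r_-+\tau)^2/M\}$ completes the two-sided bound on $t$, and hence, through~\eqref{eq:SRTprop}, all the claimed bounds on $\omega_1,\omega_2,m_{\mu_1\boxplus\mu_2}$ and on $\pi\mu_{\sigma,r}(0)$. (A softer route to the two-sided bound: once one knows $0<t(r,\eta)<\infty$ for each fixed pair and that $t$ is continuous on the compact set $J\times[0,\eta_{\mathrm M}]$, it follows by compactness, though with non-explicit constants.)
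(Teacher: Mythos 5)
Your proof is correct, and it reaches the same theorem by a genuinely different and noticeably more economical route than the paper. The high-level reduction is the same---pass to the imaginary axis, use symmetry to make $\omega_1,\omega_2,m_{\mu_1\boxplus\mu_2}$ purely imaginary, and reduce everything to a scalar equation for $t=\im\omega_2(\ii\eta)-\eta$. What differs is how the two-sided bound on $t$ is extracted. The paper's Theorem~\ref{0 in the bulk} proceeds through a Nevanlinna representation $F_{\mu_1}(\omega)-\omega=-r_-^2/\omega+\int\frac{\dd\widetilde\mu_1(x)}{x-\omega}$ (with $\widetilde\mu_1$ a measure of total mass $r_+^2-r_-^2$ with no atom at $0$), followed by the introduction of quantile-type parameters $s_-,\sigma_\pm,b_-$ and a rather delicate fixed-point/contraction analysis of the equation near $\omega=\widehat\omega$; the large-$\eta$ regime is then handled by the monotonicity of $\eta\mapsto\eta(\im\omega_2(\ii\eta)-\eta)$. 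You instead work directly with $\Phi(\beta)=1/(\beta h(\beta))$, $h(\beta)=\int(x^2+\beta^2)^{-1}\dd\mu_1(x)$, and exploit only two soft facts: $\lim_{\beta\downarrow0}h(\beta)=r_-^{-2}>r^{-2}$ for the lower bound on $t$, and $\int x^2\dd\mu_1=r_+^2>r^2$ (via the rewriting~\eqref{eq:SRTpropB} and a two-term expansion of $(x^2+u^2)^{-1}$) for the upper bound. This captures exactly the same arithmetic content as the paper's $-r_-^2/\omega$ term and $\widehat\mu_1(\R)=r_+^2$, but bypasses the auxiliary measure $\widetilde\mu_1$ and the fixed-point argument entirely. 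Two remarks: (i) Your "$t>0$ is immediate" step for $\eta=0$ leans on finiteness of the boundary value of $\omega_1$, which deserves a word of care (the paper instead derives $\lim_{\eta\searrow0}\im\omega_2(\ii\eta)>0$ by a self-contained contradiction); in practice your $h$-argument already yields $t>u_0-\eta$ for $\eta>0$ and continuity then handles $\eta=0$, so this point is cosmetic. (ii) The paper's more quantitative bounds in~\eqref{le bounds on im omega2} (with explicit $\sigma_\pm,s_\pm,b_-$) are not needed for the present theorem, but they feed into Remark~\ref{le remark uniformity of constants}; your argument achieves the required uniformity in $r\in J$ more directly by observing that $u_0$ and the upper threshold depend only on $(\mu_1,\tau,\eta_{\mathrm M})$ and not on $r$. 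Overall, your proof is a cleaner path to Theorem~\ref{le corollary of 0 in the bulk} at the cost of not producing the detailed constants of Theorem~\ref{0 in the bulk}.
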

\begin{rem}
 By~\eqref{mbound}, the measure $\mu_1\boxplus\mu_2$ has a positive and bounded density at $E=0$. In particular, $E=0$ is in the bulk of the measure $\mu_1\boxplus\mu_2$, as defined in Definition~\ref{def of bulk} below.
\end{rem}

The proof of Theorem~\ref{le corollary of 0 in the bulk} is quite technical and independent of the main line of the argument, so
we give it in Section~\ref{le section of 0 in the bulk}. In the subsequent sections, we will mainly rely on the following corollary of Theorem~\ref{le corollary of 0 in the bulk}. Let $m_{\Sigma,r}(z)$ be the Stieltjes transform of $\mu_{\Sigma, r}$; see~\eqref{090210}. 
\begin{cor}\label{cor:bounds}
Fix $\eta_{\rm M}<\infty$ and a (small) $\tau>0$. Then there are constants $C\equiv C(\mu_\sigma^{\mathrm{sym}},\tau,\eta_{\rm M})$, $c\equiv(\mu_\sigma^{\mathrm{sym}},\tau,\eta_{\rm M})$ and a threshold $N_0\equiv N_0(\mu_\sigma^{\mathrm{sym}},\tau,\eta_{\rm M})$ such that the conclusions in Theorem~\ref{le corollary of 0 in the bulk} 
hold with $\mu_1=\mu_\Sigma^{\mathrm{sym}}$ and $\mu_2=\delta_{r}^\mathrm{sym}$, for $N\ge N_0$.
\end{cor}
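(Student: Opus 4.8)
The plan is to deduce Corollary~\ref{cor:bounds} from Theorem~\ref{le corollary of 0 in the bulk} by a perturbative argument at the level of the subordination equation~\eqref{090410}, using that $\mu_\Sigma$ is close to $\mu_\sigma$ and that both are supported in a fixed compact interval. Two elementary reductions come first. Symmetrization does not increase the L\'evy distance, so \eqref{le assumption on levy distances} gives $\dL(\mu_\Sigma^{\mathrm{sym}},\mu_\sigma^{\mathrm{sym}})\to 0$; and by~\eqref{le big S_+} both $\mu_\Sigma^{\mathrm{sym}}$ and $\mu_\sigma^{\mathrm{sym}}$ are supported in $[-S_+,S_+]$. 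For a family of measures with a common compact support, L\'evy convergence upgrades to uniform convergence of the Stieltjes transforms on every fixed compact subset of $\C^+$; on such a subset $|m_{\mu^{\mathrm{sym}}}(\,\cdot\,)|\ge\im m_{\mu^{\mathrm{sym}}}(\,\cdot\,)$ is bounded below, so $F_{\mu_\Sigma^{\mathrm{sym}}}$ is analytic there and converges to $F_{\mu_\sigma^{\mathrm{sym}}}$ uniformly.

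Next I would freeze the geometry produced by Theorem~\ref{le corollary of 0 in the bulk} applied to $\mu_1=\mu_\sigma^{\mathrm{sym}}$ (whose hypothesis $r_-<r_+$ holds by Assumption~\ref{ass one}): there are constants $c_0,C_0$ so that for every $r\in J$ and $\eta\in[0,\eta_{\mathrm M}]$ the solution $\omega_2^{\sigma,r}(\ii\eta)$ of~\eqref{090410} lies in the fixed compact set $\mathcal D_0\deq\{w\in\C:\im w\ge c_0,\ |w|\le C_0\}$. Reading~\eqref{090410} on the imaginary axis, where by symmetry $\omega_2^{\sigma,r}(\ii\eta)=\ii\beta$ with $\beta\ge c_0$, and using $\mathrm{supp}\,\mu_\sigma\subset[0,S_+]$, one moreover gets $\im\omega_2^{\sigma,r}(\ii\eta)=\beta\ge\eta+\delta_1$ for some $\delta_1\equiv\delta_1(c_0,S_+,\tau,\eta_{\mathrm M})>0$. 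Write
\begin{align*}
\Psi_{\nu,r,\eta}(w)\deq F_\nu(w)-w+\ii\eta+\frac{r^2}{w-\ii\eta}
\end{align*}
for the analytic function whose unique zero in $\{\im w\ge\eta\}$ is, by~\eqref{090410}, the subordination function of $\nu\boxplus\delta_r^{\mathrm{sym}}$ at $\ii\eta$ (for $\nu$ supported at more than one point). Plugging $\omega_2^{\sigma,r}(\ii\eta)$ (which annihilates $\Psi_{\mu_\sigma^{\mathrm{sym}},r,\eta}$) into $\Psi_{\mu_\Sigma^{\mathrm{sym}},r,\eta}$ and using the first step,
\begin{align*}
\big|\Psi_{\mu_\Sigma^{\mathrm{sym}},r,\eta}\big(\omega_2^{\sigma,r}(\ii\eta)\big)\big|=\big|F_{\mu_\Sigma^{\mathrm{sym}}}\big(\omega_2^{\sigma,r}(\ii\eta)\big)-F_{\mu_\sigma^{\mathrm{sym}}}\big(\omega_2^{\sigma,r}(\ii\eta)\big)\big|\le\varepsilon_N\,,
\end{align*}
where $\varepsilon_N\deq\sup_{\mathcal D_0}|F_{\mu_\Sigma^{\mathrm{sym}}}-F_{\mu_\sigma^{\mathrm{sym}}}|\to0$. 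Thus $\omega_2^{\sigma,r}(\ii\eta)$ is a uniformly good approximate zero of $\Psi_{\mu_\Sigma^{\mathrm{sym}},r,\eta}$.

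It then remains to upgrade this to the genuine subordination function $\omega_2^{\Sigma,r}(\ii\eta)$ and read off the bounds. I would do so by Rouch\'e's theorem: on the circle of radius $\rho$ about $\omega_2^{\sigma,r}(\ii\eta)$, contained in $\C^+$ once $\rho<c_0$, the function $\Psi_{\mu_\sigma^{\mathrm{sym}},r,\eta}$ is bounded below by $\delta_0\rho^{m}$, where $m$ is the order of its zero there and $\delta_0>0$; with $\rho$ chosen to be a small fixed power of $\varepsilon_N$ this beats $\varepsilon_N$ for large $N$, so $\Psi_{\mu_\Sigma^{\mathrm{sym}},r,\eta}$ has a zero $\omega^{\ast}$ with $|\omega^{\ast}-\omega_2^{\sigma,r}(\ii\eta)|\le\rho\to0$. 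Since then $\im\omega^{\ast}\ge\eta+\delta_1-\rho>\eta$ for large $N$, the uniqueness clause in~\eqref{090410} forces $\omega^{\ast}=\omega_2^{\Sigma,r}(\ii\eta)$. Hence $\omega_2^{\Sigma,r}(\ii\eta)$ lies within $\rho$ of $\mathcal D_0$, and then $\omega_1^{\Sigma,r}(\ii\eta)=-r^2/(\omega_2^{\Sigma,r}(\ii\eta)-\ii\eta)$ by~\eqref{le omega1 after knowing omega2} and $m_{\mu_\Sigma^{\mathrm{sym}}\boxplus\delta_r^{\mathrm{sym}}}(\ii\eta)=-1/F_{\mu_\Sigma^{\mathrm{sym}}}(\omega_2^{\Sigma,r}(\ii\eta))$ obey the bounds of Theorem~\ref{le corollary of 0 in the bulk}, uniformly in $r\in J$ and $\eta\in(0,\eta_{\mathrm M}]$, for $N\ge N_0$; the endpoint $\eta=0$ follows by continuity of the subordination functions up to $\R$, valid here because $\mu_\Sigma^{\mathrm{sym}}$ is supported at more than one point for large $N$ (as $\mu_\sigma$ is not a Dirac mass), and the density bound $c\le\pi\mu_{\Sigma,r}(0)\le C$ follows from the bound on $m_{\mu_\Sigma^{\mathrm{sym}}\boxplus\delta_r^{\mathrm{sym}}}(\ii\eta)$ at $\eta=0$.

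The step I expect to be the real obstacle is the uniform lower bound $|\Psi_{\mu_\sigma^{\mathrm{sym}},r,\eta}|\ge\delta_0\rho^{m}$ near its zero — equivalently, uniform control of the order $m$ and of the leading coefficient of $\Psi_{\mu_\sigma^{\mathrm{sym}},r,\eta}$ at $\omega_2^{\sigma,r}(\ii\eta)$, uniformly over $r\in J$, $\eta\in[0,\eta_{\mathrm M}]$ and up to $\eta=0$. This is a non-criticality statement for $\mu_\sigma^{\mathrm{sym}}\boxplus\delta_r^{\mathrm{sym}}$ at the origin, closely tied to the bound $\pi\mu_{\sigma,r}(0)\ge c$ already in Theorem~\ref{le corollary of 0 in the bulk}, and in the worst case can be read off from the analysis in Section~\ref{le section of 0 in the bulk}. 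If a clean quantitative version is not at hand, one can bypass it by a soft compactness/contradiction argument: were the asserted bounds to fail, one extracts $N_k\to\infty$, $r_k\to r_{\ast}\in J$, $\eta_k\to\eta_{\ast}\in[0,\eta_{\mathrm M}]$ along which $\omega_2^{\Sigma_{N_k},r_k}(\ii\eta_k)$ degenerates; an a priori modulus bound and passing to the limit in~\eqref{090410} (using uniform convergence of $F_{\mu_{\Sigma_{N_k}}^{\mathrm{sym}}}$ away from $\R$) give $\omega_2^{\Sigma_{N_k},r_k}(\ii\eta_k)\to\omega_2^{\sigma,r_{\ast}}(\ii\eta_{\ast})$, contradicting Theorem~\ref{le corollary of 0 in the bulk}; the delicate point of this route is precisely the a priori bound keeping the limit away from the real axis and from infinity.
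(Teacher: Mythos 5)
Your approach is the same in concept as the paper's, but the paper's own proof is a two-line appeal to Lemma~5.1 of~\cite{BES15}, which asserts continuity of the subordination functions with respect to the L\'evy distance of the input measures, combined with Theorem~\ref{le corollary of 0 in the bulk} and~\eqref{le assumption on levy distances}. You instead attempt to re-derive that continuity from scratch via Rouch\'e's theorem (with a compactness argument as fallback), which is a genuinely more self-contained route but also a longer one.

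Your preliminary reductions are sound: symmetrization does not increase the L\'evy distance; for uniformly compactly supported measures L\'evy convergence upgrades to uniform convergence of $F_\nu$ on compact subsets of $\C^+$; and the comparison of $\Psi_{\mu_\Sigma^{\mathrm{sym}},r,\eta}$ with $\Psi_{\mu_\sigma^{\mathrm{sym}},r,\eta}$ at the point $\omega_2^{\sigma,r}(\ii\eta)$, together with the strict gap $\im\omega_2^{\sigma,r}(\ii\eta)\ge\eta+\delta_1$ extracted from the lower bound in Theorem~\ref{0 in the bulk}, is correctly set up. However, the step you yourself flag as the obstacle---a uniform lower bound $|\Psi_{\mu_\sigma^{\mathrm{sym}},r,\eta}|\ge\delta_0\rho^m$ near the zero, or equivalently an a priori bound keeping $\omega_2^{\Sigma_{N_k},r_k}(\ii\eta_k)$ in a compact subset of $\C^+$ in the contradiction route---is a genuine gap, and it is precisely the substance of the lemma the paper cites. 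It is not obviously ``read off'' from Section~\ref{le section of 0 in the bulk}: the constants $s_-(\mu_1,r)$ and $b_-(\mu_1,r)$ in~\eqref{le s--} and~\eqref{defb} depend on the distribution of the measure $\widetilde\mu_1$ near the origin, not merely on its support, and Remark~\ref{le remark uniformity of constants} asserts continuity in $r$ only for a fixed $\mu_1$. Supplying continuity in $\mu_1$ under L\'evy perturbations is exactly what would have to be added, and this amounts to reproving a case of Lemma~5.1 of~\cite{BES15}. In short, your diagnosis of where the work lies is accurate, and the transfer strategy is the right one, but the decisive quantitative non-degeneracy input is left unverified, whereas the paper closes the argument by citation.
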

\begin{proof} This follows directly from the continuity of the subordination functions with respect to the L\'evy distance (see
Lemma 5.1 of~\cite{BES15}), from Theorem~\ref{le corollary of 0 in the bulk} and from~\eqref{le assumption on levy distances}.
\end{proof}

\subsection{Key technical inputs}\label{sec:key}

Following Girko's hermitization technique~\cite{G84}, we introduce for any $w\in \C$ the $2N\times 2N$ Hermitian matrix 
\begin{align}
H^w\deq\left(
\begin{array}{ccccc}
0 &X-w\\
X^*-w^* & 0
\end{array}
\right)\,. \label{def of Hw}
\end{align}
The main advantage of working with $H^w$ is that it is self-adjoint and we thus have a functional calculus at disposal. For any function $g\in C^2(\C)$, an application of Green's theorem reveals~that
\begin{align}
\frac{1}{N} \sum_{i=1}^N g(\lambda_i(X))=\frac{1}{2\pi }\int_{\C} (\Delta g)(w) \; \Big(\frac{1}{2N}\text{Tr}\log |H^w|\Big)\; \dd^2 w\,, \label{103101}
\end{align}
which is a manifestation of $\log|\cdot|$ being the Coulomb potential in two dimensions. The following identity, first used in this context by~\cite{TV}, allows us to efficiently deal with the right side of~\eqref{103101}. For any (large) $K>0$,
\begin{align}
\frac{1}{2N} \text{Tr} \log | H^w|=\frac{1}{2N} \text{Tr} \log |(H^w-\mathrm{i}K) |- \Im \int_0^K m^w(\mathrm{i}\eta) {\rm d}\eta\,, \label{081010bis}
\end{align}
with $|w|>0$, where $m^w(z)$, $z\in\C^+$, is the Stieltjes transform of the spectral distribution of $H^w$. For very large $K$ the first term on the right side of~\eqref{081010bis} is elementary to control, we hence focus on the second term. Due to the block structure of~$H^w$, the eigenvalues
come in pairs $\pm\lambda_i^w$, $i\in\llbracket 1,N\rrbracket$, where $0\leq \lambda_1^w\leq \ldots\leq \lambda_N^w$ are the non-negative eigenvalues. With these notations $m^w$ is given~by
\begin{align*}
m^w(z)\deq\frac{1}{2N} \sum_{i=1}^N\Big(\frac{1}{\lambda_i^w-z}+\frac{1}{-\lambda_i^w-z}\Big)=\frac{1}{N}\sum_{i=1}^N\frac{\lambda_i^w}{(\lambda_i^w)^2-z^2}\,,\qquad z\in\C^+\,.
\end{align*}

Recall the notation~$m_{\Sigma,|w|}$ for the Stieltjes transform of  $\mu_{\Sigma, |w|}$; \cf~\eqref{090210}. The following result is the main technical input for the proof of Theorem~\ref{main theorem}. Recall $\mathcal{R}_\sigma^\tau$ from~\eqref{le smaller ring}.

\begin{thm}[Local law for $H^{w}$] \label{lem.081002} Under the conditions and with the notations of Theorem~\ref{main theorem}, the estimate
\begin{align}
\sup_{w\in\mathcal{R}_\sigma^\tau}\big|m^w(\ii\eta)-m_{\Sigma,|w|}(\ii\eta)\big|\prec\frac{1}{N\eta}\,, \label{081020}
\end{align}
holds uniformly in $\eta>0$, for $N$ sufficiently large, depending on $\tau$, $S_+$ and~$\mu_\sigma$.
\end{thm}
This result controls $|m^w(\ii\eta)-m_{\Sigma,|w|}(\ii\eta)|$ along the positive imaginary axis. Note that the error estimate on the right side of~\eqref{081020} is effective when $\eta$ is chosen just above the local scale, \ie when $\eta> N^{-1+\gamma}$, for any small $\gamma>0$. For even smaller $\eta>0$,~\eqref{081020} yields the upper bound $|m^w(\ii\eta)|\prec (N\eta)^{-1}$ which improves the trivial deterministic bound~$|m^w(\ii\eta)|\le \eta^{-1}$ by a factor~$N^{-1}$. Theorem~\ref{lem.081002} is used to control the integrand in the second term on the right side of~\eqref{081010bis} for $\eta\gtrsim N^{-1}$. On very short scales, the behavior of $m^w(\ii\eta)$, $\eta\lesssim N^{-1}$, is essentially random  and determined by the smallest (in absolute value) eigenvalues of $H^w$. The following estimate on~$\lambda_1^w$,
 proved by Rudelson and Vershynin in \cite{RV14}, is then used to control the integrand of the second term on the right side of~\eqref{081010bis}
for very small $\eta\lesssim N^{-1}$.
\begin{thm}[Theorem~1.1 and Theorem~1.2 in~\cite{RV14} ] \label{lem.081003}  There exist positive numerical constants $c>0$ and $C<\infty$, such that
\begin{align}
\mathbb{P}\Big(\lambda_1^w\leq \frac{t}{|w|}  \Big)\leq \Big(\frac{t}{|w|}\Big)^cN^C\,, \label{081021}
\end{align}
uniformly in $t>0$, for all $N\in\N$.
\end{thm}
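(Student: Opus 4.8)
\emph{Proof idea.} The plan is to recast the statement as a smallest--singular--value (invertibility) estimate for the non-Hermitian matrix $X-w$ and then to quote the corresponding bounds of Rudelson and Vershynin, which were established precisely for the ensemble $U\Sigma V^{*}$. By the block structure of $H^{w}$ in~\eqref{def of Hw}, its spectrum is $\{\pm\lambda_{i}^{w}\}_{i=1}^{N}$, where the $\lambda_{i}^{w}$ are exactly the singular values of $X-w$; hence $\lambda_{1}^{w}$ is the smallest singular value $s_{\min}(X-w)$ of $X-w=U\Sigma V^{*}-w$. Since singular values are invariant under multiplication by unitary (orthogonal) matrices, $\lambda_{1}^{w}=s_{\min}\big(U^{*}(U\Sigma V^{*}-w)V\big)=s_{\min}(\Sigma-wW)$ with $W\deq U^{*}V$, and $W$ is again Haar distributed on $U(N)$, respectively on $O(N)$. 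Equivalently, one may keep $X-w=U\Sigma V^{*}-wI$ as it stands: the randomness is just a pair of independent Haar matrices multiplying the fixed matrix $\Sigma$, which by Assumption~\ref{ass one} has operator norm at most $S_{+}$.

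This is the content of Theorem~1.1 (unitary case) and Theorem~1.2 (orthogonal case) of~\cite{RV14}: for a deterministic matrix $D$ of bounded operator norm and independent Haar $U,V$, one has $\mathbb{P}\big(s_{\min}(UDV^{*}-z)\le s\big)\le s^{\,c}N^{C}$ with constants $c>0$ and $C<\infty$ depending only on the bound on $\|D\|$, uniformly in the shift~$z$. Taking $D=\Sigma$ and $s=t/|w|$ --- and, if one prefers the scaled form, rewriting $X-w=w\,(w^{-1}U\Sigma V^{*}-I)$ so that the factor $|w|^{-1}$ appears explicitly in the normalisation --- yields exactly~\eqref{081021}. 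The claimed uniformity in $t>0$ is part of the Rudelson--Vershynin statement; the bound is informative only when $t/|w|$ is small, and for $w$ near~$0$ (where $t/|w|$ is large) it holds trivially since $\lambda_{1}^{w}\ge 0$ and the right--hand side exceeds~$1$. For $w$ in a region bounded away from the origin --- which is the only regime used in this paper, cf.~$\mathcal{R}_\sigma^\tau$ --- the constants $c,C$ can be taken uniform, since then $\|w^{-1}\Sigma\|$ is bounded.

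The genuinely hard analysis --- anti-concentration (small--ball) estimates for Haar measure on the classical compact groups, control of distances between random subspaces, and $\varepsilon$-net arguments over the unit sphere --- lies entirely inside~\cite{RV14} and is simply invoked; there is no real obstacle at the level of the present paper. The only points requiring (minor) attention are bookkeeping ones: matching normalisations so that $|w|^{-1}$ carries the correct power in~\eqref{081021}, and observing that neither an invertibility hypothesis on $\Sigma$ nor a lower bound on $|w|$ is needed, because the additive term $wW$ with $W$ Haar already supplies the non-degeneracy that drives the estimate.
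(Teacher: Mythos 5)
The paper does not prove this statement; it is a direct citation of Theorems~1.1 and~1.2 of~\cite{RV14}, and the only in-paper commentary is the remark immediately following, which records the extra hypothesis needed in the orthogonal case. Your overall route --- identifying $\lambda_1^w$ with $s_{\min}(X-w)$ via the block structure of $H^w$, then reducing by unitary/orthogonal invariance of singular values to the Rudelson--Vershynin bound --- is exactly the derivation the paper is tacitly invoking.

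Two of your side assertions, however, need correction. First, the bookkeeping is not quite right. The theorems in~\cite{RV14} bound $s_{\min}(D+W)$ for a deterministic matrix $D$ and Haar $W$; they are not stated for $s_{\min}(UDV^*-z)$ as you paraphrase. After the reduction $U^*(X-w)V=\Sigma-wW$ with $W=U^*V$, one must factor $\lambda_1^w=|w|\,s_{\min}(W+D)$ with $D=-w^{-1}\Sigma$, not $D=\Sigma$, and the RV bound applied at threshold $t/|w|^2$ gives $\mathbb{P}(\lambda_1^w\le t/|w|)\le (t/|w|^2)^cN^C$. This matches the stated $(t/|w|)^cN^C$ only after absorbing the factor $|w|^{-c}$ into $N^C$, which requires $|w|$ bounded below uniformly in $N$. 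Hence, contrary to your claim that ``nor a lower bound on $|w|$ is needed,'' such a lower bound \emph{is} needed for the statement to hold with $w$-uniform constants; your argument that the bound is ``trivially'' true for $w$ near the origin only treats large $t/|w|$ and does not cover $t\to 0$ at fixed small $|w|$. This is harmless in the paper because every application takes $w$ in compact subsets of $\mathcal{R}_\sigma^\tau$, but the sentence overclaims. Second, the orthogonal case is not unconditional: RV's Theorem~1.2 requires the deterministic matrix $D$ to be separated (up to sign) from the orthogonal group, and the paper's remark explains that this holds for $N$ large because Assumption~\ref{ass one} (in particular $\mu_\sigma$ being supported at more than one point) keeps $\Sigma$ away from multiples of the identity. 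Your closing sentence that the Haar term ``already supplies the non-degeneracy'' with no hypothesis glosses over this; Haar measure on $O(N)$ alone does not give the anti-concentration without that separation assumption.
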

\begin{rem} In the orthogonal case,~\eqref{081021} holds, for $N$ sufficiently large, when the matrix~$\Sigma$ is away from the identity; see Theorem 1.2 in \cite{RV14}. In this case the constants $c$, $C$ and the threshold for $N$ in~\eqref{081021} depend on $S_+$ and $\mu_\sigma$. Indeed,~\eqref{le assumption on levy distances} and the assumption that the support of $\mu_\sigma$ contains more than one point imply that $\Sigma$ is separated away from the~identity.
\end{rem}

In Section~\ref{s. proof of main theorem}, we will choose $g$ in~\eqref{103101} to be the rescaled function $f_{w_0}(\cdot)=N^{2\alpha} f(N^\alpha(\cdot-w_0)$; see~\eqref{le rescaled function}. The local law in~\eqref{081020} together with~\eqref{081021} (with $t/|w|\ll N^{-1}$) will allow us to choose $\alpha\in(0,1/2)$ as is asserted in Theorem~\ref{main theorem}. The details of the proof of Theorem~\ref{main theorem}, assuming 
Theorem~\ref{lem.081002}, are carried out in Section~\ref{s. proof of main theorem}. Our main task then is to prove Theorem~\ref{lem.081002}. Actually, we will establish the local law in a more general setting; \cf Theorem~\ref{thm.081501}. This will be accomplished in Sections ~\ref{s.local law for block model}-\ref{s.strong law} and we will separately outline the main ideas 
of this proof in Section~\ref{sec:outline}.} We begin with the proof of Theorem~\ref{le corollary of 0 in the bulk} in the next section.

\section{Proof of Theorem~\ref{main theorem} and Corollary~\ref{le corollary xh}}\label{s. proof of main theorem}
In this section, we prove Theorem~\ref{main theorem} and Corollary~\ref{le corollary xh}, with the aid of Theorems~\ref{lem.081002} and~\ref{lem.081003}. The use of Girko's hermitized matrices to derive local laws is a standard argument, see \eg~\cite{BYY,TV} for related models. Following~\cite{TV}, we use the 
identity~\eqref{081010} below to link the log-determinant of~$H^w$ with the Stieltjes transform $m^{w}$.
 
\begin{proof}[Proof of Theorem~\ref{main theorem}]   For any $\zeta\in \C$, we  denote
\begin{align}
w\equiv w(\zeta)\deq w_0+N^{-\alpha}\zeta\,. \label{102940}
\end{align}
Given $f\,:\,\C\rightarrow \R$ satisfying the assumption of Theorem~\ref{main theorem}, we introduce the domain
\begin{align}
\mathcal{D}_{w_0}(\alpha)\equiv  \mathcal{D}_{w_0}(\alpha,f):= \Big\{ \tilde w: N^{\alpha}(\tilde w-w_0)\in \text{supp}(f)\Big\}. \label{110201}
\end{align}
According to~\eqref{102940}, $w\in \mathcal{D}_{w_0}(\alpha)$ is equivalent to $\zeta\in \text{supp}(f)$, in particular $|\zeta|\le C$ as $f$ is compactly supported. 
 Recall the notation $f_{w_0}(\cdot)$ from Theorem~\ref{main theorem}. Using~(\ref{103101}), we rewrite
\begin{align}
\frac{1}{N}\sum_i f_{w_0} (\lambda_i(X))= \frac{1}{2\pi }N^{2\alpha}\int_{\mathbb{C}} (\Delta f)(\zeta) \; \Big(\frac{1}{2N} \text{Tr}\log |H^w|\Big)\; {\rm d }^2\zeta\,. \label{090201}
\end{align}
Recalling the definitions in~(\ref{090210}) and ~(\ref{102910}), we  also have
\begin{align}
\int_{\mathbb{C}} f_{w_0}(w) \rho_{\Sigma}({\rm d}^2 w) &=\frac{1}{2\pi} \int_{\mathbb{C}} f_{w_0}(w) \Delta_w\Big( \int_{\mathbb{R}}\log|u| \mu_{\Sigma, |w|}({\rm d}u)\Big) {\rm d}^2 w\nonumber\\
&= \frac{1}{2\pi} N^{2\alpha}\int_{\mathbb{C}} (\Delta f)(\zeta) \Big( \int_{\mathbb{R}}\log|u| \mu_{\Sigma, |w|}({\rm d}u)\Big) {\rm d}^2 \zeta. \label{090202}
\end{align}
Hence, we can write
\begin{multline}
\frac{1}{N} \sum_{i} f_{w_0} (\lambda_i(X))- \int_{\mathbb{C}} f_{w_0}(w) \rho_\Sigma({\rm d}^2 w) \\
=\frac{1}{2\pi} N^{2\alpha} \int_{\mathbb{C}} (\Delta f) (\zeta) \bigg(\frac{1}{2N} \text{Tr} \log |H^w|-\int_{\mathbb{R}} \log |u| \; \mu_{\Sigma,|w|} ({\rm d} u)\bigg) {\rm d}^{2} \zeta. \label{103025}
\end{multline}
We next use the following observation due to~\cite{TV}, Section~8. For any (large) $K>0$ and $|w|>0$, we have
\begin{align}
\frac{1}{2N} \text{Tr} \log | H^w|=\frac{1}{2N} \text{Tr} \log |(H^w-\mathrm{i}K) |- \Im \int_0^K m^w(\mathrm{i}\eta) {\rm d}\eta\,. \label{081010}
\end{align}
Analogously, we can also write, with the same $K$,
\begin{align}
\int_{\mathbb{R}}\log|u| \; \mu_{\Sigma,|w|}({\rm d}u)=\int_{\mathbb{R}}\log|u-\mathrm{i}K| \; \mu_{\Sigma, |w|}({\rm d}u)-\Im \int_0^K m_{\Sigma,|w|}(\mathrm{i}\eta) {\rm d}\eta\,. \label{081011}
\end{align}

Choosing $K$ sufficiently large, say $K=N^L$ for some large constant~$L$, it is easy to see that
\begin{align}
\Big|\frac{1}{2N} \text{Tr} \log |(H^w-\mathrm{i}K) |-\int_{\mathbb{R}}\log|u-\mathrm{i}K| \mu_{\Sigma,|w|}({\rm d}u)\Big|\ll \frac{1}{N} \label{103026}
\end{align}
holds uniformly in $w\in \mathcal{D}_{w_0}(\alpha)$. Here we used the fact that $\|H^w\|\leq C$
 for some positive constant $C$, under \cf Assumption~\ref{ass one}.
The uniformity in $w$ can be guaranteed by the fact that $\mathcal{D}_{w_0}(\alpha)$ lies in a ball of finite (in fact  $CN^{-\alpha}$) radius
since  $f$ is compactly supported. 
Hence, it suffices to show 
\begin{align}
\Big| \int_{\mathbb{C}} (\Delta f)(\zeta) \; \Big(\Im \int_0^{N^L} \big(m^w(\mathrm{i}\eta)-m_{\Sigma,|w|}(\mathrm{i}\eta)\big) {\rm d}\eta\Big)  {\rm d}^{ 2} \zeta\Big| \prec \frac{\|\Delta f\|_{\mathrm{L}^1(\C)}}{N}.  \label{102920}
\end{align}
To show~(\ref{102920}), we decompose the integral with respect to $\eta$ into two parts:
\begin{align}\label{two parts}
\int_0^{N^L}= \int_0^{N^{-L_1}}+\int_{N^{-L_1}}^{N^{L}} \,,
\end{align}
for sufficiently large constants $L_1>1$ and $L>0$ to be chosen below.  To control the first part, we use~(\ref{081021}), while for the second part we use~(\ref{081020}).

First,  using the upper bound of  $m_{\Sigma,|w|}(\ii \eta)$ (\cf Corollary~\ref{cor:bounds}), we obtain 
\begin{align}
\Big|\int_0^{N^{-L_1}} \Im \; m_{\Sigma,|w|}(\mathrm{i}\eta) {\rm d}\eta\Big|\leq  \frac{1}{N}\,, \label{102931}
\end{align}
for  $L_1> 1$, 
uniformly in $w\in \mathcal{D}_{w_0}(\alpha)$. Hence, we have
\begin{align}
\Big| \int_{\mathbb{C}} (\Delta f)(\zeta) \; \Big( \int_0^{N^{-L_1}} \Im m_{\Sigma,|w|}(\mathrm{i}\eta) {\rm d}\eta\Big)  {\rm d}^{ 2} \zeta\Big| \le C\frac{\|\Delta f\|_{\mathrm{L}^1(\C)}}{N}. \label{103021}
\end{align} 
In addition, we observe that 
\begin{align}
\mathbb{P}\Big(\Big| \int_{\mathbb{C}} (\Delta f)(\zeta) \;& \Big( \int_0^{N^{-L_1}} \Im m^w(\mathrm{i}\eta) {\rm d}\eta\Big)  {\rm d}^2 \zeta\Big|  > \frac{\|\Delta f\|_{\mathrm{L}^1(\C)}}{N}\Big)\nonumber\\
&\leq \frac{ N }{\|\Delta f\|_{\mathrm{L}^1(\C)}} \mathbb{E}\Big| \int_{\mathbb{C}} (\Delta f)(\zeta) \; \Big( \int_0^{N^{-L_1}} \Im m^w(\mathrm{i}\eta) {\rm d}\eta\Big)  {\rm d}^2 \zeta\Big|\nonumber\\
& \leq \frac{ N }{\|\Delta f\|_{\mathrm{L}^1(\C)}}\int_{\mathbb{C}} \big|(\Delta f)(\zeta)\big|\;\mathbb{E}\Big( \int_0^{N^{-L_1}} \frac{\eta}{(\lambda_1^w)^2+\eta^2} {\rm d}\eta\Big)  {\rm d}^2 \zeta\,. \label{103005}
\end{align}
Note that 
\begin{align*}
 \mathbb{E}\Big( \int_0^{N^{-L_1}} &\frac{\eta}{(\lambda_1^w)^2+\eta^2} {\rm d}\eta\Big)=\frac{1}{2} \mathbb{E} \log \Big( 1+(N^{L_1}\lambda_1^w)^{-2}\Big)\nonumber\\
  &= \frac{1}{2} \int_0^\infty \mathbb{P}\Big(\log \Big( 1+(N^{L_1}\lambda_1^w)^{-2}\Big)\geq s \Big) {\rm d} s\nonumber\\
  &= \frac{1}{2} \int_0^\infty \mathbb{P}\Big(\lambda_1^w\leq N^{-L_1}({\rm e}^s-1)^{-\frac{1}{2}} \Big) {\rm d} s\nonumber\\
  &=\frac{1}{2} \Big(\int_0^{N^{-L_1}}+\int_{N^{-L_1}}^1+\int_{1}^\infty \Big)\mathbb{P}\Big(\lambda_1^w\leq N^{-L_1}({\rm e}^s-1)^{-\frac{1}{2}} \Big) {\rm d} s\,. 
\end{align*}
For the   first  integral,  we use the trivial bound $\mathbb{P}(\,\cdot\,)\leq 1$ to obtain
\begin{align}
\int_0^{N^{-L_1}} \mathbb{P}\Big(\lambda_1^w\leq N^{-L_1}({\rm e}^s-1)^{-\frac{1}{2}} \Big) {\rm d} s\leq  N^{-L_1}\,. \label{103001}
\end{align}
For the second part of the integral,  using the crude bound $({\rm e}^s-1)^{-\frac{1}{2}}\leq s^{-\frac{1}{2}}\leq N^{\frac{L_1}{2}}$, $s\in[N^{-L_1},1]$, and~(\ref{081021}), we estimate	
\begin{align*}
&\int_{N^{-L_1}}^1 \mathbb{P}\Big(\lambda_1^w\leq N^{-L_1}({\rm e}^s-1)^{-\frac{1}{2}} \Big) {\rm d} s\leq  \int_{N^{-L_1}}^1 \mathbb{P}\Big(\lambda_1^w\leq N^{-\frac{L_1}{2}}\Big) {\rm d} s  \le N^{-\frac{cL_1}{2}+C}\,,
\end{align*}
for some constants $c>0$ and $C<\infty$, for $N$ sufficiently large. For the third part, using ${\rm e}^s-1>  \frac{1}{2}  {\rm e}^s$, $s>1$, and~(\ref{081021}), we have  
\begin{align}
\int_{1}^\infty \mathbb{P}\Big(\lambda_1^w\leq N^{-L_1}({\rm e}^s-1)^{-\frac{1}{2}} \Big) {\rm d} s & \leq  
\int_{1}^\infty \mathbb{P}\Big(\lambda_1^w\leq  \sqrt{2} N^{-L_1}{\rm e}^{-\frac{s}{2}} \Big) {\rm d} s \nonumber\\
&  \le \frac{N^{-cL_1+C}}{2}\int_{1}^\infty {\rm e}^{-\frac{cs}{2}} {\rm d} s \le N^{-cL_1+C}\,, \label{103003}
\end{align}
for some constants $c>0$ and $C<\infty$. Combining~(\ref{103001})-(\ref{103003}), we obtain that there are positive constants $c'>0$ and $C'$, independent of $L_1$ such that
\begin{align}
\mathbb{E}\Big( \int_0^{N^{-L_1}} \frac{\eta}{(\lambda_1^w)^2+\eta^2} {\rm d}\eta\Big)\leq N^{-c'L_1+C'}\,, \label{103004}
\end{align}
for $N$ sufficiently large. In fact, the bound~\eqref{103004} is uniform in $w\in \mathcal{D}_{w_0}( \alpha)$ since the constants~$c$ and~$C$ in Theorem~\ref{lem.081003} are uniform in $t$ and $w$. Plugging~(\ref{103004}) into~(\ref{103005}), yields
\begin{align}
\mathbb{P}\bigg(\Big| \int_{\mathbb{C}} (\Delta f)(\zeta) \; \Big( \int_0^{N^{-L_1}} \Im m^w(\mathrm{i}\eta) {\rm d}\eta\Big)  {\rm d}^2 \zeta\Big| \geq \frac{\|\Delta f\|_{\mathrm{L}^1(\C)}}{N}\bigg)\leq N^{-c'L_1+C'+1}\,, \label{103022}
\end{align}
for $N$ sufficiently large (independent of $L_1$). Choosing $L_1$ large enough, the contribution of the first integral in \eqref{two parts} to~\eqref{102920} is within the claimed error. 
 
 To control the contributions from the second integral in~\eqref{two parts}, for any (large) constant $L_1$, we apply the local law for $m^w$ 
 in~(\ref{081020}), uniform in $w$, to find
 \begin{multline*}
\bigg| \int_{\mathbb{C}} (\Delta f)(\zeta) \; \Big(\Im \int_{N^{-L_1}}^{N^L} \big(m^w(\mathrm{i}\eta)-m_{\Sigma,|w|}(\mathrm{i}\eta)\big) {\rm d}\eta\Big)  {\rm d}^2 \zeta\bigg|\\
\prec  \int_{\mathbb{C}} |(\Delta f)(\zeta)| \; \Big(\int_{N^{-L_1}}^{N^L} \frac{1}{N\eta} {\rm d}\eta\Big)  {\rm d}^2  \zeta \prec 
 \frac{\|\Delta f\|_{\mathrm{L}^1(\C)}}{N}\,.
\end{multline*}
Combining~(\ref{103021}) and~(\ref{103022}), and choosing $L_1$  sufficiently  large, we get~(\ref{102920}), which together with~(\ref{103025})-(\ref{103026}) concludes the proof of  Theorem~\ref{main theorem}. 
\end{proof}

\begin{proof}[Proof of Corollary~\ref{le corollary xh}]
 Let $f\,:\,\C\rightarrow\R$ be smooth and supported on $\mathcal{R}^\tau_\sigma$; see~\eqref{le smaller ring}. It is straightforward following the proof of Theorem~\ref{main theorem} to verify that~\eqref{090602} also holds with $\alpha=0$ and $f_{w_0}$ replaced with $f$ provided that $\mathrm{supp}\,f\subset\mathcal{R}^\tau_\sigma$; \cf Remark~\ref{le remark stay away from edges}. Thus under the assumptions of Corollary~\ref{le corollary xh} it suffices to show that
 \begin{align*}
 \bigg|\int_{\mathcal{R}_\sigma^\tau} (\Delta f)(w) \int_\R\log |u|\big(\mu_{\Sigma,|w|}(\dd u)-\mu_{\sigma,|w|}(\dd u)\big){\rm d}^{ 2} w\bigg|\le C {\|\Delta f\|_{\mathrm{L}^1(\C)}}\dd_\mathrm{L}(\mu_\Sigma,\mu_\sigma)\,,
 \end{align*}
 for a constant $C$ (depending on $\tau$), to conclude its proof. From~\eqref{081011}, it is sufficient to prove~that
 \begin{align}\label{le first bom}
\Big|\int_{\R}\log|u-\ii | \;\big( \mu_{\Sigma, |w|}({\rm d}u)- \mu_{\sigma, |w|}({\rm d}u)\big)\Big|\le C  \mathrm{d}_{\mathrm{L}}(\mu_\Sigma,\mu_\sigma)
 \end{align}
and
\begin{align}\label{le second bom}
 \Big| \int_0^1 \big(m_{\Sigma,|w|}(\mathrm{i}\eta)-m_{\sigma,|w|}(\mathrm{i}\eta)\big)\, {\rm d}\eta\Big|\le C \mathrm{d}_{\mathrm{L}}(\mu_\Sigma,\mu_\sigma)\,,
\end{align}
uniformly for all $w\in\mathcal{R}_{\sigma}^\tau$, for $N$ sufficiently large. 	

Inequality~\eqref{le first bom} follows from the continuity  of the additive free convolution. More precisely, from Theorem~4.13 of~\cite{BeV93}, we know that $\mathrm{d}_\mathrm{L}(\mu_{\Sigma, |w|},\mu_{\sigma, |w|})\le\mathrm{d}_\mathrm{L}(\mu_\Sigma,\mu_\sigma)$. Since $\log|u-\ii|$ is a smooth function and $\mu_{\Sigma, |w|}$, $\mu_{\sigma, |w|}$ are compactly supported,~\eqref{le first bom} follows.

To establish~\eqref{le second bom}, we note that, for $N$ sufficiently large,
\begin{align*}
 \int_0^1 \big|m_{\Sigma,|w|}(\mathrm{i}\eta)-m_{\sigma,|w|}(\mathrm{i}\eta)\big|\, {\rm d}\eta&\le \max_{\eta\in(0,1]}\big|m_{\Sigma,|w|}(\mathrm{i}\eta)-m_{\sigma,|w|}(\mathrm{i}\eta)\big|\le C \mathrm{d}_\mathrm{L}(\mu_\Sigma,\mu_\sigma)\,,
\end{align*}
for all $w$ with $r_-+\tau\le |w|\le r_+-\tau$, with a constant depending on $\tau$. This follows directly from Theorem~2.7 of~\cite{BES15}. This shows~\eqref{le second bom}. 

So far we proved~\eqref{le xh} for smooth functions $f$. Since $\rho_\sigma$ is a Borel probability measure, see \eg Theorem~\ref{global single ring},~\eqref{le xh} extends to $f\in C^2(\C)$ supported in~$\mathcal{R}_\sigma^\tau$.  This completes the proof of Corollary~\ref{le corollary xh}.
\end{proof}

\section{Local law for block additive model} \label{s.local law for block model}

In this section, we derive a local law for block additive random matrices in a slightly generalized setting; see Theorem~\ref{thm.081501}
below. Theorem~\ref{lem.081002} is a direct consequence of this result. 

First, note that the matrix $H^w$ defined in~(\ref{def of Hw}) can be rewritten as
\begin{align}
H^w&=\left(
\begin{array}{ccccc}
U &0\\
0  & V
\end{array}
\right)\left(
\begin{array}{ccccc}
0 &\Sigma\\
\Sigma  & 0
\end{array}
\right)\left(
\begin{array}{ccccc}
U^* &0\\
0  & V^*
\end{array}
\right)+\left(
\begin{array}{ccccc}
0 & -w\\
-w^* & 0
\end{array}
\right)\,,\label{050390}
\end{align}
where $0$ is the $N\times N$ matrix filled with zeros. In the following we consider a slightly more general problem 
 by looking at  random matrices $H$ defined by
\begin{align}
H&\deq\left(
\begin{array}{ccccc}
U &0\\
0  & V
\end{array}
\right)\left(
\begin{array}{ccccc}
0 &\Sigma\\
\Sigma^*  & 0
\end{array}
\right)\left(
\begin{array}{ccccc}
U^* &0\\
0  & V^*
\end{array}
\right)+\left(
\begin{array}{ccccc}
0 & \Xi\\
\Xi^* & 0
\end{array}
\right)\,, \label{052902}
\end{align}
where
\begin{align}
 \Sigma\deq\text{diag} (\sigma_1,\ldots, \sigma_N)\,,\qquad \Xi\deq\text{diag}(\xi_1,\ldots, \xi_N)\,, \label{102002}
\end{align}
with  $\sigma_i, \xi_i\in \mathbb{C}$, $i\in \llbracket 1, N\rrbracket$. Here $\Sigma$ and $\Xi$ are deterministic diagonal matrices, while $U$ and $V$ are independent Haar unitary or Haar orthogonal matrices of degree $N$ as before. Note that we  allow in~\eqref{102002} for complex matrix elements in $\Sigma$ and $\Xi$. 
In the sequel, we always  assume that $\Sigma$ and $\Xi$ are bounded,
\begin{align}
\|\Sigma\|, \|\Xi\| \leq C\,, \label{102701}
\end{align}
for some constant $C$ independent of $N$. Denote the empirical density of their singular values~by
\begin{align}\label{le empirical measures of Sigma and Xi}
\mu_\Sigma\deq\frac{1}{N}\sum_{i=1}^N \delta_{|\sigma_i|}\,,\qquad \mu_\Xi\deq \frac{1}{N} \sum_{i=1}^N \delta_{|\xi_i|}\,.
\end{align}
Note that $\mu_\Sigma$ and $\mu_\Xi$ are probability measures on $[0,\infty)$. We assume that there are compactly supported probability measures $\mu_\sigma$ and $\mu_\xi$ such that
\begin{align}
\sup_{N\ge N_0}(\dL(\mu_\Sigma,\mu_\sigma)+\dL(\mu_\Xi, \mu_\xi)) \le 2b\,, \label{081410}
\end{align}
for a sufficiently small constant $b>0$ and sufficiently large~$N_0$.

The following general regularity result is of interest.

\begin{lem}[Theorem 4.1 in~\cite{Bel}]\label{le regularity lemma for free convolution}
 Let $\mu_1$ and $\mu_2$ be Borel probability measures on $\R$, neither of them a point mass. Then the singular continuous part of $\mu_1\boxplus\mu_2$ vanishes. A point $x\in\R$ is an atom of $\mu_1\boxplus\mu_2$ if and only if there are $x_1,x_2\in\R$ such that $x=x_1+x_2$ and $\mu_1(\{x_1\})+\mu_2(\{x_2\})>1$. Moreover, the absolutely continuous part of $\mu_1\boxplus\mu_2$ is always nonzero, and its density is
analytic wherever positive and finite.
 \end{lem}

\begin{defn}\label{def of bulk}
For two Borel probability measures $\mu_1$ on $\mu_2$ on $\R$ satisfying the assumptions of Lemma~\ref{le regularity lemma for free convolution}, we set 

\begin{align}
 \mathcal{B}_{\mu_1\boxplus\mu_2}\deq\{x\in\R: 0<f_{\mu_1\boxplus\mu_2}(x)<\infty\,, \mu_1\boxplus\mu_2(\{x\})=0 \}\,,
 \end{align}
 where~$f_{\mu_1\boxplus\mu_2}$ denotes the density function of $\mu_1\boxplus\mu_2$. We call~$\mathcal{B}_{\mu}$ the~bulk~of~$\mu$.
\end{defn}

Let $G\equiv G(z)\deq(H-z)^{-1}$ be the Green function of $H$ at parameter $z\in \C^+$,  and let  
\begin{align}
   m_H(z)\deq\ntr G(z) = \frac{1}{2N} \text{Tr} \,G(z)
   \end{align}
be the normalized trace of $G(z)$, which by the functional calculus agrees with the Stieltjes transform of the empirical eigenvalue distribution of $H$. 

Given an interval $\mathcal{I}\subset\R$ and $0\le a\le b$, we introduce the domain
\begin{align}
\mathcal{S}_\mathcal{I}(a,b)\deq\{z=E+\mathrm{i}\eta\in\C^+\,:\, E\in \mathcal{I}, a<\eta\leq b\}\,. \label{le definition of caS domain}
\end{align}

As before, we denote for a measure $\mu$ on $\R$ its symmetrization by $\mu^{\mathrm{sym}}$. The following is a key result of this paper.
  
\begin{thm}[Strong law for $H$] \label{thm.081501} Suppose that~\eqref{102701} holds. Let $\mu_\sigma$ and $\mu_\xi$ be two compactly supported probability measures on $[0,\infty)$ such that neither  $\mu_\sigma^{\mathrm{sym}}$  nor $\mu_\xi^{\mathrm{sym}}$ is a single point mass and at least of one of them is supported at more than two points. Fix some $L>0$ and let $\mathcal{I}$ be any compact interval of the bulk $\mathcal{B}_{\mu_\sigma^{\mathrm{sym}}\boxplus\mu_\xi^{\mathrm{sym}}}$.  Then there exists a (small) constant $b_0>0$ and $N_0\in\N$, depending only on $\mu_\sigma$, $\mu_\xi$, $\mathcal{I}$ and the constant $C$ in~\eqref{102701}, such that whenever
\begin{align}\label{le assumption on levy distances 2b}
\sup_{N\ge N_0}(\dL(\mu_\Sigma,\mu_\sigma)+\dL(\mu_\Xi, \mu_\xi)) \le 2b\,,
\end{align}
 for some $b\le b_0$, then 
\begin{align}
\big|m_H(z)-{ m_{\mu_\Sigma^{\mathrm{sym}}\boxplus\mu_\Xi^{\mathrm{sym}}}(z)}\big|\prec\frac{1}{N\eta (1+\eta)} \label{the local law for mH prec inequality}
\end{align}
holds uniformly on $\mathcal{S}_\mathcal{I}({ 0, N^L})$, for $N$ sufficiently large depending only on~$\mu_\sigma$, $\mu_\xi$, $\mathcal{I}$, $L$ and the constant $C$ in~\eqref{102701} . Moreover, there exists a constant $\eta_{\mathrm{M}}\ge 1$, independent of $N$, such that~\eqref{the local law for mH prec inequality} holds uniformly on $\mathcal{S}_{\mathcal{I}}(\eta_M,N^L)$, for any compact interval $\mathcal{I}\subset\R$,  for $N$ sufficiently large depending only on~$\mu_\sigma$, $\mu_\xi$, $L$ and the constant $C$ in~\eqref{102701} .
\end{thm}
Theorem~\ref{thm.081501} is proved in Sections~\ref{s.green function subordination}-\ref{s.strong law} and Section~\ref{s.large eta}. In fact in Section~\ref{s.large eta}, we  prove Theorem~\ref{thm.081501} for spectral parameters $z\in\C^+$ with large imaginary parts, $\eta$.  Here, large $\eta$ means  $\eta\ge \eta_\mathrm{M}$, for some $\eta_{\mathrm{M}}\ge 1$ independent of $N$ to be chosen below. The proof for large $\eta$ relies on the Gromov--Milman concentration inequality for the full Haar measure in conjunction with identities for expectations of Green functions originating in the global $U(N)$-symmetry. These arguments are independent of the main line followed here and are hence postponed to Section~\ref{s.large eta}. The results for large~$\eta$ serve as initial estimates in a boostrap argument carried out in Sections~\ref{s.green function subordination}-\ref{s.strong law} where we prove Theorem~\ref{thm.081501} in the complementary regime where $\eta< \eta_{\mathrm{M}}$.

\begin{proof}[Proof of~Theorem~\ref{lem.081002}] Theorem~\ref{lem.081002} follows from Theorem~\ref{thm.081501} by choosing $\mathcal{I}=\{0\}$. The conditions of Theorem~\ref{thm.081501} require that the density of~$\mu_\sigma^{\mathrm{sym}}\boxplus\mu_\xi^{\mathrm{sym}}$ is uniformly bounded from below on the compact interval $\mathcal{I}$. For $E=0$, this condition was verified in Theorem~\ref{le corollary of 0 in the bulk}. This yields~\eqref{081020} uniformly for $0<\eta\le N^L$, with $L>1$ as in Theorem~\ref{thm.081501} for fixed $w$ with $|w|\in[r_-+\tau,r_+-\tau]$.

Next, we show that~\eqref{081020} can be strengthened to a uniform bound in $w\in\mathcal{R}_{\sigma}^\tau\deq\{w\in\C\,:\, |w|\in[r_-+\tau,r_+-\tau]\}$. We introduce the lattice
\begin{align*}
\widehat{\mathcal{R}}_{\sigma}^\tau( L_1)\deq \mathcal{R}_{\sigma}^\tau \cap N^{-L_1}\big\{\mathbb{Z}\times \ii \mathbb{Z}\big\}\,,
\end{align*}
for some sufficiently large positive constant $L_1$ such that $L_1\geq 2 L$ (say).  Using the definition of stochastic domination in Definition~\ref{definition of stochastic domination} and~\eqref{081020} for fixed $w$, we obtain
\begin{align*}
\max_{w\in\widehat{\mathcal{R}}_{\sigma}^\tau( L_1)} \big|m^w(z)-m_{\Sigma,|w|}(z)\big|\prec\frac{1}{N\eta}\,, 
\end{align*} 
uniformly in $0<\eta\le N^L$. To extend this bound to all of $ \mathcal{R}_{\sigma}^\tau $, it suffices to show 
 Lipschitz continuity of these quantities in $w$. We need that, for any $w_1, w_2\in \mathcal{R}_{\sigma}^\tau $ with $|w_1-w_2|\leq N^{-L_1}$ for sufficiently large $L_1$, one has
\begin{align}
\big|m^{w_1}(z)-m^{w_2}(z)\big|\leq \frac{1}{N\eta}\,,\qquad    \big|m_{\Sigma, |w_1|}(z)-m_{\Sigma, |w_2|}(z)\big|\leq \frac{1}{N\eta}\,, \label{103010}
\end{align} 
uniformly in $0<\eta\le N^L$. To show the first deterministic bound in~(\ref{103010}), we use the bound
\begin{align*}
\big|m^{w_1}(z)-m^{w_2}(z)\big|&\leq \frac{|w_1-w_2|}{2N} \text{Tr} {|H^{w_1}-z|^{-1}|H^{w_2}-z|^{-1}}\nonumber\\ &\leq \frac{|w_1-w_2|}{2\eta^2}\leq \frac{1}{2\eta}N^{-L_1+ L}\leq \frac{1}{N\eta}\,,
\end{align*}
where $|A|\deq \sqrt{A^*A}$, for any square matrix $A$.

To show  the second bound in~(\ref{103010}), we use the stability of the Stieltjes transform of free additive convolution. Here it suffices to use the following bound  (\cf (2.20) in \cite{BES15} for instance) 
\begin{align*}
|m_{\Sigma,|w_1|}(z)-m_{\Sigma, |w_2|}(z)|\leq \frac{C}{\eta}\big(1+\frac{1}{\eta}\big) \dd_{\rm L} \Big(\delta_{|w_1|}^{\mathrm{sym}}, \delta_{|w_2|}^{\mathrm{sym}}\Big)\leq  \frac{C}{\eta}\big(1+\frac{1}{\eta}\big)|w_1-w_2|\,,
\end{align*} 
for all $z=E+\ii \eta\in \mathbb{C}^+$, where $C$ is a constant uniform in $z$.  Using the assumptions $|w_1-w_2|\leq N^{-L_1}$ and $L_1\geq 2L$, we get~(\ref{103010}), which in turn establishes the desired uniformity of~\eqref{081020} in $w\in\mathcal{R}_{\sigma}^\tau $.

To complete the proof of~\eqref{081020}, it remains to deal with the large $\eta$ regime, \ie when $\eta \ge N^{L}$. For that we use the elementary (deterministic) estimates
\begin{align}\label{le trivial unsinn}
 m_H(\ii\eta)=-\frac{1}{\ii\eta}+O\left(\frac{1}{|\eta|^3}\right)\,,\qquad m_{\Sigma,|w|}(\ii\eta)=-\frac{1}{\ii\eta}+O\left(\frac{1}{|\eta|^3}\right)\,,
\end{align}
as $\eta\nearrow\infty$, where we used a resolvent expansion of $G$ together with $\mathrm{tr} H=0$ and $\|H\|\le S_+$ (see~\eqref{le big S_+}), and the large $\eta$ expansion of the Stieltjes transform together with the fact that~$\mu_{\Sigma,|w|}$ is symmetric and compactly supported. Thus for $\eta\ge N^L$,~\eqref{081020} follows from~\eqref{le trivial unsinn}. Uniformity in $w\in \mathcal{R}_\sigma^\tau$ is immediate. \end{proof}

\subsection{Approximate subordination for block additive models}
In this subsection, we establish the matrix subordination for the Green function of $H$. To simplify notation, we introduce the block matrices
\begin{align}
 A  \deq\left(
\begin{array}{ccccc}
0 & \Xi\\
\Xi^* & 0
\end{array}
\right),\qquad  B :=\left(
\begin{array}{ccccc}
0 &\Sigma\\
\Sigma^*  & 0
\end{array}
\right),\qquad  \mathcal{U}:=\left(
\begin{array}{ccccc}
U &0\\
0  & V
\end{array}
\right). \label{050970}
\end{align}
Then we write~(\ref{052902}) as
\begin{align}
H=A+\wt{B}\,,\qquad\qquad \wt{ B }\deq \mathcal{U}  B\, \mathcal{U}^*\,. \label{091301}
\end{align}
As before, we let $G(z)\deq (H-z)^{-1}$ be the Green function of $H$ at spectral parameter~$z\in\C^+$. A simple consequence of the definition of $G$ are the identities
\begin{align}
\wt{B}G(z)=I_{2N}-(A-z)G(z)\,, \qquad G(z)\wt{B}=I_{2N}-G(z)(A-z)\,. \label{102702}
\end{align}
Inspired by~\cite{VP}, see also~\cite{BES15b,BG,Kargin}, we introduce the approximate subordination functions
\begin{align}
\omega_A^c(z)\deq z-\frac{\ntr  A  G}{\ntr G}\,,\qquad \omega_B^c(z)\deq z-\frac{\ntr \wt{ B }G}{\ntr G}\,.  \label{091107}
\end{align}
By these definitions and ~(\ref{102702}), we have
\begin{align}
\omega_A^c(z)+\omega_B^c(z)-z=-\frac{1}{m_H(z)}\,.  \label{091110}
\end{align}
  
Recall the measures $\mu_\Sigma$ and~$\mu_\Xi$ of~\eqref{le empirical measures of Sigma and Xi} as well as $\mu_\sigma$ and $\mu_\xi$ of~\eqref{081410}.
For their symmetrizations we introduce, hinting at~\eqref{050970},  the shorthands
\begin{align}
\mu_A\equiv \mu_\Xi^{\mathrm{sym}},\qquad \mu_B\equiv \mu_\Sigma^{\mathrm{sym}},\qquad \mu_\alpha\equiv \mu_\xi^{\mathrm{sym}},\qquad \mu_\beta\equiv \mu_\sigma^{\mathrm{sym}}.  \label{091105}
\end{align}
Note that $\mu_A$ and $\mu_B$ are the empirical spectral distributions of $ A  $ and $ B $. We denote by $\omega_A(z),\omega_B(z), \omega_\alpha(z), \omega_\beta(z)$ the subordination functions defined via~\eqref{le definiting equations} with the choices $(\mu_1,\mu_2)=(\mu_A,\mu_B)$ and $(\mu_\alpha, \mu_\beta)$, respectively. 
  
The next result shows that the approximate subordination functions $\omega_A^c$ and $\omega_B^c$ are indeed good approximations to the subordination functions $\omega_A$ and $\omega_B$. Moreover, it establishes the subordination for the diagonal Green function entries.  

 \begin{thm}\label{le proposition of Green function subordination} Under the conditions and with the notations of Theorem~\ref{thm.081501} the estimates
  \begin{align}
|\omega_A^c(z)-\omega_A(z)|   \prec\frac{1}{N\eta}\,,\qquad\qquad |\omega_B^c(z)-\omega_B(z)|\prec \frac{1}{N\eta}\,,
  \end{align}
hold uniformly on $\mathcal{S}_{ \mathcal{I} }(0,\eta_{\mathrm{M}})$, for $N$ sufficiently large depending only on~$\mu_\alpha$, $\mu_\beta$, $\mathcal{I}$, $L$ and the constant $C$ in~\eqref{102701}. Moreover, we have
  \begin{align}
& \bigg| G_{ii}(z)- \frac{\omega_B(z)}{|\xi_i|^2-(\omega_B(z))^2}\bigg|\prec\frac{1}{\sqrt{N\eta}}\,,\quad
 \bigg| G_{\hat{i}\hat{i}(z)}- \frac{\omega_B(z)}{|\xi_i|^2-(\omega_B(z))^2}\bigg|\prec\frac{1}{\sqrt{N\eta}}\,,\nonumber\\
& \bigg| G_{i\hat{i}}(z)- \frac{\xi_i}{|\xi_i|^2-(\omega_B(z))^2}\bigg|\prec\frac{1}{\sqrt{N\eta}}\,,\quad  \bigg| G_{\hat{i}i}(z)- \frac{\bar{\xi}_i}{|\xi_i|^2-(\omega_B(z))^2}\bigg|\prec\frac{1}{\sqrt{N\eta}}\,,\label{le diagonal Green function subordination}
 \end{align}
uniformly in $i\in\llbracket 1,N\rrbracket$ and in $z\in\mathcal{S}_{\mathcal{I}}(0,\eta_\mathrm{M})$, where $\hat i\deq i+N$, for $N$ sufficiently large depending only on~$\mu_\alpha$, $\mu_\beta$, $\mathcal{I}$, $L$ and the constant $C$ in~\eqref{102701}.
 \end{thm}
\begin{rem}
 Some crucial properties of the subordination functions $\omega_A$ and $\omega_B$ are collected in Lemma~\ref{lem.A.1}. Here, we mention that under the assumptions of Theorem~\ref{le proposition of Green function subordination}, for $N$ sufficiently large, the imaginary parts of the subordination functions, $\im\omega_A(z)$ and $\im\omega_B(z)$ are both bounded from below on $z\in\mathcal{S}_{\mathcal{I}}(0,\eta_\mathrm{M})$. This follows from Lemma~\ref{lem.A.1} and the assumption that $\mathcal{I}$ is a compact interval in the bulk of $\mu_\alpha\boxplus\mu_\beta$. It then follows from~\eqref{le diagonal Green function subordination} that $|G_{ii}(z)|\prec 1$ and $|G_{\hat i\hat i}(z)|\prec 1$ uniformly on $\mathcal{S}_{\mathcal{I}}(N^{-1+\gamma}, \eta_{\mathrm{M}})$, for any $\gamma>0$, and all $i\in\llbracket 1,N\rrbracket$. A direct consequence of this result is that the eigenvectors associated with eigenvalues in the bulk are fully delocalized. More precisely, letting $(\mathbf{u}_k)$ denote the $\ell^2$-normalized eigenvectors associated with the eigenvalues $(\lambda_k)$, $k\in\llbracket 1,2N\rrbracket$, we have
 \begin{align}\label{le delocalized eigenvectors}
  \max_{k\,:\, \lambda_k\in\mathcal{I}}\|\mathbf{u}_k\|_\infty\prec \frac{1}{\sqrt{N}}\,,
 \end{align}
for any compact interval $\mathcal{I}$ in the bulk of $\mu_\alpha\boxplus\mu_\beta$. For a proof of~\eqref{le delocalized eigenvectors} from Theorem~\ref{le proposition of Green function subordination}, we refer to the proof of Theorem~2.6 in~\cite{BES15b}.
\end{rem}
 
\subsection{Outline of the strategy of proof}\label{sec:outline}

The proof of the local law of Theorem~\ref{thm.081501} is carried out in three steps. In Step 1, we consider the large $\eta$ regime, \ie we establish~\eqref{the local law for mH prec inequality} on $\mathcal{S}_\mathcal{I}(\eta_{\mathrm{M}},N^L)$, for some sufficiently large, but $N$-independent, $\eta_{\mathrm{M}}$. In Step~2, we establish a weak local law for $m^w$ in the small $\eta$ regime, \ie we establish~\eqref{the local law for mH prec inequality} with a weaker error bound on $\mathcal{S}_{\mathcal{I}}(N^{-1+\gamma},\eta_{\mathrm{M}})$, for some small $\gamma>0$; see Theorem~\ref{thm.green function subordination} below for the statement of the weak law. The extension to $\mathcal{S}_{\mathcal{I}}(0,\eta_\mathrm{M})$ will follow directly from monotonicity of the Green function. This second step is based on a bootstrapping argument to reduce the spectral parameter $\im z$. Step~1 will provide the initial estimate to get the bootstrapping started. In Step~3, we use a fluctuation averaging argument together with the weak local law established in the second step  to get~\eqref{the local law for mH prec inequality} in its strong form.

Step~1 is carried out in Section~\ref{s.large eta}. It builds on the celebrated Gromov-Milman concentration inequality whose application to random matrix theory is fairly standard~\cite{AGZ}. For additive models of the form $X+UYU^*$, with deterministic $X,Y\in M_N(\C)$ and $U$ Haar distributed on $U(N)$ or on $O(N)$ it was used in~\cite{VP,Kargin,BES15}, and for the model block-additive model considered in this section in~\cite{BG}.

Step~2 is carried out in Section~\ref{s.green function subordination}, where we prove Theorem~\ref{thm.green function subordination}. This proof has three major ingredients. First, we use a partial randomness decomposition
of the Haar measure (see~\eqref{050202}) that enables us to take partial expectations of functions of the diagonal Green function entries $G_{ii}$, $G_{\hat i\hat i}$. Exploiting
concentration only for this partial randomness surpasses the more general but less flexible Gromov--Milman technique used in Step~1. Second, to compute the partial expectations of $G_{ii}$, we establish
a system of self-consistent equations involving only two auxiliary quantities $(S_{ij})$ and $(T_{ij})$; see~\eqref{053070}. In our previous work~\cite{BES15b}, we used a similar approach to derive the local law for $X+UYU^*$. For the model considered in this paper, we face with a new phenomenon causing several  substantial difficulties. The main point is that for block additive models, we have less randomness originating in the Haar measure on $U(N)\times U(N)$ than for the additive models with Haar measure on $U(2N)$. As a consequence, we have to control more quantities in the two blocks 
separately. Even more importantly, the coupling between the two blocks is provided solely by the  diagonal matrix $\Sigma$ without any randomness; see
\eqref{052902}. Our proof shows that the randomness in the diagonal blocks and the deterministic off-diagonal blocks effectively make
up for the lacking off-diagonal randomness.

To derive the aforementioned system of equations for $(S_{ij})$, $(T_{ij})$ and $(G_{ii})$, we use the partial decomposition of Haar measure in combination with recursive moment estimates; see \eg Lemma~\ref{lem.071420} for such a statement. Recursive moment estimates were used first in~\cite{LS16} to derive local laws for sparse Wigner matrices. They allow us to pass on cumbersome partial concentration estimates used in Section~5 of~\cite{BES15b}, and provide a conceptually clear approach to the weak local law for both models.
Third, to connect the diagonal Green function entries with the subordination functions from~Theorem~\ref{le prop 1}, we rely on the optimal stability result
for the subordination equations obtained in~\cite{BES15}.

Step~3 is carried out in Section~\ref{s.strong law}. In this section, we exploit the so-called fluctuation averaging mechanism to~improve the estimates of Step 2. While the fluctuation averaging mechanism is, thanks to the independence of the matrix entries, well understood for Wigner type matrices (see \eg ~\cite{EYYBer,EKY}), dependencies among the entries of the Haar matrices mask this mechanism and its current understanding for matrix ensembles involving Haar matrices is still rather poor. We gave a first result in~\cite{BES15c} for additive models. In the present paper, we approach the fluctuation mechanics for block-additive models by first deriving a set of so-called ``Ward identities'' which will enable us to finish the proof of Theorem~\ref{thm.081501}. Ward identities are relations among tracial quantities involving the Green function and the matrices~$A$ and~$B$. In expectation, these relations can be derived using the invariance of Haar measure (see \eg~\eqref{le pastur trick} for a first example), yet we will require optimal estimates that hold with high probability; see~\eg~\eqref{le Ward for Upsilon} and~\eqref{083110}. These estimates are obtained using recursive moment estimates for carefully chosen quantities; see~\eqref{071560}. Since we have less randomness coming from $U(N)\times U(N)$ in the setup of block-additive models, more quantities need to be simultaneously controlled than in the additive models, resulting in a more sophisticated analysis.

\subsection{Notations}  
We introduce some more notation used in the proof of Theorem~\ref{thm.081501}.
  
{\it Notation for matrices:} In our analysis we also use the matrices
 \begin{align}
 \mathcal{H}=B+\mathcal{U}^*A\,\mathcal{U}=:B+\wt{A}\,,\qquad \mathcal{G}(z)=(\mathcal{H}-z)^{-1}\,, \label{0913125}
 \end{align}
 which are the analogues of $H$ in~(\ref{091301}) and of its Green function $G(z)$, obtained by switching the r\^oles of $A$ and $B$, and also the r\^oles of
 $\mathcal{U}$ and $\mathcal{U}^*$. Note that by cyclicity $\mathrm{Tr} G(z)=\mathrm{Tr} \mathcal{G}(z)$.

{\it Vector space notation:}  For any index $i\in\llbracket1, N\rrbracket$, we let $\hat{i}\equiv i+N$. 
We make the convention hereafter that the index $i$ always runs from $1$ to $N$, unless said otherwise. Thus the index $\hat{i}$ runs from $N+1$ to $2N$. We denote by $\sum_{i}^{(k)}$  the sum over $i\in \llbracket 1, N\rrbracket\setminus\{ k\}$.  We denote by $\{\mathbf{e}_i\}$ the canonical basis of $\mathbb{C}^N$ while we denote by $\{\hat{\mathbf{e}}_i\}$ the canonical basis of $\mathbb{C}^{2N}$. We let~$\mathbf{0}$ denote the zero vector in either space. We use bold font for vectors and denote the components as $\mathbf{v}=(v_i)$.

The identity matrix in $M_N(\C)$, respectively $M_{2N}(\C)$, is denoted by
\begin{align}
I\equiv I_N\,,\qquad \hat{I}\equiv I_{2N}\,, \label{111301}
\end{align}
and we let
\begin{align}
\hat{I}_{1}\deq I\oplus 0\,,\qquad \hat{I}_{2}\deq 0\oplus I  \label{071540}
\end{align}
denote the block identities in $M_{N}(\C)\oplus M_N(\C)$, where $0$ represents the $N\times N$ zero matrix. 

For any  matrix $D\in M_n(\mathbb{C})$, $n\ge 1$, we let
\begin{align*}
\ntr D \deq\frac{1}{n} \text{Tr} D
\end{align*}
denote the normalized trace of $D$. For $D\in M_{2N}(\mathbb{C})$ we introduce the normalized partial traces
\begin{align}
\tau_1(D)\deq\frac{1}{N} \sum_{i=1}^N D_{ii}\,,\qquad \tau_2(D)\deq\frac{1}{N} \sum_{i=1}^N D_{\hat{i}\hat{i}}\,.  \label{111901}
\end{align}
Using the block structure of $H$, it is easy to check that the Green function $G(z)$ satisfies
\begin{align}
\tau_1(G(z))=\tau_2(G(z))\,,\qquad\qquad z\in\C^+\,. \label{neu083140}
\end{align}

{\it $\Phi$-system:} For our purposes it is convenient to recast~\eqref{le definiting equations} in a compact form: For generic probability measures $\mu_1,\mu_2$ on $\R$, let the function $\PP_{\mu_1,\mu_2}\,:\, (\C^+)^{3}\rightarrow \C^2$ be given by
\begin{align}\label{le H system defs}
\PP_{\mu_1,\mu_2}(\omega_1,\omega_2,z)\deq\left(\begin{array}{cc} F_{\mu_1}(\omega_2)-\omega_1-\omega_2+z \\ F_{\mu_2}(\omega_1)-\omega_1-\omega_2+z \end{array}\right)\,.
\end{align}
Considering $\mu_1,\mu_2$ as fixed, the equation
\begin{align}\label{le H system}
\PP_{\mu_1,\mu_2}(\omega_1,\omega_2,z)=0\,,
\end{align}
is equivalent to~\eqref{le definiting equations} and, by Theorem~\ref{le prop 1}, there are unique analytic functions $\omega_1,\omega_2\,:\, \C^+\rightarrow \C^+$, $z\mapsto \omega_1(z),\omega_2(z)$ satisfying~\eqref{le limit of omega} that solve~\eqref{le H system} in terms of $z$.

{\it Control parameters:} 
For $z\in\C^+$, we will use the following deterministic control parameter
\begin{align}
\Psi\equiv \Psi(z)\deq{\frac{1}{\sqrt{N\eta(1+\eta)}}}\,, \qquad  \qquad \eta=\im z \,. \label{071902}
\end{align}
We further introduce, for $z\in\C^+$ and $i\in\llbracket 1,N\rrbracket$, the random control parameters
\begin{align}
&\Lambda_{{\rm d};ii}(z)\deq \bigg| G_{ii}- \frac{\omega_B(z)}{|\xi_i|^2-(\omega_B(z))^2}\bigg|\,,\qquad
 \Lambda_{{\rm d}; \hat{i} \hat{i}}(z)\deq \bigg| G_{\hat{i}\hat{i}}- \frac{\omega_B(z)}{|\xi_i|^2-(\omega_B(z))^2}\bigg|\,,\nonumber\\
& \Lambda_{{\rm d};i\hat{i}}(z)\deq \bigg| G_{i\hat{i}}- \frac{\xi_i}{|\xi_i|^2-(\omega_B(z))^2}\bigg|\,,\qquad  \Lambda_{{\rm d};\hat{i}i}(z):= \bigg| G_{\hat{i}i}- \frac{\bar{\xi}_i}{|\xi_i|^2-(\omega_B(z))^2}\bigg|\,,\nonumber\\ \nonumber\\
&\hspace{20ex}\Lambda_{{\rm d}}(z)\deq \max_{i\in \llbracket 1, N\rrbracket}\max_{k,l=i \text{ or }\hat{i}}\Lambda_{{\rm d};kl}(z)\,. \label{071603}
\end{align}
We also define $ \Lambda_{{\rm d}}^c(z)$ analogously by replacing $\omega_B$ by $\omega_B^c$ (\cf~(\ref{091107})) in the definition of $\Lambda_{{\rm d}}(z)$. We will often omit the variable $z$ from the above notations when there is no confusion.

For notational simplicity, we do not follow the threshold $N$ for which the estimates apply. Following the dependence of this threshold on the other parameters along the proofs one may easily verify the dependences stated in Theorem~\ref{thm.081501} and Theorem~\ref{le proposition of Green function subordination}.

\section{Green function subordination for small $\eta$}  \label{s.green function subordination}

Let $\eta_{\mathrm{M}}>0$ be some sufficiently large constant, and for any given (small) $\gamma>0$, we set
\begin{align}\label{etamdef}
\eta_{\rm m} \equiv \eta_{\rm m} (\gamma)\deq N^{-1+\gamma}\,.
\end{align} 
In this section, we prove a Green function subordination property in the regime $\eta_{\mathrm{m}}\leq\eta \leq \eta_{\mathrm{M}}$. The formal statement is given in Theorem~\ref{thm.green function subordination} below.  For definiteness, we work with the unitary setup in this section. The necessary modifications for the orthogonal case are stated in Appendix~\ref{Appendix C}. We start with the partial randomness decomposition of the Haar measure on $U(N)\times U(N)$ announced in Subsection~\ref{sec:outline}.

\subsection{Partial randomness decomposition of the Haar measure}

Let $\mathbf{u}_i=(u_{i1},\ldots, u_{iN})'$ and $\mathbf{v}_i=(v_{i1},\ldots, v_{iN})'$ be the $i$th columns of $U$ and $V$, respectively. Let $\theta_{i}^u$ and $\theta_i^v$ be the arguments of $u_{ii}$ and $v_{ii}$, respectively, and let $\phi_i^a=\mathrm{e}^{\mathrm{i}\theta_i^a}$ for $a=u,v$. Our approach relies on the partial randomness decomposition of the Haar measure from \cite{DS87,Mezzadri}:
\begin{align}
U=-\phi_i^uR_i^u U^{\la i\ra}\,, \qquad V=-\phi_i^vR_i^v V^{\la i\ra}\,. \label{050202}
\end{align}
Here $U^{\la i\ra}$ and $V^{\la i\ra}$ are unitary matrices with $(i,i)$-th entry equal $1$, 
 and their $(i,i)$-minors are  independent,  Haar distributed on $\mathcal{U}(N-1)$. 
  In particular, $U^{\la i\ra} \mathbf{e}_i=V^{\la i\ra} \mathbf{e}_i=\mathbf{e}_i$ and $  \mathbf{e}_i^*U^{\la i\ra}  = \mathbf{e}_i^*V^{\la i\ra} = \mathbf{e}_i^*$,
 where $\mathbf{e}_i$ is the $i$-th coordinate vector. 
   In addition, $U^{\la i\ra}$ is independent of $\mathbf{u}_i$, and $V^{\la i\ra}$ is independent of $\mathbf{v}_i$. Here $R_i^u$ and $R_i^v$ are reflections, defined as
\begin{align}
R_i^a\deq I-\mathbf{r}_i^a(\mathbf{r}_i^a)^*\,,\qquad  a=u,v\,, \label{def of R}
\end{align}
where
\begin{align}
\mathbf{r}_i^u\deq\sqrt{2}\frac{\mathbf{e}_i+\bar{\phi}_i^u\mathbf{u}_i}{\|\mathbf{e}_i+\bar{\phi}_i^u\mathbf{u}_i\|_2}\,, \qquad \qquad  \mathbf{r}_i^v\deq\sqrt{2}\frac{\mathbf{e}_i+\bar{\phi}_i^v\mathbf{v}_i}{\|\mathbf{e}_i+\bar{\phi}_i^v\mathbf{v}_i\|_2}\,.  \label{110301}
\end{align}
Note that $R_i^u$ is independent of $U^{\la i\ra}$ and $R_i^v$ is independent of $V^{\la i\ra}$.

Set the  $(2N)\times (2N)$  matrices
\begin{align}
\Phi_i\deq(\phi_i^u I)\oplus (\phi_i^v I)\,,\qquad \qquad \mathcal{R}_i\deq R_i^u\oplus R_i^v\,,\qquad \qquad \mathcal{U}_i\deq U^{\la i\ra}\oplus V^{\la i\ra}\,. \label{052704}
\end{align}
With the above notations and the decompositions in~(\ref{050202}), we have
\begin{align}
 \mathcal{U}=-\mathcal{R}_i\mathcal{U}_i \Phi_i\,.   \label{101811}
\end{align}
Hence, for each $i\in \llbracket 1, N\rrbracket$, we can write
\begin{align}
H=A+\wt{B}=A+\mathcal{R}_i \mathcal{U}_i \Phi_iB\Phi_i^* \mathcal{U}_i^*  \mathcal{R}_i\deq A+\mathcal{R}_i \wt{B}^{\la i\ra} \mathcal{R}_i\,, \label{081101}
\end{align}
where we introduced the notation
\begin{align}
\wt{ B }^{\la i\ra}\deq \mathcal{U}_i \Phi_iB \Phi_i^* \mathcal{U}_i^* \,. \label{052703}
\end{align}
We further define the matrices
\begin{align}
H^{\la i\ra}\deq A+\wt{B}^{\la i\ra}\,, \qquad G^{\la i\ra}\deq\big(H^{\la i\ra}-z\big)^{-1}\,. \label{0916110}
\end{align}
Since $\mathbf{u}_i$ and $\mathbf{v}_i$ are  independent, uniformly distributed complex unit vectors, there exist
 independent normal vectors, $\wt{\mathbf{g}}_i^u,\wt{\mathbf{g}}_i^v\sim \mathcal{N}_{\mathbb{C}}(0, \frac{1}{N}I_N)$ such that
\begin{align*}
\mathbf{u}_i=\frac{\wt{\mathbf{g}}_i^u}{\|\wt{\mathbf{g}}_i^u\|_2}\,,\qquad \mathbf{v}_i=\frac{\wt{\mathbf{g}}_i^v}{\|\wt{\mathbf{g}}_i^v\|_2}\,.
\end{align*} 
We further define
\begin{align}
\mathbf{g}_i^u\deq\bar{\phi}_i^u\wt{\mathbf{g}}_i^u\,,\qquad \mathbf{h}_i^u\deq\frac{\mathbf{g}_i^u}{\|\mathbf{g}_i^u\|_2}=\bar{\phi}_i^u \mathbf{u}_i\,,\qquad \ell_i^u\deq\frac{\sqrt{2}}{\|\mathbf{e}_i+\mathbf{h}_i^u\|_2}\,,  \label{052802}
\end{align}
and define $\mathbf{g}_i^v$, $\mathbf{h}_i^v$ and $\ell_i^v$  analogously by replacing $\mathbf{u}_i$ by $\mathbf{v}_i$. 
Note that  for $a=u$ or $v$, $g_{ik}^a$'s  for $k\neq i$ are $N_\mathbb{C} (0,\frac{1}{N})$ variables and $g_{ii}^a$ is $\chi$-distributed with $\mathbb{E}[(g_{ii}^a)^2]=\frac{1}{N}$. In  addition, the components of $\mathbf{g}_i^a$ are independent, and  they  are all independent of $\phi_i^a$.   Hence, $\mathbf{g}_i^a$ and $\mathbf{h}_i^a$ are independent of $\wt{B}^{\la i\ra}$ (\cf~(\ref{052703})), for $a=u,v$. With these notations, we can write
\begin{align}
\mathbf{r}_i^a=\ell_i^a(\mathbf{e}_i+\mathbf{h}_i^a)\,,\qquad\qquad a=u,v\,, \label{050820}
\end{align}
where $\mathbf{r}_i^a$ is defined in~(\ref{110301}).
Using Lemma~\ref{lem.091720}, it is elementary to check that, for $a=u,v$,
\begin{align}
\|\mathbf{g}_i^a\|_2=1+\frac{1}{2}\big(\|\mathbf{g}_i^a\|_2^2-1\big)+O_\prec\big(\frac{1}{N}\big)\,,\quad (\ell_i^a)^2=\frac{1}{1+\mathbf{e}_i^*\mathbf{h}_i^a}=1-g_{ii}^a+O_\prec\big(\frac{1}{N}\big)\,,  \label{050901}
\end{align}
where in the first estimate we used the fact $\big|\|\mathbf{g}_i^a\|_2^2-1\big|\prec \frac{1}{\sqrt{N}}$. 
In addition, by definition, $R_i^a$ is a reflection sending $\mathbf{e}_i$ to $-\mathbf{h}_i^a$, \ie
\begin{align}
R_i^a\mathbf{e}_i=-\mathbf{h}_i^a,\qquad R_i^a\mathbf{h}_i^a=-\mathbf{e}_i\,,\qquad a=u,v\,.  \label{050805}
\end{align}
We also denote by $\mathring{\mathbf{g}}_i^a$ the vector obtained from $\mathbf{g}_i^a$ by replacing $g_{ii}^a$ by $0$, \ie
\begin{align*}
\mathring{\mathbf{g}}_i^a\deq\mathbf{g}_i^a-g_{ii}^a\mathbf{e}_i\,,\qquad a=u,v\,. 
\end{align*}
Correspondingly, we set
\begin{align}
\mathring{\mathbf{h}}_i^a\deq\frac{\mathring{\mathbf{g}}_i^a}{\|\mathbf{g}_i^a\|_2}\,,\qquad a=u,v\,.  \label{053065}
\end{align}
Recall the notation $\mathbf{0}$ for the $N\times 1$ null vector.  Finally, for brevity, we set
\begin{align}
\mathbf{k}_i^u\deq\left(\begin{array}{cc}
\mathbf{h}_i^u\\
\mathbf{0}
\end{array}\right),\quad \mathbf{k}_i^v\deq\left(\begin{array}{cc}
\mathbf{0}\\
\mathbf{h}_i^v
\end{array}\right),\quad  \mathring{\mathbf{k}}_i^u\deq\left(\begin{array}{cc}
\mathring{\mathbf{h}}_i^u\\
\mathbf{0}
\end{array}\right),\quad \mathring{\mathbf{k}}_i^v:=\left(\begin{array}{cc}
\mathbf{0}\\
\mathring{\mathbf{h}}_i^v
\end{array}\right)\,. \label{052801}
\end{align}
We move on to the formal statement of the Green function subordination.
\subsection{Green function subordination}

Recall the notation $\{\hat{\mathbf{e}}_i\}$ for the standard basis of  $\mathbb{C}^{2N}$,  
and also the notation $\hat{i}\equiv i+N$ 
 for any $i\in \llbracket 1, N\rrbracket$.  We introduce the following quantities for $j=i,\hat{i}$,   $i\in \llbracket 1, N\rrbracket$, 
\begin{align}
&S_{ij}:= (\mathbf{k}_i^u)^* \wt{ B }^{\la i\ra}  G\hat{\mathbf{e}}_j\,,\qquad \qquad  T_{ij}:= (\mathbf{k}_i^u)^* G\hat{\mathbf{e}}_j\,,\nonumber\\
&S_{\hat{i}j}:= (\mathbf{k}_i^v)^* \wt{ B }^{\la i\ra}  G\hat{\mathbf{e}}_j\,,\qquad \qquad  T_{\hat{i}j}:= (\mathbf{k}_i^v)^* G \hat{\mathbf{e}}_j\,, \label{053070}
\end{align}
and 
\begin{align}
\mathring{S}_{ii}:= (\mathring{\mathbf{k}}_i^u)^* \wt{ B }^{\la i\ra}  G\hat{\mathbf{e}}_i=S_{ii}-\tilde{\sigma}_i h_{ii}^u G_{\hat{i}i}\,,\qquad  \mathring{T}_{ii}:= (\mathring{\mathbf{k}}_i^u)^* G\hat{\mathbf{e}}_i=T_{ii}-h_{ii}^uG_{ii}\,, \label{052901}
\end{align}
where $\tilde{\sigma}_i=\phi_i^u\bar{\phi}_i^v \sigma_i$, and $\sigma_i$ is the $i$th diagonal entry of $\Sigma$, \cf~(\ref{102002}).  
Here in~(\ref{052901}) we used
\begin{align}
\hat{\mathbf{e}}_i^* \wt{ B }^{\la i\ra} =\tilde{\sigma}_i \hat{\mathbf{e}}_{\hat{i}}^*\,,\qquad  \wt{ B }^{\la i\ra}\hat{\mathbf{e}}_{\hat{i}} =\tilde{\sigma}_i \hat{\mathbf{e}}_i\,,\qquad\qquad i\in \llbracket 1, N\rrbracket\,, \label{0830200}
\end{align}
which is checked from  the definitions of $\wt{ B }^{\la i\ra}$ in~(\ref{052703}),  $\mathcal{U}_i$ and  $\Phi_i$ in ~(\ref{052704}),  and also $ B $ in~(\ref{050970}). 

Recall from \eqref{111901} the notations for normalized partial traces $\tau_1$ and $\tau_2$ on $M_{2N}(\mathbb{C})$. Moreover, recall from~\eqref{071603} the definition of the control parameters~$\Lambda_{{\rm d};ii}(z)$,~$\Lambda_{{\rm d}; \hat{i} \hat{i}}(z)$,~$\Lambda_{{\rm d};i\hat{i}}(z)$,~$ \Lambda_{{\rm d};\hat{i}i}(z)$ and~$\Lambda_{{\rm d}}(z)$. We further introduce $ \Lambda_{{\rm d}}^c(z)$ analogously by replacing $\omega_B$ by $\omega_B^c$ (\cf~(\ref{091107})) in the definition of $\Lambda_{{\rm d}}(z)$. We will often omit the variable $z$ from these notations.

In this section we will  show  that  $ \Lambda_{{\rm d}}(z)$, $ \Lambda_{{\rm d}}^c(z)$ and $\Lambda_{T}$ are
of order $\Psi$ with high probability; \ie matrix elements of the Green function
can be expressed in terms of the subordination functions, up to a small random fluctuations of order $\Psi$.
We will refer to these results as {\it Green function subordination}. The main
tool is a high moment calculation and Gaussian integration by parts. However, we cannot  directly estimate  the
high moments of $T_{kl}$ and  the formulas $|G_{ij} - [\ldots]|$ 
defining $\Lambda_{{\rm d};ij}(z)$. Instead, we introduce the following auxiliary quantities.
For each $i\in \llbracket 1, N\rrbracket $ and $j=i$ or $\hat{i}$, let
\begin{align}
&\mathcal{P}_{ij}\equiv\mathcal{P}_{ij}(z):= (\wt{ B }G)_{ij} \tau_1(G) -G_{ij} \tau_1(\wt{ B }G)+\big( G_{ij}+ T_{ij}\big) \Upsilon_{1}\,,\nonumber\\
& \mathcal{P}_{\hat{i}j}\equiv \mathcal{P}_{\hat{i}j}(z):= (\wt{ B }G)_{\hat{i}j} \tau_2(G) -G_{\hat{i}j} \tau_2(\wt{ B }G)+\big( G_{\hat{i}j}+ T_{\hat{i}j}\big) \Upsilon_{2}\,,\nonumber\\
& \mathcal{K}_{ij}\equiv \mathcal{K}_{ij}(z):=  T_{ij}+\tau_1(G)\big(\tilde{\sigma}_i T_{\hat{i}j} +(\wt{B}G)_{ij}\big)-\tau_1(G\wt{B})\big(G_{ij}+T_{ij}\big)\,,\nonumber\\
 &  \mathcal{K}_{\hat{i}j}\equiv \mathcal{K}_{\hat{i}j}(z):=  T_{\hat{i}j}+\tau_2(G)\big(\tilde{\sigma}_i^* T_{ij} +(\wt{B}G)_{\hat{i}j}\big)-\tau_2(G\wt{B})\big(G_{\hat{i}j}+T_{\hat{i}j}\big)\,,
\label{071560}
\end{align} 
where, with $a=1,2$,
\begin{align}
\Upsilon_{a}\equiv \Upsilon_{a}(z)\deq\tau_a(\wt{ B }G)+\tau_a(G)\; \tau_a(\wt{ B } G\wt{ B })-\tau_a(G\wt{ B })\tau_a(\wt{ B }G)\,.  \label{072130}
\end{align}

Using the invariance of the Haar measure, the following Ward identities
\begin{align}
 \E \Upsilon_a =0\,,\qquad\quad a=1,2\,,\label{le Ward for Upsilon}
\end{align}
can be checked. However, we will also need to know that $\Upsilon_a$ are small with high probability and not only in expectation in the following; see \eg~\eqref{071604} in Theorem~\ref{071601} below.

We will compute their high moments of these auxiliary quantities $\mathcal{P}$ and $\mathcal{K}$
and from them we will conclude the estimates on the $\Lambda$'s. 
The careful choice of these auxiliary 
quantities $\mathcal{P}$ and $\mathcal{K}$ is essential for the proof.  They have a built-in cancellation mechanism
that makes the  high moment calculation tractable, see~(\ref{071421})-\eqref{111751} later.

Moreover, we recall the following matrices introduced in~(\ref{0913125}) 
\begin{align*}
 \mathcal{H}=B+\mathcal{U}^*A\mathcal{U}=:B+\wt{A}\,,\quad\qquad \mathcal{G}(z)=(\mathcal{H}-z)^{-1}\,,\qquad z\in\C^+\,,
 \end{align*}
 which are the analogue of $H$ in~(\ref{091301}) and its Green function $G(z)$, obtained via  swapping  
 the r\^oles of $A$ and $B$, and also the r\^oles of
 $\mathcal{U}$ and $\mathcal{U}^*$.
  Note that the structure of $\mathcal{H}$ is exactly the same as $H$, so we can define the $\mathcal{H}$-counterparts
   of all  quantities we have introduced so far for~$H$. We will not repeat the heavy notations of the partial randomness
   decomposition for $\mathcal{H}$ as well, since we will not need all these details. We will only need to know that,
 accordingly,  we can define $\mathcal{G}_{ij}$, $\mathcal{S}_{ij}$ and $\mathcal{T}_{ij}$ by
 applying the same switching in the definitions of $G_{ij}$, $S_{ij}$~and~$T_{ij}$.

  Also note the following alternative definition of $\omega_A^c$ and $\omega_B^c$ in~(\ref{091107}):
 \begin{align}
\omega_A^c(z)\deq z-\frac{\ntr  \wt{A} \mathcal{ G}}{\ntr \mathcal{G}}\,,\qquad \omega_B^c(z)\deq z-\frac{\ntr B \mathcal{G}}{\ntr \mathcal{G}}\,, \label{091361}
 \end{align}
 and the trivial fact $\ntr G=\ntr \mathcal{G}$.
 
 In addition, replacing $\xi_i, \omega_B, G_{ij}$ by $\sigma_i, \omega_A, \mathcal{G}_{ij}$ respectively in~(\ref{071603}), we define 
  $\wt{\Lambda}_{\rm{d};ij}(z)$ and 
 $\wt{\Lambda}_{\rm{d}}(z)$ as the analogues of  ${\Lambda}_{\rm{d};ij}(z)$ and $\Lambda_{\rm {d}}(z)$.  For example
 \begin{equation}\label{tilde quantities}
 \wt{\Lambda}_{{\rm d};ii}(z)=\bigg| \mathcal{G}_{ii}- \frac{\omega_A(z)}{|\sigma_i|^2-(\omega_A(z))^2}\bigg|\,,
 \end{equation}
 and
 \begin{equation}
   \wt{\Lambda}_{\rm{d}}(z):= \max_{i\in \llbracket 1, N\rrbracket}\max_{k,l=i \text{ or }\hat{i}}\wt\Lambda_{{\rm d};kl}(z)\,.
\end{equation}
 Similarly, we can also define $\wt{\Lambda}_{\rm d}^c(z)$ and $\wt{\Lambda}_{T}(z)$ as the analogue of $\Lambda_{\rm d}^c(z)$ and $\Lambda_{T}(z)$, respectively. 
 The analysis of the operator $\mathcal{H}$ is very similar to that of $H$, but at some point it will be useful to
 work with them in tandem, so we will need to control both.

Our main aim in this section is to prove the following Green function subordination property.
 Recall the definition of the control parameter $\Psi(z)$ from \eqref{071902}. 
\begin{thm}\label{thm.green function subordination} Suppose that the assumptions in Theorem~\ref{thm.081501} hold. 
Then
\begin{align}
\Lambda_{{\rm d}}(z)\prec\Psi(z), \quad \wt{\Lambda}_{\rm d}(z)\prec \Psi (z),\quad \Lambda_{T}(z)\prec \Psi(z),\quad \wt{\Lambda}_{T}(z)\prec \Psi(z) \label{091601}
\end{align}
uniformly on $\mathcal{S}_{\mathcal{I}}(\eta_{\mathrm{m}}, \eta_{\mathrm{M}})$, for any  (large) constant $\eta_{\mathrm{M}}>0$ and (small) constant $\gamma>0$,  in the definition of  $\eta_{\mathrm{m}}$ (\cf~(\ref{etamdef})).  Moreover, the estimates
\begin{align}
&\big|\omega_A^c(z)-\omega_A(z)\big|\prec \Psi(z)\,,\qquad \big|\omega_B^c(z)-\omega_B(z)\big|\prec \Psi(z)\,,\nonumber\\
&\big|m_H(z)-m_{\mu_A\boxplus \mu_B}(z)\big|\prec \Psi(z) \label{101470}
\end{align}
also hold uniformly on $\mathcal{S}_{\mathcal{I}}(\eta_{\mathrm{m}},\eta_{\mathrm{M}})$.
\end{thm}
The estimates on the tracial quantities  and the subordination functions 
 in~(\ref{101470}) are weaker than the final result in Theorem~\ref{thm.081501} and Theorem~\ref{le proposition of Green function subordination}. Later in Section~\ref{s.strong law},  we will improve them. The estimates in~(\ref{091601}) are, however,  (believed to be) optimal. 

In what follows, we will mainly work with $\Lambda_{{\rm d}}(z)$. The discussion on $\wt{\Lambda}_{\rm d}(z)$ is the same. 
First, we show the analogous estimate for $\Lambda_{\rm d}^c$ by assuming an a priori bound on $\Lambda_{{\rm d}}$ and $\Lambda_T$, for a fixed $z\in \mathcal{S}_{\mathcal{I}}(\eta_{\mathrm{m}}, \eta_{\mathrm{M}})$. This is the content of Theorem~\ref{071601} below. A continuity argument in Subsection~\ref{sec: continuity argument} then allows us to conclude Theorem~\ref{thm.green function subordination} from Theorem~\ref{071601}.

\begin{thm}\label{071601}Suppose that the assumptions in Theorem~\ref{thm.081501} hold. 
Let  $\eta_{\mathrm{M}}>0$ be a (large) constant  and  $\gamma>0$ be a (small) constant  in \eqref{etamdef}.  Fix a $z\in \mathcal{S}_{\mathcal{I}}(\eta_{\mathrm{m}}, \eta_{\mathrm{M}})$. Assume that
\begin{align}
\Lambda_{{\rm d}}(z)\prec N^{-\frac{\gamma}{4}}\,,\qquad  \wt{\Lambda}_{{\rm d}}(z)\prec N^{-\frac{\gamma}{4}}\,, \qquad \Lambda_T(z)\prec 1\,,\qquad \wt{\Lambda}_T(z)\prec 1\,.  \label{071422}
\end{align}
Then we have  
\begin{align}
&|\mathcal{P}_{ij}(z)|\prec \Psi(z)\,, \qquad  |\mathcal{P}_{\hat{i}j}(z)|\prec \Psi(z)\,,\nonumber\\
& |\mathcal{K}_{ij}(z)|\prec \Psi(z)\,, \qquad  |\mathcal{K}_{\hat{i}j}(z)|\prec \Psi(z)\,, \label{071302}
\end{align}
for all $i\in \llbracket 1, N\rrbracket$ and $j=i$ or $\hat{i}$. In addition, under~(\ref{071422}) we also have
\begin{align}
|\Upsilon_{1}(z)|\prec \Psi(z)\,,\qquad |\Upsilon_{2}(z)|\prec \Psi(z)\,,  \label{071604}
\end{align}
and 
\begin{align}
\Lambda_{{\rm d}}^c(z)\prec\Psi(z)\,,\qquad \Lambda_T(z)\prec \Psi(z)\,. \label{071602}
\end{align}
The same statements  hold if we switch the r\^oles of $A$ and $B$, and also the r\^oles of $U$ and $U^*$, in all  the conclusions from~(\ref{071302}) to~(\ref{071602}).
\end{thm}
Note that, since $\eta_{\mathrm{m}}\leq \eta \leq \eta_{\mathrm{M}}$, we have $\Psi(z)\sim \frac{1}{\sqrt{N\eta}}$.

The proof of Theorem~\ref{071601} proceeds in two steps. In the first step, we establish in Subsection~\ref{sec: recursive moment estimates} recursive moment estimates for the quantities $\mathcal{P}_{ii}$ and $\mathcal{K}_{ii}$. In the second step, carried out in Subsection~\ref{sec: mini stability analysis}, we use a local stability analysis to conclude Theorem~\ref{071601} from the estimates established in Subsection~\ref{sec: recursive moment estimates}.

\subsection{Recursive moment estimates for $\mathcal{P}_{ii}$ and $\mathcal{K}_{ii}$}\label{sec: recursive moment estimates} In the proof of Theorem~\ref{071601}, assumption \eqref{071422} is used to conclude that various $G_{kl}$ and $T_{kl}$ with $k,l=i$ or $\hat{i}$ are
finite. More specifically, with the aid of  assumption~(\ref{071422})  and  with 
 the upper bound of $|\omega_B|$  and  the lower bound on $\Im \omega_B$ in~(\ref{110270})  that together
   imply  that $\omega_B^2$ is away from the positive real axis so the denominators
in  the definition of $\Lambda_{{\rm d},ij}$ do not vanish, we have
\begin{align}
\max_{i\in \llbracket 1, N\rrbracket}\max_{k,l=i \text{ or } \hat{i}} |G_{kl}|\prec 1\,,\qquad \max_{i\in \llbracket 1, N\rrbracket}\max_{k,l=i \text{ or } \hat{i}} |T_{kl}|\prec 1\,. \label{112401}
\end{align}
  In addition, using the  identities in~(\ref{102702}),
we can  further get the bound 
\begin{align}
\max_{i\in \llbracket 1, N\rrbracket}\max_{k,l=i \text{ or } \hat{i}}\big|(XGY)_{kl}\big|\prec 1\,,\qquad X, Y=\hat{I}\,, \text{ or } \wt{B}\,.
   \label{100401}
\end{align}
Observe that 
\begin{align}
\frac{1}{N}\sum_i  \frac{\omega_B(z)}{|\xi_i|^2-(\omega_B(z))^2}=m_{\mu_A}(\omega_B(z)) = m_{\mu_A\boxplus\mu_B}(z)\,, \label{111601}
\end{align} 
where the first step follows from the definition of $\mu_A$ in~(\ref{091105}), and the second step follows from~(\ref{le definiting equations}) with the choice $(\mu_1,\mu_2)=(\mu_A,\mu_B)$. Then,~(\ref{111601}) together with the first estimate in~(\ref{071422}),~(\ref{102702}), and the upper bound of $|\omega_B|$ and the lower bound of $\Im \omega_B$ in~(\ref{110270}) leads to 
 the following estimates for tracial quantities 
\begin{align}
\tau_a(G)&= m_{\mu_A\boxplus\mu_B}+O_\prec(N^{-\frac{\gamma}{4}})\,,\qquad\qquad a=1,2\,.\nonumber\\
\tau_a(\wt{B}G)&= (z-\omega_B) m_{\mu_A\boxplus\mu_B} +O_\prec(N^{-\frac{\gamma}{4}})\,,\nonumber\\
\tau_a(G\wt{B})&= (z-\omega_B) m_{\mu_A\boxplus\mu_B} +O_\prec(N^{-\frac{\gamma}{4}})\,,\nonumber\\
\tau_a(\wt{B}G\wt{B})&=(\omega_B-z)(1+(\omega_B-z)m_{\mu_A\boxplus\mu_B})+O_\prec(N^{-\frac{\gamma}{4}})\,. 
\label{102125}
\end{align}
Then, using the upper bound on $|\omega_B|$ and the lower bound on $\Im \omega_B$ in~(\ref{110270}), and the second identity in~(\ref{111601}), we see that all these tracial quantities are stochastically dominated by~$1$, under assumption~(\ref{071422}). Recalling $\Upsilon_a$ from~\eqref{072130}, we thus have under assumption~\eqref{071422}~that
\begin{align}
|\Upsilon_a(z)|\prec 1\,.
\end{align}

For~(\ref{071302}), we only handle the estimate of $\mathcal{P}_{ii}$ and  $\mathcal{K}_{ii}$ in detail.  The others are similar. 
It suffices to show the high order moment estimate: for any fixed integer $p\geq 1$, we have
\begin{align}
\mathbb{E}\big[| \mathcal{P}_{ii}|^{2p}\big]\prec \Psi^{2p}\,, \qquad  \mathbb{E} \big[ |\mathcal{K}_{ii}|^{2p}\big]\prec \Psi^{2p}\,. \label{071305}
\end{align}
Let us introduce the notation 
\begin{align}
\mathfrak{m}_i(k,l)\deq\mathcal{P}_{ii}^k\overline{\mathcal{P}_{ii}^l}\,, \qquad  \mathfrak{n}_i(k,l)\deq \mathcal{K}_{ii}^k\overline{\mathcal{K}_{ii}^l}\,. \label{102102}
\end{align}

We will use the following notational conventions in the statement of the recursive moment estimates. The notation $O_\prec(\Psi^k)$ for any given positive integer $k$, represents a generic (possibly) $z$-dependent random variable $X\equiv X(z)$ that satisfies 
\begin{align}
X\prec \Psi^k\,,\qquad \mathbb{E} [|X|^q]\prec \Psi^{qk}\,, \label{100402}
\end{align}
for any given positive integer $q$. In the sequel, we only check the first bound in~(\ref{100402}) for various $X$'s, 
 then the second bound is   valid as well.  Indeed, 
since the $X$'s we will encounter below are analogous to those in \cite{BES15c},  we refer to the paragraph below (6.2) of \cite{BES15c} for a general reasoning  why   the second bound in~(\ref{100402}) follows from the first one. Additionally, sometimes $X$ will be of the form  $1/|g|$
where $g$ is an $N$-dimensional Gaussian random variable (see \eg~\eqref{071435}-\eqref{111750}),
whose  $q$th moments are also integrable for any fixed~$q$ if $N$ is large enough.

The main technical task in the proof of~\eqref{071305} is the following recursive moment estimate.
\begin{lem}[Recursive moment estimate for $\mathcal{P}_{ii}$ and  $\mathcal{K}_{ii}$] \label{lem.071420} Suppose the assumptions of Theorem~\ref{071601} hold.  For any fixed integer $p\geq 1$,   and for any $i\in \llbracket 1, N\rrbracket$, we~have
\begin{align}
\mathbb{E}[\mathfrak{m}_i(p,p)]&=\mathbb{E}[O_\prec(\Psi) \mathfrak{m}_i(p-1,p)]+\mathbb{E}[O_\prec(\Psi^2) \mathfrak{m}_i(p-2,p)]\nonumber\\ &\qquad\qquad+\mathbb{E}[O_\prec(\Psi^2) \mathfrak{m}_i(p-1,p-1)]\,, \nonumber\\
\mathbb{E}[\mathfrak{n}_i(p,p)]&=\mathbb{E}[O_\prec(\Psi) \mathfrak{n}_i(p-1,p)]+\mathbb{E}[O_\prec(\Psi^2) \mathfrak{n}_i(p-2,p)]\nonumber\\ &\qquad\qquad+\mathbb{E}[O_\prec(\Psi^2) \mathfrak{n}_i(p-1,p-1)]\,, 
\label{071503}
\end{align}
where we made the convention $\mathfrak{m}_i(0,0)=\mathfrak{n}_i(0,0)=1$ and $\mathfrak{m}_i(-1,1)=\mathfrak{n}_i(-1,1)=0$ if $p=1$.
\end{lem}

\begin{proof}[Proof of Lemma~\ref{lem.071420}]  
According to the decomposition in~(\ref{081101}), for $i\in \llbracket 1,N\rrbracket$, we have
\begin{align}
(\wt{ B }G)_{ii}=\hat{\mathbf{e}}_i^* \mathcal{R}_i \wt{ B }^{\la i\ra} \mathcal{R}_i G\hat{\mathbf{e}}_i=-\big((\mathbf{h}_i^u)^*,\mathbf{0}^*\big) \wt{ B }^{\la i\ra} \mathcal{R}_i G\hat{\mathbf{e}}_i=-(\mathbf{k}_i^u)^* \wt{ B }^{\la i\ra} \mathcal{R}_i G\hat{\mathbf{e}}_i\,, \label{050811}
\end{align}
where in the second step we used~(\ref{050805}), and in the last step we used the notation in ~(\ref{052801}). 
Using~(\ref{050811}), the definition in~(\ref{def of R}) , and also the identity in~(\ref{050820}),   one can  check
\begin{align}
(\wt{ B }G)_{ii}&=-(\mathbf{k}_i^u)^* \wt{ B }^{\la i\ra} \big(\hat{I}-\mathbf{r}_i^u(\mathbf{r}_i^u)^*\oplus \mathbf{r}_i^v(\mathbf{r}_i^v)^*\big) G\hat{\mathbf{e}}_i \nonumber\\
&= -S_{ii}+(\mathbf{k}_i^u)^* \wt{ B }^{\la i\ra} \big(\mathbf{r}_i^u(\mathbf{r}_i^u)^*\oplus \mathbf{r}_i^v(\mathbf{r}_i^v)^*\big) G\hat{\mathbf{e}}_i\nonumber\\
&= -S_{ii}+(\mathbf{k}_i^u)^* \wt{ B }^{\la i\ra} \big(0\oplus \mathbf{r}_i^v(\mathbf{r}_i^v)^*\big) G\hat{\mathbf{e}}_i\nonumber\\
&= -S_{ii}+(\ell_i^v)^2(\mathbf{k}_i^u)^* \wt{ B }^{\la i\ra} (\hat{\mathbf{e}}_{\hat{i}}+\mathbf{k}_i^v)
(\hat{\mathbf{e}}_{\hat{i}}+\mathbf{k}_i^v)^* G\hat{\mathbf{e}}_i\nonumber\\
&=-S_{ii}+(\ell_i^v)^2\big(\tilde{\sigma}_i h_{ii}^u+(\mathbf{k}_i^u)^* \wt{ B }^{\la i\ra} \mathbf{k}_i^v\big)\big(G_{\hat{i}i}+T_{\hat{i}i}\big)\nonumber\\
&=:-\mathring{S}_{ii}+\varepsilon_{i1}\,,  \label{050930}
\end{align}
where  $0$ in the third line is the $N\times N$ zero matrix, and 
\begin{align}
\varepsilon_{i1}:=\Big(\big((\ell_i^v)^2-1\big)\tilde{\sigma}_i h_{ii}^u+(\ell_i^v)^2(\mathbf{k}_i^u)^* \wt{ B }^{\la i\ra} \mathbf{k}_i^v\Big)G_{\hat{i}i}+(\ell_i^v)^2\big(\tilde{\sigma}_i h_{ii}^u+(\mathbf{k}_i^u)^* \wt{ B }^{\la i\ra} \mathbf{k}_i^v\big)T_{\hat{i}i}\,.
\end{align}
 In the third step of~(\ref{050930}) we used the fact $(\mathbf{k}_i^u)^* \wt{ B }^{\la i\ra}(\mathbf{r}_i^u(\mathbf{r}_i^u)^*\oplus 0)=0$ which follows from the definition of $\mathbf{k}_i^u$ and $\wt{B}^{\la i\ra}$ in~(\ref{052801}) and 
(\ref{052703});  in the fifth step we used the second identity in \eqref{0830200}; and in the last step, we used~(\ref{052901}). We note that
\begin{align} \label{083011}
 |\varepsilon_{i1}|\prec\Psi\,,
\end{align}
where we used~\eqref{112401} and the large deviation bound~\eqref{091731} to show that  $(\mathbf{k}_i^u)^* \wt{ B }^{\la i\ra} \mathbf{k}_i^v \prec N^{-1/2}$.

Using integration by parts, we note that
\begin{align}
\int_{\mathbb{C}} \bar{g} f(g,\bar{g}) \mathrm{e}^{-\frac{|g|^2}{\sigma^2}} \,\dd^2g=\sigma^2 \int_\mathbb{C} \partial_g  f(g,\bar{g})  \mathrm{e}^{-\frac{|g|^2}{\sigma^2}} \dd^2g\,, \label{integration by parts formula}
\end{align}
for differentiable functions $f: \mathbb{C}^2 \to \mathbb{C}$  (recall that $\dd^2g$ is
 the Lebesgue measure on $\mathbb{C}$). 

According to the definitions in~(\ref{def of R}),~(\ref{052802}),  and the identity~(\ref{050820}), one can check for $k\neq i$, 
\begin{align}
\frac{\partial R_i^a}{\partial g_{ik}^a}=-\frac{(\ell_i^a)^2}{\|\mathbf{g}_i^a\|_2} \mathbf{e}_k\big(\mathbf{e}_i+\mathbf{h}_i^a\big)^*+\Delta_R^a(i,k)\,,\qquad a=u,v\,. \label{050903}
\end{align}
where
\begin{align}
\Delta_R^a(i,k):=&\frac{(\ell_i^a)^2}{2\|\mathbf{g}_i^a\|_2^2}\bar{g}_{ik}^a\big(\mathbf{e}_i(\mathbf{h}_i^a)^*+\mathbf{h}_i^a\mathbf{e}_i^*+2\mathbf{h}_i^a(\mathbf{h}_i^a)^*\big)\nonumber\\
&-\frac{(\ell_i^a)^4}{2\|\mathbf{g}_i^a\|_2^3} g_{ii}^a \bar{g}_{ik}^a\big(\mathbf{e}_i+\mathbf{h}_i^a\big)\big(\mathbf{e}_i+\mathbf{h}_i^a\big)^*\,,\qquad a=u,v\,. \label{0916100}
\end{align}
The $\Delta_R^a(i,k)$'s are irrelevant error terms. Their estimates will be  presented  separately  in Appendix~\ref{Appendix B}. For convenience, we set for $a=u,v$, 
\begin{align}
c_i^a\deq\frac{(\ell_i^a)^2}{\|\mathbf{g}_i^a\|_2}=\frac{1}{\|\mathbf{g}_i^a\|_2}-h_{ii}^a+O_\prec(\frac{1}{N})=\|\mathbf{g}_i^a\|_2-h_{ii}^a-\big(\|\mathbf{g}_i^a\|_2^2-1\big)+O_\prec(\frac{1}{N})\,, \label{053095}
\end{align} 
where the last step follows from~(\ref{050901}). Using~(\ref{081101}), we have  for $k\neq i$
\begin{align}
\frac{\partial G}{\partial g_{ik}^u}=-G\frac{\partial \wt{ B }}{\partial g_{ik}^u} G=-G\frac{\partial \mathcal{R}_i}{\partial g_{ik}^u} \wt{ B }^{\la i\ra} \mathcal{R}_i G-G\mathcal{R}_i \wt{ B }^{\la i\ra} \frac{\partial \mathcal{R}_i}{\partial g_{ik}^u} G\,. \label{050911}
\end{align}
According to~(\ref{050903}) and the fact $\mathcal{R}_i=R_i^u\oplus R_i^v$, we have
\begin{align}
\frac{\partial \mathcal{R}_i}{\partial g_{ik}^u}= -c_i^u \hat{\mathbf{e}}_k\big(\hat{\mathbf{e}}_i+\mathbf{k}_i^u\big)^*+\Delta_R^a(i,k)\oplus 0\,, \label{050910}
\end{align}
where  $0$  is the $N\times N$ zero matrix.
 We also used that $\partial  R_i^v/\partial g_{ik}^u =0$. 
Plugging~(\ref{050910}) into~(\ref{050911}), for $k\neq i$, we can write 
\begin{align}
\frac{\partial G}{\partial g_{ik}^u}=c_i^u  G\hat{\mathbf{e}}_k\big(\hat{\mathbf{e}}_i^*+(\mathbf{k}_i^u)^*\big) \wt{ B }^{\la i\ra} \mathcal{R}_i G+c_i^uG\mathcal{R}_i \wt{ B }^{\la i\ra} \hat{\mathbf{e}}_k\big(\hat{\mathbf{e}}_i^*+(\mathbf{k}_i^u)^*\big) G+\Delta_{G}^u(i,k)\,, \label{050915}
\end{align}
where we set 
\begin{align}
\Delta_{G}^u(i,k)\deq-G(\Delta_R^u(i,k)\oplus 0) \wt{ B }^{\la i\ra} \mathcal{R}_i G-G\mathcal{R}_i \wt{ B }^{\la i\ra} (\Delta_R^u(i,k)\oplus 0) G\,. \label{0916101}
\end{align}
With the above derivatives, we are ready to apply the integration by parts formula in~(\ref{integration by parts formula}).
We start with the following
\begin{align}
\mathbb{E}[\mathfrak{m}_i(p,p)]&=\mathbb{E}[\mathcal{P}_{ii} \mathfrak{m}_i(p-1,p)]=\mathbb{E}[(\wt{ B }G)_{ii} \tau_1(G)  \mathfrak{m}_i(p-1,p)]\nonumber\\
&\quad+\mathbb{E}\big[\big(-G_{ii} \tau_1(\wt{ B }G)+\big( G_{ii}+ T_{ii}\big) \Upsilon_{1}\big)\mathfrak{m}_i(p-1,p)\big]\,,\label{07142100}\\
\mathbb{E}[\mathfrak{n}_i(p,p)]&=\mathbb{E}[\mathcal{K}_{ii}\mathfrak{n}_i(p-1,p)]=\mathbb{E}[T_{ii}  \mathfrak{n}_i(p-1,p)] \nonumber\\
&\quad +\mathbb{E}\big[\big(\tau_1(G)\big(\tilde{\sigma}_i T_{\hat{i}j} +(\wt{B}G)_{ij}\big)-\tau_1(G\wt{B})\big(G_{ij}+T_{ij}\big)\big)\mathfrak{n}_i(p-1,p)\big]\,,\label{071421}
\end{align}
which follow from  the definitions in~(\ref{071560}) and~(\ref{102102}) directly. From~(\ref{050930}) and~(\ref{052901}), we have
\begin{align}
\mathbb{E}[(\wt{ B }G)_{ii} \tau_1(G) \mathfrak{m}_i(p-1,p)]&= -\mathbb{E}[\mathring{S}_{ii}\tau_1(G) \mathfrak{m}_i(p-1,p)]\nonumber\\ &\qquad\qquad+\mathbb{E}[\varepsilon_{i1}\tau_1(G)\mathfrak{m}_i(p-1,p)]\,, \label{071430}\\
\mathbb{E}[T_{ii}  \mathfrak{n}_i(p-1,p)] &= \mathbb{E}[\mathring{T}_{ii}  \mathfrak{n}_i(p-1,p)] +\mathbb{E}[O_\prec(\Psi)\mathfrak{n}_i(p-1,p)]\,, \label{111751}
\end{align}
where we used the fact $|h_{ii}|\prec N^{-\frac{1}{2}}$, and also~(\ref{112401}).

 Now we will carefully compute the first terms in the right hand side of \eqref{071430} and \eqref{111751} 
with the integration by parts formula since both $\mathring{S}_{ii}$ and $\mathring{T}_{ii}$ explicitly contain a
multiplicative Gaussian factor. We will then find that the leading term of the result of this calculation
will exactly cancel the last  quantities in the right side of  equations in~(\ref{07142100}) and \eqref{071421}. 
This cancellation is the key point of the following tedious calculation and this is the main reason for defining 
the key quantities $\mathcal{P}_{ii}$ and  $\mathcal{K}_{ii}$ in the form they are given in  \eqref{071560}.

For the first term on the right side of~(\ref{071430}), using the definition of $\mathring{S}_{ii}$ in~(\ref{052901}) and  the integration by parts formula in~(\ref{integration by parts formula}), we have
\begin{align}
\mathbb{E}[\mathring{S}_{ii}\tau_1(G) \mathfrak{m}_i(p-1,p)]&=\sum_{k}^{(i)} \mathbb{E} \Big[ \bar{g}^u_{ik}\frac{1}{\| \mathbf{g}^u_i\|_2} \hat{\mathbf{e}}_k^*\wt{ B }^{\la i\ra}  G\hat{\mathbf{e}}_i \tau_1(G) \mathfrak{m}_i(p-1,p)\Big]\nonumber\\
&=\frac{1}{N} \sum_{k}^{(i)} \mathbb{E} \Big[ \frac{1}{\|\mathbf{g}_{i}^u\|_2}\frac{  \partial (\hat{\mathbf{e}}_k^*\wt{ B }^{\la i\ra}  G\hat{\mathbf{e}}_i)}{\partial g^{u}_{ik}} \tau_1(G) \mathfrak{m}_i(p-1,p)\Big]\nonumber\\
&\quad +\frac{1}{N}\sum_{k}^{(i)} \mathbb{E} \Big[ \frac{ \partial \| \mathbf{g}^u_i\|_2^{-1}}{\partial g_{ik}^u} \hat{\mathbf{e}}_k^*\wt{ B }^{\la i\ra}  G\hat{\mathbf{e}}_i \tau_1(G) \mathfrak{m}_i(p-1,p)\Big]\nonumber\\
&\quad + \frac{1}{N}\sum_{k}^{(i)} \mathbb{E} \Big[  \frac{1}{\|\mathbf{g}_{i}^u\|_2} \hat{\mathbf{e}}_k^*\wt{ B }^{\la i\ra}  G\hat{\mathbf{e}}_i \frac{\partial \tau_1(G)}{\partial g^u_{ik}}  \mathfrak{m}_i(p-1,p)\Big] \nonumber\\
&\quad +\frac{p-1}{N} \sum_{k}^{(i)} \mathbb{E} \Big[ \frac{1}{\| \mathbf{g}^u_i\|_2} \hat{\mathbf{e}}_k^*\wt{ B }^{\la i\ra}  G\hat{\mathbf{e}}_i \tau_1(G) \frac{\partial \mathcal{P}_{ii}}{\partial g_{ik}^u}\mathfrak{m}_i(p-2,p)\Big]\nonumber\\
&\quad +  \frac{p}{N} \sum_{k}^{(i)} \mathbb{E} \Big[\frac{1}{\| \mathbf{g}^u_i\|_2} \hat{\mathbf{e}}_k^*\wt{ B }^{\la i\ra}  G\hat{\mathbf{e}}_i \tau_1(G) \frac{\partial \overline{\mathcal{P}_{ii}}}{\partial g_{ik}^u}\mathfrak{m}_i(p-1,p-1)\Big]\,. \label{071435}
\end{align}
Analogously, we have
\begin{align}
\mathbb{E}[\mathring{T}_{ii}  \mathfrak{n}_i(p-1,p)] 
&=\frac{1}{N} \sum_{k}^{(i)} \mathbb{E} \Big[ \frac{1}{\|\mathbf{g}_{i}^u\|_2}\frac{  \partial (\hat{\mathbf{e}}_k^* G\hat{\mathbf{e}}_i)}{\partial g^{u}_{ik}}  \mathfrak{n}_i(p-1,p)\Big]\nonumber\\
&\qquad +\frac{1}{N}\sum_{k}^{(i)} \mathbb{E} \Big[ \frac{ \partial \| \mathbf{g}^u_i\|_2^{-1}}{\partial g_{ik}^u} \hat{\mathbf{e}}_k^*  G\hat{\mathbf{e}}_i  \mathfrak{n}_i(p-1,p)\Big]\nonumber\\
&\qquad +\frac{p-1}{N} \sum_{k}^{(i)} \mathbb{E} \Big[ \frac{1}{\| \mathbf{g}^u_i\|_2} \hat{\mathbf{e}}_k^* G\hat{\mathbf{e}}_i  \frac{\partial \mathcal{K}_{ii}}{\partial g_{ik}^u}\mathfrak{n}_i(p-2,p)\Big]\nonumber\\
&\qquad +  \frac{p}{N} \sum_{k}^{(i)} \mathbb{E} \Big[\frac{1}{\| \mathbf{g}^u_i\|_2} \hat{\mathbf{e}}_k^*G\hat{\mathbf{e}}_i  \frac{\partial \overline{\mathcal{K}_{ii}}}{\partial g_{ik}^u}\mathfrak{n}_i(p-1,p-1)\Big]\,. \label{111750}
\end{align}

We start from the first term on the right side of~(\ref{071435}).  Using~(\ref{050915}), we have
\begin{align}
\frac{1}{N}\sum_{k}^{(i)} \frac{  \partial (\hat{\mathbf{e}}_k^*\wt{ B }^{\la i\ra}  G\hat{\mathbf{e}}_i)}{\partial g^{u}_{ik}}=&c_i^u\frac{1}{N}  \sum_{k}^{(i)} \hat{\mathbf{e}}_k^*\wt{ B }^{\la i\ra}  G\hat{\mathbf{e}}_k\big(\hat{\mathbf{e}}_i+\mathbf{k}_i^u\big)^* \wt{ B }^{\la i\ra} \mathcal{R}_i G\hat{\mathbf{e}}_i\nonumber\\
&\hspace{-15ex}+c_i^u\frac{1}{N}  \sum_{k}^{(i)} \hat{\mathbf{e}}_k^*\wt{ B }^{\la i\ra}  G\mathcal{R}_i \wt{ B }^{\la i\ra} \hat{\mathbf{e}}_k\big(\hat{\mathbf{e}}_i+\mathbf{k}_i^u\big)^* G\hat{\mathbf{e}}_i+\frac{1}{N}  \sum_{k}^{(i)} \hat{\mathbf{e}}_k^*\wt{ B }^{\la i\ra}\Delta_{G}^u(i,k)\hat{\mathbf{e}}_i\,. \label{083015}
\end{align}
Let
\begin{align}
\varepsilon_{i2}:=\frac{1}{N}  \sum_{k}^{(i)} \hat{\mathbf{e}}_k^*\wt{ B }^{\la i\ra}\Delta_{G}^u(i,k)\hat{\mathbf{e}}_i\,. \label{112470}
\end{align}
Note that 
\begin{align}
\frac{1}{N}\sum_{k}^{(i)}\hat{\mathbf{e}}_k^*\wt{ B }^{\la i\ra}  G\hat{\mathbf{e}}_k=\tau_1(\wt{B}^{\la i\ra}G)-\frac{1}{N}(\wt{B}^{\la i\ra}G)_{ii}=  \tau_1(\wt{B}G)+O_\prec(\Psi^2) \,, 
\label{083020}
\end{align}
where in the last step we used the second estimate in Corollary~\ref{cor.finite rank} with the choice $Q=\hat{I}_1$ (\cf~(\ref{071540})), $(\wt{B}^{\la i\ra}G)_{ii}=\tilde{\sigma}_iG_{\hat{i}i}$ (\cf ~(\ref{0830200})), and the bound in~(\ref{112401}). Analogously, one shows 
\begin{align}
\frac{1}{N}\sum_{k}^{(i)}\hat{\mathbf{e}}_k^*\wt{ B }^{\la i\ra}  G\mathcal{R}_i \wt{ B }^{\la i\ra} \hat{\mathbf{e}}_k= \tau_1(\wt{B}G\wt{B})+O_\prec(\Psi^2)\,. \label{083021}
\end{align}
Moreover, using~(\ref{0830200}),~(\ref{050805}) and the fact $\mathcal{R}_i^2=\hat{I}$,  we also have the following observations
\begin{align}
&\hat{\mathbf{e}}_i^*\wt{ B }^{\la i\ra} \mathcal{R}_i G\hat{\mathbf{e}}_i=\tilde{\sigma}_i\hat{\mathbf{e}}_{\hat{i}}^*\mathcal{R}_i G\hat{\mathbf{e}}_i=-\tilde{\sigma}_i(\mathbf{k}_i^v)^*G\hat{\mathbf{e}}_i=-\tilde{\sigma}_iT_{\hat{i}i}\,,\nonumber\\
& (\mathbf{k}_i^u)^* \wt{ B }^{\la i\ra} \mathcal{R}_i G\hat{\mathbf{e}}_i= (\mathbf{k}_i^u)^*\mathcal{R}_i \wt{ B } G\hat{\mathbf{e}}_i=-\hat{\mathbf{e}}_i^*\wt{ B } G\hat{\mathbf{e}}_i=-(\wt{ B }G)_{ii}\,. \label{071553}
\end{align}
Plugging~(\ref{083020}),~(\ref{083021}) and~(\ref{071553}) into~(\ref{083015}), we obtain
\begin{multline}
\frac{1}{N}\sum_{k}^{(i)} \frac{  \partial (\hat{\mathbf{e}}_k^*\wt{ B }^{\la i\ra}  G\hat{\mathbf{e}}_i)}{\partial g^{u}_{ik}}= - c_i^u\tau_1(\wt{B}G)\big(\tilde{\sigma}_iT_{\hat{i}i}+(\wt{ B }G)_{ii}\big)\\+c_i^u \tau_1(\wt{B}G\wt{B})\big( G_{ii}+T_{ii}\big)+\varepsilon_{i2}+O_\prec(\Psi^2)\,.\label{071440}
\end{multline}
Analogously to~(\ref{071440}), we also have 
\begin{multline}
\frac{1}{N}\sum_{k}^{(i)} \frac{  \partial (\hat{\mathbf{e}}_k^*  G\hat{\mathbf{e}}_i)}{\partial g^{u}_{ik}}= -c_i^u \tau_1(G)\big(\tilde{\sigma}_iT_{\hat{i}i}+(\wt{ B }G)_{ii}\big)\\
+ c_i^u\tau_1(G\wt{ B })\big( G_{ii}+T_{ii}\big)+\varepsilon_{i3}+O_\prec(\Psi^2)\,, \label{071441}
\end{multline}
where
\begin{align*}
\varepsilon_{i3}:=\frac{1}{N}  \sum_{k}^{(i)} \hat{\mathbf{e}}_k^*\Delta_{G}^u(i,k)\hat{\mathbf{e}}_i\,.
\end{align*}
The following estimates on $\varepsilon_{i2}$ and $\varepsilon_{i3}$ will be proved in Lemma~\ref{lem.100655} in Appendix~\ref{Appendix B}.
\begin{align}
|\varepsilon_{i2}|\prec \Psi^2,\qquad |\varepsilon_{i3}|\prec \Psi^2\,. \label{083050}
\end{align}
Combining~(\ref{071440}), ~(\ref{071441}) with an appropriate linear combination  and  using ~(\ref{083050}), we get
\begin{multline}
\frac{1}{N}\sum_{k}^{(i)} \frac{  \partial (\hat{\mathbf{e}}_k^*\wt{ B }^{\la i\ra}  G\hat{\mathbf{e}}_i)}{\partial g^{u}_{ik}} \tau_1(G)- \frac{1}{N}\sum_{k}^{(i)} \frac{  \partial (\hat{\mathbf{e}}_k^*  G\hat{\mathbf{e}}_i)}{\partial g^{u}_{ik}} \tau_1(\wt{ B }G)\\
=-c_i^u\big( G_{ii}+T_{ii}\big) \Big( \tau_1(\wt{ B }G)-\Upsilon_{1}\Big)+O_\prec(\Psi^2)\,. \label{071450}
\end{multline}
Here we also used that the tracial quantities $\tau_1(G)$, $ \tau_1(\wt{ B }G)$ and  $\Upsilon_{1}$   are stochastically dominated by $1$, in light of~(\ref{102125}). 
Applying ~(\ref{053095}), the fact $\mathring{T}_{ii}=T_{ii}-h_{ii}^uG_{ii}$ from~(\ref{052901}),  we can write
\begin{align}
&\frac{1}{N}\sum_{k}^{(i)} \frac{  \partial (\hat{\mathbf{e}}_k^*\wt{ B }^{\la i\ra}  G\hat{\mathbf{e}}_i)}{\partial g^{u}_{ik}} \tau_1(G)=-c_i^u(G_{ii}+T_{ii})\Big(\tau_1(\wt{ B }G)-\Upsilon_{1}\Big)\nonumber\\ &\qquad\qquad\qquad\qquad+\frac{1}{N}\sum_{k}^{(i)}\frac{\partial (\hat{\mathbf{e}}_k^*G\hat{\mathbf{e}}_i)}{\partial g_{ik}^u}\tau_1(\wt{B}G)+O_\prec(\Psi^2)\nonumber\\
&=-c_i^u(G_{ii}+T_{ii})\Big(\tau_1(\wt{ B }G)-\Upsilon_{1}\Big)+\mathring{T}_{ii} \tau_1(\wt{ B }G)\nonumber\\ &\qquad\qquad\qquad\qquad+\Big(\frac{1}{N}\sum_{k}^{(i)}\frac{\partial (\hat{\mathbf{e}}_k^*G\hat{\mathbf{e}}_i)}{\partial g_{ik}^u}-\mathring{T}_{ii} \Big)\tau_1(\wt{ B }G)+O_\prec(\Psi^2)\nonumber\\
& =-\|\mathbf{g}_i^u\|_2\Big( G_{ii}\tau_1(\wt{ B }G)-(G_{ii}+T_{ii})\Upsilon_{1}\Big)+\Big(\frac{1}{N}\sum_{k}^{(i)}\frac{\partial (\hat{\mathbf{e}}_k^*G\hat{\mathbf{e}}_i)}{\partial g_{ik}^u}-\mathring{T}_{ii} \Big)\tau_1(\wt{ B }G)\nonumber\\
&\qquad\quad+ \varepsilon_{i4}+\varepsilon_{i5}+O_\prec(\Psi^2)\,, \label{083030}
\end{align}
where 
\begin{align}
\varepsilon_{i4}&\deq \big((1-\|\mathbf{g}_i^u\|_2^2)\tau_1\big( \wt{B} G\big)+(1-\|\mathbf{g}_i^u\|_2^2-h_{ii})(\tau_1\big( \wt{B}G\big)-\Upsilon_1)\big)T_{ii}\nonumber  \label{112001}\\
&\qquad\qquad+(1-\|\mathbf{g}_i^u\|_2^2-h_{ii})  G_{ii} \Upsilon_1\,,\\
\varepsilon_{i5}&\deq\big( \|\mathbf{g}_i^u\|_2^2-1\big)G_{ii}\tau_1(\wt{ B }G)\,. \label{083190}
\end{align}
Using $\|\mathbf{g}_i^u\|_2=1+O_\prec(\frac{1}{\sqrt{N}})$, the estimates~(\ref{112401}), ~(\ref{100401}) and~(\ref{102125}), and Corollary~\ref{cor.finite rank}, we~get
\begin{align}
|\varepsilon_{i4}|\prec \frac{1}{\sqrt{N}}\,,\qquad |\varepsilon_{i5}|\prec \frac{1}{\sqrt{N}}\,.  \label{1023100}
\end{align}

Notice that the first term in the right side of \eqref{083030} will exactly cancel  the explicit last term in
the right side of~(\ref{07142100}). This cancellation is one of the main reasons behind the choice of
the auxiliary quantity $\mathcal{P}$.
Combining the first equation of~(\ref{071421}),~(\ref{071430}),~(\ref{071435}) with ~(\ref{083030}), we~get
\begin{align}
\mathbb{E}[\mathfrak{m}_i(p,p)]= &\mathbb{E} \Big[ \frac{1}{\|\mathbf{g}_{i}^u\|_2}\Big(\mathring{T}_{ii}- \frac{1}{N} \sum_{k}^{(i)} \frac{  \partial (\hat{\mathbf{e}}_k^*  G\hat{\mathbf{e}}_i)}{\partial g^{u}_{ik}} \Big)\tau_1(\wt{ B }G)  \mathfrak{m}_i(p-1,p)\Big]\nonumber\\
& -\frac{1}{N}\sum_{k}^{(i)} \mathbb{E} \Big[ \frac{ \partial \| \mathbf{g}^u_i\|_2^{-1}}{\partial g_{ik}^u} \hat{\mathbf{e}}_k^*\wt{ B }^{\la i\ra}  G\hat{\mathbf{e}}_i \tau_1(G) \mathfrak{m}_i(p-1,p)\Big]\nonumber\\
& - \frac{1}{N}\sum_{k}^{(i)} \mathbb{E} \Big[  \frac{1}{\|\mathbf{g}_{i}^u\|_2} \hat{\mathbf{e}}_k^*\wt{ B }^{\la i\ra}  G\hat{\mathbf{e}}_i \frac{\partial \tau_1(G)}{\partial g^u_{ik}}  \mathfrak{m}_i(p-1,p)\Big] \nonumber\\
& -\frac{p-1}{N} \sum_{k}^{(i)} \mathbb{E} \Big[ \frac{1}{\| \mathbf{g}^u_i\|_2} \hat{\mathbf{e}}_k^*\wt{ B }^{\la i\ra}  G\hat{\mathbf{e}}_i \tau_1(G)  \frac{\partial \mathcal{P}_{ii}}{\partial g_{ik}^u}\mathfrak{m}_i(p-2,p)\Big]\nonumber\\
& -  \frac{p}{N} \sum_{k}^{(i)} \mathbb{E} \Big[\frac{1}{\| \mathbf{g}^u_i\|_2} \hat{\mathbf{e}}_k^*\wt{ B }^{\la i\ra}  G\hat{\mathbf{e}}_i \tau_1(G)  \frac{\partial \overline{\mathcal{P}_{ii}}}{\partial g_{ik}^u}\mathfrak{m}_i(p-1,p-1)\Big]\nonumber\\
&+\mathbb{E}\Big[\Big(\varepsilon_{i1} \tau_1(G) - \frac{\varepsilon_{i4}+\varepsilon_{i5}}{\|\mathbf{g}_i^u\|_2}\Big) \mathfrak{m}_i(p-1,p)\Big]+\mathbb{E}\big[O_\prec(\Psi^2) \mathfrak{m}_i(p-1,p)\big]\,. \label{071501}
\end{align}
Note that the sixth term on the right side can be estimated  by $\mathbb{E}\big[O_\prec(\Psi) \mathfrak{m}_i(p-1,p)\big]$, according to~(\ref{083011}) and~(\ref{1023100}). This estimate is sufficient for the proof of Lemma~\ref{lem.071420}. But here we keep the $\varepsilon$-terms explicit for further use.

 In order to estimate the first term in the right side, 
  similarly to~(\ref{111750}), we can apply the integration by parts formula~(\ref{integration by parts formula}) to obtain
\begin{align}
&\mathbb{E} \Big[ \frac{1}{\|\mathbf{g}_{i}^u\|_2} \Big(\mathring{T}_{ii}- \frac{1}{N} \sum_{k}^{(i)} \frac{  \partial (\hat{\mathbf{e}}_k^*  G\hat{\mathbf{e}}_i)}{\partial g^{u}_{ik}} \Big)\tau_1(\wt{ B }G)  \mathfrak{m}_i(p-1,p)\Big]\nonumber\\
&\qquad= \frac{1}{N} \sum_{k}^{(i)}\mathbb{E} \Big[ \frac{\partial \|\mathbf{g}_{i}^u\|_2^{-2}}{\partial g_{ik}^u}  \hat{\mathbf{e}}_k^*  G\hat{\mathbf{e}}_i\tau_1(\wt{ B }G)  \mathfrak{m}_i(p-1,p)\Big]\nonumber\\
&\qquad\qquad +\frac{p-1}{N} \sum_{k}^{(i)}\mathbb{E} \Big[ \frac{1}{\|\mathbf{g}_{i}^u\|_2^2}\hat{\mathbf{e}}_k^*  G\hat{\mathbf{e}}_i  \frac{\partial \tau_1(\wt{ B }G)}{\partial g_{ik}^u}\mathfrak{m}_i(p-1,p)\Big] \nonumber\\
&\qquad\qquad +\frac{p-1}{N} \sum_{k}^{(i)}\mathbb{E} \Big[  \frac{1}{\|\mathbf{g}_{i}^u\|_2^2}\hat{\mathbf{e}}_k^*  G\hat{\mathbf{e}}_i \tau_1(\wt{ B }G) \frac{\partial \mathcal{P}_{ii}}{\partial g_{ik}^u}\mathfrak{m}_i(p-2,p)\Big]\nonumber\\
&\qquad\qquad +\frac{p}{N} \sum_{k}^{(i)}\mathbb{E} \Big[  \frac{1}{\|\mathbf{g}_{i}^u\|_2^2} \hat{\mathbf{e}}_k^*  G\hat{\mathbf{e}}_i \tau_1(\wt{ B }G) \frac{\partial \overline{\mathcal{P}_{ii}}}{\partial g_{ik}^u}\mathfrak{m}_i(p-1,p-1)\Big]\,.\label{071502}
\end{align}
 Notice the cancellation  between the two terms in the bracket in the first line.

Next we consider the estimate  of $\mathfrak{n}_i(p,p)$; especially we control the first term in the right side of  \eqref{111750}. 
In addition, using~(\ref{071441}),~(\ref{083050}), and the facts $\|\mathbf{g}_i^u\|_2=1+O_\prec(\frac{1}{\sqrt{N}})$ and  $c_i^u=1+O_\prec(\frac{1}{\sqrt{N}})$, we have 
\begin{align}
&\frac{1}{N}\frac{1}{\|\mathbf{g}_i^u\|_2}\sum_{k}^{(i)} \frac{  \partial (\hat{\mathbf{e}}_k^*  G\hat{\mathbf{e}}_i)}{\partial g^{u}_{ik}}= - \tau_1(G)\big(\tilde{\sigma}_iT_{\hat{i}i}+(\wt{ B }G)_{ii}\big)+ \tau_1(G\wt{ B })\big( G_{ii}+T_{ii}\big)+O_\prec(\Psi)\,. \label{111752}
\end{align}
 Note that the result of this calculation exactly cancels the second term in the right side of~\eqref{071421}.  
Hence, analogously to~(\ref{071501}), combining ~(\ref{111750}),~(\ref{083050}), (\ref{111751}), (\ref{071421}) and~(\ref{111752}), we~get
\begin{align}
\mathbb{E}[\mathfrak{n}_i(p,p)]= &\frac{1}{N}\sum_{k}^{(i)} \mathbb{E} \Big[ \frac{ \partial \| \mathbf{g}^u_i\|_2^{-1}}{\partial g_{ik}^u} \hat{\mathbf{e}}_k^*  G\hat{\mathbf{e}}_i  \mathfrak{n}_i(p-1,p)\Big]\nonumber\\
&+\frac{p-1}{N} \sum_{k}^{(i)} \mathbb{E} \Big[ \frac{1}{\| \mathbf{g}^u_i\|_2} \hat{\mathbf{e}}_k^* G\hat{\mathbf{e}}_i  \frac{\partial \mathcal{K}_{ii}}{\partial g_{ik}^u}\mathfrak{n}_i(p-2,p)\Big]\nonumber\\
&+  \frac{p}{N} \sum_{k}^{(i)} \mathbb{E} \Big[\frac{1}{\| \mathbf{g}^u_i\|_2} \hat{\mathbf{e}}_k^*G\hat{\mathbf{e}}_i  \frac{\partial \overline{\mathcal{K}_{ii}}}{\partial g_{ik}^u}\mathfrak{n}_i(p-1,p-1)\Big]\nonumber\\ &+ \mathbb{E} \Big[ O_\prec(\Psi) \mathfrak{n}_i(p-1,p)\Big]. \label{112402}
\end{align}

Hence, to prove the second equation of~(\ref{071503}), it suffices to estimate the first three terms on the  right side of~(\ref{112402}). For the first equation of~(\ref{071503}), with~(\ref{083011}) and~(\ref{1023100}),  it suffices  to estimate the second to the fifth terms on the right side of~(\ref{071501}), and the terms on the right side of~(\ref{071502}). 
All these estimates can be derived from the following lemma.

\begin{lem} \label{lem.071510}  Suppose that the assumptions in Theorem~\ref{071601} hold. 
Set $X_i=\hat{I}$ or $\wt{ B }^{\la i\ra}$. Let $Q$ be any deterministic diagonal matrix satisfying $\|Q\|\leq C$ and $X=\hat{I}$ or~$A$. We have the following estimates 
\begin{align}
\frac{1}{N}\sum_{k}^{(i)}\frac{ \partial \| \mathbf{g}^u_i\|_2^{-1}}{\partial g_{ik}^u} \hat{\mathbf{e}}_k^*X_i G\hat{\mathbf{e}}_i&=
 O_\prec(\frac{1}{N}),\quad  &\frac{1}{N} \sum_{k}^{(i)} \hat{\mathbf{e}}_i^*X\frac{\partial G}{\partial g_{ik}^u} \hat{\mathbf{e}}_i \hat{\mathbf{e}}_k^*  X_i G\hat{\mathbf{e}}_i & =O_\prec(\Psi^2), \nonumber\\
 \frac{1}{N}\sum_{k}^{(i)}\frac{ \partial T_{ji}}{\partial g_{ik}^u} \hat{\mathbf{e}}_k^*X_i G\hat{\mathbf{e}}_i&=
 O_\prec(\Psi^2), \quad & 
\frac{1}{N} \sum_{k}^{(i)}  \frac{\partial \ntr QXG}{\partial g_{ik}^u} \hat{\mathbf{e}}_k^*  X_i G\hat{\mathbf{e}}_i&=O_\prec(\Psi^4), \label{071520}
\end{align}
where $j=i$ or $\hat{i}$ in the third equation. 
\end{lem}

Assuming the validity of Lemma~\ref{lem.071510}, we continue with the proof of Lemma \ref{lem.071420}. Recall that our task is to bound the terms on the right 
sides  of (\ref{071501}), (\ref{071502}), (\ref{112402}). The second term in~(\ref{071501}), the first term in~(\ref{071502}) and the first term in 
~(\ref{112402}) can all be estimated with the aid of first bound in~(\ref{071520}). 
The estimates for the  third term in~(\ref{071501}) and  the second term  in~(\ref{071502}) follow from the last bound in~(\ref{071520}). 
Finally, the fourth term in~(\ref{071501}), the third term  in~(\ref{071502}) and the second term in~(\ref{112402}) 
 together with  their complex conjugate analogues  can be  estimated in a similar way, so 
we only present the details for the fourth term on the right side of~(\ref{071501}) in the sequel.

Recall the definition of $\mathcal{P}_{ii}$ from~(\ref{071560})
\begin{align*}
\mathcal{P}_{ii}= (\wt{ B }G)_{ii} \tau_1(G) -G_{ii} \tau_1(\wt{ B }G)+\big( G_{ii}+ T_{ii}\big) \Upsilon_{1}\,.
\end{align*}
Using ~(\ref{102702}), and recalling the definition of $\Upsilon_{1}$ in~(\ref{072130}), we can see that $\mathcal{P}_{ii}$ is a combination of the terms of the following forms: $T_{ii}$, $(XG)_{ii}$ and $\ntr (QX G)$, for $X=\hat{I}$ or $A$, and  $Q$ is certain deterministic diagonal  matrix with $\|Q\|\leq C$ for some positive constant $C$. For example: $(\wt{B}G)_{ii}= 1+zG_{ii}-(AG)_{ii}$, and
\begin{align*}
\tau_1(G\wt{B})&= \tau_1(\hat{I}-G(A-z))=1+z\tau_1(G)-\tau_1(GA)\nonumber\\
&= 1+2z\ntr (\hat{I}_1G)-2\ntr (A\hat{I}_1G)=1+2z\ntr (\hat{I}_1G)-2\ntr (\hat{I}_2 AG)\,.
\end{align*}
Then, by the product rule for derivative, and the boundedness of all the partial traces  (\cf~(\ref{102125})) and entries  (\cf~(\ref{112401}),~(\ref{100401})), we can apply  the last three bounds in~(\ref{071520}) to conclude that the fourth term on the right side of~(\ref{071501}) is $\mathbb{E}[O_\prec(\Psi^2)\mathfrak{m}_i(p-2,p)]$.  

This completes the proof of Lemma~\ref{lem.071420}, up to Lemma~\ref{lem.071510}.
\end{proof}

\begin{proof}[Proof of Lemma~\ref{lem.071510}]  Since the sums in~\eqref{071520} are over $k\neq i$, it will be convenient to work in this proof with the following notations 
\begin{align}
I^{\la i\ra}:=I-\mathbf{e}_i\mathbf{e}_i^*\,,\qquad \hat{I}_{1}^{\la i\ra}:= I^{\la i\ra}\oplus 0\,, \label{081201}
\end{align} 
where $0$ is the $N\times N$ zero matrix.
 We check the estimates in~(\ref{071520}) one by one. For the first estimate, we have
\begin{align*}
\frac{1}{N}\sum_{k}^{(i)}\frac{ \partial \| \mathbf{g}^u_i\|_2^{-1}}{\partial g_{ik}^u} \hat{\mathbf{e}}_k^*X_i G\hat{\mathbf{e}}_i&=-\frac{1}{2N}  \frac{1}{\|\mathbf{g}_i^u\|_2^3} \sum_{k}^{(i)}  \bar{g}_{ik}^u  \hat{\mathbf{e}}_k^*X_i G\hat{\mathbf{e}}_i\nonumber\\
&= -\frac{1}{2N} 
 \frac{1}{\|\mathbf{g}_i^u\|_2^2}   (\mathring{\mathbf{k}}_i^{u})^*X_i G\hat{\mathbf{e}}_i=O_\prec(\frac{1}{N})\,,
\end{align*}
where in the last step we used  that 
\begin{align}
(\mathring{\mathbf{k}}_i^{u})^*X_i G\hat{\mathbf{e}}_i\prec 1\,, \label{XG}
\end{align}
which  would follow once we show 
$|\mathring{S}_{ii}|\prec 1$ and $|\mathring{T}_{ii}| = |T_{ii} - h_{ii}^u G_{ii}|\prec 1$ by 
~(\ref{052901}). Since $\mathring{S}_{ii} = -(\widetilde BG)_{ii} + O_\prec(\Psi)$  by
 ~(\ref{050930}),~(\ref{083011}) and  $|(\widetilde BG)_{ii}|\prec 1$ from~(\ref{100401}), we get  $|\mathring{S}_{ii}|\prec 1$.
  The estimate $|\mathring{T}_{ii}|\prec 1$ follows from~(\ref{112401}) and the fact $|h_{ii}^u|\prec\frac{1}{\sqrt{N}}$.

Next, we show  the second estimate in~(\ref{071520}). Using~(\ref{050915}),  we have
\begin{align}
&\frac{1}{N} \sum_{k}^{(i)}   \hat{\mathbf{e}}_i^* X\frac{\partial G}{\partial g_{ik}^u} \hat{\mathbf{e}}_i  \hat{\mathbf{e}}_k^*  X_i G\hat{\mathbf{e}}_i = c_i^u\frac{1}{N} \sum_{k}^{(i)}  \hat{\mathbf{e}}_i^*X G\hat{\mathbf{e}}_k\big(\hat{\mathbf{e}}_i+\mathbf{k}_i^u\big)^* \wt{ B }^{\la i\ra} \mathcal{R}_i G \hat{\mathbf{e}}_i  \hat{\mathbf{e}}_k^*  X_i G\hat{\mathbf{e}}_i\nonumber\\
&\qquad\qquad+c_i^u\frac{1}{N} \sum_{k}^{(i)}  \hat{\mathbf{e}}_i^* XG\mathcal{R}_i \wt{ B }^{\la i\ra} \hat{\mathbf{e}}_k\big(\hat{\mathbf{e}}_i^*+(\mathbf{k}_i^u)^*\big) G\hat{\mathbf{e}}_i \hat{\mathbf{e}}_k^*  X_i G\hat{\mathbf{e}}_i\nonumber\\ &\qquad\qquad+\frac{1}{N} \sum_{k}^{(i)}  \hat{\mathbf{e}}_i^*X\Delta_{G}^u(i,k)\hat{\mathbf{e}}_i \hat{\mathbf{e}}_k^*  X_i G\hat{\mathbf{e}}_i\nonumber\\
 &= c_i^u\frac{1}{N} \hat{\mathbf{e}}_i^* XG \hat{I}^{\la i\ra}_1 X_i G\hat{\mathbf{e}}_i \big(\hat{\mathbf{e}}_i+\mathbf{k}_i^u\big)^* \wt{ B }^{\la i\ra} \mathcal{R}_i G \hat{\mathbf{e}}_i\nonumber\\ &\qquad\qquad +c_i^u\frac{1}{N} \hat{\mathbf{e}}_i^* XG\mathcal{R}_i \wt{ B }^{\la i\ra}  
 \hat{I}^{\la i\ra}_1 X_i G\hat{\mathbf{e}}_i \big(\hat{\mathbf{e}}_i+\mathbf{k}_i^u\big)^* G\hat{\mathbf{e}}_i \nonumber\\
&\qquad\qquad+\frac{1}{N} \sum_{k}^{(i)}  \hat{\mathbf{e}}_i^*X\Delta_{G}^u(i,k)\hat{\mathbf{e}}_i \hat{\mathbf{e}}_k^*  X_i G\hat{\mathbf{e}}_i\,, \label{100501}
\end{align}
where we have used the notation introduced in~(\ref{081201}).

From  Lemma~\ref{lem.100655} in Appendix~\ref{Appendix B}, we see that the last term on the right side of~(\ref{100501}) is of order $O_\prec(\Psi^2)$.  For the first two terms,  we first claim that
\begin{align}
|\hat{\mathbf{e}}_i^* XG \hat{I}_1^{\la i\ra}X_i G\hat{\mathbf{e}}_i |\prec \frac{1}{\eta}\,,\qquad |\hat{\mathbf{e}}_i^* XG\mathcal{R}_i \wt{ B }^{\la i\ra} \hat{I}_1^{\la i\ra}X_i G\hat{\mathbf{e}}_i |\prec \frac{1}{\eta}\,.  \label{1023101}
\end{align} 
We prove the first estimate~(\ref{1023101}) as follows. Note that
\begin{align}
\hat{\mathbf{e}}_i^* XG \hat{I}_i^{\la i\ra}X_i G\hat{\mathbf{e}}_i &\le \hat{\mathbf{e}}_i^* X|G|^2X \hat{\mathbf{e}}_i+ \hat{\mathbf{e}}_i^* G^*X_i^* \hat{I}^{\la i\ra}_1X_iG\hat{\mathbf{e}}_i \nonumber\\ &\leq  \frac{1}{\eta} \Im (X G X)_{ii} + \| X_i\|^2  \frac{1}{\eta} \Im G_{ii}\,. \label{112410}
\end{align}
Recall  $X=\hat{I}$ or $A$, and the fact $(AGA)_{ii}=|\sigma_i|^2G_{\hat{i}\hat{i}}$.  This together with~(\ref{112401}) and the fact $\|X_i\|\leq C$ since $X_i=\hat{I}$ or $\wt{B}^{\la i\ra}$ implies the first estimate in~(\ref{1023101}). The second estimate can be derived in a similar way.
  
Then, we recall  from ~(\ref{071553}) that 
$\big(\hat{\mathbf{e}}_i+\mathbf{k}_i^u\big)^* \wt{ B }^{\la i\ra} \mathcal{R}_i G \hat{\mathbf{e}}_i = 
-\widetilde \sigma_i T_{\hat ii} - (\widetilde B G)_{ii}$, and from the definition of $T_{ij}$ in~(\ref{053070}) that
$\big(\hat{\mathbf{e}}_i+\mathbf{k}_i^u\big)^*  G \hat{\mathbf{e}}_i = G_{ii}
+T_{ ii} $, 
 which together with~(\ref{112401}),~(\ref{100401}) and ~(\ref{1023101})  imply   that the first two terms on the right side of~(\ref{100501}) are also of order $O_\prec(\Psi^2)$. This completes the second estimate in~(\ref{071520}).

For the third estimate in~(\ref{071520}), we present the details for $j=i$ in the sequel. The  case of $j=\hat{i}$ is 
similar but simpler and we omit it.  According to the definition of $T_{ii}$ in~(\ref{053070}), it suffices to show 
\begin{align} \frac{1}{N} \sum_{k}^{(i)}  \frac{\partial (\mathbf{k}_i^u)^*}{\partial g_{ik}^u}G\hat{\mathbf{e}}_i \hat{\mathbf{e}}_k^*  X_i G\hat{\mathbf{e}}_i= O_\prec(\frac{1}{N} ),\quad  \frac{1}{N} \sum_{k}^{(i)}  (\mathbf{k}_i^u)^* \frac{\partial G}{\partial g_{ik}^u} \hat{\mathbf{e}}_i \hat{\mathbf{e}}_k^*  X_i G\hat{\mathbf{e}}_i=O_\prec(\Psi^2). \label{071580}
\end{align}
For the first estimate in~(\ref{071580}), we have 
\begin{align*}
\frac{1}{N} \sum_{k}^{(i)}  \frac{\partial (\mathbf{k}_i^u)^*}{\partial g_{ik}^u}G\hat{\mathbf{e}}_i \hat{\mathbf{e}}_k^*  X_i G\hat{\mathbf{e}}_i&=-\frac{1}{2\|\mathbf{g}_i^u\|_2^2}\frac{1}{N} \sum_{k}^{(i)} \bar{h}_{ik}^u\hat{\mathbf{e}}_k^*  X_i G\hat{\mathbf{e}}_i (\mathbf{k}_i^u)^*G\hat{\mathbf{e}}_i \nonumber\\
&= -\frac{1}{2\|\mathbf{g}_i^u\|_2^2}\frac{1}{N}(\mathring{\mathbf{k}}_i^u)^* X_i G\hat{\mathbf{e}}_i (\mathbf{k}_i^u)^*G\hat{\mathbf{e}}_i =O_\prec(\frac{1}{N})\,,
\end{align*}
where in the last step we used ~(\ref{112401}) and~\eqref{XG}. 
The proof of the second estimate in~(\ref{071580}) is similar to that for the second estimate in~(\ref{071520}). It suffices to go through the discussion from~(\ref{100501}) to~(\ref{112410}) again, with the vector $\hat{\mathbf{e}}_i^* X$ replaced by $(\mathbf{k}_i^u)^*$. The main differences are: instead of the last term of~(\ref{100501}), we have
\begin{align}
\frac{1}{N} \sum_{k}^{(i)}  (\mathbf{k}_i^u)^*\Delta_{G}^u(i,k)\hat{\mathbf{e}}_i \hat{\mathbf{e}}_k^*  X_i G\hat{\mathbf{e}}_i\,, \label{112450}
\end{align}
and instead of the first term on the right side of~(\ref{112410}), we have
\begin{align}
\frac{1}{\eta} \Im (\mathbf{k}_i^u)^*G \mathbf{k}_i^u\,.  \label{112451}
\end{align}
The bound on~(\ref{112450}) is stated in~(\ref{120503}). For~(\ref{112451}), we recall the identity ~(\ref{050805}) which implies $\mathbf{k}_i^u=-\mathcal{R}_i\hat{\mathbf{e}}_i$, the fact $G=\mathcal{U}\mathcal{G}\mathcal{U}^*$, together with ~(\ref{101811}) and the fact $\mathcal{R}_i^2=\hat{I}$. Then we have 
\begin{align}
(\mathbf{k}_i^u)^*G\mathbf{k}_i^u=
\hat{\mathbf{e}}_i^*\mathcal{R}_i\mathcal{U}\mathcal{G}\mathcal{U}^*\mathcal{R}_i\hat{\mathbf{e}}_i= \hat{\mathbf{e}}_i^*\mathcal{U}_i\Phi_i\mathcal{G}\Phi_i^*\mathcal{U}_i^*\hat{\mathbf{e}}_i=\mathcal{G}_{ii}\,.  \label{112230}
\end{align} 
Similarly to~(\ref{112401}), with the second bound in  assumption~(\ref{071422}), we can also show that
\begin{align}
\max_{k,l}|\mathcal{G}_{kl}|\prec 1\,. \label{112460}
\end{align} 
With these bounds for~(\ref{112450}) and ~(\ref{112451}), we can show the second estimate of~(\ref{071580}), which together with the first estimate in~(\ref{071580}) implies the third bound in ~(\ref{071520}).

At the end, we show the last bound in~(\ref{071520}). Applying~(\ref{050915}),  we have 
\begin{align}
\frac{\partial \ntr QXG}{\partial g_{ik}^u}= &\frac{1}{N} c_i^u \big(\hat{\mathbf{e}}_i+\mathbf{k}_i^u\big)^* \wt{ B }^{\la i\ra} \mathcal{R}_i GQXG\hat{\mathbf{e}}_k\nonumber\\
&+\frac{1}{N}c_i^u \big(\hat{\mathbf{e}}_i+\mathbf{k}_i^u\big)^* GQXG\mathcal{R}_i \wt{ B }^{\la i\ra} \hat{\mathbf{e}}_k+\ntr QX\Delta_{G}^u(i,k)\,. \label{101820}
\end{align}
Summing over $k$ and using the notation in~(\ref{081201}), we can write
\begin{align}
&\frac{1}{N} \sum_{k}^{(i)}  \frac{\partial \ntr QXG}{\partial g_{ik}^u} \hat{\mathbf{e}}_k^*  X_i G\hat{\mathbf{e}}_i= \frac{c_i^u }{N^2} \big(\hat{\mathbf{e}}_i+\mathbf{k}_i^u\big)^* \wt{ B }^{\la i\ra} \mathcal{R}_i GQXG\hat{I}_1^{\la i\ra}  X_i G\hat{\mathbf{e}}_i \nonumber\\
&\qquad  +\frac{c_i^u }{N^2} \big(\hat{\mathbf{e}}_i+\mathbf{k}_i^u\big)^* GQXG\mathcal{R}_i \wt{ B }^{\la i\ra} \hat{I}_1^{\la i\ra}  X_i G\hat{\mathbf{e}}_i+ \frac{1}{N} \sum_{k}^{(i)} \ntr QX\Delta_{G}^u(i,k) \hat{\mathbf{e}}_k^*  X_i G\hat{\mathbf{e}}_i\,. \label{112455}
\end{align}
The bound for the last term of the right side of~(\ref{112455}) can be found in~(\ref{100657}).  

In the sequel, we bound the first two terms on the right side of ~(\ref{112455}). We only present the details for the first one; the second is estimated analogously. First, similarly to~(\ref{071553}), we~have 
\begin{align*}
 \big(\hat{\mathbf{e}}_i+\mathbf{k}_i^u\big)^* \wt{ B }^{\la i\ra} \mathcal{R}_i=-(\tilde{\sigma}_i (\mathbf{k}_i^v)^*+\hat{\mathbf{e}}_i^* \wt{B})\,.
\end{align*}
Then we can write
\begin{multline}
 \frac{c_i^u }{N^2} \big(\hat{\mathbf{e}}_i+\mathbf{k}_i^u\big)^* \wt{ B }^{\la i\ra} \mathcal{R}_i GQXG\hat{I}_1^{\la i\ra}  X_i G\hat{\mathbf{e}}_i =  - \frac{c_i^u }{N^2} (\tilde{\sigma}_i (\mathbf{k}_i^v)^*+\hat{\mathbf{e}}_i^* \wt{B})GQXG\hat{I}_1 X_i G\hat{\mathbf{e}}_i\\+  \frac{c_i^u }{N^2} (\tilde{\sigma}_i (\mathbf{k}_i^v)^*+\hat{\mathbf{e}}_i^* \wt{B})GQXG\hat{\mathbf{e}}_i\hat{\mathbf{e}}_i^* X_i G\hat{\mathbf{e}}_i\,. \label{112201}
\end{multline}
For the second term on the right side of~(\ref{112201}), we use the bounds
\begin{align}
\big|(\tilde{\sigma}_i (\mathbf{k}_i^v)^*+\hat{\mathbf{e}}_i^* \wt{B})GQXG\hat{\mathbf{e}}_i\big|\prec\eta^{-2}\,,\qquad \big|\hat{\mathbf{e}}_i^* X_i G\hat{\mathbf{e}}_i\big|\prec 1\,, \label{112202}
\end{align}
where  in the first inequality we used the trivial bound $\| G\| \le \eta^{-1}$, while in the second inequality
we used the fact that $X_i=\hat{I}$ or $\wt{B}^{\la i\ra}$, together with~(\ref{0830200}), and the first bound in~(\ref{112401}). Using the bounds in~(\ref{112202}), we see that the second term on the right side of~(\ref{112201}) is of order $O_\prec(\Psi^4)$.

Now, we turn to the first term on the right side of~(\ref{112201}).  Note that 
\begin{align}
\Big|\frac{1}{N^2}  (\tilde{\sigma}_i (\mathbf{k}_i^v)^*+\hat{\mathbf{e}}_i^* \wt{B})GQXG\hat{I}_1 &X_i G\hat{\mathbf{e}}_i\Big| \leq \frac{C}{N^2\eta}  \big(\|(\mathbf{k}_i^v)^*G\|_2+\|\hat{\mathbf{e}}_i^* \wt{B}G\|_2\big) \|G\hat{\mathbf{e}}_i\|_2\nonumber\\
&\hspace{-10ex}\leq  \frac{C}{N^2\eta}  \big(\|(\mathbf{k}_i^v)^*G\|_2^2+\|\hat{\mathbf{e}}_i^* \wt{B}G\|_2^2+ \|G\hat{\mathbf{e}}_i\|_2^2\big)\nonumber\\
& \hspace{-10ex}\leq  \frac{C}{N^2\eta^2}  \big(\Im (\mathbf{k}_i^v)^*G \mathbf{k}_i^v+\Im \hat{\mathbf{e}}_i^* \wt{B}G \wt{B}\hat{\mathbf{e}}_i+ \Im \hat{\mathbf{e}}_i^*G\hat{\mathbf{e}}_i\big)\,. \label{112250}
\end{align}
Similarly to \eqref{112230}, we have
\begin{align}
(\mathbf{k}_i^v)^*G\mathbf{k}_i^v=\hat{\mathbf{e}}_{\hat{i}}^*\mathcal{R}_i\mathcal{U}\mathcal{G}\mathcal{U}^*\mathcal{R}_i\hat{\mathbf{e}}_{\hat{i}}= \hat{\mathbf{e}}_{\hat{i}}^*\mathcal{U}_i\Phi_i\mathcal{G}\Phi_i^*\mathcal{U}_i^*\hat{\mathbf{e}}_{\hat{i}}=\mathcal{G}_{\hat{i}\hat{i}}\,. \label{112251}
\end{align}
Combining~(\ref{112250}) and~(\ref{112251}), we obtain 
\begin{multline*}
\Big|\frac{1}{N^2}  (\tilde{\sigma}_i (\mathbf{k}_i^v)^*+\hat{\mathbf{e}}_i^* \wt{B})GQXG\hat{I}_1 X_i G\hat{\mathbf{e}}_i\Big|\leq  \frac{C}{N^2\eta^2}  \big(\Im \mathcal{G}_{\hat{i}\hat{i}}+\Im  (\wt{B}G \wt{B})_{ii}+ \Im G_{ii}\big)\\=O_\prec(\Psi^4)\,, 
\end{multline*}
where we also used~(\ref{100401}) and~(\ref{112460}). Hence the first term on the right side of~(\ref{112455}) is $O_\prec(\Psi^4)$. The second term on the right side of~(\ref{112455}) is bounded  similarly. These bounds together with~(\ref{100657}) yield the other estimates in~(\ref{071520}). This completes the proof of~Lemma~\ref{lem.071510}. 
\end{proof}

\subsection{Local stability analysis: proof of Theorem~\ref{071601}}\label{sec: mini stability analysis}
Having established Lemma~\ref{lem.071420}, we move on to the local stability analysis in order to conclude the proof of Theorem~\ref{071601}.
 
 \begin{proof}[Proof of Theorem~\ref{071601}]  Applying Young's inequality, we obtain from~(\ref{071503}) that for any given (small) $\varepsilon>0$, 
 \begin{align*}
 \mathbb{E}[\mathfrak{m}_i(p,p)]\leq 3\frac{1}{2p} \mathbb{E}\big[N^{2p\varepsilon}\Psi^{2p}\big]+ 3\frac{2p-1}{2p} N^{-\frac{2p\varepsilon}{2p-1}} \mathbb{E}[\mathfrak{m}_i(p,p)]\,, 
 \end{align*}
 which implies $ \mathbb{E}[\mathfrak{m}_i(p,p)]\prec \Psi^{2p}$. Hence, we conclude the proof of the first estimate of~(\ref{071305}). 
 
 The second estimate of~\eqref{071305} can be proved in the same way, with the aid of the second equation in~(\ref{071503}). Then, applying Markov's inequality  we get the first  and the third estimates of ~(\ref{071302}) with $j=i$.  The others in~(\ref{071302}) are proved in an analogous way. We omit the details.

 Next, we show that~(\ref{071302}) together with the assumption~(\ref{071422})  imply~(\ref{071604}). To this end, we first show the following crude bound
 \begin{align}
 \Lambda_T(z)\prec N^{-\frac{\gamma}{4}} \label{101401}
 \end{align}
 under the assumption ~(\ref{071422}).  We need the following  equations for $j=i,\hat{i}$,
 \begin{align}
&T_{ij}= - \tau_1(G)\big(\tilde{\sigma}_iT_{\hat{i}j}+(\wt{ B }G)_{ij}\big)
+ \tau_1(G\wt{ B })\big( G_{ij}+T_{ij}\big)+O_\prec(\Psi)\,,\nonumber\\
&T_{\hat{i}j}= - \tau_2(G)\big(\tilde{\sigma}_i^*T_{ij}+(\wt{ B }G)_{\hat{i}j}\big)
+ \tau_2(G\wt{ B })\big( G_{\hat{i}j}+T_{\hat{i}j}\big)+O_\prec(\Psi)\,, \label{110801}
 \end{align}
which is just a rewriting of the second line of~(\ref{071302}), according to the definition in~(\ref{071560}). 

Using the first identity in~(\ref{102702}) and the definition of $A$ in~(\ref{050970}), we have
\begin{align}
&(\wt{ B }G)_{ii}=1+zG_{ii}-\xi_i G_{\hat{i}i},\qquad (\wt{B}G)_{i\hat{i}}=-\xi_iG_{\hat{i}\hat{i}}+z G_{i\hat{i}}, \nonumber\\
&(\wt{ B }G)_{\hat{i}i}=-\bar{\xi}_i G_{ii}+zG_{\hat{i}i},\qquad (\wt{ B }G)_{\hat{i}\hat{i}}=1+zG_{\hat{i}\hat{i}}-\bar{\xi}_i G_{i\hat{i}}. \label{0913101}
\end{align}
Applying the assumption on $\Lambda_{\dd}$ in~(\ref{071422}),  and also the lower bound of $\Im \omega_B$ and the upper bound on $|\omega_B|$ in ~(\ref{110270}), we can get  from~(\ref{0913101}) that
 \begin{align}
 &(\wt{ B }G)_{ii}=\frac{(z-\omega_B)\omega_B}{|\xi_i|^2-\omega_B^2}+O_\prec(N^{-\frac{\gamma}{4}})\,,\qquad (\wt{B}G)_{i\hat{i}}=\frac{(z-\omega_B)\xi_i}{|\xi_i|^2-\omega_B^2}+O_\prec(N^{-\frac{\gamma}{4}})\,, \nonumber\\
&(\wt{ B }G)_{\hat{i}i}=\frac{(z-\omega_B)\bar{\xi}_i}{|\xi_i|^2-\omega_B^2}+O_\prec(N^{-\frac{\gamma}{4}})\,,\qquad (\wt{ B }G)_{\hat{i}\hat{i}}=\frac{(z-\omega_B)\omega_B}{|\xi_i|^2-\omega_B^2}+O_\prec(N^{-\frac{\gamma}{4}})\,.  \label{111910}
 \end{align}
This together with  ~(\ref{102125}),  
leads to the following estimates for $j=i,\hat{i}$,
\begin{align*}
&- \tau_1(G)(\wt{ B }G)_{ij}
+ \tau_1(G\wt{ B })G_{ij}=O_\prec(N^{-\frac{\gamma}{4}})\,,\\ & - \tau_2(G)(\wt{ B }G)_{\hat{i}j}
+ \tau_2(G\wt{ B }) G_{\hat{i}j}= O_\prec(N^{-\frac{\gamma}{4}})\,,
\end{align*}
which together with~(\ref{110801}) implies
\begin{align}
&\big(1- \tau_1(G\wt{ B })\big)T_{ij}+ \tau_1(G)\tilde{\sigma}_iT_{\hat{i}j}
= O_\prec(N^{-\frac{\gamma}{4}})\,,\nonumber\\
&\big(1- \tau_2(G\wt{ B })\big)T_{\hat{i}j}+ \tau_2(G)\tilde{\sigma}_i^*T_{ij}=
O_\prec(N^{-\frac{\gamma}{4}})\,,\qquad\quad j=i,\hat{i}\,. 
\label{071716}
\end{align}
Solving $T_{ij}$ from the  equations in~(\ref{071716}), we get
\begin{align}
\big(\big(1- \tau_1(G\wt{ B })\big)\big(1- \tau_2(G\wt{ B })\big)-|\sigma_i|^2 \tau_1(G) \tau_2(G)\big)T_{ij}= O_\prec(N^{-\frac{\gamma}{4}})\,.  \label{071718}
\end{align}
Using the  assumption on $\Lambda_T$ in~(\ref{071422}),  and also~(\ref{102125}), we obtain from~(\ref{071718}) that
\begin{align}
\big(\big(1+(\omega_B-z) m_{\mu_A\boxplus\mu_B}\big)^2-|\sigma_i|^2 m_{\mu_A\boxplus\mu_B}^2\big) T_{ij}=O_\prec(N^{-\frac{\gamma}{4}})\,. \label{083101}
\end{align}

Further, observe that
\begin{align}
\big(1+(\omega_B-z) m_{\mu_A\boxplus\mu_B}\big)^2-|\sigma_i|^2 m_{\mu_A\boxplus\mu_B}^2= m_{\mu_A\boxplus \mu_B}^2\big(\omega_A-|\sigma_i|\big)\big(\omega_A+|\sigma_i|\big)\,, \label{101415}
\end{align}
which follows from the second equation in~(\ref{le definiting equations}) with $(\mu_1,\mu_2)=(\mu_A, \mu_B)$.
Then by~(\ref{110270}) and the fact $m_{\mu_A\boxplus\mu_B}=m_{\mu_A}(\omega_B)$,  we see that 
 $|T_{ij}|\prec N^{-\frac{\gamma}{4}}$ for $j=i,\hat{i}$. Analogously, one can show $|T_{\hat{i}j}|\prec N^{-\frac{\gamma}{4}}$. This completes the proof of the crude bound~(\ref{101401}).

With~(\ref{101401}), we can now proceed to the proof of~(\ref{071604}).  We
 consider the average of $\mathcal{P}_{ii}$ over $i\in \llbracket 1, N\rrbracket $,
  and use~(\ref{071302}) to obtain
 \begin{align}
\Upsilon_{1}\cdot \frac{1}{N}\sum_{i=1}^N\big( G_{ii}+ T_{ii}\big)
=    \frac{1}{N}  \sum_{i=1}^N \mathcal{P}_{ii} = O_\prec(\Psi)\,. \label{071607}
 \end{align}
By  the first estimate in~(\ref{102125}), the fact $m_{\mu_A\boxplus\mu_B}=m_{\mu_A}(\omega_B)$, the lower bound on $\Im \omega_B$ in~(\ref{110270}),  and also the crude bound~(\ref{101401}), we can see that 
\begin{align}
\bigg|\frac{1}{\frac{1}{N}\sum_{i=1}^N\big( G_{ii}+ T_{ii}\big)}\bigg|=\bigg|\frac{1}{ m_{\mu_A}(\omega_B)+O_\prec(N^{-\frac{\gamma}{4}})}\bigg|\prec 1\,. \label{071608}
\end{align} 
Then the first estimate in~(\ref{071604}) follows from~(\ref{071607}) and~(\ref{071608}) immediately. The second one can be verified similarly. 

Finally, using~(\ref{071302}) and ~(\ref{071604}), we can prove~(\ref{071602}) as follows.  
Recall the definition in~(\ref{071560}). Applying~(\ref{071422})-(\ref{071604}), we obtain, for $j=i,\hat{i}$,
\begin{align}
(\wt{ B }G)_{ij}  =G_{ij} \frac{\tau_1(\wt{ B }G)}{\tau_1(G)}+O_\prec(\Psi),\qquad (\wt{ B }G)_{\hat{i}j}  =G_{\hat{i}j} \frac{\tau_2(\wt{ B }G)}{\tau_2(G)}+O_\prec(\Psi)\,.   \label{1023110}
\end{align} 
Using~(\ref{0913101}) and ~(\ref{1023110}) we get the following system of equations,
\begin{align}
&1-\xi_i G_{\hat{i}i}+\omega_{B,1}^c G_{ii}=O_\prec(\Psi)\,,\qquad -\xi_iG_{\hat{i}\hat{i}}+\omega_{B,1}^cG_{i\hat{i}}= O_\prec(\Psi)\,,\nonumber\\
&-\bar{\xi}_i G_{ii}+\omega_{B,2}^cG_{\hat{i}i}= O_\prec(\Psi)\,,  \qquad  1-\bar{\xi}_i G_{i\hat{i}}+\omega_{B,2}^cG_{\hat{i}\hat{i}}=O_\prec(\Psi)\,, \label{112901}
\end{align}
where we used the notation introduced in~(\ref{102401}). Solving~(\ref{112901}) we find
\begin{align}
&G_{ii}=\frac{\omega_{B,2}^c}{|\xi_i|^2-\omega_{B,1}^c\omega_{B,2}^c}+O_\prec(\Psi)\,,\qquad G_{i\hat{i}}= \frac{\xi_i}{|\xi_i|^2-\omega_{B,1}^c\omega_{B,2}^c}+O_\prec(\Psi)\,,\nonumber\\
& G_{\hat{i}i}=\frac{\bar{\xi}_i}{|\xi_i|^2-\omega_{B,1}^c\omega_{B,2}^c}+O_\prec(\Psi)\,,\qquad G_{\hat{i}\hat{i}}= \frac{\omega_{B,1}^c}{|\xi_i|^2-\omega_{B,1}^c\omega_{B,2}^c}+O_\prec(\Psi)\,. \label{071711}
\end{align}
From~(\ref{102125}), we see that 
\begin{align}
\omega_{B,a}^c=\omega_B+O_\prec(N^{-\frac{\gamma}{4}})\,,\qquad \qquad a=1,2\,. \label{110281}
\end{align}
 The first estimate of~(\ref{071602}) could be verified  from \eqref{071711}, if we could show  
\begin{align}
\omega_{B,a}^c=\omega_B^c+O_\prec(\Psi)\,,\quad \qquad a=1,2\,.  \label{071710}
\end{align}
To this end, we use $\tau_1(G(z))=\tau_2(G(z))$; \cf~(\ref{neu083140}). From~(\ref{102401}) and~(\ref{neu083140}), we also have 
\begin{align}
\omega_{B,1}^c+\omega_{B,2}^c=2\omega_B^c\,. \label{113003}
\end{align}

Then,  averaging the first and the fourth equations of~(\ref{071711}) over $i\in \llbracket 1, N\rrbracket $, we get
\begin{align}
\omega_{B,2}^c\frac{1}{N}\sum_{i=1}^N \frac{1}{|\xi_i|^2-\omega_{B,1}^c\omega_{B,2}^c}=\omega_{B,1}^c\frac{1}{N}\sum_{i=1}^N \frac{1}{|\xi_i|^2-\omega_{B,1}^c\omega_{B,2}^c}+O_\prec(\Psi)\,, \label{113001}
\end{align}
where we also used~(\ref{neu083140}). We further claim that 
\begin{align}
   \bigg( \frac{1}{N}\sum_{i=1}^N \frac{1}{|\xi_i|^2-\omega_{B,1}^c\omega_{B,2}^c}\bigg)^{-1}\prec 1\,,  \label{113004}
\end{align}
which together with~(\ref{113001}) implies that 
\begin{align}
\omega_{B,2}^c=\omega_{B,1}^c+O_\prec(\Psi)\,.  \label{113002}
\end{align}
Combining~(\ref{113002}) with~(\ref{113003}), we get~(\ref{071710}). Hence, it suffices to show~(\ref{113004}).  To this end, we use 
(\ref{110281}). Then we have
\begin{align*}
&\frac{1}{N}\sum_{i=1}^N \frac{1}{|\xi_i|^2-\omega_{B,1}^c\omega_{B,2}^c}=\frac{1}{N}\sum_{i=1}^N \frac{1}{|\xi_i|^2-\omega_{B}^2+O_\prec(N^{-\frac{\gamma}{4}})}\nonumber\\
&= \frac{1}{N}\sum_{i=1}^N \frac{1}{|\xi_i|^2-\omega_{B}^2}+O_\prec(N^{-\frac{\gamma}{4}})=\omega_B^{-1} m_{\mu_A}(\omega_{B})+O_\prec(N^{-\frac{\gamma}{4}})\,,
\end{align*}
where in the first step above, we used the upper bound of $|\omega_B|$ in~(\ref{110270}); in the second step, we used again the fact that $|\xi_i|^2-\omega_{B}^2$ is away from $0$ due to the lower bound of $\Im \omega_B$ in~(\ref{110270}); and the last step follows from~(\ref{111601}).  Then the fact $\|A\|\leq C$ (\cf~(\ref{102701})), the lower bound of $\Im \omega_B$ and the upper bound on $|\omega_B|$ in~(\ref{110270}), we can get \eqref{113004}. Hence, we conclude the proof of the first estimate of~(\ref{071602}).

For the second estimate in~(\ref{071602}), we need to go through the proof of~(\ref{101401})  again, but this time  with the a priori input~(\ref{071422}) replaced by the first estimate of~(\ref{071602}). Therefore, with~(\ref{071602}), we can get 
\begin{align}
\Big(\big(1+(\omega_B^c-z) m_A(\omega_B^c)\big)^2-|\sigma_i|^2 (m_A(\omega_B^c))^2\Big) T_{ij}=O_\prec(\Psi)\,, \label{101410}
\end{align}
which is the analogue of~(\ref{083101}). Then, by the estimates in~(\ref{102125}) and the definition in~(\ref{091361}), it is not difficult to check that 
the coefficient of $T_{ij}$ above can be approximated by~(\ref{101415}), up to an error $O_\prec(N^{-\frac{\gamma}{4}})$. Hence, 
 we can improve the estimate to $|T_{ij}|\prec \Psi$ for $j=i,\hat{i}$. Similarly, we can prove the same bound for $T_{\hat{i}j}$.  This completes the second estimate of~(\ref{071602}).  Hence, we conclude the proof of Theorem~\ref{071601}.
 \end{proof}

 \subsection{Continuity argument: Proof of Theorem~\ref{thm.green function subordination}}\label{sec: continuity argument} Having derived Theorem~\ref{071601}, we prove Theorem~\ref{thm.green function subordination} using a continuity argument similar to~\cite{EYY}.

\begin{proof}[Proof of Theorem~\ref{thm.green function subordination}] 
 
First,  we show that $\Lambda_{\mathrm{d}}^c(z)$ in~(\ref{071602}) can be replaced by $\Lambda_{\mathrm{d}}(z)$.
 This means, we have to control the difference between $(\omega_A, \omega_B)$ and $(\omega_A^c, \omega_B^c)$
as described in \eqref{101470}; this
estimate will follow   from  the stability of the system $\Phi_{\mu_A,\mu_B} \big(\omega_A,\omega_B, z\big)=0$, (\cf~(\ref{le H system}) with $(\mu_1,\mu_2)=(\mu_A,\mu_B)$). 
We will use the dual pair of subordination equations, \ie when we analyze  $\mathcal{H}$ instead of $H$. 
Recall the notations introduced in~(\ref{0913125}), and also $\wt{\Lambda}_{\rm d}$ and $\wt{\Lambda}_T$ as the analogue of $\Lambda_{\rm d}$  and $\Lambda_T$, respectively,  see the explanation around \eqref{tilde quantities}. 
 For any $\delta\in [0,1]$ and $z\in \mathcal{S}_{\mathcal{I}}(\eta_{\mathrm{m}},\eta_{\mathrm{M}})$, we introduce the following event
 \begin{align}
 \Theta(z, \delta):=\big \{\Lambda_{\rm d}(z)\leq \delta\,, \quad \wt{\Lambda}_{\rm d}(z)\leq \delta\,, \quad \Lambda_T(z)\leq 1\,, \quad \wt{\Lambda}_T(z)\leq 1  \big\}\,.  \label{101465}
 \end{align} 
 
 With the above notation, we have the following lemma.

 \begin{lem} \label{pro.091379}  Suppose that the assumptions in Theorem~\ref{thm.081501} hold. 
 Let $\eta_{\mathrm{M}}>0$ be a sufficiently large constant and  $\gamma>0$ be a small constant
  in the definition \eqref{etamdef}.  For any $\varepsilon$ with $0< \varepsilon\leq \frac{\gamma}{8}$ 
 and for any $D>0$, there exists a positive integer $N_2(D,\varepsilon)$ such that the following holds: For any fixed  $z\in \mathcal{S}_{\mathcal{I}}(\eta_{\mathrm{m}}, \eta_{\mathrm{M}})$ there exists an event $\Omega(z)\equiv \Omega(z, D,\varepsilon)$ with 
 \begin{align}
 \mathbb{P}(\Omega(z))\geq 1-N^{-D},\qquad \forall N\geq N_2(D,\varepsilon)\,, \label{101460}
 \end{align}
 such that if the estimate 
 \begin{align}
 \mathbb{P}(\Theta (z, N^{-\frac{\gamma}{4}}))\geq 1-N^{-D}(1+N^5(\eta_{\mathrm{M}}-\eta)) \,,\qquad  \eta=\im z\,,  \label{101450}
 \end{align}
 holds for all $D>0$ and $N\geq N_1(D,\gamma,\varepsilon)$, for some threshold $N_1(D,\gamma,\varepsilon)$, then we also have 
 \begin{align}
 \Theta (z, N^{-\frac{\gamma}{4}})\cap \Omega(z)\subset \Theta\big(z, \frac{N^{\varepsilon}}{\sqrt{N\eta}}\big)\,, \label{112511}
 \end{align}
 for all $N\geq N_3(D,\gamma,\varepsilon):=\max\big\{N_1(D,\gamma,\varepsilon), N_2(D,\varepsilon)\big\}$.
 \end{lem}
 
 \begin{proof}[Proof of Lemma~\ref{pro.091379}]   In this proof, we fix $z\in \mathcal{S}_{\mathcal{I}}(\eta_{\mathrm{m}}, \eta_{\mathrm{M}})$.  According to the definition of $\prec$ in Definition~\ref{definition of stochastic domination}, we see from the assumption~(\ref{101450}) that 
\begin{align}
\Lambda_{{\rm d}}(z)\prec N^{-\frac{\gamma}{4}}\,, \qquad \wt{\Lambda}_{{\rm d}}(z)\prec N^{-\frac{\gamma}{4}}\,, \qquad  \Lambda_T(z)
 \prec  1\,, \qquad \wt{\Lambda}_T(z) \prec  1\,. \label{0913128}
\end{align}
We apply  Theorem~\ref{071601};  by the estimates on $\Lambda_{\rm d}^c$ and on $\Lambda_T$ in~(\ref{071602}) and  their analogues for~$\wt{\Lambda}_{\rm d}^c$ and~$\wt{\Lambda}_T$,  we have
\begin{align}
\Lambda_{\rm d}^c(z)\prec \Psi\,,\qquad \wt{\Lambda}_{\rm d}^c(z)\prec \Psi\,, \qquad \Lambda_T(z)\prec \Psi\,,\qquad \wt{\Lambda}_T(z)\prec \Psi\,.  \label{101451}
\end{align}

Now, we state the conclusions in~(\ref{101451}) in a more explicit quantitative form, with the quantitative assumption~(\ref{101450}). To this end, we need a more quantitative version of Lemma \ref{lem.071420}. Let  $\varphi:\R\to \R$ be a smooth cutoff   function s.t 
\begin{align}
\varphi(x)=1\text{  if  } |x|\leq K, \quad \varphi(x)=0 \text{  if  } |x|\geq 2K, \quad \sup_{x\in \mathbb{R}}|\varphi'(x)|\leq CK^{-1}  \label{19022301} 
\end{align}
for some sufficiently large constant $K>0$.  Let 
\begin{align}
\Gamma_i\equiv \Gamma_i(z):=&\sum_{a,b=i,\hat{i}} \big(|G_{ab}|^2+|\mathcal{G}_{ab}|^2+|T_{ab}|^2+|\mathcal{T}_{ab}|^2\big)\nonumber\\
&+\sum_{a=1,2} \big(|\tau_a (G)|^2+ |\tau_a(\wt{B}G)|^2+|\tau_a(G\wt{B})|^2+|\tau_a(\wt{B}G\wt{B})|^2\big). \label{19021801}
\end{align}
Note that for a given $i$, all the a priori bounds we needed in the proof of Lemma \ref{lem.071420} are the  $O_\prec(1)$ bound for $G_{ab}$, $\mathcal{G}_{ab}$, $T_{ab}$, $\mathcal{T}_{ab}$ with $a,b=i,\hat{i}$ and the tracial quantities in (\ref{19021801}). The $O_\prec(1)$ bound for $(XGY)_{ab}$ with $X,Y=\hat{I}$ or $\wt{B}$  were also used (see $(\wt{B}X\wt{B})_{ii}$ in (\ref{112250}) for instance), but  they can be derived from the bound of $G_{ab}$'s by using  (\ref{102702}).  Recall the definitions of $\mathfrak{m}_i$ and $\mathfrak{n}_i$ in~\eqref{102102}. We now introduce modifications of $\mathfrak{m}_i$ and $\mathfrak{n}_i$ by setting
\begin{align*}
\wt{\mathfrak{m}}_i(p,q)\deq \mathfrak{m}_i(p,q) (\varphi(\Gamma_i))^{p+q}\,, \qquad \wt{\mathfrak{n}}_i(p, q)\deq \mathfrak{n}_i(p,q)(\varphi(\Gamma_i))^{p+q}\,. 
\end{align*}
 In addition,  for any $\varepsilon'>0$,  let $\widehat{\Omega}(z) =\widehat{\Omega}(z,\varepsilon')$  be the event that all the concentration estimates of the components or quadratic forms of $\mathbf{h}_i^u$ and $\mathbf{h}_i^v$ in the proof of Lemma \ref{lem.071420} hold  with precision $N^{\varepsilon'}$. For instance, we used the large deviation bound~\eqref{091731} to bound  $(\mathbf{k}_i^u)^* \wt{ B }^{\la i\ra} \mathbf{k}_i^v$ in (\ref{050930}) by $O_\prec(N^{-\frac12})$, in the proof of Lemma \ref{lem.071420}. Now we can bound it more quantitatively by $\frac{N^{\varepsilon'}}{\sqrt{N}}$ on $\widehat{\Omega}(z)$. 
Now we claim that 
\begin{align}
\mathbb{E}[\wt{\mathfrak{m}}_i(p,p)]&=\mathbb{E}[\mathfrak{c}_1\wt{\mathfrak{m}}_i(p-1,p)]+\mathbb{E}[\mathfrak{c}_2 \wt{\mathfrak{m}}_i(p-2,p)]+\mathbb{E}[\mathfrak{c}_3 \wt{\mathfrak{m}}_i(p-1,p-1)]\, \label{19021701}
\end{align}
 with some random variables $\mathfrak{c}_1$, $\mathfrak{c}_2$, $\mathfrak{c}_3$, satisfying 
\begin{align}
|\mathfrak{c}_1|\leq C\frac{N^{\varepsilon'}}{\sqrt{N\eta}}, \qquad |\mathfrak{c}_2|\leq C\frac{N^{2{\varepsilon'}}}{N\eta}, \qquad |\mathfrak{c}_3|\leq C\frac{N^{2\varepsilon'}}{N\eta}, \qquad \text{ on   } \widehat{\Omega}(z),  \label{19021702}
\end{align}
for some positive constant $C$ which may depend on $K$ in (\ref{19022301}).
In addition,  the $\mathfrak{c}_i$'s also admit trivial deterministic  bounds of order $\eta^{-k}$, for some constant $k>0$. Moreover, for any $D'>0$, there exists $N(D', \varepsilon')$, such that if $N\geq N(D',\varepsilon')$
\begin{align*}
\mathbb{P}(\widehat{\Omega}(z))\geq 1-N^{-D'}. 
\end{align*}
Observe that (\ref{19021701})  is just a more explicit version of  (\ref{071503}),  considering that $\widehat{\Omega}(z)$ holds with high probability. The proof of the more quantitative estimate (\ref{19021701}) with (\ref{19021702}) is basically the same as the proof of the non-quantitative one in (\ref{071503}). 
 
  The price for introducing $\varphi(\Gamma_i)$ into $\wt{\mathfrak{m}}_i$ is that it creates  additional terms in the integration by parts. However, they are absorbed  into  the first term  in the right side of (\ref{19021701}).  For instance, in the analogue of the step (\ref{071435}), except for replacing $\mathfrak{m}_i$ by $\wt{\mathfrak{m}}_i$, we will have an additional term
\begin{align*}
\frac{1}{N} \sum_{k}^{(i)} \mathbb{E} \Big[ \frac{1}{\|\mathbf{g}_{i}^u\|_2} (\hat{\mathbf{e}}_k^*\wt{ B }^{\la i\ra}  G\hat{\mathbf{e}}_i)  \frac{  \partial \varphi(\Gamma_i) }{\partial g^{u}_{ik}} \tau_1(G) \wt{\mathfrak{m}}_i(p-1,p)\Big].
\end{align*}
For example,  one term of $\frac{  \partial \varphi(\Gamma_i) }{\partial g^{u}_{ik}}$ is 
\begin{align*}
 \varphi'(\Gamma_i) \frac{\partial |G_{ii}|^2}{\partial g^u_{ik}}=\varphi'(\Gamma_i) \frac{\partial G_{ii}}{\partial g^u_{ik}} \overline{G}_{ii}+\varphi'(\Gamma_i) \frac{\partial \overline{G_{ii}}}{\partial g^u_{ik}} G_{ii}.
\end{align*}
Using the second estimate in (\ref{071520}),   
\begin{align*}
\frac{1}{N} \sum_{k}^{(i)} \hat{\mathbf{e}}_k^*\wt{ B }^{\la i\ra}  G\hat{\mathbf{e}}_i\frac{\partial |G_{ii}|^2}{\partial g^u_{ik}} =O(\frac{N^{\varepsilon'}}{\sqrt{N\eta}}), \quad \text{
 on} \; \{\varphi'(\Gamma_i)\neq 0\}\cap \widehat{\Omega}(z).
\end{align*}

It is also easy to check that the other terms in $\frac{  \partial \varphi(\Gamma_i) }{\partial g^{u}_{ik}}$ give the same bound.  Therefore, we have (\ref{19021701}).

 Using Young's  inequality to  (\ref{19021701}), we can get 
\begin{align*}
\mathbb{E}[\wt{\mathfrak{m}}_i(p,p)]\leq &C_p N^{2p\varepsilon'}\Big(\mathbb{E}[|\mathfrak{c}_1|^{2p}+\mathbb{E}[|\mathfrak{c}_2|^p]+\mathbb{E}[|\mathfrak{c}_3|^p ]\Big) \nonumber\\
&\leq C_p N^{2p\varepsilon'} \Big( (\frac{N^{\varepsilon'}}{\sqrt{N\eta}})^{2p}+N^{-D'}\eta^{-2kp}\Big),
\end{align*}
which implies  by Markov's inequality  that 
\begin{align}
\mathbb{P}\Big(|\mathcal{P}_{ii}\varphi(\Gamma_i)|\geq \frac{N^{\frac{\varepsilon}{4}}}{\sqrt{N\eta}}\Big)\leq C_p \Big(\frac{N^{\frac{\varepsilon}{4}}}{\sqrt{N\eta}}\Big)^{-2p}N^{2p\varepsilon'} \Big( (\frac{N^{\varepsilon'}}{\sqrt{N\eta}})^{2p}+N^{-D'}\eta^{-2kp}\Big). \label{19021802}
\end{align}
 For the given $\varepsilon>0$ in Lemma~\ref{pro.091379}, by 
 first choosing $\varepsilon'=\varepsilon'(\varepsilon)$ to be smaller   than $\frac{\varepsilon}{8}$, and then choosing  $p=p(\varepsilon, D)$ to be sufficiently large, we get
\begin{align}
C_p \Big(\frac{N^{\frac{\varepsilon}{4}}}{\sqrt{N\eta}}\Big)^{-2p}N^{2p\varepsilon'}  \Big(\frac{N^{\varepsilon'}}{\sqrt{N\eta}}\Big)^{2p}\leq \frac{1}{2}N^{-D}. \label{19021803}
\end{align}
Then, by further choosing $D'=D'(\varepsilon, D)$ sufficiently large, we can guarantee 
\begin{align}
C_p \Big(\frac{N^{\frac{\varepsilon}{4}}}{\sqrt{N\eta}}\Big)^{-2p}N^{2p\varepsilon'}N^{-D'}\eta^{-2kp}\leq \frac{1}{2}N^{-D}.  \label{19021804}
\end{align} 
 With  these choices of $\varepsilon'$ and $D'$, we now set $N_2(D,\varepsilon)\deq N(D',\varepsilon')$.

Further, by (\ref{19021802})-(\ref{19021804}), there exists an event $\Omega(z)$, such that 
\begin{align*}
\mathbb{P}(\Omega(z))\geq 1-N^{-D}, \qquad N\geq N_2(D,\varepsilon)
\end{align*}
and 
\begin{align*}
 |\mathcal{P}_{ii}\varphi(\Gamma_i)|\leq  \frac{N^{\frac{\varepsilon}{4}}}{\sqrt{N\eta}}\,, \qquad \text{on}\quad \Omega(z)\,.
\end{align*}
This now implies that 
$|\mathcal{P}_{ii}|\leq  \frac{N^{\frac{\varepsilon}{4}}}{\sqrt{N\eta}}$ on $\Theta(z, N^{-\frac{\gamma}{4}})\cap \Omega(z)$.  Similarly, by working on $\wt{\mathfrak{n}}_i$, we can get $
 |\mathcal{K}_{ii}|\leq  \frac{N^{\frac{\varepsilon}{4}}}{\sqrt{N\eta}}$ on $\Theta(z, N^{-\frac{\gamma}{4}})\cap \Omega(z)$. 
 
The same bound can be obtained for  $\mathcal{P}_{ij}, \mathcal{P}_{\hat{i}j}$, $\mathcal{K}_{ij}$ and $\mathcal{K}_{\hat{i}j}$ for $j=i,\hat{i}$. 
The remaining argument is the same as the proof of (\ref{071602}) in Theorem~\ref{071601}. The only change is, instead of the notation $\prec$, we use the deterministic $\leq$, but restricting onto    the event $\Theta(z, N^{-\frac{\gamma}{4}})\cap \Omega(z)$. 

More specifically,  the quantitative proof of \eqref{101451} yields that 
\begin{align}
\Lambda_{\rm d}^c(z)\leq \frac{N^{\frac{\varepsilon}{2}}}{\sqrt{N\eta}},\quad \wt{\Lambda}_{\rm d}^c(z)\leq \frac{N^{\frac{\varepsilon}{2}}}{\sqrt{N\eta}},\quad \Lambda_T(z)\leq \frac{N^{\frac{\varepsilon}{2}}}{\sqrt{N\eta}},\quad \wt{\Lambda}_T(z)\leq \frac{N^{\frac{\varepsilon}{2}}}{\sqrt{N\eta}}  \label{101567}
\end{align}
hold on the event $\Theta (z, N^{-\frac{\gamma}{4}})\cap \Omega(z)$,  for all $N\geq N_3(D,\gamma,\varepsilon)$.

Therefore, by the definitions of $\Lambda_{\rm d}^c$ and $\wt{\Lambda}_{\rm d}^c$, we have
\begin{align}
&\Big|G_{ii}-\frac{\omega_B^c}{|\xi_i|^2-(\omega_B^c)^2}\Big| \leq\frac{N^{\frac{\varepsilon}{2}}}{\sqrt{N\eta}}\,,\qquad \Big|\mathcal{G}_{ii}-\frac{\omega_A^c}{|\sigma_i|^2-(\omega_A^c)^2}\Big|\leq \frac{N^{\frac{\varepsilon}{2}}}{\sqrt{N\eta}}\,,\nonumber\\
& \Big|G_{\hat{i}\hat{i}}-\frac{\omega_B^c}{|\xi_i|^2-(\omega_B^c)^2}\Big| \leq\frac{N^{\frac{\varepsilon}{2}}}{\sqrt{N\eta}}\,,\qquad \Big|\mathcal{G}_{\hat{i}\hat{i}}-\frac{\omega_A^c}{|\sigma_i|^2-(\omega_A^c)^2}\Big|\leq \frac{N^{\frac{\varepsilon}{2}}}{\sqrt{N\eta}}\,,\label{091377}
\end{align}
for all $i\in\llbracket 1,N\rrbracket$, on the event  $\Theta (z, N^{-\frac{\gamma}{4}})\cap \Omega(z)$ for all $N\geq N_3(D,\gamma,\varepsilon)$. Averaging the above estimates over $i$, we obtain the system of equations
\begin{align}
m_H(z)=m_A(\omega_B^c(z))+r_A(z)\,,\nonumber\\
m_H(z)=m_B(\omega_A^c(z))+r_B(z)\,,\nonumber\\
\omega_A^c(z)+\omega_B^c(z)=z-\frac{1}{m_H(z)}\,, \label{091360}
\end{align}
where the error terms $r_A(z)$ and $r_B(z)$ satisfy
$
|r_A(z)|, |r_B(z)|\leq \frac{C N^{\frac{\varepsilon}{2}}}{\sqrt{N\eta}}$ on the event  $\Theta (z, N^{-\frac{\gamma}{4}})\cap \Omega(z)$ for all $N\geq N_3(D,\gamma,\varepsilon)$. 
Here the last equation in~(\ref{091360}) follows from the definition~(\ref{091107}) or~(\ref{091361}). From the definition of $\Theta(z,\delta)$ in~(\ref{101465}),~(\ref{091107}) or~(\ref{091361}), and the equations in~(\ref{le definiting equations}) with $(\mu_1,\mu_2)=(\mu_A,\mu_B)$,  it is not difficult to check  that 
\begin{align*}
|\omega_A^c-\omega_A|\leq CN^{-\frac{\gamma}{4}}\,,\qquad |\omega_B^c-\omega_B|\leq  CN^{-\frac{\gamma}{4}} 
\end{align*}
hold on $\Theta (z, N^{-\frac{\gamma}{4}})$.  In particular, with the help of~(\ref{110270}), this guarantees 
that the imaginary parts of $\omega_A^c$ and $\omega_B^c$ are separated away from zero,
hence so are $m_A(\omega_B^c)$ and $m_B(\omega_B^c)$. This allows us to rewrite~(\ref{091360}) as 
\begin{align}\label{Phierror}
\big\|\Phi_{\mu_A,\mu_B} (\omega_A^c,\omega_B^c,z)\big\|=\wt{r}(z)\,,
\end{align}
where $\wt{r}(z)=(\wt{r}_A(z),\wt{r}_B(z))'$ satisfy $|\wt{r}_A(z)|, |\wt{r}_B(z)|\leq \frac{C N^{\frac{\varepsilon}{2}}}{\sqrt{N\eta}}$  on the event $\Theta (z, N^{-\frac{\gamma}{4}})\cap \Omega(z)$ for all $N\geq N_3(D,\gamma,\varepsilon)$. 
Applying the stability of the system $\Phi_{\mu_A,\mu_B} (\omega_A,\omega_B,z)=0$ (see Theorem 4.1 of \cite{BES15}), we obtain 
\begin{align}
\big|\omega_A^c-\omega_A\big|\leq \frac{C N^{\frac{\varepsilon}{2}}}{\sqrt{N\eta}}\,,\qquad \big|\omega_B^c-\omega_B\big|\leq \frac{C N^{\frac{\varepsilon}{2}}}{\sqrt{N\eta}}\,, \label{091375}
\end{align}
on the event $\Theta (z, N^{-\frac{\gamma}{4}})\cap \Omega(z)$ for all $N\geq N_3(D,\gamma,\varepsilon)$. Substituting~(\ref{091375}) into
 the definition of  $\Lambda_{\rm d}^c$ and $\widetilde \Lambda_{\rm d}^c$,  we see that the first two inequalities in
 ~(\ref{101567}) imply similar bounds for~$\Lambda_{\rm d}$ and~$\widetilde \Lambda_{\rm d}$.
  This  completes the proof of Lemma~\ref{pro.091379}. 
\end{proof}

With   Lemma~\ref{pro.091379},  the remaining proof of Theorem~\ref{thm.green function subordination}   
 closely follows that for Theorem 2.5 in \cite{BES15b}, so we will only sketch the argument. 
 We start with the result with large $\eta=\eta_M$ for some large but fixed positive constant $\eta_{\rm M}$. More specifically, from Lemma~\ref{lem.081970}, we see that 
 \begin{align}
\Lambda_{\rm d}(E+\ii \eta_{\rm M})\prec\frac{1}{\sqrt{N\eta_{\rm M}^4}}\,,\qquad  \wt{\Lambda}_{\rm d}(E+\ii \eta_{\rm M})\prec\frac{1}{\sqrt{N\eta_{\rm M}^4}}\,,  \label{112501}
 \end{align}
 for any fixed $E\in\R$. The second estimate in~(\ref{112501}) can be obtained from Lemma~\ref{lem.081970} since one can apply this lemma to $\mathcal{H}$ as well.  In addition, using the trivial bound $\|G\|\leq \frac{1}{\eta}$ and inequality $|\mathbf{x}^* G\mathbf{y}|\leq \|G\|\|\mathbf{x}\|_2\|\mathbf{y}\|_2$, we also have
 \begin{align}
 \Lambda_T(E+\ii \eta_{\rm M})\leq  \frac{1}{\eta_{\rm M}}\,,\qquad \wt{\Lambda}_{T}(E+\ii \eta_{\rm M})\leq  \frac{1}{\eta_{\rm M}}\,,\label{112502}
 \end{align}
 for any fixed $E\in \mathcal{B}_{\mu_\alpha\boxplus\mu_\beta}$.  According to the definition of $\Theta(z, \delta)$ in~(\ref{101465}), (\ref{112501}) and~(\ref{112502}), we see that for any fixed $E\in \mathcal{B}_{\mu_\alpha\boxplus\mu_\beta}$ and $D>0$, 
 \begin{align}
 \mathbb{P} \big( \Theta (E+\ii \eta_{\rm M}\,, N^{-\frac{3\gamma}{8}})\big)\geq 1-N^{-D}\,, \label{112503}
 \end{align}
 holds for all $N\geq N_0(D,\gamma)$ for some positive integer $N_0(D,\gamma)$.
 
 Starting with~(\ref{112503}), we conduct a standard continuity argument, whose setup is best suited to our problem 
  in the form   presented  in \cite{BES15b}. Specifically, we do a bootstrap by reducing~$\eta$ in very small steps, $N^{-5}$ (say),  starting from $\eta_{\rm M}$  and successively control the probability of the ``good" events $\Theta$. 
    Recall the event $\Omega(z)$ in Lemma~\ref{pro.091379}. The main task is to show for any fixed $E\in \mathcal{I}$ and any $\eta\in [\eta_{\rm m}, \eta_{\rm M}]$, 
 \begin{align}
 \Theta (E+\ii \eta, N^{-\frac{3\gamma}{8}})\cap \Omega (E+\ii(\eta-N^{-5}))\subset \Theta (E+\ii (\eta-N^{-5}), N^{-\frac{3\gamma}{8}})\,,  \label{112512}
 \end{align} 
which is the analogue of~(7.20) of~\cite{BES15b}. 
To see this  inclusion, one first uses the Lipschitz continuity of the Green function, $\|G(z)-G(z')\|\leq N^2|z-z'|$, and
of the subordination functions, \cf~(\ref{110270}), to obtain 
\begin{align}
\Theta (E+\ii \eta, N^{-\frac{3\gamma}{8}})\subset  \Theta (E+\ii (\eta-N^{-5}), N^{-\frac{\gamma}{4}})\,. \label{112510}
\end{align} 
Then~(\ref{112510}) together with~(\ref{112511}) implies~(\ref{112512}). Using~(\ref{112512}) recursively, one goes from~$\eta_{\rm M}$ down to $\eta_{\rm m}$, step by step.  The remaining proof of~(\ref{091601}),  based on~(\ref{112512}) and Lemma~\ref{pro.091379}, is the same as the counterpart in \cite{BES15b} (\cf (7.20)-(7.25) therein). We omit the details. 
 
With~(\ref{091601}), we can prove~(\ref{101470}) in the sequel. The first two inequalities in~(\ref{101470}) have already been proved in ~(\ref{091375}) with a fixed $\eta$, under~(\ref{0913128}). The uniformity then follows from~(\ref{091601}) which holds uniformly on $\mathcal{S}_{\mathcal{I}}(\eta_{\mathrm{m}}, \eta_{\mathrm{M}})$.   Then the last inequality in~(\ref{101470}) follows from the first two, together with the last equation in~(\ref{091360}) and  the second equation in~(\ref{le definiting equations}) with $(\mu_1,\mu_2)=(\mu_A,\mu_B)$.
This completes the proof of Theorem~\ref{thm.green function subordination}.
\end{proof}

\section{Strong law for small $\eta$} \label{s.strong law}

In this section, we prove the strong law, \ie Theorem~\ref{thm.081501},  for $z\in\mathcal{S}_{\mathcal{I}}(0, \eta_{\mathrm{M}})$. It suffices to work on the regime $z\in\mathcal{S}_{\mathcal{I}}(\eta_{\rm m}, \eta_{\mathrm{M}})$ at first. The extension to $z\in\mathcal{S}_{\mathcal{I}}(0, \eta_{\mathrm{M}})$ will be easy. Our main task is to establish the fluctuation averaging for the quantities $\mathcal{P}_{ij}$ defined in \eqref{071560}.   

\begin{lem} [Fluctuation averaging]  \label{lem.071920} Suppose that the assumptions in Theorem~\ref{thm.081501} hold. 
Let $\eta_{\mathrm{M}}>0$ be any  (large) constant  and $\gamma>0$ be any (small) constant  in the definition of $\eta_{\rm m}$ (\cf~(\ref{etamdef})).   For any fixed integer $p\geq 1$, and deterministic numbers $d_1,\ldots, d_N\in \mathbb{C}$ satisfying $\max_{i\in \llbracket 1, N\rrbracket} |d_i|\leq 1$, we have
\begin{align}
&\Big|\frac{1}{N} \sum_{i=1}^N d_i \mathcal{P}_{ii}\Big|\prec \Psi^2,\qquad  \Big|\frac{1}{N} \sum_{i=1}^N d_i \mathcal{P}_{\hat{i}i}\Big|\prec \Psi^{2},\nonumber\\
&\Big|\frac{1}{N} \sum_{i=1}^N d_i \mathcal{P}_{i\hat{i}}\Big|\prec\Psi^2,\qquad  \Big|\frac{1}{N} \sum_{i=1}^N d_i \mathcal{P}_{\hat{i}\hat{i}}\Big|^2\prec \Psi^2 \label{072102}
\end{align}
uniformly on $\mathcal{S}_{\mathcal{I}}(\eta_{\mathrm{m}}, \eta_{\mathrm{M}})$. 
\end{lem}

We will often use the following improvement of ~(\ref{102125}),
\begin{align}
\tau_a(G)&= m_{\mu_A\boxplus\mu_B}+O_\prec(\Psi)\,,\nonumber\\
\tau_a(\wt{B}G)&= (z-\omega_B) m_{\mu_A\boxplus\mu_B} +O_\prec(\Psi)\,,\nonumber\\
\tau_a(G\wt{B})&= (z-\omega_B) m_{\mu_A\boxplus\mu_B} +O_\prec(\Psi)\,,\nonumber\\
\tau_a(\wt{B}G\wt{B})&=(\omega_B-z)(1+(\omega_B-z)m_{\mu_A\boxplus\mu_B})+O_\prec(\Psi)\,,\qquad a=1,2\,, 
\label{112133}
\end{align}
which can be proved in the same way as~(\ref{102125}), but with the first inequality in~(\ref{071422}) replaced by the first inequality in~(\ref{091601}), as the input of the proof.

 In the next Section~\ref{thm53}
 we will show how to prove Theorem~\ref{thm.081501} on $\mathcal{S}_{\mathcal{I}}(0, \eta_{\mathrm{M}})$
with the aid of Lemma~\ref{lem.071920}. Then, in Section~\ref{lm81} we will prove Lemma~\ref{lem.071920}. 

\subsection{Proof of Theorem~\ref{thm.081501} on $\mathcal{S}_{\mathcal{I}}(0, \eta_{\mathrm{M}})$}
\label{thm53}
To prove the strong law from Lemma~\ref{lem.071920}, first of all, we need to derive that the estimates
\begin{align}
|\Upsilon_{1}|\prec \Psi^2, \qquad |\Upsilon_{2}|\prec \Psi^2\label{083110}
\end{align}
hold uniformly on  $\mathcal{S}_{\mathcal{I}}(\eta_{\rm m}, \eta_{\mathrm{M}})$. These are the strongest high probability bounds 
related to the Ward identities in~(\ref{071604}). To see~(\ref{083110}), we choose $d_i=1$ for all $i\in \llbracket 1, N \rrbracket$ in~(\ref{072102}). From the definition of $\mathcal{P}_{ii}$ in~(\ref{071560}), we get
\begin{align}
\frac{1}{N}\sum_{i=1}^N \mathcal{P}_{ii}=\frac{\Upsilon_{1}}{N}\sum_{i=1}^N \big( G_{ii}+ T_{ii}\big)&= \Upsilon_{1}\Big( \tau_1(G)+ \frac{1}{N}\sum_{i=1}^N T_{ii}\Big)\nonumber\\ &=\Upsilon_{1}\Big( m_{\mu_A\boxplus\mu_B}+O_\prec(\Psi)\Big)\,, \label{112140}
\end{align}
 where in the last step we used~(\ref{112133}) and the third inequality in~(\ref{091601}). Then, using the  lower bound of  $\Im m_{\mu_A\boxplus\mu_B}=\Im m_{\mu_A}(\omega_B)$ inherited from the lower bound of $\Im \omega_B$ in~(\ref{110270}), and also  the first bound in~(\ref{072102}),   we can easily see $|\Upsilon_{1}|\prec \Psi^2$ from~(\ref{112140}). Similarly, we can also show $|\Upsilon_{2}|\prec \Psi^2$. 
 Notice that {\it a posteriori} we could have defined $\mathcal{P}_{ij}$ in \eqref{071560} without the last
term involving $\Upsilon_a$ with $a=1,2$, since we are interested only up to $O_\prec(\Psi^2)$ precision. We do not, however,
know how to prove directly that $\Upsilon_a = O_\prec(\Psi^2)$ without first proving a fluctuation averaging 
result  \eqref{072102} involving the quantity $\mathcal{P}_{ij}$ with $\Upsilon_a$. The correct choice
of $\mathcal{P}_{ij}$ is the essential idea of the entire proof.

Plugging~(\ref{083110}) back to the definition of $\mathcal{P}_{ii}$, $\mathcal{P}_{\hat{i}i}$, $\mathcal{P}_{i\hat{i}}$ and $\mathcal{P}_{\hat{i}\hat{i}}$ in~(\ref{071560}), we obtain from~(\ref{072102}),
\begin{align}
&\bigg|\frac{1}{N}\sum_{i=1}^N d_i \Big( G_{ij} \tau_1(\wt{ B }G)-\big(\wt{B}G\big)_{ij}\tau_1(G)\Big)\bigg|\prec \Psi^2,\nonumber\\
&\bigg|\frac{1}{N}\sum_{i=1}^N d_i \Big( G_{\hat{i}j} \tau_2(\wt{ B }G)-\big(\wt{B}G\big)_{\hat{i}j}\tau_2(G)\Big)\bigg|\prec \Psi^2\,,\qquad\qquad j=i,\hat{i}\,, \label{0913100}
\end{align}
for any deterministic numbers $d_1,\ldots, d_N\in \mathbb{C}$ satisfying $|d_i|\lesssim 1$, 
which is a shorthand notation for $|d_i|\le C$ with some constant $C$. 
 While  Lemma~\ref{lem.071920}
was formulated for $|d_i|\le 1$, it clearly holds as long as  $|d_i|\lesssim 1$. 
Recall the notation introduced in~(\ref{102401}). We claim that  the following estimates can be derived from~(\ref{0913101}) and ~(\ref{0913100}):
\begin{align}
&\Big|\frac{1}{N} \sum_{i=1}^N d_i \Big(G_{ii}-\frac{\omega_{B,2}^c}{|\xi_i|^2- \omega_{B,1}^c \omega_{B,2}^c}\Big)\Big|\prec \Psi^2\,,\nonumber\\ &\Big|\frac{1}{N} \sum_{i=1}^N d_i \Big(G_{\hat{i}i}-\frac{\bar{\xi}_i}{ |\xi_i|^2-\omega_{B,1}^c \omega_{B,2}^c}\Big)\Big|\prec \Psi^2,\nonumber\\
&\Big|\frac{1}{N} \sum_{i=1}^N d_i \Big(G_{\hat{i}\hat{i}}-\frac{\omega_{B,1}^c}{ |\xi_i|^2-\omega_{B,1}^c\omega_{B,2}^c}\Big)\Big|\prec \Psi^2\,,\nonumber\\ & \Big|\frac{1}{N} \sum_{i=1}^N d_i \Big(G_{i\hat{i}}-\frac{\xi_i}{ |\xi_i|^2-\omega_{B,1}^c\omega_{B,2}^c}\Big)\Big|\prec \Psi^2. \label{080150}
\end{align}
We derive the first estimate in~(\ref{080150}), 
the others are proven similarly. We~write 
\begin{multline*}
\frac{1}{N} \sum_{i=1}^N d_i \Big(G_{ii}-\frac{\omega_{B,2}^c}{|\xi_i|^2- \omega_{B,1}^c \omega_{B,2}^c}\Big)\\=\frac{1}{N}\sum_{i=1}^N\frac{d_i}{|\xi_i|^2- \omega_{B,1}^c \omega_{B,2}^c}\Big(G_{ii}\big(|\xi_i|^2- \omega_{B,1}^c \omega_{B,2}^c\big)-\omega_{B,2}^c\Big).
\end{multline*}
Applying Theorem~\ref{thm.green function subordination},  and \eqref{071710} along its proof,  it is easy to check that
\begin{align}\label{omegaom}
\omega_{B,a}^c = \omega_B + O_\prec(\Psi)\,,\quad  \qquad a=1,2\,,
\end{align}
hence 
\begin{align}
\omega_{B,1}^c \omega_{B,2}^c=\omega_B^2+O_\prec(\Psi)\,,\qquad G_{ii}\big(|\xi_i|^2- \omega_{B,1}^c \omega_{B,2}^c\big)-\omega_{B,2}^c=O_\prec(\Psi)\,. \label{110290}
\end{align}
Moreover, from   the lower bound on $\im \omega_B$ from  ~(\ref{110270}) and the first estimate of~(\ref{110290}), we have
\begin{align}
\frac{1}{{|\xi_i|^2- \omega_{B,1}^c \omega_{B,2}^c}}=\frac{1}{|\xi_i|^2- \omega_{B}^2}+O_\prec(\Psi)\,. \label{110295}
\end{align}
 Then, in light of~(\ref{110270}),~(\ref{110290}) and~(\ref{110295}), it suffices to check 
\begin{align}
\frac{1}{N}\sum_{i=1}^N d_i\Big(G_{ii}\big(|\xi_i|^2- \omega_{B,1}^c \omega_{B,2}^c\big)-\omega_{B,2}^c\Big)=O_\prec(\Psi^2)\,, \label{0913120}
\end{align}
for any deterministic numbers $d_1,\ldots, d_N\in \mathbb{C}$ satisfying $|d_i|  \lesssim  1$
 (here we redefined $d_i$ to  $d_i/( |\xi_i|^2- \omega_{B}^2)$). Using~(\ref{0913101}), we can write
\begin{align}
G_{ii}\big(|\xi_i|^2- \omega_{B,1}^c \omega_{B,2}^c\big)-\omega_{B,2}^c=&-\frac{\omega_{B,2}^c}{\tau_1(G)}\Big(\big(\wt{B}G\big)_{ii}\tau_1(G)-G_{ii}\tau_1(\wt{ B }G)\Big)\nonumber\\
&-\frac{\xi_i}{\tau_2(G)}\Big(\big(\wt{B}G\big)_{\hat{i}i}\tau_2(G)-G_{\hat{i}i}\tau_2(\wt{ B }G) \Big)\,. \label{0913121}
\end{align}
Then, from~(\ref{112133}) and~\eqref{omegaom}, we see that
\begin{align}
&\frac{\omega_{B,2}^c}{\tau_1(G)}=\frac{\omega_B}{m_{\mu_A\boxplus\mu_B}}+O_\prec(\Psi)\,,& &\frac{\xi_i}{\tau_2(G)}=\frac{\xi_i}{m_{\mu_A\boxplus\mu_B}}+O_\prec(\Psi)\,,\nonumber\\
 &\big(\wt{B}G\big)_{ii}\tau_1(G)-G_{ii}\tau_1(\wt{ B }G)=O_\prec(\Psi)\,, & &\big(\wt{B}G\big)_{\hat{i}i}\tau_2(G)-G_{\hat{i}i}\tau_2(\wt{ B }G)=O_\prec(\Psi)\,,  \label{0913122}
\end{align}
 where the second line follows from \eqref{1023110}. 
Thus combining~(\ref{0913121}),~(\ref{0913122}) and~(\ref{0913100}) yields~(\ref{0913120}), which implies~(\ref{080150}) according to the discussion above. 

 Notice that in this argument it was essential that $G_{ii}$ was approximated  in \eqref{080150} not
by $\omega_B/(|\xi_i|^2 - \omega_B^2)$ or by $\omega_B^c/(|\xi_i|^2 - (\omega_B^c)^2)$ but by
$$
   G_{ii} \approx \frac{\omega_{B,2}^c}{|\xi_i|^2- \omega_{B,1}^c \omega_{B,2}^c}\,,
$$
since this latter approximation is precise up to $O_\prec(\Psi^2)$ after averaging,  while the previous ones are
{\it a priori}  correct only with an error
$O_\prec(\Psi)$.

Next, we show that \eqref{080150} nevertheless holds if we  approximate $G_{ii}$ by $\omega_B^c/(|\xi_i|^2 - (\omega_B^c)^2)$.
Choosing all $d_i= 1$ in the first and third inequalities in~(\ref{080150}) and applying~(\ref{neu083140}), we note that 
\begin{align*}
\omega_{B,a}^c=\omega_{B}^c+O_\prec(\Psi^2)\,,\quad\qquad a=1,2\,,
\end{align*}
 so the first approximation in \eqref{omegaom} is actually one order better. 
Thus we get from~(\ref{080150}) that 
\begin{align}
&\Big|\frac{1}{N} \sum_{i=1}^N d_i \Big(G_{ii}-\frac{\omega_{B}^c}{|\xi_i|^2- (\omega_{B}^c)^2}\Big)\Big|\prec \Psi^2,\quad \Big|\frac{1}{N} \sum_{i=1}^N d_i \Big(G_{\hat{i}i}-\frac{\bar{\xi}_i}{ |\xi_i|^2-(\omega_{B}^c)^2}\Big)\Big|\prec \Psi^2,\nonumber\\
&\Big|\frac{1}{N} \sum_{i=1}^N d_i \Big(G_{\hat{i}\hat{i}}-\frac{\omega_{B}^c}{ |\xi_i|^2-(\omega_{B}^c)^2}\Big)\Big|\prec \Psi^2,\quad \Big|\frac{1}{N} \sum_{i=1}^N d_i \Big(G_{i\hat{i}}-\frac{\xi_i}{ |\xi_i|^2-(\omega_{B}^c)^2}\Big)\Big|\prec \Psi^2. \label{083150}
\end{align}
Further, recalling the definitions of  $\mathcal{H}$ and $\mathcal{G}$ in~(\ref{0913125}).  Switching the r\^oles of $A$ and $B$, and also the r\^oles of $U$ and $U^*$ in the above discussions, we  have 
\begin{align}
&\Big|\frac{1}{N} \sum_{i=1}^N d_i \Big(\mathcal{G}_{ii}-\frac{\omega_{A}^c}{|\sigma_i|^2- (\omega_{A}^c)^2}\Big)\Big|\prec \Psi^2,\quad \Big|\frac{1}{N} \sum_{i=1}^N d_i \Big(\mathcal{G}_{\hat{i}i}-\frac{\bar{\sigma}_i}{ |\sigma_i|^2-(\omega_{A}^c)^2}\Big)\Big|\prec \Psi^2,\nonumber\\
&\Big|\frac{1}{N} \sum_{i=1}^N d_i \Big(\mathcal{G}_{\hat{i}\hat{i}}-\frac{\omega_{A}^c}{ |\xi_i|^2-(\omega_{A}^c)^2}\Big)\Big|\prec \Psi^2,\quad \Big|\frac{1}{N} \sum_{i=1}^N d_i \Big(\mathcal{G}_{i\hat{i}}-\frac{\sigma_i}{ |\sigma_i|^2-(\omega_{A}^c)^2}\Big)\Big|\prec \Psi^2.  \label{083151}
\end{align}

Applying~(\ref{083150}) and~(\ref{083151}) to average over the diagonal entries of the Green functions $G$ and $\mathcal{G}$, and also using the fact $\ntr G (z)=\ntr \mathcal{G}(z) =m_H(z)$, we see that
\begin{align*}
m_H(z)=\int_\R  \frac{\omega_{B}^c}{x^2-(\omega_{B}^c)^2} {\rm d} \mu_\Xi(x)+O_\prec(\Psi^2)=\int_\R  \frac{\omega_{A}^c}{x^2-(\omega_{A}^c)^2} {\rm d} \mu_\Sigma (x)+O_\prec(\Psi^2)\,.
\end{align*}
 From this, using
$$
    \frac{\omega_{B}^c}{x^2-(\omega_{B}^c)^2} = \frac{1}{2}\Big[ \frac{1}{x-\omega_{B}^c}  +\frac{1}{-x-\omega_{B}^c}\Big]\,,
$$
we can get 
\begin{align}
m_H(z)=m_A(\omega_{B}^c(z))+O_\prec(\Psi^2)=m_B(\omega_{A}^c(z))+O_\prec(\Psi^2)\,, \label{050980}
\end{align}
where we used the fact $\mu_A\equiv \mu_\Xi^{\text{sym}}$ and $\mu_B\equiv \mu_\Sigma^{\text{sym}}$, in light of ~(\ref{050970}). In addition, we also have~(\ref{091110}).   Summarizing these estimates, we have
$\Phi_{\mu_A,\mu_B}(\omega_A^c,\omega_B^c,z)=O_\prec (\Psi^2)$, \ie compared with \eqref{Phierror},
 we improved the error
in the approximate subordination equations.

 Similarly to the proof of  Lemma~\ref{pro.091379}, we  use the stability of the system $\Phi_{\mu_A,\mu_B}(\omega_A,\omega_B,z)=0$ again, but with the improved error $\Psi^2$. We also note that the estimates from  Theorem~\ref{thm.green function subordination}  and Lemma~\ref{lem.071920}   used in the above discussion hold uniformly on $\mathcal{S}_{\mathcal{I}}(\eta_{\mathrm{m}}, \eta_{\mathrm{M}})$. 
Hence, we can conclude the proof of Theorem~\ref{thm.081501} on $\mathcal{S}_{\mathcal{I}}(\eta_{\mathrm{m}}, \eta_{\mathrm{M}})$.  

At the end, we extend~(\ref{the local law for mH prec inequality}) from $\mathcal{S}_{\mathcal{I}}(\eta_{\mathrm{m}}, \eta_{\mathrm{M}})$ to $\mathcal{S}_{\mathcal{I}}(0, \eta_{\mathrm{M}})$. The extension relies on a standard use of the monotonicity of the Green function: For all $i\in \llbracket 1, N \rrbracket $ and $j=i$ or $\hat{i}$, we~have
\begin{align*}
|G'_{jj}(z)|=\big|\sum_{k=1}^{2N} G_{jk}(z) G_{kj}(z)\big|\leq \sum_{k=1}^{2N} |G_{jk}(z)|^2=\frac{\Im G_{jj}(z)}{\eta},
\end{align*}
where the last step follows from the spectral decomposition.  In addition, note that the function $s\mapsto s\Im G_{jj}(E+\ii s)$ is monotonically increasing. This implies that for any $\eta\in (0, \eta_{\rm m}]$, 
\begin{align}
 \big| G_{jj}  (E+\ii \eta)- G_{jj}(E+\ii\eta_{\rm m})\big|&\leq  \int_{\eta}^{\eta_{\rm m}} \frac{s\Im G_{jj}(E+\ii s)}{s^2} \,{\rm d} s\nonumber\\
 &\leq 2\frac{\eta_{\rm m}}{\eta} \Im G_{jj} (E+\ii \eta_{\rm m})\leq C\frac{N^{\gamma}}{N\eta} \leq CN^{\gamma} \Psi^2\,, \label{112801}
\end{align} 
with high probability, for any $E\in \mathcal{I}$. Here we used $|G_{jj}(E+\ii \eta_{\rm m})|\prec 1$ which follows from the first bound in~(\ref{091601}).  On the other hand, for any $i\in \llbracket 1, N\rrbracket$, we also have 
\begin{align}
\bigg| \frac{\omega_B(E+\ii\eta)}{|\xi_i|^2-\omega_B^2(E+\ii \eta)}- \frac{\omega_B(E+\ii\eta_{\rm m})}{|\xi_i|^2-\omega_B^2(E+\ii \eta_{\rm m})}\bigg|\leq C(\eta_{\rm m}-\eta)\leq \Psi^2, \quad  \label{112802}
\end{align}
$\eta\in (0,\eta_{\rm m}]$, $E\in \mathcal{I}$, for sufficiently small $\gamma$, which follows from the upper bound of $\omega_B'(z)$, the lower bound of $|\xi_i|^2-\omega_B^2(z)$ which follows from the lower bound of $\Im \omega_B$, and also the upper bound of $\omega_B$, in Lemma~\ref{lem.A.1}.  Combining~(\ref{112801}) and~(\ref{112802}), and using~(\ref{111601}),  we conclude that~(\ref{the local law for mH prec inequality})
 holds   uniformly on $\mathcal{S}_{\mathcal{I}}(0, \eta_{\mathrm{M}})$. 
 This completes the proof of  Theorem~\ref{thm.081501} on $\mathcal{S}_{\mathcal{I}}(0, \eta_{\mathrm{M}})$.

Hence, what remains is to prove Lemma ~\ref{lem.071920}.
\subsection{Proof of Lemma ~\ref{lem.071920}}\label{lm81}
Since the proofs for the four estimates in~(\ref{072102}) are nearly the same,  we only present  the details  for the first one. 
First of all, from~(\ref{091601}) and~(\ref{071604}) we have
\begin{align}
\big|T_{ii}\Upsilon_{1}\big|\prec \Psi^2\,. \label{100601}
\end{align}
Hence, it suffices to bound the weighted average of the  following slight modifications of $\mathcal{P}_{ii}$'s:
\begin{align}
\mathcal{Q}_{ii}\equiv\mathcal{Q}_{ii}(z)\deq(\wt{ B }G)_{ii} \tau_1(G) -G_{ii} \tau_1(\wt{ B }G)+ G_{ii} \Upsilon_{1}\,,\qquad i\in \llbracket 1, N\rrbracket\,.  \label{defofQ}
\end{align}
Then we introduce the notation 
\begin{align*}
\mathfrak{m}(k,l)\deq\Big(\frac{1}{N}\sum_{i=1}^N d_i\mathcal{Q}_{ii}\Big)^k\Big({\frac{1}{N}\sum_{i=1}^N \overline{d_i} \,\overline{ \mathcal{Q}_{ii}}}\Big)^{l}\,.
\end{align*}
Similarly to Lemma~\ref{lem.071420}, the main technical task is the following recursive moment estimate.
\begin{thm}[Recursive moment estimate] \label{pro.053020} Suppose that the assumptions in Theorem~\ref{thm.081501} hold. 
Let $\eta_{\mathrm{M}}>0$ be any  (large) constant  and $\gamma>0$  in \eqref{etamdef} be any (small) constant. For any fixed integer $p\geq 1$, we~have
\begin{multline}
\mathbb{E}\big[\mathfrak{m}(p,p)\big]=\mathbb{E}\big[O_\prec (\Psi^2) \mathfrak{m}(p-1,p)\big]+\mathbb{E}\big[ O_\prec(\Psi^4)\mathfrak{m}(p-2,p)\big]\\+\mathbb{E}\big[O_\prec(\Psi^4)\mathfrak{m}(p-1,p-1)\big]\,, \label{053050}
\end{multline}
uniformly on $\mathcal{S}_{\mathcal{I}}(\eta_{\mathrm{m}}, \eta_{\mathrm{M}})$, where we made the convention $\mathfrak{m}(0,0)=1$ and $\mathfrak{m}(-1,1)=0$ if $p=1$. 
\end{thm}

The reason why we prefer to work with   $\mathcal{Q}_{ii}$ instead of $\mathcal{P}_{ii} = \mathcal{Q}_{ii} + T_{ii}\Upsilon_i$ is as follows. To prove Theorem~\ref{pro.053020},  we will follow a similar strategy as the proof of Lemma~\ref{lem.071420}. 
In  Lemma~\ref{lem.071420} and its proof, we worked on  $\mathcal{P}_{ii}$ directly. The derivative $\frac{\partial T_{ii}}{\partial g_{ik}^u}$ was necessary for the proof of  Lemma~\ref{lem.071420}, \cf~(\ref{071520}). However, in the proof of Theorem~\ref{pro.053020}, we would need to consider the derivative $\frac{ \partial T_{ii}}{\partial g_{jk}}$ for all $j\neq i$ if we carry the term $T_{ii}\Upsilon_{1}$ from $\mathcal{P}_{ii}$ in the discussion. Unfortunately, the dependence of the factor $(\mathbf{k}_i^u)^*$ in $T_{ii}$ (\cf~(\ref{053070}))  on $g_{jk}^u$ for $j\neq i$ is difficult to capture. On the other hand,  at this stage of the proof we already 
have the bound~(\ref{100601}) available and this allows us to drop the term $T_{ii}\Upsilon_{1}$ from the beginning.

With the aid of Theorem~\ref{pro.053020}, one can prove Lemma ~\ref{lem.071920}.
\begin{proof}[Proof of Lemma ~\ref{lem.071920}] 
Similarly to the proof of~(\ref{071302}) for $\mathcal{P}_{ii}$ from Lemma~\ref{lem.071420}, one can apply Young's inequality to~(\ref{053050}) and get  $|\frac{1}{N}\sum_{i=1}^N \mathcal{Q}_{ii}|\prec \Psi^2$, which together with~(\ref{100601}) implies 
 the first bound in~(\ref{072102}). The other three in~(\ref{072102}) can be verified analogously. Hence, we completed the proof of Lemma ~\ref{lem.071920}.
\end{proof}

\begin{proof}[Proof of Theorem~\ref{pro.053020}] 
Hence, we start with the  averaged   analogue of~(\ref{071501}), but with  
 $\mathcal{P}_{ii}$'s replaced by $\mathcal{Q}_{ii}$'s. In particular, the term $T_{ii}$ is missing. 
 Following the proof of ~(\ref{071501}) 
with these modifications, we obtain 
\begin{align}
\mathbb{E}&[\mathfrak{m}(p,p)]=\frac{1}{N}\sum_{i}d_i\mathbb{E} \Big[ \frac{1}{\|\mathbf{g}_{i}^u\|_2}\Big(\mathring{T}_{ii}- \frac{1}{N} \sum_{k}^{(i)} \frac{  \partial (\hat{\mathbf{e}}_k^*  G\hat{\mathbf{e}}_i)}{\partial g^{u}_{ik}} \Big)\tau_1(\wt{ B }G)  \mathfrak{m}(p-1,p)\Big]\nonumber\\
& \;-\frac{1}{N^2}\sum_{i}\sum_{k}^{(i)} d_i\mathbb{E} \Big[ \frac{ \partial \| \mathbf{g}^u_i\|_2^{-1}}{\partial g_{ik}^u} \hat{\mathbf{e}}_k^*\wt{ B }^{\la i\ra}  G\hat{\mathbf{e}}_i \tau_1(G) \mathfrak{m}(p-1,p)\Big]\nonumber\\
&\;-\frac{1}{N^2}\sum_i\sum_{k}^{(i)}d_i\mathbb{E}\Big[\frac{\partial \tau_1(G)}{\partial g_{ik}^u}\frac{1}{\|\mathbf{g}_i^u\|_2} \hat{\mathbf{e}}_k^*\wt{ B }^{\la i\ra}  G\hat{\mathbf{e}}_i\mathfrak{m}(p-1,p)\Big]\nonumber\\
&\; -\frac{p-1}{N^2}\sum_{i} \sum_{k}^{(i)} d_i\mathbb{E} \Big[ \frac{1}{\| \mathbf{g}^u_i\|_2} \hat{\mathbf{e}}_k^*\wt{ B }^{\la i\ra}  G\hat{\mathbf{e}}_i \tau_1(G) \Big( \frac{1}{N}\sum_j d_j\frac{\partial \mathcal{Q}_{jj}}{\partial g_{ik}^u}\Big)\mathfrak{m}(p-2,p)\Big]\nonumber\\
&\; -  \frac{p}{N^2}\sum_{i} \sum_{k}^{(i)} d_i\mathbb{E} \Big[\frac{1}{\| \mathbf{g}^u_i\|_2} \hat{\mathbf{e}}_k^*\wt{ B }^{\la i\ra}  G\hat{\mathbf{e}}_i \tau_1(G)  \Big(\frac{1}{N}\sum_j \bar{d}_j\frac{\partial \overline{\mathcal{Q}_{jj}}}{\partial g_{ik}^u}\Big)\mathfrak{m}(p-1,p-1)\Big]\nonumber\\
&\;+\frac{1}{N}\sum_{i} d_i\mathbb{E}\Big[\Big(\varepsilon_{i1} \tau_1(G) -\frac{\varepsilon_{i4}+\varepsilon_{i5}}{\|\mathbf{g}_i^u\|_2}\Big) \mathfrak{m}(p-1,p)\Big]+\mathbb{E}\big[O_\prec(\Psi^2) \mathfrak{m}(p-1,p)\big]\,.\label{083170}
\end{align}
In addition, we also have the averaged analogue of~(\ref{071502}): 
\begin{align}
&\frac{1}{N}\sum_i d_i\mathbb{E} \Big[ \frac{1}{\|\mathbf{g}_{i}^u\|_2}\Big(\mathring{T}_{ii}- \frac{1}{N} \sum_{k}^{(i)} \frac{  \partial (\hat{\mathbf{e}}_k^*  G\hat{\mathbf{e}}_i)}{\partial g^{u}_{ik}} \Big)\tau_1(\wt{ B }G)  \mathfrak{m}(p-1,p)\Big]\nonumber\\
&\quad=\frac{1}{N^2}\sum_i  \sum_{k}^{(i)} d_i\mathbb{E} \Big[\frac{\partial \|\mathbf{g}_{i}^u\|_2^{-2}}{\partial g_{ik}^u} \hat{\mathbf{e}}_k^*  G\hat{\mathbf{e}}_i\tau_1(\wt{ B }G)  \mathfrak{m}(p-1,p)\Big]\nonumber\\
&\qquad+\frac{1}{N^2}\sum_i\sum_{k}^{(i)}d_i\mathbb{E}\Big[ \frac{\partial \tau_1(\wt{ B }G)}{\partial g_{ik}^u} \frac{1}{\|\mathbf{g}_i^u\|_2^2} \hat{\mathbf{e}}_k^*  G\hat{\mathbf{e}}_i \mathfrak{m}(p-1,p)\Big]\nonumber\\
&\qquad +\frac{p-1}{N^2}\sum_i  \sum_{k}^{(i)} d_i\mathbb{E} \Big[  \frac{1}{\|\mathbf{g}_{i}^u\|_2^2} \hat{\mathbf{e}}_k^*  G\hat{\mathbf{e}}_i \tau_1(\wt{ B }G)\Big(\frac{1}{N}\sum_j d_j \frac{\partial \mathcal{Q}_{jj}}{\partial g_{ik}^u}\Big)\mathfrak{m}(p-2,p)\Big]\nonumber\\
&\qquad +\frac{p}{N^2} \sum_i\sum_{k}^{(i)} d_i\mathbb{E} \Big[  \frac{1}{\|\mathbf{g}_{i}^u\|_2^2} \hat{\mathbf{e}}_k^*  G\hat{\mathbf{e}}_i \tau_1(\wt{ B }G)\Big(\frac{1}{N}\sum_j \bar{d}_j \frac{\partial \overline{\mathcal{Q}_{jj}}}{\partial g_{ik}^u}\Big)\mathfrak{m}(p-1,p-1)\Big].\label{083171}
\end{align}

Hence, to show~(\ref{053050}), it suffices to estimate the second to the fifth terms on the right side of~(\ref{083170}), and the terms on the right side of~(\ref{083171}).  
First, we notice that 
\begin{align}
\varepsilon_{i4}=O_\prec(\Psi^2)\,, \label{112131}
\end{align}
 which can be seen from ~(\ref{091601}),~(\ref{071604}), 
 and the facts $\big|\|\mathbf{g}_i^a\|_2^2-1\big|\prec \frac{1}{\sqrt{N}}$ and $|h_{ii}^u|\prec\frac{1}{\sqrt{N}}$.
All the other  desired estimates can be derived from the following lemma.
\begin{lem} \label{lem.072610} Suppose that the assumptions in Theorem~\ref{thm.081501} hold. 
Let $\eta_{\mathrm{M}}>0$ be any  (large) constant  and $\gamma>0$  in \eqref{etamdef} be any (small) constant. Let $\hat{d}_1,\ldots, \hat{d}_N\in\mathbb{C}$ be deterministic numbers with the bound $\max_i|\hat{d}_i|\lesssim 1$ and let $\tilde{d}_1,\ldots, \tilde{d}_N \in\mathbb{C}$ be (possibly random) numbers with the bound $\max_i|\tilde{d}_i|\prec 1$ for all $i\in \llbracket 1, N\rrbracket$. 
Let $Q$ be any deterministic diagonal matrix satisfying $\|Q\|\leq C$ and $X=\hat{I}$ or~$A$, set $X_i=\hat{I}$ or $\wt{ B }^{\la i\ra}$, and let 
\[\mathbf{x}_i,\mathbf{y}_i=\binom{\mathring{\mathbf{g}}_i^u}{\mathbf{0}}\quad \text{or}\quad \binom{\mathbf{0}}{\mathring{\mathbf{g}}_i^v}.\] 
We have the estimates 
\begin{align}
&\frac{1}{N^2} \sum_{i=1}^N  \sum_{k}^{(i)} \tilde{d}_i \frac{\partial \|\mathbf{g}_i^u\|_2^{-1}}{\partial g_{ik}^u} \hat{\mathbf{e}}_k^*X_iG\hat{\mathbf{e}}_i= O_\prec(\frac{1}{N})\,,\nonumber\\
&\frac{1}{N^2}\sum_{i=1}^N\sum_k^{(i)} \tilde{d}_i \frac{\partial \ntr QXG}{\partial g_{ik}^u} \hat{\mathbf{e}}_k^* X_i G\hat{\mathbf{e}}_i=O_\prec(\Psi^4)\,, 
\label{060430}
\end{align}
uniformly on  $\mathcal{S}_{\mathcal{I}}(\eta_{\mathrm{m}}, \eta_{\mathrm{M}})$.
In addition, we also have
\begin{align}
 \frac{1}{N} \sum_{i=1}^N \hat{d}_i&\mathbb{E}\Big[\Big(  \mathbf{x}_i^* X_i\mathbf{y}_i-\mathbb{E}_i\big[\mathbf{x}_i^*X_i\mathbf{y}_i\big]\Big) \mathfrak{m}(p-1,p)\Big]\nonumber\\
 &\quad=\mathbb{E}\Big[O_\prec(\Psi^2)\mathfrak{m}(p-1,p)\Big]+\mathbb{E}\Big[O_\prec(\Psi^4)\mathfrak{m}(p-2,p)\Big]\nonumber\\ &\quad\qquad\qquad+\mathbb{E}\Big[O_\prec(\Psi^4)\mathfrak{m}(p-1,p-1)\Big]\,, \label{072620}
\end{align} 
uniformly on  $\mathcal{S}_{\mathcal{I}}(\eta_{\mathrm{m}}, \eta_{\mathrm{M}})$, 
where $\mathbb{E}_i$ denotes the expectation with respect to $\mathring{\mathbf{g}}_i^u$ and $\mathring{\mathbf{g}}_i^v$.
\end{lem}

With Lemma~\ref{lem.072610}, we can proceed to the proof of  Theorem~\ref{pro.053020} as follows. First of all,  for any diagonal matrix $Q=\text{diag} (q_{1},\ldots, q_{2N})$,  using the first estimate in~(\ref{091601}), 
we have
\begin{align*}
\ntr QG&=\frac{1}{2N} \sum_{i=1}^{N} (q_i+q_{\hat{i}})\frac{\omega_B(z)}{|\xi_i|^2-(\omega_B(z))^2}+O_\prec(\Psi)\,,\nonumber\\
\ntr QAG&=\frac{1}{2N} \sum_{i=1}^N (q_i+q_{\hat{i}})\frac{|\xi_i|^2}{|\xi_i|^2-(\omega_B(z))^2}+O_\prec(\Psi)\,.
\end{align*} 
 Using the upper bound of $\omega_B$ and the lower bound of $\Im \omega_B$ in~(\ref{110270}), we can see that 
\begin{align}
|\ntr QXG|\prec 1\,, \label{112130}
\end{align} 
for diagonal $Q$ with $\|Q\|\leq C$ and $X=\hat{I}$ or $A$. Note that all partial traces such as $\tau_1(G)$, $\tau_1(\wt{B}G)$ can be written as a linear combination of terms of the form $\ntr QXG$ with the aid of the identities in~(\ref{102702}), and thus  for
these partial traces  we have
\[ 
   \tau_1(G)= O_\prec(1)\,, \qquad\quad  \tau_1(\wt{B}G)= O_\prec(1)\,.
\]
 These bounds together with the first estimate in ~(\ref{060430}), imply  the desired estimates for the second term on the right side of~(\ref{083170}) and the first term on the right side of~(\ref{083171}).

 Next, notice that
\begin{align*}
\frac{1}{N}\sum_{j=1}^N d_j \mathcal{Q}_{jj}=\ntr(D\wt{ B }G) \tau_1(G) -\ntr (DG) \tau_1(\wt{ B }G)+ \ntr (DG) \Upsilon_{1},
\end{align*}
where we denoted  the deterministic diagonal matrix $D\deq\text{diag}\big(d_1,\ldots, d_N\big)\oplus 0$,
with $0$ the $N\times N$ zero matrix.
In addition, using~(\ref{102702}), we can see that 
$\frac{1}{N}\sum_j d_j \mathcal{Q}_{jj}$ is a polynomial of $\frac{1}{N} \sum_{j} d_j T_{jj} $ and the terms of the form $\ntr QXG$ for 
some diagonal $Q$ with $\|Q\|\leq C$ and $X=\hat{I}$~or $A$. Here we also used the fact that $\tau_a(\mathcal{D})=\ntr (\hat{I}_a\mathcal{D})$ for any $\mathcal{D}\in M_{2N}(\mathbb{C})$ and $a=1,2$, where $\hat{I}_a$ is defined in~(\ref{071540}). Then the last two estimates in~(\ref{060430}),~(\ref{112130}),   together with the chain rule,
imply  that 
\begin{align}
& \frac{1}{N^3} \sum_{i=1}^{N}  \sum_{k}^{(i)} \tilde{d}_i \hat{\mathbf{e}}_k^*  X_i G\hat{\mathbf{e}}_i \sum_{j=1}^N d_j \frac{\partial \mathcal{Q}_{jj}}{ \partial g_{ik}^u}=O_\prec(\Psi^4)\,. \label{060431}
\end{align}

Similarly, we can prove the same bound if we replace $\mathcal{Q}_{jj}$'s by $\overline{\mathcal{Q}}_{jj}$'s. Hence, the desired estimates for the third to the fifth terms on the right side of~(\ref{083170}), and the last three terms on the right side of~(\ref{083171}) can be obtained from the second estimate in~(\ref{060430}). 

 Hence, what remains is to estimate the sixth term in~(\ref{083170}). First, according to~(\ref{112131}), we can neglect $\varepsilon_{i4}$. Then we recall the definition of $\varepsilon_{i1}$ from~(\ref{083011}).  Using the estimates of $G_{\hat{i}i}$ and $T_{\hat{i}i}$ from the first and the third inequalities in~(\ref{091601}), and the estimates
 \[\ell_i^v=1+O_\prec(\frac{1}{\sqrt{N}})\,,\qquad |h_{ii}^u|\prec \frac{1}{\sqrt{N}}\,,\qquad |(\mathbf{k}_i^u)^*\wt{B}^{\la i\ra}\mathbf{k}_i^v|\prec \frac{1}{\sqrt{N}}\,,\]
we see that 
\begin{align}
\varepsilon_{i1}=\frac{\bar{\xi}_i}{|\xi_i|^2-\omega_B^2} (\mathbf{k}_i^u)^*\wt{B}^{\la i\ra}\mathbf{k}_i^v+O_\prec(\Psi^2)=\frac{\bar{\xi}_i}{|\xi_i|^2-\omega_B^2} (\mathbf{\ell}_i^u)^*\wt{B}^{\la i\ra}\mathbf{\ell}_i^v+O_\prec(\Psi^2)\,, \label{083180}
\end{align}
where we introduced the notations
\begin{align*}
\mathbf{\ell}_i^u\deq\binom{\mathring{\mathbf{g}}_i^u}{\mathbf{0}},\qquad\mathbf{\ell}_i^v\deq \binom{\mathbf{0}}{\mathring{\mathbf{g}}_i^v}\,.
\end{align*}
Then, recall the definition of $\varepsilon_{i5}$ from~(\ref{083190}). Applying the estimate of $G_{ii}$ from the first inequality in~(\ref{091601}), and the second formula in~(\ref{112133}), and the fact $\|\mathbf{g}_i^u\|_2^2=\|\mathbf{\ell}_i^u\|_2^2+O_\prec(\frac{1}{N})=1+O_\prec(\frac{1}{\sqrt{N}})$,   we also have 
\begin{align}
\varepsilon_{i5}= \frac{(z-\omega_B)m_{\mu_A\boxplus \mu_B}\omega_B}{|\xi_i|^2-\omega_B^2}\big(\|\mathbf{\ell}_i^u\|_2^2-1\big)+O_\prec(\Psi^2)\,. \label{083195}
\end{align}

Note that both of the first terms on the right side of~(\ref{083180}) and~(\ref{083195}) are of the form 
$\hat{d}_i(\mathbf{x}_i^*X_i\mathbf{y}_i-\mathbb{E}_i[\mathbf{x}_i^*X_i\mathbf{y}_i])$ for some deterministic $\hat{d}_i$ with  $|\hat{d}_i|\lesssim 1$. Hence, using~(\ref{072620}), we get the desired bound for the sixth term of~(\ref{083170}). This completes the proof of Theorem~\ref{pro.053020} up to the proof of Lemma~\ref{lem.072610}. 
\end{proof}

\begin{proof}[Proof of Lemma~\ref{lem.072610}] The first estimate in ~(\ref{060430}) follows directly from the first estimate in~(\ref{071520}). The second estimate of~(\ref{060430}) is a weighted average of the last estimate in~(\ref{071520}). 

Hence, what remains is to prove~(\ref{072620}). We only show the details for the case $\mathbf{x}_i=\mathbf{\ell}_i^u$ and $\mathbf{y}_i=\mathbf{\ell}_i^v$. The others are similar. Notice that in this case, $\mathbb{E}_i[\mathbf{x}_i^* X_i \mathbf{y}_i]=0$.
Using the integration by parts formula~(\ref{integration by parts formula}) again, we have 
\begin{align*}
\frac{1}{N}\sum_i \hat{d}_i&\mathbb{E}\Big[(\mathbf{\ell}_i^u)^* X_i\mathbf{\ell}_i^v\mathfrak{m}(p-1,p)\Big]=\frac{1}{N}\sum_i\sum_{k}^{(i)} \hat{d}_i \mathbb{E}\Big[\bar{g}_{ik}^u \hat{\mathbf{e}}_k^* X_i\mathbf{\ell}_i^v \mathfrak{m}(p-1,p)\Big]\nonumber\\
&=\frac{p-1}{N^2} \sum_i\sum_k^{(i)} \hat{d}_i \mathbb{E}\Big[ \hat{\mathbf{e}}_k^* X_i\mathbf{\ell}_i^v \frac{1}{N}\sum_j d_j\frac{\partial \mathcal{Q}_{jj}}{\partial g_{ik}^u}\mathfrak{m}(p-2,p)\Big]\nonumber\\
&\qquad + \frac{p}{N^2} \sum_i\sum_k^{(i)} \hat{d}_i \mathbb{E}\Big[ \hat{\mathbf{e}}_k^* X_i\mathbf{\ell}_i^v \frac{1}{N}\sum_j \bar{d}_j\frac{\partial \overline{\mathcal{Q}}_{jj}}{\partial g_{ik}^u}\mathfrak{m}(p-1,p-1)\Big]\,.
\end{align*}
Hence, it suffices to show 
\begin{align}
\Big|\frac{1}{N^3} \sum_i\sum_k^{(i)} \hat{d}_i \hat{\mathbf{e}}_k^* X_i\mathbf{\ell}_i^v \sum_j d_j\frac{\partial \mathcal{Q}_{jj}}{\partial g_{ik}^u}\Big|\prec \Psi^4 \label{090110}
\end{align}
and its complex conjugate analogue. The proof of~(\ref{090110}) is nearly the same as that of~(\ref{060431}). Hence, we omit it. 
Therefore, we completed the proof of Lemma~\ref{lem.072610}.
\end{proof}

\section{Proof of Theorem~\ref{le corollary of 0 in the bulk}}\label{le section of 0 in the bulk}

Theorem~\ref{le corollary of 0 in the bulk} will directly follow from a more detailed result, Theorem~\ref{0 in the bulk}, below. Recall the definitions of $s_+<\infty$ from~\eqref{le s-s+} and of $r_\pm$ from~\eqref{le radii}. Recall further that we assumed $\mathrm{supp}\,\mu_1\subset[-s_+,s_+]$.

Given $r\in(r_-,r_+)$, we define
\begin{align}\label{le sigmas}
 \sigma_-\equiv\sigma_-(r)\deq\left(\frac{r^2-r_-^2}{r_+^2-r_-^2}\right)^{\frac12}\,,\qquad \sigma_+\equiv\sigma_+(r)\deq\left(\frac{r_+^2}{r_+^2-r^2}\right)^{\frac12}\,,
\end{align}
and we note that $\sigma_-(r)\in(0,1)$ and $\sigma_+(r)\in(1,\infty)$. 

The main result of this section is the following bound on $\omega_2$ along the imaginary axis.
\begin{thm}(Bounds on $\omega_2$) \label{0 in the bulk} 
Let $r\in(r_-,r_+)$. Then there are strictly positive constants $b_-\equiv b_-(\mu_1,r)>0$ and $s_-\equiv s_-(\mu_1,r)>0$ such that
\begin{align}\label{le bounds on im omega2}
 C_2^{-1}\sigma_-s_-b_-\min\Big\{1, \frac{\sigma_-s_-}{\eta}\Big\}\le|\omega_2(\ii\eta)-\ii\eta|\le C_2\min\Big\{\sigma_+s_+,\frac{r^2}{\eta}\Big\}\,,
\end{align}
for all $\eta\ge 0$, for a numerical constant $C_2>1$ (independent of $\mu_1$ and $r$). 
\end{thm}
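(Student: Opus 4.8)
The plan is to use the symmetry of $\mu_1=\mu_\sigma^{\mathrm{sym}}$ and $\mu_2=\delta_r^{\mathrm{sym}}$ to collapse the subordination equation~\eqref{090410} along the imaginary axis into a scalar fixed‑point equation, and then to analyze that equation by elementary but careful estimates. First, fix $z=\ii\eta$, $\eta\ge0$. Symmetry forces $\omega_2(\ii\eta)$ to be purely imaginary, and by Theorem~\ref{le prop 1}$(i)$ its imaginary part is $\ge\eta$; write $\omega_2(\ii\eta)=\ii(\eta+\beta)$ with $\beta\equiv\beta(\eta)\ge 0$, so $|\omega_2(\ii\eta)-\ii\eta|=\beta$ (and $\beta>0$ since otherwise $\omega_1(\ii\eta)$ would blow up in~\eqref{le omega1 after knowing omega2}). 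For symmetric $\mu_1$ one has $m_{\mu_1}(\ii v)=\ii v\,G(v^2)$ with $G(t)\deq\int_\R(x^2+t)^{-1}\dd\mu_1(x)$, hence $F_{\mu_1}(\ii v)=\ii\,\phi(v)$ with $\phi(v)\deq\big(v\,G(v^2)\big)^{-1}$. Plugging this and~\eqref{le F of delta sym} into~\eqref{090410} and dividing by $\ii$ turns it into
\[
 \phi(\eta+\beta)=\beta+\frac{r^2}{\beta}\,,\qquad\text{equivalently}\qquad G\big((\eta+\beta)^2\big)=\frac{\beta}{(\eta+\beta)(\beta^2+r^2)}\,.
\]
Existence, uniqueness and continuity of $\beta(\eta)$, and $\beta(\eta)\to0$ as $\eta\to\infty$, come from Theorem~\ref{le prop 1} and~\eqref{le limit of omega}.

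Next I would dispatch the upper bound and the easy (large‑$\eta$) lower bound. Since $\im F_{\mu_1}(z)\ge\im z$ for any probability measure, $\phi(v)\ge v$, so the equation gives $r^2/\beta=\phi(\eta+\beta)-\beta\ge\eta$, i.e.\ $\beta\le r^2/\eta$. Using $\mathrm{supp}\,\mu_1\subset[-s_+,s_+]$ (recall~\eqref{le s-s+} and $\mu_1=\mu_\sigma^{\mathrm{sym}}$) together with $\int x^2\dd\mu_1(x)=r_+^2$ one gets $\tfrac{1}{s_+^2+t}\le G(t)\le\tfrac1t\big(1-\tfrac{r_+^2}{s_+^2+t}\big)$, equivalently $v\le\phi(v)\le v+\tfrac{s_+^2}{v}$. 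The upper estimate on $\phi$, fed into the equation and combined with $\eta\beta\le r^2$, yields after clearing denominators $(r_+^2-r^2)\beta^2\le r^2(s_+^2+\eta^2+2r^2)$ when $\eta\le r$, hence $\beta\le2\sigma_+ s_+$; and for $\eta>r$ one has trivially $\beta\le r^2/\eta<r<\sigma_+ s_+$. Thus $\beta(\eta)\le2\min\{\sigma_+ s_+,r^2/\eta\}$, which is the upper half of~\eqref{le bounds on im omega2}. Conversely the lower estimate on $\phi$ gives $r^2/\beta\le\eta+\tfrac{s_+^2}{\eta+\beta}$, so $\beta\ge r^2/(2\eta)$ whenever $\eta\ge s_+$.

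The hard part is the lower bound for bounded $\eta$: it suffices to show $\beta(\eta)\ge c$ on $[0,s_+]$ with $c$ comparable (constants depending on $\mu_1,r$) to $\sigma_- s_- b_-$ for a suitable length $s_-=s_-(\mu_1,r)>0$ and mass $b_-=b_-(\mu_1,r)>0$; together with the $\eta\ge s_+$ bound, and matching the two regimes at the scale $\eta\asymp\sigma_- s_-$ (taking $s_-b_-$ small enough that $\sigma_- s_- b_-$ stays below both $r^2/2$ and $c$), this produces the lower half of~\eqref{le bounds on im omega2}. The decisive case is $\eta=0$, where $t_0\deq\beta(0)^2$ is the unique positive root of $\int\frac{x^2-r^2}{x^2+t}\dd\mu_1(x)=0$ --- unique because $t\mapsto(r^2+t)^{-1}\int\frac{x^2-r^2}{x^2+t}\dd\mu_1(x)=(r^2+t)^{-1}-G(t)$ is strictly increasing at each of its zeros, since there $G(t)=(r^2+t)^{-1}$ while $\int(x^2+t)^{-2}\dd\mu_1(x)>G(t)^2$ by Cauchy--Schwarz and the fact that $\mu_1$ is not a point mass. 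The quantitative input that keeps $t_0$ away from $0$ is that $r_-<r$ entails a genuine excess of inverse‑square mass below level $r$: with $e\deq r_-^{-2}-r^{-2}>0$ one has $\int_{x^2<r^2}x^{-2}\dd\mu_1(x)\ge e$, and $r^2e=(r^2-r_-^2)/r_-^2$ is comparable to $\sigma_-^2$ by~\eqref{le radii}--\eqref{le sigmas}. Writing $\int\frac{x^2-r^2}{x^2+t}\dd\mu_1=\mathrm{pos}(t)-\mathrm{neg}(t)$ for the contributions of $\{x^2\ge r^2\}$ and $\{x^2<r^2\}$, one has $\mathrm{neg}(0)-\mathrm{pos}(0)=r^2e$ together with the slow‑variation bounds $|\mathrm{pos}(t)-\mathrm{pos}(0)|\le t/r^2$ and $|\mathrm{neg}(t)-\mathrm{neg}(0)|\le r^2\!\int_{x^2<\delta}x^{-2}\dd\mu_1(x)+tr^2/\delta^2$ for every $\delta>0$; choosing $\delta$ so that $\int_{x^2<\delta}x^{-2}\dd\mu_1(x)\le e/8$, and then $t_1$ of order $e\min\{\delta^2,r^4\}$, gives $\mathrm{neg}(t)-\mathrm{pos}(t)\ge r^2e/2>0$ for $t\le t_1$, hence $t_0>t_1$, i.e.\ $\beta(0)\ge\sqrt{t_1}$, which reads off as $\asymp\sigma_- s_- b_-$ once $s_-,b_-$ are defined from $\delta$ and $r_\pm,r$. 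For $\eta\in(0,s_+]$ one propagates: $\eta+\beta\ge\beta$ forces $G((\eta+\beta)^2)\le(\beta^2+r^2)^{-1}\le r^{-2}$, so $\eta+\beta\ge\sqrt{t^*}$ where $G(t^*)=r^{-2}$ and $t^*\gtrsim(\sigma_- s_- b_-)^2$ by the same excess argument; hence $\beta(\eta)\ge\sqrt{t^*}-\eta$ for $\eta\le\tfrac12\sqrt{t^*}$, while for $\eta\in[\tfrac12\sqrt{t^*},s_+]$ one writes instead $\beta=(\eta+\beta)(\beta^2+r^2)G((\eta+\beta)^2)\ge\eta\,r^2\,G(V_*^2)$ with $V_*\deq(1+2\sigma_+)s_+\ge\eta+\beta$, using the upper bound $\beta\le2\sigma_+ s_+$. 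The degenerate case $r_-=0$ (then $\sigma_-=r/r_+$ and $\int x^{-2}\dd\mu_1=\infty$) runs through the same scheme after truncating $\mu_1$ to $\{|x|\ge\epsilon\}$, with the divergence of $\int_{x^2<\rho^2}x^{-2}\dd\mu_1(x)$ as $\rho\downarrow 0$ playing the role of the finite excess $e$.

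Once~\eqref{le bounds on im omega2} is proved, Theorem~\ref{le corollary of 0 in the bulk} follows quickly: $\omega_1(\ii\eta)=\ii r^2/\beta(\eta)$ by~\eqref{le omega1 after knowing omega2}, $F_{\mu_1\boxplus\mu_2}(\ii\eta)=\ii(\beta(\eta)+r^2/\beta(\eta))$ by~\eqref{le kkv} and the scalar equation, so $|m_{\mu_1\boxplus\mu_2}(\ii\eta)|=\beta(\eta)/(\beta(\eta)^2+r^2)\le\tfrac{1}{2r}$ with a matching lower bound from the bounds on $\beta$; uniformity over $r\in J$ is then just continuity of $\sigma_\pm,s_-,b_-$ in $r$. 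The main obstacle is precisely the bounded‑$\eta$ lower bound, and within it the quantitative positivity of $\beta(0)$ --- equivalently, of the density $\mu_{\sigma,r}(0)$: since $\mu_1$ may pile mass arbitrarily close to $0$ (subject at most to $\int x^{-2}\dd\mu_1<\infty$, or to nothing when $r_-=0$) and near the endpoints $r_\pm$, the bound must genuinely depend on $\mu_1$, and isolating the correct power of $\sigma_-$ and the exact shape $\sigma_- s_- b_-\min\{1,\sigma_- s_-/\eta\}$ requires the careful bookkeeping of the excess mass $r_-^{-2}-r^{-2}$ against the location of the mass of $\mu_1$, combined with the truncation near $0$.
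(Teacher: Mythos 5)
Your proposal takes a genuinely different route from the paper. The paper builds on the Nevanlinna representation~\eqref{le super nevan} of $F_{\mu_1}(\omega)-\omega$ via the auxiliary symmetric measure $\widetilde\mu_1$, defines $s_-$ through the mass of $\widetilde\mu_1$ near zero as in~\eqref{le s--}, and runs a dichotomy-plus-continuity argument around the reference point $\widehat\omega$ for small $\eta$, together with the Nevanlinna monotonicity of $\eta\mapsto\eta\,(\im\omega_2(\ii\eta)-\eta)$ for larger $\eta$. You instead collapse~\eqref{090410} on the imaginary axis to the scalar equation $G\big((\eta+\beta)^2\big)=\beta/[(\eta+\beta)(\beta^2+r^2)]$ with $G(t)=\int(x^2+t)^{-1}\dd\mu_1(x)$ and $\beta=\im\omega_2(\ii\eta)-\eta$, and then analyze $G$ directly; the positivity input is the inverse-square excess $e=r_-^{-2}-r^{-2}>0$. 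The two proofs exploit the same structure --- your $G$ and the paper's $\widetilde\mu_1$ are two parametrizations of $F_{\mu_1}(\ii v)-\ii v$ --- but the scalar reduction is more elementary and sidesteps the dichotomy/continuity step: $(\eta+\beta)^2\ge t^*$ (where $G(t^*)=r^{-2}$) follows from strict monotonicity of $G$ alone.

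Two steps are, however, wrong as written. For the upper bound, you say to feed the upper estimate $\phi(v)\le v+s_+^2/v$ (equivalently $G(t)\ge(s_+^2+t)^{-1}$) into the equation; but doing so only gives $r^2/\beta\le\eta+s_+^2/(\eta+\beta)$, which is a \emph{lower} bound on $\beta$ and is vacuous for $\beta$ large. What you actually need is the $G$ upper bound $G(t)\le(s_+^2+t-r_+^2)/[t(s_+^2+t)]$ --- which you wrote down, but then mis-paired with the trivial $\phi\ge v$ --- i.e.\ the tighter $\phi(v)\ge v(s_+^2+v^2)/(s_+^2+v^2-r_+^2)$. Substituted into $G(V^2)=\beta/[V(\beta^2+r^2)]$ and rearranged, using $\eta\beta\le r^2$ and discarding the manifestly nonpositive contributions, this does give $(r_+^2-r^2)\beta^2\le r^2(s_+^2+\eta^2+2r^2)$. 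Similarly, ``the lower estimate on $\phi$ gives $r^2/\beta\le\eta+s_+^2/(\eta+\beta)$'' should read ``upper''. These are not mere typos: the argument as you described it would not close. Second, the one-sentence treatment of $r_-=0$ is not a proof: truncating $\mu_1$ changes the subordination functions, so you cannot just rerun the scheme on a truncated measure. The correct and elementary fix is to bound $G$ directly, for instance $G(t)\ge\tfrac12\int_{|x|>\rho}x^{-2}\dd\mu_1(x)$ for $t\le\rho^2$; when $r_-=0$ the right side diverges as $\rho\downarrow0$, so for a suitable $\rho>0$ one has $G(t)>r^{-2}$ on $(0,\rho^2]$, hence $t^*\ge\rho^2$.

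Finally, the last bookkeeping step --- producing exactly the profile $\sigma_-s_-b_-\min\{1,\sigma_-s_-/\eta\}$ of~\eqref{le bounds on im omega2} with a single numerical $C_2$ --- should be spelled out. The natural choice is $\sigma_-s_-\asymp\sqrt{t^*}$ with the intermediate-regime loss $r^2G(V_*^2)$ absorbed into $b_-$, after which one verifies the three regimes $\eta\lesssim\sqrt{t^*}$, $\sqrt{t^*}\lesssim\eta\le s_+$ and $\eta>s_+$ separately; the compatibility needed in the last regime, $(\sigma_-s_-)^2b_-\lesssim r^2$, holds because $t^*\le r^2$, which follows from $G(t)\le 1/t$. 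With these repairs, your argument is a valid and attractive alternative to the paper's Nevanlinna-representation proof.
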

\begin{rem}\label{le remark uniformity of constants}
It follows from the proof of Theorem~\ref{0 in the bulk}, that $s_-(\mu_1,r)$ is a monotonic increasing function in the variable $r\in(r_-,r_+)$, with $0<s_-(\mu_1,r)\le s_+$. The $r$ dependence of $b_-(\mu_1,r)$ (and $a_-(\mu_1,r)$ in~\eqref{le a--}) is then explicit in terms of $s_-(\mu_1,r)$ and $r$. This allows us to obtain uniform bounds for $r\in [r_-+\tau,r_+-\tau]$, with a fixed $\tau>0$, in Theorem~\ref{le corollary of 0 in the bulk}.
\end{rem}

With this proposition we can easily establish all necessary bounds on the free convolution measure and both associated subordination 
functions stated in Theorem~\ref{le corollary of 0 in the bulk}.
\begin{proof}[Proof of Theorem~\ref{le corollary of 0 in the bulk}] Using~\eqref{le omega1 after knowing omega2} and the facts $\omega_1(\ii\eta)=-\overline{\omega_1(\ii\eta)}$ and $\omega_2(\ii\eta)=-\overline{\omega_2(\ii\eta)}$, Theorem~\ref{le corollary of 0 in the bulk} follows readily.
Indeed, the upper bound on $|\omega_2(\ii\eta)|$ 
follows from the upper  bound in \eqref{le bounds on im omega2},
the lower bound on $\im \omega_2(\ii\eta)$ follows from  the lower bound  in \eqref{le bounds on im omega2}
for small~$\eta$ and from~$\im \omega_2(\ii \eta)\ge \eta$, for large $\eta$. The upper and lower bounds on $\im\omega_2(\ii\eta)-\ii \eta$ then
imply a lower and upper bound on $|\omega_1(\ii\eta)|$ by~\eqref{le omega1 after knowing omega2}. Finally,~\eqref{090410}
controls $F_{\mu_1}(\omega_2(\ii \eta))= -1/m_{\mu_1\boxplus\mu_2}(\ii \eta)$ from above
and $\im F_{\mu_1}(\omega_2(\ii\eta)) \ge \im (\omega_2(\ii\eta) -\ii\eta)$ controls it from below by~\eqref{le bounds on im omega2},
which yields \eqref{mbound}. 
\end{proof}

\subsection{Proof of Theorem~\ref{0 in the bulk}}
For the sake of simplicity of presentation, the proof of~Theorem~\ref{0 in the bulk} is accomplished in a sequence of lemmas.

\begin{lem} Let $\mu_1$ be as in Theorem~\ref{0 in the bulk}. Then there exists a symmetric (non-negative) Borel measure $\widetilde\mu_1$ such that
\begin{align}\label{le super nevan}
 F_{\mu_1}(\omega)-\omega=\frac{-r_-^2}{\omega}+\int_\R\frac{\dd\widetilde\mu_1(x)}{x-\omega}\,,\qquad\qquad\omega\in\C^+\,,
\end{align}
with $\widetilde\mu_1(\R)=r_+^2-r_-^2$, $\mathrm{supp}\,\widetilde\mu_1\subset[-s_+,s_+]$ and $\widetilde\mu_1(\{0\})=0$.
\end{lem}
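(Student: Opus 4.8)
The plan is to start from the defining relation $F_{\mu_1}(\omega)-\omega = -z - r^2/(\omega - z)$ for the subordination function — but here we want an abstract identity for $F_{\mu_1}(\omega) - \omega$ as a function of $\omega$ alone, valid on all of $\C^+$, so I would instead work directly with the Nevanlinna (Pick) representation of the analytic function $G(\omega) \deq F_{\mu_1}(\omega) - \omega$. Recall that $\mu_1 = \mu_\sigma^{\mathrm{sym}}$ is a symmetric probability measure supported in $[-s_+, s_+]$, so $m_{\mu_1}$ maps $\C^+ \to \C^+$ and is odd, which forces $F_{\mu_1} = -1/m_{\mu_1}$ to be odd as well. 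Since $F_{\mu_1}$ has nonnegative imaginary part on $\C^+$ and behaves like $z$ at infinity (by \eqref{le F behaviour at infinity}), the function $\omega - F_{\mu_1}(\omega)$ is an analytic self-map of the upper half-plane, hence admits a Nevanlinna representation $\omega - F_{\mu_1}(\omega) = a + b\omega + \int_\R \big(\frac{1}{x-\omega} - \frac{x}{1+x^2}\big)\dd\nu(x)$ for some $a \in \R$, $b \ge 0$ and positive measure $\nu$. The behaviour $F_{\mu_1}(\ii\eta)/(\ii\eta) \to 1$ as $\eta \to \infty$ forces $b = 0$; oddness of $F_{\mu_1}$ forces $a = 0$ and $\nu$ symmetric, so the Poisson-type correction term $\frac{x}{1+x^2}$ integrates to zero and we get $F_{\mu_1}(\omega) - \omega = -\int_\R \frac{\dd\nu(x)}{x-\omega}$ for a symmetric positive measure $\nu$.

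Next I would identify the total mass and the atom at zero. Expanding $F_{\mu_1}(\omega) - \omega$ for $\omega = \ii\eta$, $\eta \to \infty$: from $m_{\mu_1}(z) = -\frac1z - \frac{m_2(\mu_1)}{z^3} + O(z^{-5})$ (odd moments vanish), one computes $F_{\mu_1}(z) = z - \frac{m_2(\mu_1)}{z} + O(z^{-3})$, where $m_2(\mu_1) = \int x^2 \dd\mu_1(x) = r_+^2$ by the definition of $r_+$ in \eqref{le radii} (since $\mu_\sigma^{\mathrm{sym}}$ has the same second moment as $\mu_\sigma$). Matching with $-\int \frac{\dd\nu(x)}{x-\omega} = \frac{\nu(\R)}{\omega} + O(\omega^{-3})$ gives $\nu(\R) = r_+^2$. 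To extract the behaviour at the origin, I would use that $F_{\mu_1}(0) = -1/m_{\mu_1}(0)$ and that $m_{\mu_1}(0) = -\int x^{-2}\dd\mu_1(x) \cdot$(sign handled via symmetry)$= -r_-^{-2}$ when the integral converges — more precisely the pole structure of $F_{\mu_1}$ at $0$ contributes an atom $\nu(\{0\}) = r_-^2$; separating it off, write $\nu = r_-^2 \delta_0 + \widetilde\mu_1$ with $\widetilde\mu_1(\{0\}) = 0$, giving $F_{\mu_1}(\omega) - \omega = \frac{-r_-^2}{\omega} + \int_\R \frac{\dd\widetilde\mu_1(x)}{x-\omega}$ and $\widetilde\mu_1(\R) = r_+^2 - r_-^2$. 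Symmetry of $\widetilde\mu_1$ is inherited from that of $\nu$, and $\mathrm{supp}\,\widetilde\mu_1 \subset \mathrm{supp}\,\nu \subset [-s_+,s_+]$ follows since $F_{\mu_1}$ extends analytically across $\R \setminus \mathrm{supp}\,\mu_1^{\mathrm{sym}}$ and $\mathrm{supp}\,\mu_1 \subset [-s_+,s_+]$ by \eqref{le big S_+}.

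The main obstacle I anticipate is making the extraction of the atom at $0$ rigorous in the degenerate cases: when $r_- = 0$ (i.e.\ $\int x^{-2}\dd\mu_\sigma = \infty$, including the case $\mu_\sigma(\{0\}) > 0$), there is no pole and the claimed identity should hold with the first term absent, consistent with $r_-^2 = 0$; this needs care because $m_{\mu_1}(0)$ may vanish and $F_{\mu_1}(0)$ blow up. The clean way to handle both regimes uniformly is to analyze the boundary behaviour $\lim_{\eta \downarrow 0} \ii\eta \big(F_{\mu_1}(\ii\eta) - \ii\eta\big)$, which by the representation equals $-\nu(\{0\})$, and to show independently via monotone convergence on $m_{\mu_1}(\ii\eta) = -\int \frac{x}{x^2+\eta^2}\,\cdot$ (or rather the symmetric-measure form $m_{\mu_1}(\ii\eta) = \ii\eta\int\frac{\dd\mu_1(x)}{x^2+\eta^2}$) that this limit equals $r_-^2$ in all cases, with the convention $r_- = 0$ when the relevant integral diverges — exactly the convention fixed after \eqref{le radii}. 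Once the atom is pinned down this way, the rest is routine bookkeeping with Nevanlinna representations.
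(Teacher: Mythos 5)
Your plan is essentially the paper's: take the Nevanlinna representation of $F_{\mu_1}(\omega)-\omega$, read off the total mass $r_+^2$ from the expansion of $m_{\mu_1}$ at infinity, identify the atom at the origin as $r_-^2$ via the boundary limit $\lim_{\eta\searrow 0}\ii\eta\bigl(F_{\mu_1}(\ii\eta)-\ii\eta\bigr)=-r_-^2$, and subtract that atom off. The paper quotes the representation $F_{\mu_1}(\omega)=\omega+\int_\R\frac{\dd\widehat\mu_1(x)}{x-\omega}$ directly from Maassen's Proposition~2.2 for symmetric measures rather than deriving it from the general Nevanlinna theorem as you do, but the content is the same, and your monotone-convergence remark about the degenerate case $r_-=0$ is a sensible sanity check that the paper leaves implicit.

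There is, however, a systematic sign slip you should repair. Since $\im F_{\mu_1}(\omega)\ge\im\omega$ on $\C^+$, the analytic self-map of $\C^+$ is $F_{\mu_1}(\omega)-\omega$, \emph{not} $\omega-F_{\mu_1}(\omega)$: as you have written it, the representation $\omega-F_{\mu_1}(\omega)=\int\frac{\dd\nu(x)}{x-\omega}$ with $\nu\ge 0$ would force $\im F_{\mu_1}\le\im\omega$, the wrong inequality. The error then reappears: matching your $-\int\frac{\dd\nu(x)}{x-\omega}=\frac{\nu(\R)}{\omega}+O(\omega^{-3})$ against $F_{\mu_1}(\omega)-\omega=-\frac{r_+^2}{\omega}+O(\omega^{-3})$ actually yields $\nu(\R)=-r_+^2<0$, and the decomposition $\nu=r_-^2\delta_0+\widetilde\mu_1$ would give $F_{\mu_1}(\omega)-\omega=\frac{r_-^2}{\omega}-\int\frac{\dd\widetilde\mu_1(x)}{x-\omega}$, the negative of the target; likewise your stated boundary limit $-\nu(\{0\})$ is inconsistent with your own representation. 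You land on the correct final identity only by silently flipping the sign at each of these junctures. Starting instead from $F_{\mu_1}(\omega)-\omega=\int_\R\frac{\dd\widehat\mu_1(x)}{x-\omega}$ with $\widehat\mu_1\ge 0$ (exactly the paper's \eqref{le symmetric nevan}) makes all three steps come out consistently.
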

\begin{proof}
Since $\mu_1$ is a symmetric probability measure, $F_{\mu_1}\,:\C^+\rightarrow\C^+$ satisfies~\eqref{le F behaviour at infinity} and there exists a symmetric Borel measure, $\widehat\mu_1$, such that $F_{\mu_1}$ admits the Nevanlinna representation
\begin{align}\label{le symmetric nevan}
 F_{\mu_1}(\omega)=\omega+\int_\R\frac{\dd\widehat\mu_1(x)}{x-\omega}\,,\qquad\qquad\omega\in\C^+;
\end{align}
see \eg Proposition~2.2 in~\cite{Maassen}. We observe that $\widehat\mu_1(\R)=r_+^2$. Indeed, expanding the Stieltjes transform $m_{\mu_1}$ around complex infinity we find
\begin{align*}
 m_{\mu_1}(\omega)=\int_\R\frac{\dd\mu_1(x)}{x-\omega}&=\int_\R\left(\frac{-1}{\omega}-\frac{x^2}{\omega^3}+O(|\omega|^{-5})\right)\dd\mu_1(x)\nonumber\\&=-\frac{1}{\omega}-\frac{r_+^2}{\omega^3}+O(|\omega|^{-5})\,,
\end{align*}
as $|\omega|\to\infty$ in $\C^+$, where we used that $\mu_1$ is symmetric. Thus $F_{\mu_1}(\omega)-\omega=-r_+^2/\omega+O(|\omega|^{-3})$ in the same limit and we conclude by comparing with~\eqref{le symmetric nevan} that $\widehat\mu_1(\R)=r_+^2$.

Since $\mu_1$ is symmetric, its Stieltjes transform satisfies $m_{\mu_1}(\ii\eta)=\ii \,\im m_{\mu_1}(\ii\eta)$, $\eta>0$.
We then obtain
\begin{align}
 \lim_{\eta\searrow 0}\ii\eta (F_{\mu_1}(\ii\eta)-\ii\eta)&=-\lim_{\eta\searrow 0}\frac{\eta}{\im m_{\mu_1}(\ii\eta)}\nonumber\\ &=-\lim_{\eta\searrow 0}\left({\int_\R\frac{\dd\mu_1(x)}{x^2+\eta^2}}\right)^{-1}=-r_-^2\,,
\end{align}
by the definition of $r_-$ in~\eqref{le radii}. Comparing with~\eqref{le symmetric nevan}, we conclude that $\widehat\mu_1(\{0\})=r_-^2$, since for any Borel measure $\nu$ we have $\lim_{\eta\searrow 0} \eta\im m_{\nu}(E+\ii\eta)=\nu(\{E\})$, for all $E\in\R$. Setting $\widetilde\mu_1\deq \widehat\mu_1-r_-^2\delta_0$ we get~\eqref{le super nevan}. Clearly, $\widetilde\mu_1$ is a symmetric (non-negative) Borel measure with $\widetilde\mu_1(\R)=r_+^2-r_-^2$ satisfying $\mathrm{supp}\,\widetilde\mu_1\subset[-s_+,s_+]$. This concludes the proof of the lemma.
\end{proof}

We now introduce $s_-\in\R^+$ as
\begin{align}\label{le s--}
 s_-\deq \sup\left\{x\in\R^+\,:\,\int_0^{x}\dd\widetilde\mu_1(x)\le\frac{r^2-r_-^2}{8}\right\}\,.
\end{align}
Note that, for $r>r_-$, $s_-$ is strictly positive since $\mu_1$ is symmetric and we assume that $\mu_1$ is supported at least at three points. (We assume that $\mu_\sigma$ is supported at least at two points. Thus $\mu_1=\mu_\sigma^\mathrm{sym}$ is supported at least at three points). Note that since $\mathrm{supp}\,\widetilde\mu_1\subset[-s_+,s_+]$, 
 thus $\widetilde\mu_1([0, s_+])= \widetilde\mu_1(\R^+)=\frac{1}{2}(r_+^2-r_-^2)> \frac{1}{8}(r^2-r_-^2)$, we have $s_-\le s_+$.

Equation~\eqref{090410} for $\omega_2(z)$, when combined with~\eqref{le super nevan}, reads
\begin{align}
  F_{\mu_1}(\omega_2(z))-\omega_2(z)=\frac{-r_-^2}{\omega_2(z)}+\int_\R \frac{\dd\widetilde\mu_1(x)}{x-\omega_2(z)}=-z-\frac{r^2}{\omega_2(z)-z}\,,
\end{align}
$z\in\C^+$. We then rewrite this last equation as
\begin{multline}\label{le perturbed}
 (r^2-r_-^2)\omega_2(z)+r_-^2z\\ =-z(\omega_2(z)-z)\omega_2(z)-(\omega_2(z)-z)\omega_2(z)\int_\R\frac{\dd\widetilde\mu_1(x)}{x-\omega_2(z)}\,,
\end{multline}
$z\in\C^+$. Our first goal is to show that $\im\omega_2(0)\equiv \lim_{\eta\searrow 0}\im\omega_2(\ii\eta)$ is strictly positive.

\begin{lem}
 Let $\mu_1$ and $r\in(r_-,r_+)$ be as in Theorem~\ref{0 in the bulk}. Then,
\begin{equation}\label{omega2 at zero}
\im\omega_2(0)>\frac{\sqrt3}{2}\sigma_-s_-\,.
\end{equation} 
\end{lem}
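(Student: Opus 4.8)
The strategy is to evaluate the defining equation~\eqref{le perturbed} at $z=0$, exploiting that along the imaginary axis $\omega_2(\ii\eta)$ is purely imaginary. Writing $\omega_2(0)=\ii\beta$ with $\beta\deq\im\omega_2(0)\ge 0$ (the limit exists by the continuous extension of the subordination functions, and $\beta\ge 0$ since $\omega_2\,:\,\C^+\to\C^+$), and using $\int_\R \dd\widetilde\mu_1(x)/(x-\ii\beta) = \ii\beta\int_\R \dd\widetilde\mu_1(x)/(x^2+\beta^2)$ (by symmetry of $\widetilde\mu_1$), the $z=0$ limit of~\eqref{le perturbed} becomes
\begin{align}\label{pf:z0}
 (r^2-r_-^2)\,\ii\beta = \ii\beta\cdot\beta^2 + \ii\beta\,\beta^2\int_\R\frac{\dd\widetilde\mu_1(x)}{x^2+\beta^2}\,.
\end{align}
First I would argue $\beta>0$: if $\beta=0$ then $\omega_1(0)=-r^2/\omega_2(0)$ would blow up, contradicting the uniform upper bound coming from $\im\omega_1(\ii\eta)\ge\eta$ together with the fact that $F_{\mu_1}(\omega_2(z))=F_{\mu_2}(\omega_1(z))$ must stay finite — more directly, $\omega_2(z)-z$ cannot vanish at $z=0$ since the left side of~\eqref{le perturbed} at $z=0$ equals $(r^2-r_-^2)\omega_2(0)$, which would force $\omega_2(0)=0$ and then the right side is $0\cdot(\cdots)$, an indeterminacy to be ruled out; cleaner is to divide~\eqref{pf:z0} by $\ii\beta$ (legitimate once $\beta>0$ is known, but one can instead observe that $\beta=0$ makes the identity $0=0$ vacuous and use a short limiting argument: divide~\eqref{le perturbed} by $z$, send $\eta\searrow0$, and check the left side tends to $r_-^2\neq 0$ unless $\beta>0$).

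\textbf{Main estimate.} Once $\beta>0$, divide~\eqref{pf:z0} by $\ii\beta$ to get the scalar identity
\begin{align}\label{pf:scalar}
 r^2-r_-^2 = \beta^2\left(1+\int_\R\frac{\dd\widetilde\mu_1(x)}{x^2+\beta^2}\right)\,.
\end{align}
Now I would bound the right side from above on the event $\beta\le\sigma_- s_-$ and derive a contradiction with the lower bound in~\eqref{omega2 at zero}, or rather bound it to extract the stated inequality directly. Split the integral at $|x|=s_-$: on $|x|\le s_-$ use $x^2+\beta^2\ge\beta^2$ so the contribution is $\le \widetilde\mu_1([-s_-,s_-])/\beta^2 \cdot \beta^2$... more carefully, $\beta^2\int_{|x|\le s_-}\dd\widetilde\mu_1/(x^2+\beta^2)\le \widetilde\mu_1([-s_-,s_-]) \le 2\cdot\frac{r^2-r_-^2}{8}=\frac{r^2-r_-^2}{4}$ by the definition~\eqref{le s--} of $s_-$ (using symmetry of $\widetilde\mu_1$); and on $|x|>s_-$ use $x^2+\beta^2\ge s_-^2$, so $\beta^2\int_{|x|>s_-}\dd\widetilde\mu_1/(x^2+\beta^2)\le (\beta^2/s_-^2)\,\widetilde\mu_1(\R) = (\beta^2/s_-^2)(r_+^2-r_-^2)$. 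Also the "$1$" term contributes $\beta^2$. Hence from~\eqref{pf:scalar},
\begin{align}\label{pf:ineq}
 r^2-r_-^2 \le \beta^2 + \frac{r^2-r_-^2}{4} + \frac{\beta^2}{s_-^2}(r_+^2-r_-^2)\,,
\end{align}
so $\frac34(r^2-r_-^2)\le\beta^2\big(1+\frac{r_+^2-r_-^2}{s_-^2}\big)$. This already gives a positive lower bound on $\beta$; to reach the precise constant $\frac{\sqrt3}{2}\sigma_- s_-$ one instead keeps $\beta^2\int_{|x|>s_-}\dd\widetilde\mu_1/(x^2+\beta^2)$ only when convenient. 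Actually the clean route: drop the nonnegative $|x|>s_-$ integral and the "$1$" is replaced by noting the cleanest bound uses $\int_\R\dd\widetilde\mu_1/(x^2+\beta^2)\le \widetilde\mu_1(\R)/\beta^2$ giving $r^2-r_-^2\le\beta^2+\widetilde\mu_1(\R)=\beta^2+r_+^2-r_-^2$, which is too weak; so the split at $s_-$ is essential, and one then optimizes by using $x^2+\beta^2\ge \max\{x^2,\beta^2\}$ and being slightly more generous on the tail — I expect that tracking constants carefully through the split, together with $\sigma_-^2=(r^2-r_-^2)/(r_+^2-r_-^2)$, converts~\eqref{pf:ineq} into $\beta^2 > \frac34\sigma_-^2 s_-^2$, i.e.\ $\beta>\frac{\sqrt3}{2}\sigma_-s_-$, after noting that on the tail $\widetilde\mu_1$-mass is at most $\widetilde\mu_1(\R^+)\cdot 2 = r_+^2-r_-^2$ and feeding in the bound $\beta\le s_-$ in that regime (which may itself need a preliminary a priori bound $\beta\le\sigma_+s_+$ from the upper estimate in Theorem~\ref{0 in the bulk}, proved separately/earlier in the lemma chain).

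\textbf{Main obstacle.} The conceptually routine part is taking the $z\to 0$ limit and the algebra of~\eqref{pf:scalar}. The delicate part — and where I would spend the most care — is pinning down the \emph{sharp} numerical constant $\frac{\sqrt3}{2}$: this requires choosing the right splitting point and the right inequality on each piece so that the factor $\frac14$ from the definition of $s_-$ in~\eqref{le s--} and the factor controlling the tail combine to leave exactly $\frac34(r^2-r_-^2)$ on the right. One must also be careful that the limit $\omega_2(\ii\eta)\to\ii\beta$ genuinely has $\beta\in(0,\infty)$ before dividing by $\ii\beta$; this finiteness and positivity should be extracted from the continuity of the subordination functions up to $\R$ (Theorem~\ref{le prop 1} and the remarks after it) together with $\im\omega_2(\ii\eta)\ge\eta$ and the relation~\eqref{le omega1 after knowing omega2} which forbids $\omega_2(0)-0=\omega_2(0)$ from being $0$ (else $\omega_1(0)=\infty$). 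Everything else is bookkeeping.
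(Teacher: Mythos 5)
Your approach is the same as the paper's (evaluate the algebraic identity~\eqref{le perturbed} at $z=0$, use that $\omega_2(0)$ is purely imaginary to get a scalar equation, split the resulting integral at $s_-$), but there is a computational slip in~\eqref{pf:z0} that propagates and prevents you from reaching the sharp constant. At $z=0$ the term $-z(\omega_2-z)\omega_2$ in~\eqref{le perturbed} vanishes identically because of the explicit factor $z$, so there is no $\ii\beta\cdot\beta^2$ contribution; the correct identity after dividing by $\ii\beta$ is
\begin{align*}
 r^2-r_-^2 \;=\; \beta^2\int_\R\frac{\dd\widetilde\mu_1(x)}{x^2+\beta^2}\,,
\end{align*}
with no extra ``$1$'' inside the parentheses. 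With this corrected, the split at $|x|=s_-$ and the bounds you already wrote (the $\frac{r^2-r_-^2}{4}$ contribution from $|x|\le s_-$ via~\eqref{le s--}, and $\beta^2(r_+^2-r_-^2)/s_-^2$ from the tail) give directly $\frac34(r^2-r_-^2)\le \beta^2\,(r_+^2-r_-^2)/s_-^2$, i.e.\ $\beta^2\ge\frac34\sigma_-^2 s_-^2$, with no need for the ``optimization'' you gesture at — that hand-wave is exactly where your spurious $\beta^2$ makes the inequality too weak to close, and the fix is to remove the erroneous term rather than to refine the split.

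The second gap is in the positivity step $\beta>0$. The limiting argument you sketch (divide~\eqref{le perturbed} by $z$, send $\eta\searrow 0$, ``check the left side tends to $r_-^2$'') does not go through as stated: $\omega_2(\ii\eta)/(\ii\eta)\ge 1$ always (since $\im\omega_2(\ii\eta)\ge\eta$), so the first term on the left side need not vanish even if $\omega_2(\ii\eta)\to 0$, and in fact the ratio can stay bounded. The paper instead argues by contradiction: assuming $\omega_2(\ii\eta)\to 0$, it shows $\big|\int_\R |\omega_2|^2\dd\widetilde\mu_1/(x^2+|\omega_2|^2)\big|$ can be made $\le\epsilon$ — and here the fact that $\widetilde\mu_1(\{0\})=0$ (established when $\widetilde\mu_1$ was built, by subtracting off the atom $r_-^2\delta_0$) is essential, since an atom at the origin would make this integral stay bounded below. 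Feeding that smallness back into~\eqref{le perturbed} forces $\im\omega_2(\ii\eta)\gtrsim 1/\eta$, contradicting the assumption. Your proposal never invokes $\widetilde\mu_1(\{0\})=0$, and without it the positivity of $\beta$ is not actually pinned down.
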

\begin{proof}
By Theorem 2.3 of~\cite{Bel1}, $\omega_2(z)$ extends continuously to the real line. Choosing $z=\ii\eta$ in~\eqref{le perturbed} we can assume that $\lim_{\eta\searrow 0}\im\omega_2(\ii\eta)<\infty$. 
By symmetry, $\omega_2(\ii\eta)=-\overline{\omega_2(\ii\eta)}$, we know that
 $\omega_2(0)$ is purely imaginary. Assume first that $\im \omega_2(0)>0$. Taking the limit $\eta\searrow 0$ in~\eqref{le perturbed} and dividing through $\omega_2(0)$ we get
\begin{align}\label{le new thing}
(r^2-r_-^2)=-\omega_2(0)\int_\R\frac{\dd\widetilde\mu_1(x)}{x-\omega_2(0)}=\int_\R\frac{|\omega_2(0)|^2\dd\widetilde\mu_1(x)}{x^2+|\omega_2(0)|^2}\,,
\end{align}
where we used that $\omega_2(0)$ is purely imaginary. Recalling $s_-$ in~\eqref{le s--}, we further~get
\begin{align}\label{le vega 3}
 \int_\R\frac{|\omega_2(0)|^2\dd\widetilde\mu_1(x)}{x^2+|\omega_2(0)|^2}&\le 2\int_0^{s_-}\frac{|\omega_2(0)|^2\dd\widetilde\mu_1(x)}{x^2+|\omega_2(0)|^2}+2\int_{s_-}^{s_+}\frac{|\omega_2(0)|^2\dd\widetilde\mu_1(x)}{x^2+|\omega_2(0)|^2}\nonumber\\
 &\le  2\int_0^{s_-}\dd\widetilde\mu_1(x)+2|\omega_2(0)|^2\int_{s_-}^{s_+}\frac{\dd\widetilde\mu_1(x)}{x^2}\nonumber\\
 &\le \frac{r^2-r_-^2}{4}+|\omega_2(0)|^2\frac{r_+^2-r_-^2}{s_-^2}\,,
\end{align}
where we also used that $\widetilde\mu_1(\R)=r_+^2-r_-^2$.  Hence from~\eqref{le vega 3} and~\eqref{le new thing}, we conclude that
\begin{align}
 3\frac{r^2-r_-^2}{4}\le |\omega_2(0)|^2\frac{r_+^2-r_-^2}{s_-^2}\,.
\end{align}
Thus, we get
\begin{align}
\frac{\sqrt3}{2} {\sigma_- s_-}\le \im \omega_2(0)\,, 
\end{align}
provided that $\im\omega_2(0)>0$, where we used that $|\omega_2(0)|=\im \omega_2(0)$.

To conclude the proof, we need to show that $\lim_{\eta\searrow 0}\im\omega_2(\ii\eta)>0$. Arguing by contradiction, we assume that $\lim_{\eta\searrow 0}\im\omega_2(\ii\eta)=0$. Choose an arbitrary $\epsilon>0$. Letting $\eta>0$ be sufficiently small, we can assure that
\begin{align}\label{le chliner als epsilon}
 \left|\omega_2(\ii\eta)\int_\R\frac{\dd\widetilde\mu_1(x)}{x-\omega_2(\ii\eta)}\right|=\left| \int_\R\frac{|\omega_2(\ii\eta)|^2\dd\widetilde\mu_1(x)}{x^2+|\omega_2(\ii\eta)|^2}\right|\le \epsilon\,,
\end{align}
where we first used that $\omega_2(\ii\eta)$ is purely imaginary and then used that $0$ is not an atom of the measure $\widetilde\mu_1$. We thus obtain from~\eqref{le perturbed} and~\eqref{le chliner als epsilon} that
\begin{align*}
|(r^2-r_-^2)\omega_2(\ii\eta) |\le |(r^2-r_-^2)\omega_2(\ii\eta)+r_-^2\ii\eta| \le  \eta |\omega_2(\ii\eta)|^2+|\omega_2(\ii\eta)| \epsilon\,,
\end{align*}
for $\eta>0$ sufficiently small, where we used $r>r_-$, $|\omega_2(\ii\eta)|=\im\omega_2(\ii\eta)$, $\im\omega_2(\ii\eta)\ge \eta$ (\cf Theorem~\ref{le prop 1}), so $|\omega_2(\ii \eta)-\ii\eta|\le |\omega_2(\ii \eta)|$. Choosing $\epsilon=(r^2-r_-^2)/2$, we get
\begin{align}
 |(r^2-r_-^2)\omega_2(\ii\eta) |\le 2  \eta |\omega_2(\ii\eta)|^2\,,
\end{align}
for $\eta>0$ sufficiently small, \ie we have $\im\omega_2(\ii\eta)\ge (r^2-r_-^2)/(2\eta)$. Since $r^2-r_-^2>0$, we get a contraction with the assumption that $\lim_{\eta\searrow 0}\omega_2(\ii\eta)=0$. We thus conclude that $\lim_{\eta\searrow 0}\omega_2(\ii\eta)>0$. This completes the proof of the lemma.
 \end{proof}

 We are now prepared to prove the lower bound in~\eqref{le bounds on im omega2}. Recall $s_->0$ from~\eqref{le s--}.
 \begin{lem}
 Let $\mu_1$ and $r\in(r_-,r_+)$ be as in Theorem~\ref{0 in the bulk}. Then, there is a strictly positive constant $b_-\equiv b_-(\mu_1,r)>0$ such that
 \begin{align}\label{le bounds on im omega22}
|\omega_2(\ii\eta)-\ii\eta|\ge  C^{-1}\sigma_-s_- b_-\min\Big\{1, \frac{\sigma_-s_-}{\eta}\Big\}\,,
\end{align}
for all $\eta\ge 0$, where $C>1$ is a numerical constant (independent of $\mu_1$ and~$r$).
\end{lem}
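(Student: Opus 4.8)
The plan is to bootstrap from the fact, established in the previous lemma, that $\im\omega_2(0)$ is bounded below by $\tfrac{\sqrt3}{2}\sigma_-s_-$, and to propagate this lower bound to all $\eta\ge 0$ by tracking how $\im\omega_2(\ii\eta)-\eta$ can decay. First I would observe that by symmetry $\omega_2(\ii\eta)$ is purely imaginary, so we may write $\omega_2(\ii\eta)=\ii\beta(\eta)$ with $\beta(\eta)\ge\eta$ (the lower bound from Theorem~\ref{le prop 1}), and the quantity of interest is $|\omega_2(\ii\eta)-\ii\eta|=\beta(\eta)-\eta$. Evaluating the fundamental identity~\eqref{le perturbed} at $z=\ii\eta$ and taking imaginary/real parts turns it into a scalar equation relating $\beta(\eta)$, $\eta$, and the monotone quantity $g(\beta)\deq\int_\R\frac{\beta^2\,\dd\widetilde\mu_1(x)}{x^2+\beta^2}$, which is increasing in $\beta$ with $g(\infty)=r_+^2-r_-^2$.

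The key step is a continuity/monotonicity argument in $\eta$. Since $\omega_2$ extends continuously to $\R$ (Theorem 2.3 of~\cite{Bel1}) and $\im\omega_2(0)>\tfrac{\sqrt3}{2}\sigma_-s_-$, there is some $\eta_0>0$ on which $\beta(\eta)\ge\tfrac{\sqrt3}{2}\sigma_-s_-$ — this handles small $\eta$. For intermediate $\eta$, I would rearrange~\eqref{le perturbed} to isolate $\omega_2(\ii\eta)-\ii\eta$: dividing by $\omega_2(\ii\eta)$ one gets, schematically,
\begin{align*}
 (r^2-r_-^2)-\frac{r_-^2\eta}{\beta(\eta)}=-\big(\omega_2(\ii\eta)-\ii\eta\big)\Big(\ii\eta+\int_\R\frac{\dd\widetilde\mu_1(x)}{x-\omega_2(\ii\eta)}\Big)\,,
\end{align*}
so that $|\omega_2(\ii\eta)-\ii\eta|$ equals $(r^2-r_-^2-r_-^2\eta/\beta(\eta))$ divided by the modulus of the bracket. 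The bracket is bounded above by $\eta+\int x^{-1}\,\dd\widetilde\mu_1$ on the region where $\beta\ge s_-$, which together with the already-proven upper bound $|\omega_2(\ii\eta)-\ii\eta|\le C_2\min\{\sigma_+s_+,r^2/\eta\}$ forces $\beta(\eta)$ to stay comparable to $\sigma_-s_-$ until $\eta$ is of that order, and thereafter the $r^2/\eta$ competition kicks in to give the $\min\{1,\sigma_-s_-/\eta\}$ shape. The constant $b_-$ absorbs the lower bound on the numerator $r^2-r_-^2-r_-^2\eta/\beta(\eta)$, which stays positive precisely because $\beta(\eta)\gtrsim\sigma_-s_-$ on the relevant range; its dependence on $\mu_1,r$ (and its continuity in $r$, cf.\ Remark~\ref{le remark uniformity of constants}) comes from the dependence of $s_-$ and of $\int_{s_-}^{s_+}x^{-1}\dd\widetilde\mu_1(x)$ on these parameters.

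The main obstacle I anticipate is closing the argument uniformly in $\eta$ across the transition region $\eta\sim\sigma_-s_-$: one must rule out the scenario in which $\beta(\eta)-\eta$ dips below the claimed bound for some moderate $\eta$ even though it is large at $\eta=0$ and controlled (by the upper bound) for large $\eta$. I would handle this by a bootstrap: assume $\beta(\eta)\ge\tfrac12\sigma_-s_-$ as long as possible, show via the rearranged identity above that on this set $|\omega_2(\ii\eta)-\ii\eta|$ is in fact bounded below by the asserted expression (hence $\beta$ cannot be too small there), and use continuity to conclude the set is all of $[0,\eta_{\rm M}]$-type intervals — in particular there is no first exit time. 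A secondary nuisance is keeping the numerical constant $C$ genuinely independent of $\mu_1$ and $r$, which requires always estimating $\int_{s_-}^{s_+}x^{-2}\dd\widetilde\mu_1$ by $(r_+^2-r_-^2)/s_-^2$ as in the previous lemma rather than by anything $\mu_1$-specific, and pushing all $\mu_1$-dependence into $b_-$ and $s_-$.
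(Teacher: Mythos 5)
Your overall plan --- restrict to the imaginary axis where $\omega_2(\ii\eta)=\ii\beta(\eta)$, divide the subordination identity~\eqref{le perturbed} through by $\omega_2$, bound the resulting bracket, and propagate the lower bound $\beta(0)\ge\tfrac{\sqrt3}{2}\sigma_-s_-$ by continuity --- is reasonable in outline, but the bootstrap you sketch does not close, and you cannot conclude that there is ``no first exit time''. Under the hypothesis $\beta\ge\tfrac12\sigma_-s_-$ the rearranged identity yields a lower bound of the claimed form $\beta(\eta)-\eta\ge c\,b_-\sigma_-s_-$; but $b_-$ can be far smaller than~$1$ (indeed it must degenerate as $r\to r_\pm$), so the conclusion $\beta\ge\eta+c\,b_-\sigma_-s_-$ is \emph{strictly weaker} than the bootstrap hypothesis $\beta\ge\tfrac12\sigma_-s_-$. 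Hence nothing prevents $\beta$ from reaching $\tfrac12\sigma_-s_-$ at some positive first time, and the parenthetical ``hence $\beta$ cannot be too small there'' does not follow from what you have shown. Likewise, the previously proven \emph{upper} bound $|\omega_2(\ii\eta)-\ii\eta|\le C_2\min\{\sigma_+s_+,r^2/\eta\}$ controls $\beta$ from above, not from below, so it cannot ``force $\beta$ to stay comparable to $\sigma_-s_-$'' as you suggest.

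The paper closes the argument with two mechanisms that do not appear in your proposal. First, after splitting $\widetilde\mu_1$ at $|x|=s_-$ and Taylor-expanding the outer part of the integral, the subordination identity is recast as an approximate cubic $(\omega^2-\widehat\omega^2)(\omega-z)=-r^2z/a_-+\psi(\omega,z)$ with $|\widehat\omega|\ge\sigma_-s_-$ and $\psi$ controlled on a domain $\mathcal F_-\times\mathcal E_-^{(1)}$. The key output is a genuine \emph{dichotomy}: for $\eta\le t_-b_-/64$ (with $t_-=\sigma_-s_-$), any purely imaginary solution satisfies either $|\omega|<\tfrac{5}{16}t_-$ or $|\omega|>\tfrac{6}{16}t_-$, with the annulus in between forbidden. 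A continuous branch starting above $\tfrac{6}{16}t_-$ at $\eta=0$ therefore never enters the small regime --- this version of the continuity argument actually closes, unlike a bootstrap whose conclusion is weaker than its hypothesis. Second, for $\eta$ beyond $\mathcal E_-^{(1)}$ the paper does not re-estimate the equation at all: it invokes the Nevanlinna representation $\omega_2(z)=z+\int_\R(x-z)^{-1}\dd\nu_2(x)$ to see that $\eta\mapsto\eta\big(\im\omega_2(\ii\eta)-\eta\big)$ is monotone increasing, and simply transports the bound from the boundary $\eta_0=t_-b_-/64$, which produces the $\sigma_-s_-/\eta$ decay. Both the dichotomy and the Nevanlinna monotonicity are essential and missing from your sketch.
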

\begin{proof}
Using the definition of $s_-$ in~\eqref{le s--}, we write~\eqref{le perturbed}, the defining equation for $\omega_2(z)$, as 
an equation with a free variable $\omega$:
\begin{align}\label{le alig}
 \frac{r^2-r_-^2}{\omega}+\int_{|x|\le s_-}\frac{\dd\widetilde\mu_1(x)}{(x-\omega)}+\int_{|x|>s_-}\frac{\dd\widetilde\mu_1(x)}{(x-\omega)}=-z-\frac{r^2z}{(\omega-z)\omega}\,,
\end{align}
$z\in\C^+$, whose unique solution on the upper half plane gives $\omega=\omega_2(z)$. Note that the third term on the left side has the expansion
\begin{align}
 \int_{|x|>s_-}\frac{\dd\widetilde\mu_1(x)}{(x-\omega)}&=\int_{|x|>s_-}\frac{\dd\widetilde\mu_1(x)}{x^2}\omega+\int_{|x|>s_-}\frac{\dd\widetilde\mu_1(x)}{x^3(x-\omega)}\omega^3\,,
\end{align}
for $|\omega|<s_-$, where we used that $\widetilde\mu_1$ is symmetric to get the second line. Let
\begin{align}\label{le a--}
 a_-\equiv a_-(\mu_1,r)\deq\int_{|x|>s_-}\frac{\dd\widetilde\mu_1(x)}{x^2}\,.
\end{align}
Note that by the definition of $s_-$ in~\eqref{le s--} we have the bound
\begin{align}\label{le hatomega at zero}
 0< \frac{3}{4}\frac{r^2-r_-^2}{s_+^2}\le a_-\le\frac{r_+^2-r_-^2}{s_-^2}\,.
\end{align}
Let moreover
\begin{align}\label{le def of hatomega}
 \widehat\omega\deq\ii\left(\frac{r^2-r_-^2}{a_-}\right)^{\frac{1}{2}}\,.
\end{align}
Note that from~\eqref{le hatomega at zero}, we have
\begin{align}\label{le size of omega hat}
 \sigma_-s_-\le |\widehat\omega|\,.
\end{align}

Using the definitions of~$\widehat\omega$ in~\eqref{le def of hatomega}
and of~$a_-$ in~\eqref{le a--}, we rewrite~\eqref{le alig} as
\begin{align}\label{final equation}
 (-\widehat\omega^2+\omega^2)(\omega-z)=-\frac{r^2z}{a_-}+\psi(\omega,z)\,,\qquad\qquad z\in\C^+\,,
\end{align}
where we further introduced the shorthand notation
\begin{multline}\label{le psi mm}
 \psi(\omega,z)\deq  - \frac{\omega(\omega-z)z}{a_-}-\frac{\omega^4(\omega-z)}{a_-}\int_{|x|> s_-}\frac{\dd\widetilde\mu_1(x)}{x^3(x-\omega)}\\-\frac{\omega(\omega-z)}{a_-}\int_{|x|\le s_-}\frac{\dd\widetilde\mu_1(x)}{x-\omega}\,.
\end{multline}

Next, we abbreviate $t_-\deq s_-\sigma_-<s_-$ and  define 
\begin{align}\label{defb}
 b_-\equiv b_-(\mu_1,r)\deq \min\Big\{1,a_-,a_-\frac{t_-^2}{r^2}\Big\}>0\,.
\end{align}
Then we introduce the sets
\begin{align}
 {\mathcal{F}}_-\deq\Big\{\omega=\ii|\omega|\in\C^+\,:\,|\omega|^2\le\frac{7}{8}t_-^2\Big\}
\end{align}
\begin{align}\begin{split}
 {\mathcal{E}}_-^{(1)}\deq \Big\{z=\ii\eta\in\C^+\,:\,\eta\le\frac{t_-b_-}{ 64} \Big\}\,.
\end{split}\end{align}

For $\omega\in{\mathcal{F}}_-$ we bound the last term in
the definition of $\psi(\omega,z)$ in~\eqref{le psi mm}~as 
\begin{align}
 \frac{|\omega| |\omega-z|}{a_-}\left|\int_{|x|\le s_-}\frac{\dd\widetilde\mu_1(x)}{x-\omega}\right|&= \frac{|\omega-z|}{a_-}\left|\int_{|x|\le s_-}\frac{|\omega|^2\dd\widetilde\mu_1(x)}{x^2+|\omega|^2}\right|\nonumber\\
 &\le\frac{|\omega-z|}{4a_-}(r^2-r_-^2)=\frac{|\widehat\omega|^2}{4}\,|\omega-z|\,,\qquad z\in\C^+\,,
\end{align}
where we used that $\widetilde\mu_1$ is symmetric and the definitions of~$s_-$ in~\eqref{le s--} and of $\widehat\omega$ in~\eqref{le def of hatomega}.

For $\omega\in {\mathcal{F}}_-$ we bound the second but last term in the definition of $\psi(\omega,z)$~as 
\begin{align}
\frac{|\omega|^4 |\omega-z|}{a_-}\bigg|\int_{|x|>s_-}\frac{\dd\widetilde\mu_1(x)}{x^3(x-\omega)}\bigg|&\le\frac{|\omega|^4|\omega-z| }{a_-}\int_{|x|>s_-}\frac{\dd\widetilde\mu_1(x)}{x^2}\frac{1}{|s_-|^2}\le\frac{7}{8}|\omega|^2\,|\omega-z|\,,
\end{align}
where we used that $|x-\omega|\ge |x|\ge s_-$, as $\omega$ lies on the imaginary axis, the definition of $a_-$ in~\eqref{le a--} and $t_-=\sigma_-s_-\le s_- $.

For the first term on the right side of~$\psi(\omega,z)$, we get for $z\in{\mathcal{E}}_-^{(1)}$ the bound
\begin{align}
 \frac{|\omega(\omega-z)z|}{a_-}\le \frac{|\omega-z| |\omega| t_-}{{ 64}}\,,
\end{align}
where we used that $64|z|\le t_-b_-\le  t_-a_-$ on $\mathcal{E}_-^{(1)}$.

Combining these estimates, we get that, for $\omega\in {\mathcal{F}}_-$ and $z\in{\mathcal{E}}_-^{(1)}$,
\begin{align}
 |\psi(\omega,z)|\le &   \frac{ |\omega| t_-}{ 64}|\omega-z|+\frac{7}{8}|\omega|^2|\omega-z|+\frac{|\widehat\omega|^2}{4}\,|\omega-z|\,.
\end{align}

For the first term on the right side of~\eqref{final equation} we note for $z\in{\mathcal{E}}_-^{(1)}$ the bound
\begin{align}
 \frac{r^2|z|}{a_-}\le \frac{t_-r^2}{{ 64} a_-}b_-\le \frac{t_-^3}{ 64}\,,
\end{align}
where we used that $b_-r^2/a_-\le t_-^2$ on $\mathcal{E}_-^{(1)}$ as follows from~\eqref{defb}.

Thus, for $\omega$ a solution to~\eqref{final equation} in $\mathcal{F}_{-}$ with $z\in{\mathcal{E}}_-^{(1)}$, at least one of the following holds
\begin{align}\label{le first dad}
 |-\widehat\omega^2+\omega^2|\,|\omega-z|\le  \frac{t_-^3}{{ 32}} 
\end{align}
or
\begin{align}\label{le second dad}
 |-\widehat\omega^2+\omega^2|\,|\omega-z|\le\frac{|\omega| t_-}{ 32}|\omega-z| +\frac{7}{4}|\omega|^2|\omega-z|+\frac{|\widehat\omega|^2}{2}\,|\omega-z|\,.
\end{align}

First, assume that~\eqref{le second dad} holds. Then we either have $\omega-z=0$, or  
\begin{align}
 |-\widehat\omega^2+\omega^2|&\le \frac{ t_-^2}{ 64}+\frac{|\omega|^2}{ 64}+\frac{7}{4}|\omega|^2+\frac{|\widehat\omega|^2}{2}\,.
\end{align}
We then absorb the last term on the right side into the left side to get
\begin{align}
\frac{1}{2}|\widehat\omega^2|\le \frac{ t_-^2}{ 64}+{|\omega|^2}+\frac{|\omega|^2}{64}+\frac{7}{4}|\omega|^2\,.
\end{align} We thus find
\begin{align}
 |\widehat\omega^2|<\frac{ t_-^2}{ 32}+{6}{|\omega|^2}\,.
\end{align}
Since $|\widehat\omega|\ge t_-$ by~\eqref{le size of omega hat}, we thus obtain that in this case that either $\omega-z=0$ or
\begin{align}
 |\omega|> {\frac38}t_-\,.
\end{align}

Second, assume that~\eqref{le first dad} holds. Then we can estimate, using that $\omega\in\mathcal{F}_-$,
\begin{align}
 |\omega-z|\le \frac{t_-^3}{ 32}\frac{1}{ |-\widehat\omega^2+\omega^2|}\le { \frac{2}{8}}t_-\,,
\end{align}
where we used that $|\widehat\omega|\ge t_-$ and $|\omega|^2\le 7t_-^2/8$ on $\mathcal{F}_-$. Since $|z|\le t_-/64$ for $z\in\mathcal{E}_-^{(1)}$, we find $|\omega|<{{ 5}t_-}/{ 16}$ in this case.

We conclude that for any $z\in\mathcal{E}_-^{(1)}$ a solution $\omega(z)$ to~\eqref{final equation} in $\mathcal{F}_-$  satisfies either
\begin{align}\label{le amad}
 |\omega(z)|< {\frac{5}{16}}t_-\qquad \textrm{ or }\qquad |\omega(z)|> { \frac{6}{16}}t_-\,.
\end{align}
Also note that if a solution $\omega(z)$ of~\eqref{final equation} satisfies $\omega(z)\not\in\mathcal{F}_-$ for some $z\in\mathcal{E}_-^{(1)}$, then the second alternative in~\eqref{le amad} holds trivially. 

Now, since the subordination function $\eta\rightarrow\mapsto\omega_2(\ii\eta)$ (extends to) a continuous function on $[0,\infty)$ by Theorem~2.3 of~\cite{Bel1}, we can conclude from~\eqref{le amad} that
 \begin{align}\label{le first lower bound on im omega}
|\omega_2(z)|>{ \frac{3}{8}} t_-\,,\qquad\qquad z\in\mathcal{E}_-^{(1)}\,,
 \end{align}
since we already showed in~\eqref{omega2 at zero} that $|\omega_2(0)|\ge { \sqrt{3}t_-/2}$. This proves the lower bound  in~\eqref{le bounds on im omega22} for the small $\eta$ regime.

Next, we introduce the domain which will handle the regime complementary to ${\mathcal{E}}_-^{(1)}$,
\begin{align}
 {\mathcal{E}}_-^{(2)}\deq\Big\{z=\ii\eta\in\C^+\,:\,\eta \ge\frac{t_-b_-}{{ 64}} \Big \}\,.
\end{align}
We claim that $\eta\mapsto \eta\cdot(\im \omega_2(\ii\eta)-\eta)$ is a monotone increasing function for $\eta\in\R^+$. Indeed since the analytic function $\omega_2\,:\,\C^+\rightarrow\C^+$ satisfies~\eqref{le limit of omega} and $\omega_2(\ii\eta)=-\overline{\omega_2(\ii\eta)}$, it has the Nevanlinna representation
\begin{align}\label{le nevi for omega2}
 \omega_2(z)=z+\int_\R\frac{\dd\nu_2(x)}{x-z}\,, \qquad\qquad z\in \C^+\,,
\end{align}
where $\nu_2$ is a finite symmetric Borel measure. 
The claim follows directly by considering the imaginary part of~\eqref{le nevi for omega2}
for $z$ along the positive imaginary axis. Hence, for any $\eta\ge \eta_0>0$, 
\begin{align}
 \im\omega_2(\ii\eta)-\eta\ge\frac{\eta_0}{\eta}(\im\omega_2(\ii\eta_0)-\eta_0)\,.
\end{align}
Choosing $\eta_0={t_-b_-}/{ 64}$ on the boundary of $ {\mathcal{E}}_-^{(1)}$, we can apply~\eqref{le first lower bound on im omega} for $z_0=\ii\eta_0$, and we obtain the estimate
  \begin{align}\label{le second lower bound on im omega}
 \im\omega_2(\ii\eta)-\eta\ge \frac{\eta_0}{\eta}\Big({\frac{3t_-}{8}}-\frac{t_-b_-}{64}\Big)\ge\frac{2t_-\eta_0}{8\eta} \ge \frac{{ 2} t_-^2b_-}{{ 256}\eta}\,,\quad \ii\eta\in {\mathcal{E}}_-^{(2)}\,,
\end{align}
where we used the definition of $b_-$ in~\eqref{defb} to get the second inequality.
Combining~\eqref{le first lower bound on im omega} and~\eqref{le second lower bound on im omega} we get the bound~\eqref{le bounds on im omega22}. 
\end{proof}

We move on to the upper bound in~\eqref{le bounds on im omega2}.  
\begin{lem}
Let $\mu_1$ and $r\in(r_-,r_+)$ be as in Theorem~\ref{0 in the bulk}. Then,
\begin{align}\label{le bounds on im omega23}
 |\omega_2(\ii\eta)-\ii\eta|\le C\min\Big\{\sigma_+s_+,\frac{r^2}{\eta}\Big\}\,,
\end{align}
for all $\eta\ge 0$, for a numerical constant $C<\infty$ (independent of $\mu_1$ and $r$).
\end{lem}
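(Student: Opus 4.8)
The plan is to restrict to the positive imaginary axis, where everything is purely imaginary, and reduce the defining equation~\eqref{090410} to a scalar identity. Writing $\omega_2(\ii\eta)=\ii v$ with $v\equiv v_2(\eta)\ge\eta$ (legitimate since $\omega_2(\ii\eta)=-\overline{\omega_2(\ii\eta)}$) and setting $g(\eta)\deq|\omega_2(\ii\eta)-\ii\eta|=v-\eta\ge 0$, I substitute~\eqref{le super nevan} into~\eqref{090410}, take $z=\ii\eta$, use the symmetry of $\widetilde\mu_1$, multiply by $v$, and rewrite $v^2\int_\R(x^2+v^2)^{-1}\dd\widetilde\mu_1(x)=(r_+^2-r_-^2)-\int_\R x^2(x^2+v^2)^{-1}\dd\widetilde\mu_1(x)$. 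This yields, for every $\eta>0$,
\begin{align}\label{uppereq}
 \int_\R\frac{x^2\,\dd\widetilde\mu_1(x)}{x^2+v^2}=r_+^2-r^2+\eta v-\frac{r^2\eta}{v-\eta}=r_+^2-r^2+\frac{\eta}{g(\eta)}\big(g(\eta)v-r^2\big)\,.
\end{align}

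Two elementary bounds feed into this. First, from~\eqref{le omega1 after knowing omega2}, $\omega_1(\ii\eta)=-r^2/(\omega_2(\ii\eta)-\ii\eta)$ is purely imaginary with $\im\omega_1(\ii\eta)=r^2/g(\eta)$, and $\im\omega_1(\ii\eta)\ge\eta$ by Theorem~\ref{le prop 1}$(i)$, so $g(\eta)\le r^2/\eta$; this already gives the $r^2/\eta$ half of~\eqref{le bounds on im omega23}. Second, since $\mathrm{supp}\,\widetilde\mu_1\subset[-s_+,s_+]$ and $\widetilde\mu_1(\R)=r_+^2-r_-^2\le r_+^2$, and since the definition~\eqref{le sigmas} gives $(\sigma_+s_+)^2(r_+^2-r^2)=r_+^2s_+^2$, the left side of~\eqref{uppereq} is at most $r_+^2 s_+^2/v^2=(\sigma_+s_+)^2(r_+^2-r^2)/v^2$; hence $v\ge K\sigma_+s_+$ would force the left side of~\eqref{uppereq} to be at most $(r_+^2-r^2)/K^2$.

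To close the argument, fix $\eta>0$ and suppose, for contradiction, that $g(\eta)\ge K\sigma_+s_+$ with $K=2$; then $v=g(\eta)+\eta\ge g(\eta)\ge K\sigma_+s_+$. If $g(\eta)v\ge r^2$, the second expression in~\eqref{uppereq} is $\ge r_+^2-r^2$, contradicting the upper bound $(r_+^2-r^2)/K^2<r_+^2-r^2$ just derived for the left side. If $g(\eta)v<r^2$, then $g(\eta)\eta\le g(\eta)v<r^2$, so $r^2\eta/g(\eta)<r^4/g(\eta)^2\le r^4/(K\sigma_+s_+)^2=r^4(r_+^2-r^2)/(K^2r_+^2s_+^2)\le(r_+^2-r^2)/K^2$ (using $r\le r_+\le s_+$); inserting this into the second expression in~\eqref{uppereq} makes the left side $\ge(r_+^2-r^2)(1-K^{-2})$, which contradicts the bound $(r_+^2-r^2)K^{-2}$ because $K^2>2$. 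Hence $g(\eta)<K\sigma_+s_+$ for all $\eta>0$; letting $\eta\searrow 0$ and using that $\omega_2$ extends continuously to $[0,\infty)$ (Theorem~2.3 of~\cite{Bel1}, already invoked in this section) extends the bound to $\eta=0$. Combined with $g(\eta)\le r^2/\eta$ this proves~\eqref{le bounds on im omega23} with $C=2$.

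The whole argument is short once~\eqref{uppereq} is in place, so the main thing to be careful about is bookkeeping: keeping $v$, $g=v-\eta$ and $\eta$ straight in~\eqref{uppereq}, and observing that the crude second-moment estimate $\int_\R x^2\,\dd\widetilde\mu_1\le s_+^2(r_+^2-r_-^2)$ together with the algebraic identity defining $\sigma_+$ is \emph{exactly} sharp enough to dominate $r_+^2-r^2$. There is essentially no slack, which is why no quantitative information about $\mu_\sigma$ beyond its support and the radii $r_\pm$ enters and why the constant is universal.
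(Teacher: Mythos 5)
Your proof is correct and takes a genuinely different route from the paper. The paper introduces a second Nevanlinna representation $F_{\mu_1}(\omega)-\omega=-r_+^2/\omega-\chi(\omega)/\omega^2$ (Eq.~\eqref{le super nevan above}), solves the resulting quadratic for $\omega_2$ to get the square-root formula~\eqref{le better way to do it}, and then splits the imaginary axis into three regimes $\mathcal{E}_+^{(1)},\mathcal{E}_+^{(2)},\mathcal{E}_+^{(3)}$, each handled with separate estimates; the $r^2/\eta$ part is extracted from the Nevanlinna representation of $\omega_2$ itself. You instead stay with the \emph{same} representation~\eqref{le super nevan} already used for the lower bound, convert~\eqref{090410} on the imaginary axis into the single scalar identity~\eqref{uppereq}, and then get both halves of the bound at once: the $r^2/\eta$ bound falls out immediately from $\omega_1(\ii\eta)=\ii r^2/g(\eta)$ combined with $\im\omega_1\ge\eta$ (a cleaner derivation than the paper's), while the $\sigma_+s_+$ bound is a short contradiction argument balancing the second-moment estimate $\int x^2\dd\widetilde\mu_1\le s_+^2(r_+^2-r_-^2)$ against the algebraic identity $(\sigma_+s_+)^2(r_+^2-r^2)=r_+^2s_+^2$. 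This avoids both the regime splitting and the quadratic formula, reuses machinery already set up, and even gives a slightly better constant ($C=2$ versus the paper's $C=10$ in the middle regime). The identity~\eqref{uppereq} also makes transparent why only the support bound $s_+$ and the radii $r_\pm$ enter. One small point worth making explicit in a final write-up: $g(\eta)>0$ for $\eta>0$ (needed to divide by $g$ in~\eqref{uppereq}) follows because $\im\omega_1(\ii\eta)=r^2/g(\eta)$ must be finite for the analytic map $\omega_1:\C^+\to\C^+$, and you already observe $g(\eta)\le r^2/\eta<\infty$; this is implicit in your argument but deserves a sentence.
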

\begin{proof}

Using~\eqref{le symmetric nevan} we write
\begin{align}\label{le super nevan above}
 F_{\mu_1}(\omega)-\omega=-\frac{r_+^2}{\omega}-\frac{1}{\omega^2}\int_\R\frac{x^2\dd\widehat\mu_1(x)}{x-\omega}\,,\qquad\qquad \omega\in\C^+\,.
\end{align}
For $z\in\C^+$, we write~\eqref{le perturbed} with $\omega_2(z)$ replaced by the free variable~$\omega$~as 
\begin{align}\label{le some equation}
-\frac{r_+^2}{\omega}-\frac{\chi(\omega)}{\omega^2}=-z-\frac{r^2}{\omega-z}\,,
\end{align}
where we introduced the short hand notation
\begin{align*}
\chi(\omega)\deq\int_\R\frac{x^2\dd\widehat\mu_1(x)}{x-\omega}\,.
\end{align*}
From~\eqref{le some equation}, we find that
\begin{align}\label{le better way to do it}
 \omega_2(z)&=\frac{r_+^2-r^2+z^2}{2z}\Big( 1-\Big(1-\frac{4z^2r_+^2-4z\chi{(\omega_2(z))}\frac{\omega_2(z)-z}{\omega_2(z)}}{(r_+^2-r^2+z^2)^2} \Big)^{\frac12}\Big)\,,
 \end{align}
where we choose the square root such that $\im \omega_2(z)\ge \im z$.

Abbreviate $t_+\deq \sigma_+s_+$ and partition the positive imaginary axis by introducing 
 \begin{align}\begin{split}
  {\mathcal{E}}_+^{(1)}&\deq \big\{z=\ii\eta\in\C^+\,:\,0<\eta\le\frac{1}{4} \frac{r_+^2-r^2}{t_+}\big\}\,,\\
  {\mathcal{E}}_+^{(2)}&\deq\big\{z=\ii\eta\in\C^+\,:\,\frac{1}{4} \frac{r_+^2-r^2}{t_+}<\eta\le(r_+^2-r^2)^{1/2}\}\,,\\
  {\mathcal{E}}_+^{(3)}&\deq\big\{z=\ii\eta\in\C^+\,:\,(r_+^2-r^2)^{1/2}<\eta\}\,.
 \end{split}\end{align}
We will prove the bound in~\eqref{le bounds on im omega23} separately for these three regimes. 

Choose $z\in{\mathcal{E}}_+^{(1)}$ first. We will argue by contradiction that
$ \im\omega_2(z) \le 2t_+$ for this domain. Assuming that $\omega\in\C^+$ with $\im\omega>2t_+$, we have the simple bound
\begin{align}\label{le ms1}
  \left| \frac{\chi(\omega)}{r_+^2-r^2}\right|=  \left|\frac{1}{r_+^2-r^2}\int_\R\frac{x^2\dd\widehat\mu_1(x)}{x-\omega} \right|\le\frac{s_+^2r_+^2}{r_+^2-r^2}\frac{1}{2t_+}
  = \frac{t_+}{2} \,,
\end{align}
where we used  $\widehat\mu_1(\R)=r_+^2$, and $|x|\le s_+$ on the support of $\widehat\mu_1$ 
in the first inequality and $t_+\ge s_+$ in the second. Now, for $z\in{\mathcal{E}}_+^{(1)}$, we have 
\begin{align}\label{le sas1}
 \frac{4|z|^2r_+^2}{(r_+^2-r^2-|z|^2)^2}\le\frac{r_+^2}{16t_+^2}\frac{(r_+^2-r^2)^2}{(r_+^2-r^2-|z|^2)^2}\le \frac{16}{15^2\sigma_+^2}<\frac{1}{10}\,,
\end{align}
where we use that $t_+=\sigma_+s_+$, $\sigma_+>1$, $s_+\ge r_+ $ and
\begin{align}\label{le sas11}
 \frac{|z|^2}{r_+^2-r^2}\le\frac{r_+^2-r^2}{16t_+^2}< \frac{r_+^2}{16t_+^2}<\frac{1}{16}\,,\qquad\qquad z\in{\mathcal{E}}_{+}^{(1)}\,,
\end{align}
since $t_+>r_+$. For $z\in{\mathcal{E}}_+^{(1)}$, we further have
\begin{align}\label{le sas2}
 \frac{4|z\chi(\omega_2(z))|}{(r_+^2-r^2-|z|^2)^2}\left|\frac{\omega_2(z)-z}{\omega_2(z)}\right|\le \frac{1}{2t_+}\frac{t_+}{2}\frac{(r_+^2-r^2)^2}{(r_+^2-r^2-|z|^2)^2}< \frac{3}{10	}\,,
\end{align}
where we used~\eqref{le ms1}, and $|\omega_2(z)-z| \le |\omega_2(z)|$ and $z\in {\mathcal{E}}_-^{(1)}$
to get the first inequality. 

We then obtain from~\eqref{le better way to do it}, upon expanding the square root using~\eqref{le sas1} and~\eqref{le sas2} that
\begin{align}\label{le sas4}
 |\omega_2(z)|&\le 2\frac{|z|r_+^2}{r_+^2-r^2-|z|^2}+2\frac{|\chi(\omega_2)|}{r_+^2-r^2-|z|^2}\le 2\frac{16|z|r_+^2}{15(r_+^2-r^2)}+\frac{16}{15}t_+<2t_+\,,
\end{align}
where we used~\eqref{le sas1},~\eqref{le sas11} and~\eqref{le sas2} to get the second inequality, and that $z\in{\mathcal{E}}_+^{(1)}$ and $r_+\le t_+$ to get the third. However,~\eqref{le sas4} yields a contradiction with the assumption that $\im\omega_2(z)>2t_+$. We can therefore conclude that
\begin{align}\label{le upper bound on E1}
 \im\omega_2(z)\le 2t_+\,,\qquad z\in{\mathcal{E}}_+^{(1)}\,. 
\end{align}

Choose now~$z\in{\mathcal{E}}_+^{(2)}$. Starting from~\eqref{le better way to do it}, we estimate
\begin{align*}
 |\omega_2(z)|&\le\frac{1}{2|z|}\left(2(r_+^2-r^2)+ 2|z|r_++ 2 \left(\frac{|z|s_+^2r_+^2}{|\omega_2(z)|}\right)^{\frac12}\right)\\
 &\le \frac{4t_+}{(r_+^2-r^2)}(r_+^2-r^2)+2\frac{r_+}{2} + 2\left(\frac{s_+^2r_+^2}{4|z\omega_2(z)|}\right)^{\frac12}\\
 &\le 5 t_++2 \left(\frac{t_+s_+^2r_+^2}{(r_+^2-r^2)|\omega_2(z)|}\right)^{\frac12}\,,\qquad\qquad z\in{\mathcal{E}}_+^{(2)}\,,
\end{align*}
where we used $z^2<0$, $r_+\le t_+$, $ (r_+^2-r^2)/(4t_+)\le |z|$, $|\omega_2(z)-z|\le |\omega_2(z)|$ and
\begin{align*}
 \big|\chi(\omega_2(z))\big|\le\frac{s_+^2r_+^2}{\im \omega_2(z)}\,,
\end{align*}
with $\im\omega_2(z)=|\omega_2(z)|$, for $z\in{\mathcal{E}}_+^{(2)}$. Thus at least one of the following bounds
holds
\begin{align*}
 |\omega_2(z)|&\le 10 t_+\qquad\textrm{or}\qquad |\omega_2(z)|\le 2
  \left(\frac{4t_+s_+^2r_+^2}{(r_+^2-r^2)|\omega_2(z)|}\right)^{\frac12}\,.
\end{align*}
In the latter case we find that
\begin{align*}
 |\omega_2(z)|\le \left(\frac{16 t_+s_+^2r_+^2}{r_+^2-r^2} \right)^{\frac13}=\left(16\sigma_+^2s_+^2 t_+\right)^{\frac13}< 
 3 t_+\,.
\end{align*}
Thus in both cases we have
\begin{align}\label{le upperbound on E2}
 |\omega_2(z)|\le 10t_+\,,\qquad\qquad z\in{\mathcal{E}}_+^{(2)}\,.
\end{align}

Finally, we consider $z\in {\mathcal{E}}_+^{(3)}.$
Since $\im\omega_2(z)\ge \im z$, we can expand~\eqref{le super nevan above} as 
\begin{align}
F_{\mu_1}(\omega_2(z))-\omega_2(z)=-\frac{r_+^2}{\omega_2(z)}+O(|\omega_2(z)|^{-3})=O(|z|^{-1})\,,
\end{align}
as $\im z\nearrow\infty$, which in turn implies through~\eqref{le some equation} that
\begin{align*}
\omega_2(z)=z-\frac{r^2}{z+O\left(|z|^{-1} \right)}\,,
\end{align*}
as $\im z\nearrow\infty$. Comparison with the Nevanlinna representation of $\omega_2(z)$ in~\eqref{le nevi for omega2} reveals that  $\nu_2(\R)=r^2$. Using~\eqref{le nevi for omega2} we can therefore estimate $\omega_2(z)$ from above as
\begin{align}\label{le trivial upper bound}
 |\omega_2(z)-z|\le \frac{r^2}{\im z}\,,\qquad\qquad z\in\C^+\,.
\end{align}
In particular, using $r<r_+\le s_+$ we have, for $\ii\eta\in{\mathcal{E}}_+^{(3)}$, that
\begin{align}\label{le upper bound on E3}
 |\omega_2(\ii\eta)-\ii\eta|\le \frac{r^2}{\eta}\le\sigma_+ s_+=t_+\,,\qquad\qquad z\in{\mathcal{E}}_+^{(3)}\,.
\end{align}

Combining~\eqref{le upper bound on E1},~\eqref{le upperbound on E2} and~\eqref{le upper bound on E3}, 
we see that there is a numeral $C$ such that
\begin{align}
 |\omega_2(\ii\eta)-\ii\eta|\le \min\Big \{C\sigma_+s_+, \frac{r^2}{\eta}\Big\}\le C\sigma_+s_+\,, \qquad\qquad \eta>0\,.
\end{align}
This proves~\eqref{le bounds on im omega23} and concludes the proof of the lemma.
\end{proof}
\begin{proof}[Proof of Theorem~\ref{0 in the bulk}]
Theorem~\ref{0 in the bulk} follows by combining~\eqref{le bounds on im omega22} and \eqref{le bounds on im omega23}, and adjusting the numerical constants.
\end{proof}

\section{Proof of Theorem~\ref{thm.081501} for large $\eta$}\label{s.large eta}

In this section, we prove Theorem~\ref{thm.081501} for spectral parameters $z\in\C^+$ with large imaginary parts, $\eta$. Here, large $\eta$ means  $\eta\ge \eta_\mathrm{M}$, for some $\eta_{\mathrm{M}}\ge 1$ independent of $N$ to be chosen below.

\subsection{Concentration of $m_H$ for large $\eta$}\label{le blink 183}
In this subsection, we fix an arbitrary $L>0$ and a compact interval $\mathcal{I}\subset\R$, and consider the domain $\mathcal{S}_{\mathcal{I}}\big(\eta_{\mathrm{M}}, N^{L}\big)$ introduced in~\eqref{le definition of caS domain}.

\begin{proof}[Proof of~(\ref{the local law for mH prec inequality}) on $\mathcal{S}_{\mathcal{I}}(\eta_{\mathrm{M}}, N^L)$]
In this proof, we choose both matrices $U$ and $V$ to be either Haar distributed on $U(N)$ or $O(N)$, \ie we treat the unitary and orthogonal case at once. For simplicity we refer to $U$ and $V$ as Haar matrices below. 

Our proof consists of two main steps. 
In the first step, we shall show that
\begin{align}
\Big| m_H(z)-m_{A}(\omega_B^c(z))\Big|\prec \frac{1}{N\eta^2}\,,\qquad\Big| m_H(z)-m_{B}(\omega_A^c(z))\Big|\prec \frac{1}{N\eta^2}\,, \label{081920}
\end{align} 
uniformly on $\mathcal{S}_{\mathcal{I}}\big(\eta_{\mathrm{M}}, N^{L}\big)$. In the second step, we 
use the local stability of the system~\eqref{le definiting equations} with the choice $(\mu_1,\mu_2)=(\mu_A,\mu_B)$ to conclude~\eqref{the local law for mH prec inequality} from~\eqref{081920} for large $\eta$.

 {\it Step 1: Proof of \eqref{081920}.}
 This step is based on the Gromov-Milman concentration inequality.  Let $\mathfrak{M}\equiv \mathfrak{M}(N)$ stand for the fundamental representation of either $U(N)$ or $O(N)$ on $M_N(\C)$, and let $\mathfrak{M}_1\equiv \mathfrak{M}_1(N)$ stand for the fundamental representation of either $SU(N)$ or $SO(N)$ on $M_N(\C)$, all endowed with the Riemann metric $\|{\rm d} s\|_2$ inherited from $M_N(\mathbb{C})$ (equipped with the Hilbert-Schmidt norm $\|\cdot\|_2$). We denote by $\P_{\mathfrak{M}}$, $\P_{\mathfrak{M}_1}$ (the push-forwards of)
 the Haar measure on $\mathfrak{M}$, $\mathfrak{M}_1$ respectively. We use the following version of the Gromov-Milman concentration inequality formulated as Corollary~4.4.28 in~\cite{AGZ}. If $g: (\mathfrak{M}(N), \|{\rm d} s\|_2)\to \C$ is an $\mathcal{L}$-Lipschitz function  then
\begin{align}
\mathbb{P}_{\mathfrak{M}}\Big(\Big|g(\,\cdot\,)-\int_{\mathfrak{M}_1}g(W\,\cdot\,)\dd \P_{\mathfrak{M}_1}(W)\Big|>\delta\Big)\le C\e{-c\frac{N\delta^2}{\mathcal{L}^2}}\,, \label{081505}
\end{align}
for all $\delta>0$, where $c>0$ and $C$ are numerical constants. 

To apply~\eqref{081505} with the Haar matrices $U$ and $V$ at once, we extend~\eqref{081505} to the direct product group $\mathfrak{M}\times\mathfrak{M}$ by adjusting the constants $c>0$ and $C$; see \eg~Theorem~1.11~of~\cite{Le01}.

For any deterministic matrix $Q\in M_{2N}(\mathbb{C})$, we introduce
\begin{align}\label{le f function}
 f(Q,\mathcal{U},z)\deq\ntr QG(z)\,,\qquad\qquad z\in\C^+\,,
\end{align}
where $\mathcal{U}$ is given in terms of $U$ and $V$ as in~\eqref{050970}, \ie is a Haar matrix on $\mathfrak{M}\times\mathfrak{M}$. 
 We will view $f(Q,\mathcal{U},z)$ as a random variable on $\mathfrak{M}\times\mathfrak{M}$. To apply~\eqref{081505}, we estimate the Lipschitz constant of $ f(Q,\,\cdot\,,z)\,:\, \mathfrak{M}\times\mathfrak{M}\rightarrow \C$.

Denote by $\mathfrak{m}\oplus\mathfrak{m}$ the (fundamental representation of the) Lie algebra of $\mathfrak{M}\times\mathfrak{M}$. Note that $X\in\mathfrak{m}\oplus\mathfrak{m}$ is a blockdiagonal matrix satisfying $X=-X^*$. For $X\in\mathfrak{m}\oplus\mathfrak{m}$ let $\mathrm{ad}_X\,:\,\mathfrak{m}\oplus\mathfrak{m}\rightarrow \mathfrak{m}\oplus\mathfrak{m}$, $Y\mapsto [X,Y]$ with $[\,\cdot\,,\,\cdot\,]$  the Lie bracket of $\mathfrak{m}\oplus\mathfrak{m}$, \ie the commutator on $M_{2N}(\C)$.
 Let~$B$ be as in~\eqref{050970}. Then for $X\in\mathfrak{m}\oplus\mathfrak{m}$ and $t\in\R$, we have $\e{t\mathrm{ad}_X}(\mathcal{U}B\mathcal{U}^*)=(\e{tX}\mathcal{U})B (\e{t X}\mathcal{U})^*$, where we used that $X=-X^*$. Furthermore, note that
    \begin{align}\label{le derivata rule}
  \frac{\dd }{\dd t}\e{t\mathrm{ad}_X}(\mathcal{U}B\mathcal{U}^*)=\e{t\mathrm{ad}_X}\mathrm{ad}_X (\mathcal{U}B\mathcal{U}^*)\,.
 \end{align}

For $X\in\mathfrak{m}\oplus\mathfrak{m}$, we then compute, using~\eqref{le derivata rule} and $\wt{B}=\mathcal{U}B\mathcal{U}^*$, that
\begin{align}
 \frac{\dd}{\dd t}f(Q, \e{tX}\mathcal{U},z)\Big|_{t=0}=-\ntr QG(\mathrm{ad}_X \wt{B})G=-\frac{1}{N}\mathrm{Tr} QG(\mathrm{ad}_X \wt{B})G\,. \label{081520}
\end{align}
We thus get the bound
\begin{align}
 \bigg| \frac{\dd}{\dd t}f(Q, \e{tX}\mathcal{U},z)\Big|_{t=0}\bigg|\leq \frac{2}{N}\| B\|\,\|QG\|\,\|GX\|_1\le \frac{C \|QG\|}{N} \|G\|_2\,\|X\|_2\,, \label{081521}
\end{align}
where $\|\cdot\|_1$ denotes  the trace norm. We used Schwarz inequality and $\|B\|\le C$ by assumption to get the last inequality. Since $|G(z)|^2=\frac{\im G(z)}{\eta}$ and $\|G(z)\|\le \eta^{-1}$, we get from~\eqref{081521} that
\begin{align}
  \bigg| \frac{\dd}{\dd t}f(Q, \e{tX}\mathcal{U},z)\Big|_{t=0}\bigg|\le \frac{C \|Q\|}{\sqrt{N}\eta^2}\|X\|_2\,,\qquad\qquad z\in\C^+\,.
\end{align}
Thus the Lipschitz constant of $f(Q)$ is bounded above by $C\|Q\|/(\sqrt{N}\eta^2)$. We therefore obtain from~\eqref{081505} the concentration inequality
\begin{align}\label{le partial average concentation}
 \bigg|f(Q,\mathcal{U},z)-\int_{\mathfrak{M}_1\times\mathfrak{M}_1} f(Q,\mathcal{W}\cdot\mathcal{U},z)\dd \P_{\mathfrak{M}_1\times\mathfrak{M}_1}(\mathcal{W})\bigg|\prec \frac{\|Q\|}{N\eta^2}\,,
\end{align}
where the randomness behind the notation $\prec$ is provided by the Haar measure on $\mathfrak{M}\times\mathfrak{M}$.

We next identify the average appearing on the left side of~\eqref{le partial average concentation}. For a function $g\,:\,\mathfrak{M}\times\mathfrak{M}\rightarrow \C$, $\mathcal{U}\mapsto g(\mathcal{U})$, we introduce the shorthand
\begin{align}
 \widetilde \E g(\mathcal{U})\deq\int_{\mathfrak{M}_1\times\mathfrak{M}_1} g(\mathcal{W}\cdot\mathcal{U})\,\dd \P_{\mathfrak{M}_1\times\mathfrak{M}_1}(\mathcal{W})\,.
\end{align}
Using the invariance of Haar measure on $\mathfrak{M}_1\times\mathfrak{M}_1$, we are going to compute $\widetilde\E\ntr G(z)$. Denote by $\frak{m}_1\oplus\frak{m}_1$ the Lie algebra of $\frak{M}_1\times\frak{M}_1 $. The following argument is essential due to~\cite{VP}; see also~\cite{BG,Kargin} for similar arguments. Viewing the Green function 
as a function (random variable) on $\mathfrak{M}\times\mathfrak{M}$, 
 $G(\cdot,z)\,:\mathfrak{M}\times\mathfrak{M}\rightarrow M_N(\C)$, we compute, using~\eqref{le derivata rule}, that
\begin{align}
 \widetilde{\E}\Big[\frac{\dd}{\dd t} G(\e{tX}\mathcal{U},z)\Big|_{t=0}\Big]=-\widetilde{\E}\Big[G(\mathcal{U},z) \mathrm{ad}_X(\wt B)G(\mathcal{U},z)\Big]\,, \label{102901}
\end{align}
for any $X\in\mathfrak{m}_1\oplus\mathfrak{m}_1$, where $\wt B\equiv\mathcal{U}B\mathcal{U}^*$. On the other hand, by the left-invariance of Haar measure, we also have $\frac{\dd}{\dd t}\widetilde{\E} G(\e{tX}\mathcal{U},z)=0$, for all $t\in\R$ and all $X\in\mathfrak{m}_1\oplus\mathfrak{m}_1$. Thus we get from~\eqref{102901} that $\widetilde{\E}[G(\mathcal{U},z) \mathrm{ad}_X(\wt B)G(\mathcal{U},z)]=0$, for any $X\in\mathfrak{m}_1\oplus\mathfrak{m}_1$,
 \ie  we have
\begin{align}\label{le pastur trick}
 \widetilde{\E}[G(\mathcal{U},z) [X,\wt B]G(\mathcal{U},z)]=0\,.
\end{align}
Such formulas originating from basic symmetries of the model are often called {\it Ward identities} in physics.
Let now $Y\deq \hat{\mathbf{e}}_i\hat{\mathbf{e}}_k^*$, with $i\not=k$, $i,k\in\llbracket 1,N\rrbracket$. We then note that we can decompose $Y=\frac{1}{2}X_1+\frac{1}{2\ii}X_2$, where $X_1\deq Y-Y^*$ and $X_2\deq(\ii Y+\ii Y^*)$. Note that $X_1,X_2\in \mathfrak{m}_1\oplus\mathfrak{m}_1$. Thus we have from~\eqref{le pastur trick} that
\begin{align}\label{le pastur trick 2}
 \widetilde{\E}[G(\mathcal{U},z) [X_\iota,\wt B]G(\mathcal{U},z)]=0\,,\qquad\qquad \iota=1,2\,.
\end{align}
Since $Y$ is a linear combination of $X_1$ and $X_2$, we conclude by the linearity of the commutator and~\eqref{le pastur trick 2} that, for $i\not=k$, 
\begin{align}\label{le pastur 1}
\widetilde{\E}[G(\mathcal{U},z) [ \hat{\mathbf{e}}_i\hat{\mathbf{e}}_k^*,\wt B]G(\mathcal{U},z)] =0\,.
\end{align}

Next, recall the notational convention $\hat i\equiv i+N$, for $i\in\llbracket 1,N\rrbracket$. Using exactly the same argument as above we infer, for $i\not=k$, $i,k\in\llbracket 1,N\rrbracket$, that
\begin{align}\label{le pastur 2}
\widetilde{\E}[G(\mathcal{U},z)[\hat{\mathbf{e}}_{\hat{i}}\hat{\mathbf{e}}_{\hat{k}}^*,\wt B]G(\mathcal{U},z)]=0\,.
\end{align}

Thus, taking matrix elements of~\eqref{le pastur 1} and~\eqref{le pastur 2}, we obtain, for all $i\in\llbracket 1,N\rrbracket$, $ j=i,\hat{i}$,
\begin{align}
\Big|\wt\E\Big[  \tau_1(G(\mathcal{U},z))(G(\mathcal{U},z)\wt{B})_{ji}-\tau_1(\wt{ B }G(\mathcal{U},z)) G_{j i}(\mathcal{U},z) \Big]\Big|\le \frac{C}{N\eta^2}\,, \label{081690}
\end{align}
and
\begin{align}
\Big|\wt\E\Big[  \tau_2(G(\mathcal{U},z))(G(\mathcal{U},z)\wt{B})_{j\hat{i}}-\tau_2(\wt{B}G(\mathcal{U},z))G_{j\hat{i}}(\mathcal{U},z) \Big]\Big|\le \frac{C}{N\eta^2}\,,\label{081691}
\end{align} 
for some constant $C$ depending only on $\|B\|$, where the error terms result from coincidences among 
indices when using~\eqref{le pastur 1} and~\eqref{le pastur 2}. Here we also used $\|G(z)\|\le \eta^{-1}$.

Suppressing for simplicity the $z$- and $\mathcal{U}$-dependences in the notation for the Green function, we next note the identities 
\begin{align}
&(G\wt{ B })_{ii}=1+zG_{ii}-\bar{\xi}_i G_{i\hat{i}}\,,\qquad & &(G\wt{B})_{i\hat{i}}=-\xi_iG_{ii}+z G_{i\hat{i}}\,, \nonumber\\ 
&(G\wt{ B })_{\hat{i}i}=-\bar{\xi}_i G_{\hat{i}\hat{i}}+zG_{\hat{i}i}\,,\qquad & &(G\wt{ B })_{\hat{i}\hat{i}}=1+zG_{\hat{i}\hat{i}}-\xi_i G_{\hat{i}i}\,,
 \label{071701}
\end{align}
 for all $i\in \llbracket 1, N\rrbracket$, which follow from~(\ref{102702}). Plugging~(\ref{071701}) into~(\ref{081690}) and~(\ref{081691}) we get
\begin{align}\begin{split}
\Big|\wt\E\Big[ \big(1+zG_{ii}-\bar{\xi}_i G_{i\hat{i}}\big)\tau_1(G) - G_{ii}\tau_1(\wt{ B }G)\Big]\Big|&\le \frac{C}{N\eta^2}\,,\\
\Big|\wt\E\Big[ \big(-\bar{\xi}_i G_{\hat{i}\hat{i}}+zG_{\hat{i}i}\big) \tau_1(G) - G_{\hat{i}i}\tau_1(\wt{ B }G)\Big]\Big|&\le \frac{C}{N\eta^2}\,, \\
\Big|\wt\E\Big[ \big(-\xi_i G_{ii}+zG_{i\hat{i}}\big)\tau_2(G) - G_{i\hat{i}}\tau_2(\wt{ B }G)\Big]\Big|&\le \frac{C}{N\eta^2}\,, \\
\Big|\wt\E\Big[ \big(1+zG_{\hat{i}\hat{i}}-\xi_i G_{\hat{i}i}\big)\tau_2(G) - G_{\hat{i}\hat{i}}\tau_2(\wt{ B }G)\Big]\Big|&\le \frac{C}{N\eta^2}\,. \label{081693}
\end{split}\end{align}

Next, by~\eqref{le partial average concentation} we have the concentration inequalities
\begin{align*}
&\Big|\tau_a(G)-\wt\E\big[\tau_a(G)\big]\Big|\prec \frac{1}{N\eta^2}\,,\qquad \Big|\tau_a(\wt{B}G)-\wt\E\big[\tau_a(\wt{B}G)\big]\Big|\prec \frac{1}{N\eta^2}\,.
\end{align*}
For the second estimate we used that $\tau_a(\wt BG)$ can be brought into the form $\ntr QG$ with a deterministic $Q$
with the help of~(\ref{102702}). Hence, we can go back and forth between the tracial quantities $\tau_a(G)$, $\tau_a(\wt B G)$ and their partial expectations $\wt\E\tau_a(G)$ and $\wt\E\tau_a(\wt BG)$, up to an error $O_\prec(\frac{1}{N\eta^2})$ in the following discussion. Pulling out the expectation of the tracial quantities 
 and combining the first and the third equations in~(\ref{081693})  we eliminate $\wt\E G_{i\hat{i}}$ and get an equation for
$\wt\E G_{ii}$. After solving for $\wt\E G_{ii}$, we may remove the partial expectation~$\wt\E$ from the tracial quantities. We get
\begin{multline*}
 \Big( \big(z\tau_1(G)-\tau_1(\wt{B}G)\big)\big(z\tau_2(G)-\tau_2(\wt{B}G)\big)-|\xi_i|^2  \tau_1(G)\tau_2(G) 
\Big)\wt{\E}\big[G_{ii}\big]\nonumber\\
\qquad\qquad+\tau_1(G)(z\tau_2(G)-\tau_2(\wt{B}G))=O_\prec\Big(\frac{1}{N\eta^2}\Big)\,.
\end{multline*}
Here we used once more the bound $\|G\|\le 1/\eta$. Dividing the above equation by $\tau_1(G)\tau_2(G)$ and using the fact $|\tau_a(G)- \frac{{ \ii}}{\eta}|\le O(\eta^{-2})$, we obtain 
\begin{align}
 \big(\omega_{B,1}^c\omega_{B,2}^c-|\xi_i|^2\big)\wt\E\big[G_{ii}\big]+\omega_{B,2}^c=O_\prec\Big(\frac{1}{N}\Big)\,, \label{081910}
\end{align}
for all $z\in\mathcal{S}_\mathcal{I}(\eta_{\mathrm{M}},N^L)$, by choosing $\eta_M>0$ sufficiently large. Here we introduced the auxiliary subordination functions
\begin{align}
\omega_{B,a}^c(z)\deq z-\frac{\tau_a(\wt{ B }G(z))}{\tau_a(G(z))}\,,\qquad\qquad a=1,2\,,\qquad z\in\C^+\,,  \label{102401}
\end{align}
which are defined using the partial traces $\tau_a$ instead of the full traces as in~\eqref{091107}.

We further observe that a large $z$ expansion in $\C^+$ of the resolvent yields
\begin{align}\label{dae johan mues halt robotere}
\tau_a(\wt{B}G(z))=-\frac{\tau_a(\wt{B})}{z}+O\Big(\frac{1}{|z|^2}\Big)=O\Big(\frac{1}{|z|^2}\Big)\,,
\end{align}
as $|z|\rightarrow\infty$, where we used that $\tau_a(\wt B)=0$. Thus from~\eqref{102401} we find that
\begin{align}
\omega_{B,a}^c(z)=z+O\big(|z|^{-1}\big)\,,\qquad a=1,2\,,  \label{081911}
\end{align}
as $|z|\to\infty$. Combining~(\ref{081910}) and~(\ref{081911}) we find
\begin{align}
\wt\E\big[G_{ii}(z)\big]-\frac{\omega_{B,2}^c(z)}{|\xi_i|^2-\omega_{B,1}^c(z)\omega_{B,2}^c(z)}=O_\prec\Big(\frac{1}{N\eta^2}\Big)\,,\qquad \forall i\in \llbracket 1,N\rrbracket\,, \label{081940}
\end{align}
for all $z\in\mathcal{S}_\mathcal{I}(\eta_{\mathrm{M}},N^L)$, for sufficiently large $\eta_M>0$. Analogously, we have 
\begin{align}
\wt\E\big[G_{\hat{i}\hat{i}}(z)\big]-\frac{\omega_{B,1}^c(z)}{|\xi_i|^2-\omega_{B,1}^c(z)\omega_{B,2}^c(z)}=O_\prec\Big(\frac{1}{N\eta^2}\Big)\,,\qquad \forall i\in \llbracket 1,N\rrbracket\,, \label{081941}
\end{align}
for all $z\in\mathcal{S}_\mathcal{I}(\eta_{\mathrm{M}},N^L)$. From $\tau_1(G)=\tau_2(G)$, see~\eqref{neu083140}, and from~\eqref{081911}, we  obtain from~(\ref{081940})  and~(\ref{081941}) that
\begin{align*}
\omega_{B,1}^c=\omega_{B,2}^c+O_\prec\Big(\frac{1}{N}\Big)=\omega_{B}^c+O_\prec\Big(\frac{1}{N}\Big)\,,
\end{align*}
where $\omega_B^c$ is defined in~\eqref{091107}. The second equality follows from the fact that 
$\tau_1(G)=\tau_2(G)$ implies that this common value is $\ntr G$. Hence
$\omega_{B,1}^c\approx \omega_{B,2}^c$ implies $\tau_1(\widetilde BG) \approx \tau_2(\widetilde BG) $, hence
both are close to their average, $\ntr \widetilde B G$.  We therefore also  have
\begin{align}
&\wt\E\big[G_{ii}(z)\big]-\frac{\omega_{B}^c(z)}{|\xi_i|^2-(\omega_{B}^c(z))^2}=O_\prec\Big(\frac{1}{N\eta^2}\Big)\,,\nonumber \\
&\wt\E\big[G_{\hat{i}\hat{i}}(z)\big]-\frac{\omega_{B}^c(z)}{|\xi_i|^2-(\omega_{B}^c(z))^2}=O_\prec\Big(\frac{1}{N\eta^2}\Big)\,,\qquad\quad \forall i\in \llbracket 1,N\rrbracket\,, \label{081943}
\end{align}
uniformly on $z\in\mathcal{S}_\mathcal{I}(\eta_{\mathrm{M}},N^L)$ by choosing $\eta_M>0$ sufficiently large. Averaging~\eqref{081943} over~$i$ and using the concentration estimate~\eqref{le partial average concentation} with $Q=\hat{I}$, we obtain the first estimate in~(\ref{081920}). The second estimate in~\eqref{081920} is obtained in the same way by interchanging 
the r\^oles of~$A$ and~$B$. This completes the first step of the argument.

\medskip

{\it Step 2: Stability analysis.}
We move on to check the stability of the system  
\begin{align}
\Phi_{\mu_A,\mu_B} \big(\omega_A,\omega_B, z\big)=0\,,\label{le voxx}
\end{align}
for $z\in \mathcal{S}_{\mathcal{I}}(\eta_{\mathrm{M}},N^L)$; see~\eqref{le H system defs} for the definition of $\Phi_{\mu_A,\mu_B}$. First, we will show that~$\omega_A^c(z)$ and~$\omega_B^c(z)$ approximately solve~\eqref{le voxx}. Then we will conclude from Lemma~A.2 of~\cite{BES17} that~$\omega_A^c(z)$ and~$\omega_B^c(z)$ are close to~$\omega_A(z)$ and~$\omega_B(z)$.

Using that $F_{\mu_A}(\omega_B^c(z))=-1/m_{\mu_A}(\omega_B^c(z))$ and the identity~(\ref{091110}), we can write
\begin{align}
F_{\mu_A}(\omega_B^c(z))-\omega_A^c(z)-\omega_B^c(z)+z&=\frac{1}{m_H(z)m_A(\omega_B^c(z))}\Big(m_A(\omega_B^c(z))-m_H(z)\Big)\,. \label{081950}
\end{align}
From the resolvent expansion $G(z)= -1/z + O(1/|z|^2)$ in the large $|z|$ regime we have
 that $ |m_H(z)-\ii \eta^{-1}|\le C\eta^{-2}$ and $|\omega_B^c(z)-\ii\eta|\le C\eta^{-1}$; for the latter estimate we also
 used  $\ntr \wt B=0$ in \eqref{091107}. Thus together with 
  the estimates in~(\ref{081920}), we get from~\eqref{081950} that
\begin{align*}
F_{\mu_A}(\omega_B^c(z))-\omega_A^c(z)-\omega_B^c(z)+z=O_\prec\Big(\frac{1}{N}\Big)\,,
\end{align*}
uniformly in $z\in\mathcal{S}_\mathcal{I}(\eta_{\mathrm{M}},N^L)$, by choosing $\eta_M>0$ large enough. Analogously, we also have
\begin{align*}
F_{\mu_B}(\omega_A^c(z))-\omega_A^c(z)-\omega_B^c(z)+z=O_\prec\Big(\frac{1}{N}\Big)\,,
\end{align*}
on the same domain. Hence we have 
\begin{align}\label{le phi system large eta}
\|\Phi(\omega_A^c(z),\omega_B^c(z),z)\|_2\prec N^{-1}\,,
\end{align}
for all $z\in\mathcal{S}_{\mathcal{I}}(\eta_\mathrm{M},N^L)$. 

Next, observe that  the deterministic bounds
\begin{align}
|\omega_A^c(z)-z|\leq \frac{C}{|z|}, \qquad |\omega_B^c(z)-z|\leq \frac{C}{|z|} \label{17101201}
\end{align}
hold uniformly for all $|z|\geq \eta_{M}$ with sufficiently large $\eta_M$. This follows from~\eqref{091107}, a large $z$ expansion of $G(z)$ and $\ntr A=\ntr B=0$.  Consequently, it is easy to check the following deterministic bound also holds uniformly for all $z$ with $|z|\geq \eta_M$
\begin{align}
\|\Phi(\omega_A^c(z),\omega_B^c(z),z)\|_2\leq  \frac{C}{|z|}.  \label{17101202}
\end{align}
Then we apply Lemma~A.2 of~\cite{BES17}. Thanks to ~\eqref{17101201} and~\eqref{17101202}, the assumptions of Lemma~A.2 of~\cite{BES17} are satisfied and we further conclude from ~\eqref{le phi system large eta} that
\begin{align}
\big|\omega_a^c(z)-\omega_a(z)\big|\prec \frac{1}{N}\,,\qquad\quad a=A,B\,, \label{081971}
\end{align}
uniformly in $z\in\mathcal{S}_\mathcal{I}(\eta_\mathrm{M},N^L)$ by slightly adjusting the value of $\eta_M$.

Combining~\eqref{081971} with~\eqref{081920} and $|m_A'(\omega)|= O( |\omega|^{-2}) = O(\eta^{-2}) $ in the regime where $\omega \approx  \ii\eta$ and $\eta$ is large, we find that
\begin{align*}
m_H(z)-m_A(\omega_B(z))=O_\prec\Big(\frac{1}{N\eta^2}\Big)\,,\qquad\qquad z\in\mathcal{S}_\mathcal{I}(\eta_\mathrm{M},N^L)\,.
\end{align*}
Finally, since $m_{\mu_A\boxplus\mu_B}=m_A(\omega_B)$ we conclude the proof of~(\ref{the local law for mH prec inequality}) for $z\in\mathcal{S}_\mathcal{I}(\eta_\mathrm{M},N^L)$.
\end{proof}

\subsection{Green function subordination for large $\eta$} 
In this subsection, we show the following subordination property for the Green function entries when $\eta$ is large. Recall from~\eqref{071603} the control parameter $\Lambda_d$.

\begin{lem}\label{lem.081970} Under the conditions and with the notations of Theorem~\ref{thm.081501} there is a (large) constant~$\eta_{\mathrm{M}}$ such that
\begin{align}
\Lambda_{\rm d}(z)\prec\frac{1}{\sqrt{N\eta^4}}\,, \label{081983}
\end{align}
uniformly on $\mathcal{S}_{\mathcal{I}}(\eta_{\mathrm{M}}, N^L)$.
\end{lem}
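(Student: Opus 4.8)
The plan is to deduce~\eqref{081983} by combining an entrywise sharpening of the Gromov--Milman concentration estimate with the computation of the partial expectations $\wt\E G_{k\ell}$, the latter being already essentially carried out in the proof of~\eqref{the local law for mH prec inequality} on $\mathcal S_{\mathcal I}(\eta_{\mathrm M},N^L)$ above (``Step~1''). First I would fix $i\in\llbracket 1,N\rrbracket$ and indices $k,\ell\in\{i,\hat i\}$, and set $Q\deq\hat{\mathbf e}_\ell\hat{\mathbf e}_k^*$, a rank-one matrix with $\|Q\|=1$ and $\ntr QG=\tfrac1N G_{k\ell}$. The key point is that the Lipschitz bound~\eqref{081521} behind~\eqref{le partial average concentation} is wasteful for such a $Q$, since it pairs $\|Q\|$ with $\|G\|_2\asymp\sqrt N/\eta$; for rank-one $Q$ one should instead pair $Q$ with a single resolvent factor. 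Concretely, for $X\in\mathfrak m\oplus\mathfrak m$, formula~\eqref{081520} gives
\[
\frac{\dd}{\dd t}\,\ntr\big(QG(\e{tX}\mathcal U,z)\big)\Big|_{t=0}=-\frac1N\mathrm{Tr}\big(X\,\wt BGQG\big)+\frac1N\mathrm{Tr}\big(X\,GQG\wt B\big),
\]
and, using the Hilbert--Schmidt Cauchy--Schwarz inequality together with $\|QG\|_2=\|\hat{\mathbf e}_k^*G\|_2\le\|G\|\le\eta^{-1}$ (the $\ell^2$-norm of a single row of $G$), $\|G\|\le\eta^{-1}$ and $\|\wt B\|=\|B\|\le C$, I would bound both terms by $CN^{-1}\eta^{-2}\|X\|_2$. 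Thus $\mathcal U\mapsto\ntr QG(\mathcal U,z)$ has Lipschitz constant $O(N^{-1}\eta^{-2})$ on $\mathfrak M\times\mathfrak M$, and~\eqref{081505} then gives
\[
\big|G_{k\ell}(z)-\wt\E\big[G_{k\ell}(z)\big]\big|\prec\frac{1}{\sqrt N\,\eta^{2}}\,,
\]
uniformly on $\mathcal S_{\mathcal I}(\eta_{\mathrm M},N^L)$ --- exactly the gain of a factor $N^{-1/2}$ over the naive entrywise bound that~\eqref{081983} requires.

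Next I would identify $\wt\E G_{k\ell}$. For $k=\ell=i$ this is precisely~\eqref{081940}, obtained in Step~1 by solving the first and third equations of~\eqref{081693} for $\wt\E G_{ii}$ (after replacing the tracial coefficients by their partial expectations, up to $O_\prec((N\eta^2)^{-1})$). Solving the same pair of equations for $\wt\E G_{i\hat i}$ instead produces $\wt\E G_{i\hat i}=\xi_i\big(|\xi_i|^2-\omega_{B,1}^c\omega_{B,2}^c\big)^{-1}+O_\prec((N\eta^2)^{-1})$, and, by the symmetric argument (equations two and four of~\eqref{081693}), analogous formulas for $\wt\E G_{\hat i\hat i}$ and $\wt\E G_{\hat i i}$. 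Then, using $\omega_{B,1}^c=\omega_{B,2}^c+O_\prec(N^{-1})=\omega_B^c+O_\prec(N^{-1})$ from Step~1 and $\omega_B^c=\omega_B+O_\prec(N^{-1})$ from~\eqref{081971}, together with the fact that in the large-$\eta$ regime (where all these functions lie near $\ii\eta$ and $\big|\,|\xi_i|^2-\omega^2\big|\asymp\eta^2$) the maps $\omega\mapsto\omega/(|\xi_i|^2-\omega^2)$ and $\omega\mapsto\xi_i/(|\xi_i|^2-\omega^2)$ are Lipschitz with constant $O(\eta^{-2})$, I would replace $\omega_{B,1}^c\omega_{B,2}^c$ by $\omega_B^2$ and the numerators by $\omega_B$, resp.\ $\xi_i$, $\bar\xi_i$, at a cost of only $O_\prec(N^{-1}\eta^{-2})$. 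Hence $\wt\E G_{k\ell}$ equals the deterministic quantity appearing in $\Lambda_{{\rm d};k\ell}$ up to $O_\prec(N^{-1}\eta^{-2})$; combining this with the concentration bound above and taking the maximum over the $O(N)$ triples $(i,k,\ell)$ via the union-bound property of $\prec$ yields $\Lambda_{\rm d}(z)\prec(\sqrt N\eta^2)^{-1}=(N\eta^4)^{-1/2}$ on $\mathcal S_{\mathcal I}(\eta_{\mathrm M},N^L)$, with $\eta_{\mathrm M}$ chosen at least as large as the threshold from the proof of~\eqref{the local law for mH prec inequality}.

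The hard part --- in fact the only step that is not a straightforward reassembly of Step~1 --- is the sharpened Lipschitz estimate: applied with $\|Q\|=1$, the bound~\eqref{081521} only gives $|G_{k\ell}-\wt\E G_{k\ell}|\prec\eta^{-2}$, which misses the required factor $N^{-1/2}$; the improvement hinges on the observation that for a rank-one observable $\|QG\|_2$ is merely the $\ell^2$-norm of a single row of $G$ and is therefore controlled by $\|G\|$ rather than by $\|G\|_2\asymp\sqrt N\,\|G\|$. A secondary, routine matter is to verify that the $O_\prec$ errors and the Lipschitz constants above are uniform in $i$ and in $z\in\mathcal S_{\mathcal I}(\eta_{\mathrm M},N^L)$, which holds because the constants in~\eqref{081505}, the bound $\|B\|\le C$, and all the estimates inherited from Step~1 are uniform.
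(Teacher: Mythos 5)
Your proposal is correct and takes essentially the same approach as the paper. The paper proves the same sharpened concentration bound by differentiating the entry $G_{ij}(\mathcal U,z)$ directly and bounding $|\hat{\mathbf e}_i^* G\,\mathrm{ad}_X\wt B\,G\hat{\mathbf e}_j|\le C\|X\|_2/\eta^2$ via Cauchy--Schwarz (pairing the two single resolvent columns $\|G\hat{\mathbf e}_i\|_2,\|G\hat{\mathbf e}_j\|_2\le\eta^{-1}$), which is exactly your rank-one-observable estimate restated entrywise; the identification of $\wt\E G_{k\ell}$ with the deterministic target up to $O_\prec((N\eta^2)^{-1})$ is likewise read off from~\eqref{081943} and~\eqref{081971} precisely as you describe.
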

\begin{proof}[Proof of Lemma~\ref{lem.081970}]
Let $\eta_{\mathrm{M}}$ be as in Subsection~\ref{le blink 183}.
From~(\ref{081943}) and~(\ref{081971}) we directly get
\begin{align}
&\wt\E\big[G_{ii}(z)\big]-\frac{\omega_{B}(z)}{|\xi_i|^2-\omega^2_{B}(z)}=O_\prec\big(\frac{1}{N\eta^2}\big)\,,\quad \wt\E\big[G_{\hat{i}\hat{i}}(z)\big]-\frac{\omega_{B}(z)}{|\xi_i|^2-\omega^2_{B}(z)}=O_\prec\big(\frac{1}{N\eta^2}\big)\,,\nonumber\\
&\wt\E\big[G_{i\hat{i}}(z)\big]-\frac{\xi_i}{|\xi_i|^2-\omega_{B}^2(z)}=O_\prec\big(\frac{1}{N\eta^2}\big)\,,\quad \wt\E\big[G_{\hat{i}i}(z)\big]-\frac{\bar{\xi}_i}{|\xi_i|^2-\omega_{B}^2(z)}=O_\prec\big(\frac{1}{N\eta^2}\big)\,, \label{081981}
\end{align}
for all $z\in\mathcal{S}_{\mathcal{I}}(\eta_\mathrm{M},N^L)$. Hence, it remains to show the concentration of these entries of the Green function. To this end, we regard, as in Subsection~\ref{le blink 183}, the Green function entries as functions of $\mathcal{U}$, and use the Gromov-Milman concentration inequality in~\eqref{081505}. The Lipschitz constant of $G_{ij}(\cdot, z)\,:\,\mathfrak{M}\times\mathfrak{M}\rightarrow \C$, $\mathcal{U}\mapsto G_{ij}(\mathcal{U},z)$ is estimated by bounding, for $X\in\mathfrak{m}\oplus\mathfrak{m}$, 
\begin{align*}
\bigg|\frac{\dd G_{ij}(\e{tX}\mathcal{U},z)}{\dd t} \Big|_{t=0}\bigg|= \bigg|\hat{\mathbf{e}}_i^*G(\mathcal{U},z)  \mathrm{ad}_X \wt{B} G(\mathcal{U},z)\hat{\mathbf{e}}_j\bigg|\le \frac{C \|X\|_2}{\eta^2}\,,
\end{align*}
 with a constant $C$ depending only on $\|B\|$, where we first used~\eqref{le derivata rule} and then Schwarz inequality.
  Thus by~\eqref{081505}, 
\begin{align*}
\Big|G_{ij}(z)-\wt\E\big[G_{ij}(z)\big]\Big|\prec \frac{1}{\sqrt{N\eta^4}}\,,\qquad\qquad z\in\mathcal{S}_{\mathcal{I}}(\eta_{\mathrm{M}},N^L)\,. 
\end{align*} 
Combining these concentration results
with~\eqref{081981} we find~\eqref{081983}. To obtain uniform bounds in $z \in
\mathcal{S}_{\mathcal{I}}(\eta_{\mathrm{M}},N^L)$, we can apply a simple lattice argument using the Lipschitz continuity of the Green function $G(z)$ and of the two subordination functions~$\omega_A(z)$ and~$\omega_B(z)$. See the proof of Theorem~\ref{main theorem} in Section~\ref{s. proof of main theorem} for a similar argument. The uniform Lipschitz continuity of the subordination functions follows directly from their analyticity on $\mathcal{S}_{\mathcal{I}}(\eta_{\mathrm{M}},N^L)$. This completes the proof of Lemma~\ref{lem.081970}.
\end{proof}

  \appendix

\section{} \label{appendix A}

\subsection{Stochastic domination and large deviation properties}\label{stochastic domination section}
Recall the stochastic domination in Definition~\ref{definition of stochastic domination}. The relation $\prec$ is transitive and it satisfies the following arithmetic rules: if $X_1\prec Y_1$ and $X_2\prec Y_2$ then $X_1+X_2\prec Y_1+Y_2$ and $X_1 X_2\prec Y_1 Y_2$.  Further assume that $\Phi(v)\ge N^{-C}$ is deterministic and that~$Y(v)$ is a non-negative random variable satisfying $\E [Y(v)]^2\le N^{C'}$ for all~$v$. Then $Y(v) \prec \Phi(v)$, uniformly in $v$, implies $\E [Y(v)] \prec \Phi(v)$, uniformly in~$v$.

Gaussian vectors have well-known large deviation properties. We will use them in the following form
whose proof is standard.  
\begin{lem} \label{lem.091720} Let $X=(x_{ij})\in M_N(\C)$ be a deterministic matrix and let $\bs{y}=(y_{i})\in\C^N$ be a deterministic complex vector. For a Gaussian real or complex random vector $\mathbf{g}=(g_1,\ldots, g_N)\in \mathcal{N}_{\mathbb{R}}(0,\sigma^2 I_N)$ or $\mathcal{N}_{\mathbb{C}}(0,\sigma^2 I_N)$, we have
 \begin{align}\label{091731}
  |\bs{y}^* \bs{g}|\prec\sigma \|\bs{y}\|_2\,,\qquad\qquad  |\bs{g}^* X\bs{g}-\sigma^2N \ntr X|\prec \sigma^2\| X\|_2\,.
 \end{align}
\end{lem}

\subsection{Bounds on subordination functions}\label{bounds on subordination}Let $\mu_\alpha,\mu_\beta$ be two $N$-independent probability measures on $\mathbb{R}$ which are compactly supported: there exists a constant $L<\infty$ such that
\begin{align}
\text{supp}(\mu_\alpha)\subset [-L,L]\,,\qquad \text{supp} (\mu_\beta) \subset [-L,L]\,. \label{110250}
\end{align}
Let $\omega_\alpha, \omega_\beta$ be the subordination functions defined via the system of equations~\eqref{le definiting equations}.  The following result is proved in \cite{BES15}.
\begin{lem}[Lemma 5.1 and Corollary 5.2 in~\cite{BES15}] \label{lem.A.1} Suppose that neither  $\mu_\alpha$  nor $\mu_\beta$ is a single point mass and at least of one of them is supported at more than two points. Assume in addition that ~(\ref{110250}) holds. Let $\mathcal{I}\subset \mathcal{B}_{\mu_\alpha\boxplus \mu_\beta}$ be a compact non-empty interval in the  bulk of $\mu_\alpha\boxplus\mu_\beta$. Fix any $0<\eta_{\rm M}<\infty$.  Let $\mu_A$, $\mu_B$ be ($N$-dependent) probability measures on $\R$.
Then there exist constants $b_0>0$, $k>0$ and $K<\infty$, $S<\infty$, which depend only on $\eta_{\rm M}$, $L$ in~\eqref{110250}, the  interval $\mathcal{I}$ and the measures $\mu_\alpha$ and $\mu_\beta$, such that whenever
\begin{align}\label{le assumption on levy distances 2}
 \mathrm{d}_{\mathrm{L}}(\mu_A,\mu_\beta)+\mathrm{d}_{\mathrm{L}}(\mu_B,\mu_\beta)\le b_0\,,
\end{align}
then
\begin{align}
&\max_{z\in \mathcal{S}_{\mathcal{I}}(0,\eta_{\rm M})} |\omega_A(z)| \leq K\,,\qquad \max_{z\in \mathcal{S}_{\mathcal{I}}(0,\eta_{\rm M})} |\omega_B(z)| \leq K\,,\nonumber\\
& \min_{z\in \mathcal{S}_{\mathcal{I}}(0,\eta_{\rm M})} \Im \omega_A(z) \geq k\,,\qquad  \min_{z\in \mathcal{S}_{\mathcal{I}}(0,\eta_{\rm M})} \Im \omega_B(z) \geq k\,, \nonumber\\
&  \max_{z\in \mathcal{S}_{\mathcal{I}}(0,\eta_{\rm M})} |\omega_A'(z)| \leq S\,,\qquad  \max_{z\in \mathcal{S}_{\mathcal{I}}(0,\eta_{\rm M})} |\omega_B'(z)| \leq S\,,  \label{110270}
\end{align}
for all $N\geq N_0$ with  some sufficiently large $N_0$ depending only on $\eta_{\rm M}$, $L$ in~\eqref{110250}, the interval~$\mathcal{I}$ and the measures $\mu_\alpha$ and $\mu_\beta$. Here  $\omega_A$, $\omega_B$ denote the subordinations functions defined via~\eqref{le definiting equations} for the choice $(\mu_1,\mu_2)=(\mu_A, \mu_B)$.
\end{lem}
\subsection{Bounded rank perturbation estimate}
At various places, we use the following perturbation estimate; see Section 3.2 of \cite{BES15b} for proof, for instance.
\begin{lem}\label{finite rank perturbation}
Let $D\in M_N(\C)$ be Hermitian and let $Q\in M_N(\C)$ be arbitrary. Then, for any Hermitian matrix $R\in M_N(\C)$, we have
\begin{align}
\big|\ntr \big(Q(D+R-z)^{-1}\big)-\ntr \big(Q(D-z)^{-1}\big)\big| &\leq \frac{\mathrm{rank}(R)\|Q\|}{N\eta}\,,\qquad z=E+\ii\eta\in\C^+\,.
 \label{091002}
\end{align}
\end{lem}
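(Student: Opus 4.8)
The plan is to reduce the estimate to the rank-one case and iterate. Write $r\deq\mathrm{rank}(R)$ (if $r=0$ there is nothing to prove), and let $R=\sum_{k=1}^{r}\mu_k\mathbf{v}_k\mathbf{v}_k^{*}$ be the spectral decomposition of the Hermitian matrix $R$, with $\mu_k\in\R\setminus\{0\}$ and orthonormal $\mathbf{v}_k\in\C^N$. Set $D_0\deq D$ and $D_k\deq D_{k-1}+\mu_k\mathbf{v}_k\mathbf{v}_k^{*}$, so that every $D_k$ is Hermitian (hence $D_k-z$ is invertible for $z\in\C^+$) and $D_r=D+R$. By the triangle inequality along this telescoping chain it suffices to establish the one-step bound: for every Hermitian $D'\in M_N(\C)$, every unit vector $\mathbf{v}\in\C^N$ and every $\mu\in\R$,
\[
\Bigl|\ntr\bigl(Q(D'+\mu\mathbf{v}\mathbf{v}^{*}-z)^{-1}\bigr)-\ntr\bigl(Q(D'-z)^{-1}\bigr)\Bigr|\le\frac{\|Q\|}{N\eta}\,,\qquad z=E+\ii\eta\in\C^+\,;
\]
summing this over $k=1,\dots,r$ (with $D'=D_{k-1}$, $\mathbf{v}=\mathbf{v}_k$, $\mu=\mu_k$) then yields \eqref{091002}.

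For the one-step bound one may assume $\mu\neq0$. Put $G'\deq(D'-z)^{-1}$ and $m\deq\mathbf{v}^{*}G'\mathbf{v}$. Since $D'$ is Hermitian, $G'$ and $(G')^{*}$ commute and the Ward identity $\im G'=\eta\,(G')^{*}G'$ holds, so $\im m=\eta\,\|G'\mathbf{v}\|^{2}>0$ and $\|G'\mathbf{v}\|^{2}=\|(G')^{*}\mathbf{v}\|^{2}=\eta^{-1}\im m$. In particular $\im(1+\mu m)=\mu\,\im m\neq0$, so the Sherman--Morrison formula applies and, using cyclicity of the trace,
\[
\ntr\bigl(Q(D'+\mu\mathbf{v}\mathbf{v}^{*}-z)^{-1}\bigr)-\ntr\bigl(QG'\bigr)=-\frac{1}{N}\,\frac{\mu\,\mathbf{v}^{*}G'QG'\mathbf{v}}{1+\mu m}\,.
\]
By Cauchy--Schwarz and the norm identities above (note $\mathbf{v}^{*}G'QG'\mathbf{v}=\bigl((G')^{*}\mathbf{v}\bigr)^{*}Q\bigl(G'\mathbf{v}\bigr)$),
\[
\bigl|\mathbf{v}^{*}G'QG'\mathbf{v}\bigr|\le\|Q\|\,\|(G')^{*}\mathbf{v}\|\,\|G'\mathbf{v}\|=\frac{\|Q\|}{\eta}\,\im m\,,
\]
while $|1+\mu m|\ge|\im(1+\mu m)|=|\mu|\,\im m$. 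Combining the last three displays gives the one-step bound, and hence the lemma.

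The only point requiring care is getting the \emph{sharp} dependence: a single power of $\eta^{-1}$ and no factor $\|R\|$. Applying the resolvent identity $(D+R-z)^{-1}-(D-z)^{-1}=-(D+R-z)^{-1}R(D-z)^{-1}$ together with the crude trace estimate $|\ntr(QXRY)|\le\mathrm{rank}(R)\,\|Q\|\,\|R\|\,\eta^{-2}$ would lose both factors. The gain comes entirely from exploiting the Sherman--Morrison denominator jointly with the Ward identity $\im G'=\eta(G')^{*}G'$, which ties $|\mathbf{v}^{*}G'QG'\mathbf{v}|$ to $\im m$ and lets it cancel against the lower bound $|1+\mu m|\ge|\mu|\,\im m$; the spectral reduction of $R$ and the telescoping through Hermitian intermediate matrices are routine.
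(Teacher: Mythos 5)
Your proof is correct. Note that the paper does not give its own argument for this lemma -- it only cites Section~3.2 of \cite{BES15b} -- so there is no in-paper proof to compare against; that said, your strategy (spectral decomposition of $R$, telescoping through Hermitian rank-one increments $D_k=D_{k-1}+\mu_k\mathbf{v}_k\mathbf{v}_k^*$, Sherman--Morrison at each step, then pairing $\im G'=\eta(G')^*G'$ with $|1+\mu m|\ge|\mu|\,\im m$ to produce a single factor of $\eta^{-1}$ with no $\|R\|$ dependence) is exactly the standard proof, and every step checks out, including the normality of $G'$ needed for $\|G'\mathbf{v}\|=\|(G')^*\mathbf{v}\|$ and the nonvanishing of the Sherman--Morrison denominator.
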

 Lemma~\ref{finite rank perturbation} also has the following corollary.
\begin{cor}\label{cor.finite rank} Let $Q\in M_{2N}(\C)$ be arbitrary matrix. Then there is a numerical constant $C$  such that, with the notations defined in~(\ref{052704}),~(\ref{052703}) and~(\ref{0916110}), we have
\begin{align}
\big| \ntr QG-\ntr QG^{\la i\ra}\big| &\leq \frac{C\|Q\|}{N\eta}\,,\qquad &  \big|\ntr Q\wt{B}G- \ntr Q\wt{B}^{\la i\ra} G\big|&\leq  \frac{C\|Q\|}{N\eta}\,,\nonumber\\
\big|\ntr Q\wt{B}G- \ntr Q\wt{B}^{\la i\ra} G^{\la i\ra}\big|&\leq  \frac{C\|Q\|}{N\eta}\,, \qquad & \big|\ntr Q\wt{B}G\wt{B}- \ntr Q\wt{B}^{\la i\ra} G^{\la i\ra}\wt{B}^{\la i\ra}\big|&\leq  \frac{C\|Q\|}{N\eta}\,. \label{0916136}
\end{align}
\end{cor}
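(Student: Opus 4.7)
The plan is to reduce each of the four bounds in the corollary to a direct application of Lemma~\ref{finite rank perturbation}, exploiting the fact that $\wt{B}$ and $\wt{B}^{\la i\ra}$ differ only by a finite rank operator. Recall that $\mathcal{R}_i=\hat I-\mathcal{P}_i$ with $\mathcal{P}_i\deq \mathbf{r}_i^u(\mathbf{r}_i^u)^*\oplus \mathbf{r}_i^v(\mathbf{r}_i^v)^*$ a rank-$2$ orthogonal projector, and $\wt B=\mathcal{R}_i\wt B^{\la i\ra}\mathcal{R}_i$ by~\eqref{081101}. Expanding, we~get
\begin{align*}
\wt B-\wt B^{\la i\ra}=-\mathcal{P}_i\wt B^{\la i\ra}-\wt B^{\la i\ra}\mathcal{P}_i+\mathcal{P}_i\wt B^{\la i\ra}\mathcal{P}_i\,,
\end{align*}
so $\wt B-\wt B^{\la i\ra}$ has rank at most~$4$ and operator norm at most $C\|B\|\le C$. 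Consequently $H-H^{\la i\ra}=\wt B-\wt B^{\la i\ra}$ is Hermitian of rank at most~$4$.

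The first bound in~\eqref{0916136} is then an immediate consequence of Lemma~\ref{finite rank perturbation} applied to $D=H^{\la i\ra}$ and $R=\wt B-\wt B^{\la i\ra}$. For the second bound, I would write
\begin{align*}
\ntr Q(\wt B-\wt B^{\la i\ra})G=\frac{1}{2N}\mathrm{Tr}\,Q(\wt B-\wt B^{\la i\ra})G
\end{align*}
and bound the trace by $\mathrm{rank}(\wt B-\wt B^{\la i\ra})\cdot\|Q(\wt B-\wt B^{\la i\ra})G\|$. Using $\|G(z)\|\le 1/\eta$ and $\|\wt B-\wt B^{\la i\ra}\|\le C$ gives the claimed bound $C\|Q\|/(N\eta)$.

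For the third bound I would introduce an intermediate term to get a telescoping decomposition:
\begin{align*}
\ntr Q\wt{B}G-\ntr Q\wt{B}^{\la i\ra}G^{\la i\ra}=\ntr Q(\wt B-\wt B^{\la i\ra})G+\ntr(Q\wt B^{\la i\ra})(G-G^{\la i\ra})\,.
\end{align*}
The first summand is controlled as in the previous step. For the second, I apply Lemma~\ref{finite rank perturbation} with the deterministic matrix $Q$ replaced by $Q\wt B^{\la i\ra}$ (whose norm is $\le C\|Q\|$) and $R=\wt B-\wt B^{\la i\ra}$, yielding $C\|Q\|/(N\eta)$. The fourth bound follows by the same telescoping idea applied twice:
\begin{align*}
\ntr Q\wt BG\wt B-\ntr Q\wt B^{\la i\ra}G^{\la i\ra}\wt B^{\la i\ra}&=\ntr Q(\wt B-\wt B^{\la i\ra})G\wt B+\ntr(Q\wt B^{\la i\ra})(G-G^{\la i\ra})\wt B\\
&\quad+\ntr(Q\wt B^{\la i\ra}G^{\la i\ra})(\wt B-\wt B^{\la i\ra})\,.
\end{align*}
For the first and third terms I use the finite-rank trace bound; for the middle term I apply Lemma~\ref{finite rank perturbation} with the deterministic prefactor $Q\wt B^{\la i\ra}$ and the trailing deterministic matrix $\wt B$ absorbed into the test matrix (using cyclicity of the trace).

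The only nontrivial point, and the one I would be careful about, is that in the middle term $(G-G^{\la i\ra})\wt B$ the matrix $\wt B$ sits outside the resolvent difference, which is not literally the form of Lemma~\ref{finite rank perturbation}. This is handled by cyclicity: $\mathrm{Tr}\,Q\wt B^{\la i\ra}(G-G^{\la i\ra})\wt B=\mathrm{Tr}\,(\wt B Q\wt B^{\la i\ra})(G-G^{\la i\ra})$, so we may apply Lemma~\ref{finite rank perturbation} with the deterministic matrix $\wt B Q\wt B^{\la i\ra}$ of norm $\le C\|Q\|$. Once this rearrangement is noted, all four estimates follow with a uniform constant $C$ depending only on $\|B\|$.
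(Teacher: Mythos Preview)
Your proof is correct and takes a somewhat different, cleaner route than the paper's. For the second inequality the paper expands $\mathcal{R}_i=\hat I-\hat{\mathbf r}_i^u(\hat{\mathbf r}_i^u)^*-\hat{\mathbf r}_i^v(\hat{\mathbf r}_i^v)^*$ explicitly and writes the difference $\ntr Q\wt B^{\la i\ra}G-\ntr Q\mathcal{R}_i\wt B^{\la i\ra}\mathcal{R}_iG$ as a finite sum of scalar terms of the form $\frac{1}{N}(\hat{\mathbf r}_i^a)^*\wt B^{\la i\ra}GQ\hat{\mathbf r}_i^b$, $\frac{1}{N}(\hat{\mathbf r}_i^a)^*GQ\wt B^{\la i\ra}\hat{\mathbf r}_i^b$, etc., each of which is bounded by $C\|Q\|/(N\eta)$ via $\|G\|\le\eta^{-1}$; the third and fourth inequalities are then deduced from the first two. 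Your argument bypasses this explicit expansion by using the general inequality $|\mathrm{Tr}\,M|\le\mathrm{rank}(M)\cdot\|M\|$ applied to products containing the rank-$\le 4$ factor $\wt B-\wt B^{\la i\ra}$, together with telescoping and cyclicity to reduce the resolvent-difference pieces to Lemma~\ref{finite rank perturbation}. This is more abstract and arguably more transparent; the paper's version, on the other hand, makes the structure of the correction terms visible, which is occasionally useful elsewhere in the argument. One minor wording point: you refer to $\wt B$, $\wt B^{\la i\ra}$ as ``deterministic'' matrices when invoking the lemma, but they are random; this is harmless since Lemma~\ref{finite rank perturbation} is a pointwise inequality valid for arbitrary fixed matrices, so it applies for each realization.
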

\begin{proof}
The first inequality follows from~(\ref{091002}) directly  since $H^{\la i\ra}$ is a bounded rank perturbation of $H$. Next, we show the second inequality. Note that
\begin{align}
\ntr Q\wt{B}^{\la i\ra} G-\ntr Q\wt{B}G=\ntr Q\wt{B}^{\la i\ra} G-\ntr Q\mathcal{R}_i\wt{B}^{\la i\ra} \mathcal{R}_iG\,. \label{0916120}
\end{align}
Denote by $\hat{\mathbf{r}}_i^u=\ell_i^u\big(\hat{\mathbf{e}}_i+\mathbf{k}_i^u\big)$ and $\hat{\mathbf{r}}_i^v=\ell_i^v\big(\hat{\mathbf{e}}_{\hat{i}}+\mathbf{k}_i^v\big)$. By the definition in~(\ref{def of R}) and~(\ref{052704}), we have 
 $\mathcal{R}_i=\hat{I}-\hat{\mathbf{r}}_i^u(\hat{\mathbf{r}}_i^u)^*-\hat{\mathbf{r}}_i^v(\hat{\mathbf{r}}_i^v)^*$.
Then it is easy to check the right side of~(\ref{0916120}) is a sum of the terms of the form 
\begin{align}
\frac{\wt{d}_i}{N} (\hat{\mathbf{r}}_i^a)^* \wt{B}^{\la i\ra} GQ\hat{\mathbf{r}}_i^b\,,\qquad \frac{\wt{d}_i}{N} (\hat{\mathbf{r}}_i^a)^*  GQ \wt{B}^{\la i\ra}\hat{\mathbf{r}}_i^b\,,\qquad  \frac{\wt{d}_i}{N} (\hat{\mathbf{r}}_i^a)^*  GQ\hat{\mathbf{r}}_i^b\,,  \label{0916135}
\end{align}
 or products of some of them, for some $\wt{d}_i$ which could be different from one to another, up to the bound $|\wt{d}_i|\leq C$.  Here $a, b=u,v$. 
 Clearly, the terms in~(\ref{0916135}) are all bounded by $\frac{C\|Q\|}{N\eta}$.  This proves the second estimate in~(\ref{0916136}).  The third bound in~(\ref{0916136}) follows from the second one and~(\ref{091002}) immediately. The last  one  can also be proved analogously. 
\end{proof}

\section{}\label{Appendix B}
In this appendix, we bound the terms involving $\Delta_R^u(i,k)$, \ie  the terms in~(\ref{083050}), the last term of~(\ref{100501}), ~(\ref{112450}) and the last term of~(\ref{112455}). We summarize the bound in the next lemma.
\begin{lem} \label{lem.100655} Let $Q\in M_{2N}(\mathbb{C})$ be arbitrary, with $\|Q\|\prec 1$. Let $X_i=\hat{I}$ or $\wt{B}^{\la i\ra}$ and~$X=\hat{I}$ or~$A$. Suppose that the assumptions in Theorem~\ref{071601} hold.  Then,
\begin{align}
&|\varepsilon_{i2}|\prec \Psi^2,\qquad |\varepsilon_{i3}|\prec \Psi^2\,,\label{120501}\\
& \Big|\frac{1}{N} \sum_{k}^{(i)} \hat{\mathbf{e}}_i^*X\Delta_{G}^u(i,k)\hat{\mathbf{e}}_i \hat{\mathbf{e}}_k^*  X_i G\hat{\mathbf{e}}_i \Big|\prec \Psi^2\,,\label{120502}\\
& \Big|\frac{1}{N} \sum_{k}^{(i)} (\mathbf{k}_i^u)^*\Delta_{G}^u(i,k)\hat{\mathbf{e}}_i \hat{\mathbf{e}}_k^*  X_i G\hat{\mathbf{e}}_i \Big|\prec \Psi^2\,, \label{120503}\\
& \Big| \frac{1}{N} \sum_{k}^{(i)} \ntr QX\Delta_{G}^u(i,k) \hat{\mathbf{e}}_k^*  X_i G\hat{\mathbf{e}}_i\Big|\prec \Psi^4\,. \label{100657}
\end{align}
\end{lem}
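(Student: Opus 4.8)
Looking at the final statement, which is Lemma~\ref{lem.100655} in Appendix~\ref{Appendix B}, I need to prove bounds on various terms involving the error matrix $\Delta_R^u(i,k)$ propagated through the Green function via $\Delta_G^u(i,k)$.

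\textbf{The plan.} The starting point is the explicit formula for $\Delta_G^u(i,k)$ from~(\ref{0916101}),
\begin{align*}
\Delta_{G}^u(i,k)=-G(\Delta_R^u(i,k)\oplus 0) \wt{ B }^{\la i\ra} \mathcal{R}_i G-G\mathcal{R}_i \wt{ B }^{\la i\ra} (\Delta_R^u(i,k)\oplus 0) G\,,
\end{align*}
together with the structure of $\Delta_R^u(i,k)$ from~(\ref{0916100}), which expresses it as $\bar g_{ik}^u$ times a rank-at-most-three matrix built from $\mathbf{e}_i$ and $\mathbf{h}_i^u$ (equivalently, from $\hat{\mathbf{e}}_i$ and $\mathbf{k}_i^u$ after embedding). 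The key point is that each of the four target quantities is, after inserting these formulas and summing over $k\neq i$, a finite linear combination of expressions of the schematic form
$$
\frac{c}{N}\sum_{k}^{(i)}\bar g_{ik}^u\,(\text{vector})^*_1 \big(\mathbf{w}_1\mathbf{w}_2^*\big)(\text{matrix product with }G)\,\hat{\mathbf{e}}_k\cdot(\text{other }G\text{-factor involving }\hat{\mathbf{e}}_k),
$$
where the vectors $\mathbf{w}_1,\mathbf{w}_2$ are among $\hat{\mathbf{e}}_i,\mathbf{k}_i^u$, and the scalar prefactor $c$ carries a factor $(\ell_i^u)^2/\|\mathbf{g}_i^u\|_2^2\prec 1$ or $(\ell_i^u)^4/\|\mathbf{g}_i^u\|_2^3\prec 1$ (recall~(\ref{050901})), possibly also a $g_{ii}^u=O_\prec(N^{-1/2})$. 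After factoring out the $\hat{\mathbf{e}}_k$'s, the $k$-sum collapses: $\sum_k^{(i)}\bar g_{ik}^u\,(\cdots)\hat{\mathbf{e}}_k(\hat{\mathbf{e}}_k^*\cdots) = (\mathring{\mathbf{k}}_i^u)^*(\cdots)$ for one $G$-factor and produces an $\hat I^{\la i\ra}_1$ insertion between two $G$-factors (as in~(\ref{071580})--(\ref{100501})), up to the usual $\frac1N$-sized correction from the missing $k=i$ term.

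\textbf{Key steps in order.} First, for~(\ref{120501}): $\varepsilon_{i2}=\frac1N\sum_k^{(i)}\hat{\mathbf{e}}_k^*\wt{B}^{\la i\ra}\Delta_G^u(i,k)\hat{\mathbf{e}}_i$ and $\varepsilon_{i3}=\frac1N\sum_k^{(i)}\hat{\mathbf{e}}_k^*\Delta_G^u(i,k)\hat{\mathbf{e}}_i$; expanding $\Delta_G^u$ gives terms like $\frac{c}{N}\sum_k^{(i)}\bar g_{ik}^u(\hat{\mathbf{e}}_k^* X G\hat{\mathbf{e}}_i/\hat{\mathbf{e}}_k^* X G\mathbf{k}_i^u)\cdot(\text{bounded scalar})$, and the $k$-sum gives $(\mathring{\mathbf{k}}_i^u)^* X G\hat{\mathbf{e}}_i$ or similar, which is $O_\prec(1)$ by~(\ref{XG})/(\ref{100401}); multiplying by the extra $1/N$ and by $\|\mathring{\mathbf{k}}_i^u\|_2$-type large-deviation gains (Lemma~\ref{lem.091720}, giving another $N^{-1/2}$ where a full Hilbert--Schmidt norm appears) yields $\Psi^2 = O(1/(N\eta))$. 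Second, for~(\ref{120502}) and~(\ref{120503}): here there is an extra factor $\hat{\mathbf{e}}_k^* X_i G\hat{\mathbf{e}}_i$ carrying the $\hat{\mathbf{e}}_k$, so after the $k$-sum one gets a product of two $G$-sandwiches with an $\hat I^{\la i\ra}_1$ between them, bounded exactly as in~(\ref{1023101})--(\ref{112410}) by $\eta^{-1}$ (using $|G|^2 = \Im G/\eta$ and $(AGA)_{ii}=|\sigma_i|^2 G_{\hat i\hat i}$, $(\mathbf{k}_i^u)^* G\mathbf{k}_i^u=\mathcal{G}_{ii}\prec1$ via~(\ref{112230})), times bounded remaining factors, times $1/N$ from the prefactor and a further $\Psi$-sized gain from a large-deviation estimate, totalling $\Psi^2$. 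Third, for~(\ref{100657}): this is the weighted-trace version; proceeding as in~(\ref{101820})--(\ref{112250}) one writes $\ntr QX\Delta_G^u(i,k)$ out, the $\hat{\mathbf{e}}_k$'s from this and from $\hat{\mathbf{e}}_k^* X_i G\hat{\mathbf{e}}_i$ combine to insert $\hat I^{\la i\ra}_1$, and the estimate $\frac{1}{N^2}|(\cdots)GQXG\hat I_1 X_i G\hat{\mathbf{e}}_i|\le \frac{C}{N^2\eta^2}(\Im\mathcal{G}_{\hat i\hat i}+\Im(\wt B G\wt B)_{ii}+\Im G_{ii}) = O_\prec(\Psi^4)$ applies verbatim, with the extra $1/N$ from $\ntr$ giving the fourth power of $\Psi$.

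\textbf{Main obstacle.} The genuine work is purely bookkeeping: $\Delta_R^u(i,k)$ has several terms, $\Delta_G^u(i,k)$ doubles them, and each target quantity adds its own structure, so one must organize the resulting $\sim 10$--$20$ schematic terms and verify that in every single one the combination of (a) the explicit $1/N$ prefactors from $(\ell_i^u)^2,(\ell_i^u)^4,\bar g_{ik}^u$, (b) the collapse of the $k$-sum into a bounded vector/matrix insertion, and (c) where needed, one large-deviation bound from Lemma~\ref{lem.091720} or a trivial $\|G\|\le\eta^{-1}$ bound, produces at least $\Psi^2$ (respectively $\Psi^4$). The only subtlety worth flagging is that in~(\ref{120503}) the vector $\mathbf{k}_i^u$ appears on the left, so one cannot directly use $|G_{k\ell}|\prec1$; instead one must route through $(\mathbf{k}_i^u)^*G\mathbf{k}_i^u=\mathcal{G}_{ii}$ as in~(\ref{112230}) and invoke $\max_{k,\ell}|\mathcal{G}_{k\ell}|\prec1$ from~(\ref{112460}) (which needs the $\wt\Lambda_{\rm d}$ part of assumption~(\ref{071422})). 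No new ideas beyond those already used in the body of Section~\ref{s.green function subordination} are required.
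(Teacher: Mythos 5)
Your overall plan is correct and matches the paper's: expand $\Delta_G^u(i,k)$ as a sum of rank-one-sandwiched terms proportional to $\bar g_{ik}^u$, perform the $k$-sum, and bound the resulting scalars. Parts (\ref{120501}) you describe essentially correctly (though the paper uses the trivial $\|G\|\le\eta^{-1}$ bound on the factor $(\mathring{\mathbf{k}}_i^u)^*\wt{B}^{\la i\ra}G\hat{\boldsymbol{\alpha}}_i$ rather than any large-deviation gain, and $1/N\cdot 1/\eta$ already gives $\Psi^2$).

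There is, however, a genuine structural error in your treatment of (\ref{120502}), (\ref{120503}) and (\ref{100657}). You claim that after the $k$-sum ``one gets a product of two $G$-sandwiches with an $\hat I_1^{\la i\ra}$ between them,'' and for (\ref{100657}) that ``the $\hat{\mathbf{e}}_k$'s from this and from $\hat{\mathbf{e}}_k^*X_iG\hat{\mathbf{e}}_i$ combine to insert $\hat I_1^{\la i\ra}$.'' This is not the case. Looking at~(\ref{100650}), $\Delta_G^u(i,k)$ depends on $k$ only through the scalar $\bar g_{ik}^u$; unlike the leading part of $\partial G/\partial g_{ik}^u$ in~(\ref{050915}), it carries no explicit $\hat{\mathbf{e}}_k$. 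Thus in all three of these estimates the first factor ($\hat{\mathbf{e}}_i^*X\Delta_G^u(i,k)\hat{\mathbf{e}}_i$, $(\mathbf{k}_i^u)^*\Delta_G^u(i,k)\hat{\mathbf{e}}_i$, or $\ntr QX\Delta_G^u(i,k)$) contributes a single $\bar g_{ik}^u$, which pairs with the $\hat{\mathbf{e}}_k^*$ in the extra factor $\hat{\mathbf{e}}_k^*X_iG\hat{\mathbf{e}}_i$ to produce the scalar $(\mathring{\mathbf{k}}_i^u)^*X_iG\hat{\mathbf{e}}_i$, which is $O_\prec(1)$ by~(\ref{XG}). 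There is no second $\hat{\mathbf{e}}_k$ available, so no $\hat I_1^{\la i\ra}$ appears; the Ward-identity type estimates from~(\ref{1023101})--(\ref{112410}) and~(\ref{112250}) are neither available nor needed here. The correct (and simpler) argument, as in the paper, is to observe that after the $k$-sum each target quantity is a sum of products of three (respectively, for (\ref{100657}), four) scalar factors, of which exactly one is bounded via the trivial $\|G\|\le\eta^{-1}$ (giving $\eta^{-1}$ for (\ref{120502})--(\ref{120503}) and $\eta^{-2}$ for (\ref{100657}) because $\ntr$ contributes an extra $N^{-1}$ and there are two $G$'s between $\hat{\boldsymbol{\alpha}}_i$ and $\hat{\boldsymbol{\beta}}_i$), while the remaining factors are $O_\prec(1)$ by~(\ref{XG}) and~(\ref{100670}); the visible prefactor is $N^{-1}$ (or $N^{-2}$), yielding $\Psi^2$ (respectively $\Psi^4$).

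A smaller point: for (\ref{120503}) you suggest routing through $(\mathbf{k}_i^u)^*G\mathbf{k}_i^u=\mathcal{G}_{ii}\prec1$. That identity is used elsewhere in the paper (e.g.~for~(\ref{112451})), but it is unnecessary here: the factor $(\mathbf{k}_i^u)^*G\hat{\boldsymbol{\alpha}}_i$ can simply be bounded by $\|G\|\le\eta^{-1}$ since $\|\mathbf{k}_i^u\|_2\le 1$ and $\|\hat{\boldsymbol{\alpha}}_i\|_2\le C$, and this trivial bound already suffices for the claimed $\Psi^2$.
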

\begin{proof}[Proof of Lemma~\ref{lem.100655}] Recalling~(\ref{0916100}), we see that $\Delta_R^u(i,k)$ is a sum of terms of the form
\begin{align*}
\wt{d}_i\bar{g}_{ik}^u \boldsymbol{\alpha}_i \boldsymbol{\beta}_i^*\,,
\end{align*}
for some $\wt{d}_i\in \mathbb{C}$ satisfying $|\wt{d}_i|\prec 1$, and $\boldsymbol{\alpha}_i,\boldsymbol{\beta}_i=\mathbf{e}_i$ or $\mathbf{h}_i^u$.  Hereafter $\wt{d}_i$ can be different from line to line, up to the bound $\wt{d}_i\prec 1$ uniformly on $\mathcal{S}_{\mathcal{I}}(\eta_{\mathrm{m}}, \eta_{\mathrm{M}})$.  By~(\ref{0916101}), we see that $\Delta_G^u(i,k)$ is  a sum of the terms of the form 
\begin{align}
\wt{d}_i \bar{g}_{ik}^u G\hat{\boldsymbol{\alpha}}_i\hat{\boldsymbol{\beta}}_i^* \wt{B}^{\la i\ra}\mathcal{R}_i G\,,\qquad \wt{d}_i \bar{g}_{ik}^u G \mathcal{R}_i \wt{B}^{\la i\ra} \hat{\boldsymbol{\alpha}}_i\hat{\boldsymbol{\beta}}_i^* G\,, \label{100650}
\end{align}
where $ \hat{\boldsymbol{\alpha}}_i, \hat{\boldsymbol{\beta}}_i= \hat{\mathbf{e}}_i$ or ${\mathbf{k}}_i^u$. Then, by the definition in~(\ref{112470}), we see that $\varepsilon_{i2}$ is a sum of the terms of the form 
\begin{align*}
\frac{1}{N}\wt{d}_i (\mathring{\mathbf{k}}_i^u)^*  \wt{B}^{\la i \ra}  G\hat{\boldsymbol{\alpha}}_i\hat{\boldsymbol{\beta}}_i^* \wt{B}^{\la i\ra}\mathcal{R}_i G\hat{\mathbf{e}}_i\,,\qquad   \frac{1}{N}\wt{d}_i (\mathring{\mathbf{k}}_i^u)^*\wt{B}^{\la i\ra} G \mathcal{R}_i \wt{B}^{\la i\ra} \hat{\boldsymbol{\alpha}}_i\hat{\boldsymbol{\beta}}_i^* G \hat{\mathbf{e}}_i\,.
\end{align*}
Note that  using the trivial bound $\| G\|\le 1/\eta$, the terms above are stochastically dominated by  
\begin{align*}
\frac{1}{N\eta}\big|\hat{\boldsymbol{\beta}}_i^* \wt{B}^{\la i\ra}\mathcal{R}_i G\hat{\mathbf{e}}_i\big|\,,\qquad \frac{1}{N\eta}\big|\hat{\boldsymbol{\beta}}_i^* G \hat{\mathbf{e}}_i\big|
\end{align*}
respectively. It is easy to check 
\begin{align}
\big|\hat{\boldsymbol{\beta}}_i^* \wt{B}^{\la i\ra}\mathcal{R}_i G\hat{\mathbf{e}}_i\big|\prec 1\,,\qquad \big|\hat{\boldsymbol{\beta}}_i^* G \hat{\mathbf{e}}_i\big|\prec 1\,, \label{100670}
\end{align}
 for $\hat{\boldsymbol{\beta}}_i=\hat{\mathbf{e}}_i$ or $\mathbf{k}_i^u$. This can been seen from the facts ~(\ref{071553}) and ~(\ref{0913101}), 
and also the bounds~(\ref{112401}), which hold under the assumption~(\ref{071422}). From the above discussion, we can see that $|\varepsilon_{i2}|\prec \frac{1}{N\eta}$. This proves the first estimate in~(\ref{120501}). The second estimate 
 on $\varepsilon_{i3}$  can be verified in the same way. We omit the details. 

Now, we  prove ~(\ref{120502}). 
According to~(\ref{100650}), the left side of~(\ref{120502}) is a sum of terms of the~form 
\begin{align*}
\frac{1}{N} \wt{d}_i \hat{\mathbf{e}}_i^*X G \hat{\boldsymbol{\alpha}}_i  \;\hat{\boldsymbol{\beta}}_i^* \wt{B}^{\la i\ra} \mathcal{R}_i G \hat{\mathbf{e}}_i\; (\mathring{\mathbf{k}}_i^u)^* X_i G \hat{\mathbf{e}}_i\,,\\ \frac{1}{N} \wt{d}_i\hat{\mathbf{e}}_i^*X G \mathcal{R}_i \wt{B}^{\la i\ra} \hat{\boldsymbol{\alpha}}_i \; \hat{\boldsymbol{\beta}}_i^* G\hat{\mathbf{e}}_i\; (\mathring{\mathbf{k}}_i^u)^* X_i G \hat{\mathbf{e}}_i\,.
\end{align*}
Using~(\ref{XG}),~(\ref{100670}) and the bound  $|\hat{\mathbf{e}}_i^*X G \hat{\boldsymbol{\alpha}}_i|\prec \frac{1}{\eta}$ and $\big|\hat{\mathbf{e}}_i^*X G \mathcal{R}_i \wt{B}^{\la i\ra} \hat{\boldsymbol{\alpha}}_i \big|\prec \frac{1}{\eta}$, we can get 
(\ref{120502}). Then,~(\ref{120503}) can be proved similarly to~(\ref{120502}). Hence, we omit the details.  
Finally, we show~(\ref{100657}). According to~(\ref{100650}), $\frac{1}{N} \sum_{k}^{(i)} \ntr QX\Delta_{G}^u(i,k) \hat{\mathbf{e}}_k^*  X_i G\hat{\mathbf{e}}_i$ is a sum of terms of the~form 
\begin{align*}
\frac{1}{N^2} \wt{d}_i \hat{\boldsymbol{\beta}}_i^* \wt{B}^{\la i\ra} \mathcal{R}_i G  QX G \hat{\boldsymbol{\alpha}}_i\;  (\mathring{\mathbf{k}}_i^u)^* X_i G \hat{\mathbf{e}}_i\,, \\  \frac{1}{N^2} \wt{d}_i \hat{\boldsymbol{\beta}}_i^* G QX G \mathcal{R}_i \wt{B}^{\la i\ra}  \hat{\boldsymbol{\alpha}}_i\; (\mathring{\mathbf{k}}_i^u)^* X_i G \hat{\mathbf{e}}_i\,. 
\end{align*}
Then~(\ref{100657}) follows from~(\ref{XG}) and the trivial bounds 
\begin{align*}
\big| \hat{\boldsymbol{\beta}}_i^* \wt{B}^{\la i\ra} \mathcal{R}_i G  QX G \hat{\boldsymbol{\alpha}}_i\big|\prec \frac{1}{\eta^2}\,,\qquad  \big| \hat{\boldsymbol{\beta}}_i^* G QX G \mathcal{R}_i \wt{B}^{\la i\ra}  \hat{\boldsymbol{\alpha}}_i\big|\prec \frac{1}{\eta^2}\,. 
\end{align*}
Hence, we concluded the proof of Lemma~\ref{lem.100655}.
\end{proof}

  \section{} \label{Appendix C}

In this appendix, we explain how to modify our discussions  in Sections~\ref{s.green function subordination} and~\ref{s.strong law} 
to adapt to the orthogonal setup.  Recall our partial randomness decomposition of Haar unitary matrices~$U$ and~$V$ in~(\ref{050202}). For Haar orthogonal matrices $U$ and $V$, we  refer to Appendix A of \cite{BES15b} for an analogous decomposition, with the phases of the $i$-th components of $\mathbf{u}_i$ and $\mathbf{v}_i$ replaced by the signs of them. We then inherit all the notations introduced in Sections~\ref{s.green function subordination} and~\ref{s.strong law}. Under the orthogonal setting,  instead of \eqref{integration by parts formula}, 
we need to use the following integration by parts formula for real Gaussian random variables
\begin{align*}
\int_{\mathbb{R}} gf(g) \e{-\frac{g^2}{2\sigma^2}} {\rm d} g=\sigma^2 \int_\mathbb{R} f'(g) \e{-\frac{g^2}{2\sigma^2}} {\rm d} g\,,
\end{align*}
for differentiable functions $f: \mathbb{R}\to \mathbb{R}$. Consequently, instead of~(\ref{050903}), here we have, for $k\neq i$,
\begin{align*}
\frac{\partial R_i^a}{\partial g_{ik}^a}=&-\frac{(\ell_i^a)^2}{\|\mathbf{g}_i^a\|_2} \mathbf{e}_k\big(\mathbf{e}_i+\mathbf{h}_i^a\big)^*-\frac{(\ell_i^a)^2}{\|\mathbf{g}_i^a\|_2} \big(\mathbf{e}_i+\mathbf{h}_i^a\big)\mathbf{e}_k^*+2\Delta_R^a(i,k)\,,\qquad a=u,v\,,
\end{align*}
where $\Delta_R^a(i,k)$ is defined in~(\ref{0916100}). Thence we have the following modification of~(\ref{050915}):
\begin{align}
\frac{\partial G}{\partial g_{ik}^u}=\text{ right side of ~(\ref{050915}) }  &+c_i^u  G\big(\hat{\mathbf{e}}_i+\mathbf{k}_i^u\big)\hat{\mathbf{e}}_k^* \wt{ B }^{\la i\ra} \mathcal{R}_i G\nonumber\\ &+c_i^uG\mathcal{R}_i \wt{ B }^{\la i\ra} \big(\hat{\mathbf{e}}_i
+\mathbf{k}_i^u\big)\hat{\mathbf{e}}_k^* G\,. \label{101601}
\end{align}
The remaining task is to go through all the discussions in Sections~\ref{s.green function subordination} and~\ref{s.strong law} again, and show that all the estimates which involve the last two terms in~(\ref{101601})  are negligible at the right order.

To get through the discussions in Section~\ref{s.green function subordination} for orthogonal case, it suffices to take the last two terms in~(\ref{101601}) into the account of the derivation of the equations~(\ref{071440}) and  ~(\ref{071441}), as well as the last two estimates in~(\ref{071520}).   

Using~(\ref{101601}), we will have the following modification of~(\ref{071440}):
\begin{align}
\frac{1}{N}\sum_{k}^{(i)} \frac{  \partial (\hat{\mathbf{e}}_k^*\wt{ B }^{\la i\ra}  G\hat{\mathbf{e}}_i)}{\partial g^{u}_{ik}}= &\text{right side of~(\ref{071440})  }+\frac{c_i^u}{N} \hat{\mathbf{e}}_i^* G\mathcal{R}_i \wt{B}^{\la i\ra} \hat{I}_{1}^{\la i\ra} \wt{B}^{\la i\ra} G(\hat{\mathbf{e}}_i+\mathbf{k}_i^u)\nonumber\\
&\qquad +\frac{c_i^u}{N} \hat{\mathbf{e}}_i^* G \hat{I}_{1}^{\la i\ra} \wt{B}^{\la i\ra} G R_i \wt{B}^{\la i\ra} (\hat{\mathbf{e}}_i+\mathbf{k}_i^u)\,. \label{101803}
\end{align}
 Notice that the new terms are qualitatively different from the ones already present in~(\ref{071440}). In the new terms
the  summation over $k$ could be directly performed since $\mathbf{e}_k$ and $\mathbf{e}_k^*$ appear  directly next to each other,
 yielding the almost identity $\hat{I}_{1}^{\la i\ra}$. The analogous
sums in the old terms, explicitly seen in \eqref{083015}, result in a partial trace.

We will show that the last two terms above are of order $O_\prec(\Psi^2)$.  For the first one, note~that 
\begin{align}
\big|\hat{\mathbf{e}}_i^* G\mathcal{R}_i \wt{B}^{\la i\ra} \hat{I}_{1}^{\la i\ra} \wt{B}^{\la i\ra} G(\hat{\mathbf{e}}_i+\mathbf{k}_i^u)\big|\leq C\|G \hat{\mathbf{e}}_i\|_2 \big(\|G\hat{\mathbf{e}}_i\|_2+\|G\mathbf{k}_i^u\|_2\big)\,, \label{101801}
\end{align}
for some constant $C$. For the last term in~\eqref{101803}, 
using~(\ref{050805}), ~(\ref{0830200}) and also $\mathcal{R}_i^2=\hat{I}$, we get
\begin{align}
G \mathcal{R}_i \wt{B}^{\la i\ra} (\hat{\mathbf{e}}_i+\mathbf{k}_i^u)=-\tilde{\sigma}_i^* G\mathbf{k}_i^u-G\wt{B}\hat{\mathbf{e}}_i&=-\tilde{\sigma}_i^* G\mathbf{k}_i^u-\hat{\mathbf{e}}_i+G(A-z)\hat{\mathbf{e}}_i\nonumber\\
&= \tilde{\sigma}_i^* G\mathbf{k}_i^u-\hat{\mathbf{e}}_i-G\hat{\mathbf{e}}_i+\xi_i^*G\hat{\mathbf{e}}_{\hat{i}}\,. \label{101610}
\end{align}
 Thus, applying~(\ref{101610}), for the  last  term in \eqref{101803} we  have
\begin{align}
\big|\hat{\mathbf{e}}_i^* G \hat{I}_{1}^{\la i\ra} \wt{B}^{\la i\ra} G \mathcal{R}_i \wt{B}^{\la i\ra} (\hat{\mathbf{e}}_i+\mathbf{k}_i^u)\big|\leq C\|G\hat{\mathbf{e}}_i\|_2\big(\|G\hat{\mathbf{e}}_i\|_2+\|G\hat{\mathbf{e}}_{\hat{i}}\|_2+\|G{\mathbf{k}}_i^u\|_2\big)\,. \label{101802}
\end{align}

According to~(\ref{101803}),~(\ref{101801}) and~(\ref{101802}), it suffices to prove 
\begin{align}
\|G\hat{\mathbf{e}}_i\|_2\prec \frac{1}{\sqrt{\eta}}\,,\qquad \|G\hat{\mathbf{e}}_{\hat{i}}\|_2\prec \frac{1}{\sqrt{\eta}}\,,\qquad  \|G{\mathbf{k}}_i^u\|_2\prec \frac{1}{\sqrt{\eta}}  \label{101804}
\end{align}
 to get a bound $O_\prec(\Psi^2)$ for the last two terms in \eqref{101803}. 
To show~(\ref{101804}), we use the identities
\begin{align}
\|G\hat{\mathbf{e}}_i\|_2^2=\frac{1}{\eta} \Im G_{ii}\,,\qquad  \|G\hat{\mathbf{e}}_{\hat{i}}\|_2^2=\frac{1}{\eta} \Im G_{\hat{i}\hat{i}}\,,\qquad    \|G{\mathbf{k}}_i^u\|_2^2=\frac{1}{\eta} \Im ({\mathbf{k}}_i^u)^* G{\mathbf{k}}_i^u\,.  \label{101807}
\end{align}
Applying assumption~(\ref{071422}) and~(\ref{101807}), we can get the first two estimates in~(\ref{101804}). Using the last identity of~(\ref{101807}),~(\ref{112230}) and~(\ref{112460}), we can get the last estimate in~(\ref{101804}). The necessary modification for the proofs of~(\ref{071441}) and  the last three estimates in~(\ref{071520}) can be done in the same way, we thus omit the details. 

For the discussions in  Section~\ref{s.strong law}, in the orthogonal case,
 the  averaged  analogue  of~(\ref{071501}), \ie ~(\ref{083170}), still holds. That is because  the last two terms in~(\ref{101803})  and their analog in the equation for $\frac{1}{N}\sum_{k}^{(i)} \frac{  \partial (\hat{\mathbf{e}}_k^*  G\hat{\mathbf{e}}_i)}{\partial g^{u}_{ik}}$ are of order $O_\prec(\Psi^2)$. So  the contribution of these additional terms in~(\ref{083170})
can be  absorbed   into the last term of~(\ref{083170}). Thence, the remaining proof  is the same as the unitary case. Hence, we completed the necessary modifications for the orthogonal setup.


\begin{thebibliography}{00}


 
\bibitem{AGZ}  Anderson, G., Guionnet, A., Zeitouni, O.: \emph{An Introduction to Random Matrices}, Cambridge Stud.\ Adv.\ Math.\ {\bf 118}, Cambridge Univ.\ Press, Cambridge, 2010.

\bibitem{BES15} Bao, Z.\ G., Erd\H{o}s, L., Schnelli, K.:  \emph{Local stability of the free additive convolution}, J.\ Funct.\ Anal.\ \textbf{271(3)}, 672-719 (2016).

\bibitem{BES15b} Bao, Z.\ G., Erd\H{o}s, L., Schnelli, K.:  \emph{Local law of addition of random matrices on optimal scale}, Comm.\ Math.\ Phys.\  \textbf{349(3)}, 947-990 (2017).

\bibitem{BES15c} Bao, Z.\ G., Erd\H{o}s, L., Schnelli, K.: \emph{Convergence rate for spectral distribution of addition of random matrices}, Adv.\ Math.\ \textbf{319}, 251-291 (2017).


\bibitem{BES17} Bao, Z. G., Erd\H{o}s, L., Schnelli, K.: \emph{Spectral rigidity for addition of random matrices at the regular edge},  arXiv:1708.01597, (2017).


\bibitem{Bel1} Belinschi, S.: \emph{A note on regularity  for  free convolutions,} Ann.\ Inst.\ Henri Poincar\'{e} Probab. Stat. \textbf{42(5)}, 635-648 (2006).

\bibitem{Bel} Belinschi, S.: \emph{The Lebesgue decomposition of the free additive convolution of two probability distributions}, Probab.\ Theory Related Fields \textbf{142(1-2)}, 125-150  (2008).

\bibitem{BB}Belinschi, S., Bercovici, H.: \emph{A new approach to subordination results in free probability}, J. Anal.\ Math.\ \textbf{101(1)}, 357-365 (2007). 

\bibitem{BG15} Benaych-Georges, F.: \emph{Exponential bounds for the support convergence in the single ring theorem}, J.\ Funct.\ Anal.\ \textbf{268}, 3492-3507 (2015).

\bibitem{BG} Benaych-Georges, F.: \emph{Local single ring theorem},  arXiv:1501.07840, Ann.\ Probab.\ (appeared online).

\bibitem{BeV93} Bercovici, H, Voiculescu, D.: \emph{Free convolution of measures with unbounded support}, Indiana Univ.\ Math.~J. \textbf{42}, 733-773 (1993).

\bibitem{Bia98} Biane, P.: \emph{Process with free increments}, Math. Z. \textbf{227(1)}, 143-174 (1998).

\bibitem{BC12} Bordenave, C., Chafa\"{i}, D.:  \emph{Around the circular law}, Probability Surveys \textbf{9}, 1-89 (2012).

\bibitem{BYY} Bourgade, P.,  Yau, H.-T., Yin, J.:  \emph{Local circular law for random matrices}, Probab.\ Theory Related Fields \textbf{159(3-4)} 545-595 (2014).

\bibitem{BYY2} Bourgade, P.,  Yau, H.-T., Yin, J.: \emph{ The local circular law II: the edge case}, Probab.\ Theory Related Fields  \textbf{159(3-4)} 619-660 (2014).

\bibitem{CG} Chistyakov, G.\ P., G\"{o}tze, F.: \emph{The arithmetic of distributions in free probability theory}, Cent.\ Euro.\ J.\ Math.\ \textbf{9}, 997-1050 (2011). 

\bibitem{DS87} Diaconis, P., Shahshahani, M.:  \emph{The subgroup algorithm for generating uniform random variables},  Probab.\ Engrg. \ Inform.\ Sci. \textbf{1(01)}, 15-32 (1987).

\bibitem{E15}  Erd{\H o}s, L.: {\it Random matrices, log-gases
and H\"older regularity.}  Proceedings
of ICM 2014, Seoul, Vol. III. 213-236 (2015).

\bibitem{EYYBer} Erd{\H o}s, L.,  Yau, H.-T., Yin, J.: {\it Universality for generalized Wigner matrices with Bernoulli distribution}, J.\ Comb.~{\bf 2(1)}, 15-85 (2011).

\bibitem{EKY} Erd\H{o}s, L., Knowles, A., Yau, H.-T.: \emph{Averaging fluctuations in resolvents of random band matrices}, Ann.\ Henri Poincar\'{e} \textbf{14}, 1837-1926 (2013). 

\bibitem{ESY}  Erd{\H o}s, L., Schlein, B., Yau, H.-T.: \emph{Local semicircle law and complete
  delocalization  for Wigner random matrices.} Comm.\ Math.\ Phys. {\bf 287}, 641-655 (2009).

\bibitem{EYY} Erd\H{o}s, L., Yau, H.-T., Yin, J.:  \emph{Bulk universality for generalized Wigner matrices}, Probab.\ Theory Related Fields\  \textbf{154}(1-2): 341-407 (2012).

\bibitem{FZ97} Feinberg, J., Zee, A.: {\it Non-Gaussian non-Hermitian random matrix theory: phase transition and addition formalism}, Nuclear Phys.\ B \textbf{501}, 643-669 (1997).

\bibitem{G84} Girko, V.\ L.: {\it The circular law}, Teor.\ Veroyatnost. i Primenen. \textbf{29(4)}, 669-679 (1984).

\bibitem{GZ10} Guionnet, A., Zeitouni, O.: \emph{Support convergence in the single ring theorem}, Probab.\ Theory Related Fields\ \textbf{154} (3-4): 661-675 (2012).

\bibitem{GKZ11} Guionnet, A., Krishnapur, M., Zeitouni, O.: \emph{The single ring theorem}, Ann.\ of Math.\ (2) \textbf{174}, 1189-1217 (2011).

\bibitem{HL00} Haagerup, U., Larsen, F.: {\it Brown's spectral distribution measure for R-diagonal elements in finitevon Neumann algebras}, J.\ Funct.\ Anal.\ \textbf{176(2)}, 331-367  (2000).

\bibitem{Kargin} Kargin, V.: \emph{Subordination for the sum of two random matrices}, Ann.\ Probab.\ \textbf{43(4)}, 2119-2150 (2015).

\bibitem{LS16} Lee, J.\ O., Schnelli, K.: {\it Local law and Tracy-Widom limit for sparse random matrices}, arXiv:1605.08767 (2016).

\bibitem{Le01} Ledoux, M.: {\it The Concentration of Measure Phenomenon}, Providence, RI: American Mathematical Society, 2001. 

\bibitem{Maassen} Maassen, H.: {\it Addition of freely independent random variables}, J.\ Func.\ Anal.\ \textbf{106(2)}, 409-438 (2000).

\bibitem{Mezzadri} Mezzadri, F.: \emph{How to generate random matrices from the classical compact groups}, Notices\ Amer.\ Math.\ Soc.\ \textbf{54(5)}, 592-604 (2007).

\bibitem{VP}Pastur, L., Vasilchuk, V.: \emph{On the law of addition of random matrices}, Comm.\ Math.\ Phys.\ \textbf{214.2}, 249-286 (2000).

\bibitem{RV14} Rudelson, M., Vershynin, R.: \emph{Invertibility of random matrices: unitary and orthogonal perturbations},  J.\ Amer.\ Math.\ Soc.\  \textbf{27(2)}, 293-338 (2014).

\bibitem{TV10} Tao, T., Vu, V.: \emph{Random matrices: universality of ESDs and the circular law}, with an appendix by Krishnapur, M., Ann.\ Probab., \textbf{38(5)}, 2023-2065 (2010).

 \bibitem{TV}  Tao, T., Vu, V.:  \emph{Random matrices: universality of local spectral statistics of non-Hermitian matrices},  Ann.\ Probab.\ \textbf{43(2)}, 782-874 (2015). 

\bibitem{Voi93} Voiculescu, D.: \emph{The analogues of entropy and of Fisher's information theory in free probability theory, I}, Comm.\ Math.\ Phys.\ \textbf{155}, 71-92 (1993).

\bibitem{Yin} Yin, J.:  \emph{The local circular law III: general case}, Probab.\ Theory Related Fields\  \textbf{160(3-4)}, 679-732 (2014).



\end{thebibliography}
\end{document}